\documentclass[11pt]{amsart}

\usepackage{epsf,amssymb,times,overpic,amscd}
\usepackage[usenames]{color}
\usepackage[all]{xy}
\usepackage{hyperref}

\usepackage{amsfonts}
\usepackage{amsmath}
\usepackage{graphicx}
\usepackage{epsfig}
\usepackage{epstopdf}

\newcommand{\ba}{\begin{array}}
\newcommand{\ea}{\end{array}}
\newcommand{\be}{\begin{enumerate}}
\newcommand{\ee}{\end{enumerate}}

\newtheorem{thm}{Theorem}[section]
\newtheorem{prop}[thm]{Proposition}
\newtheorem{lemma}[thm]{Lemma}

\theoremstyle{definition}
\newtheorem{defn}[thm]{Definition}

\theoremstyle{remark}

\newtheorem{rmk}[thm]{Remark}
\newtheorem{example}[thm]{Example}

\newcommand{\C}{\mathcal{C}}

\newcommand{\Z}{\mathbb{Z}}

\newcommand{\K}{\mathbf{k}}

\newcommand{\Hom}{\operatorname{Hom}}
\newcommand{\RHom}{\operatorname{RHom}}
\newcommand{\End}{\operatorname{End}}
\newcommand{\Kom}{\operatorname{Kom}}
\newcommand{\Ind}{\operatorname{Ind}}
\newcommand{\Res}{\operatorname{Res}}
\newcommand{\Par}{\operatorname{Par}}
\newcommand{\Ker}{\operatorname{Ker}}
\newcommand{\wtpr}{\widetilde{\operatorname{pr}}}

\newcommand{\n}{\noindent}
\newcommand{\ot}{\otimes}
\newcommand{\bt}{\boxtimes}

\newcommand{\ra}{\rightarrow}
\newcommand{\xra}{\xrightarrow}

\newcommand{\mf}{\mathbf}
\newcommand{\op}{\operatorname}
\newcommand{\cal}{\mathcal}
\newcommand{\es}{\emptyset}
\newcommand{\lan}{\langle}
\newcommand{\ran}{\rangle}
\newcommand{\wt}{\widetilde}
\newcommand{\ov}{\overline}

\newcommand{\ovlm}{\overline{\lambda}}
\newcommand{\ovmu}{\overline{\mu}}
\newcommand{\ove}{\overline{\eta}}
\newcommand{\ovx}{\overline{x}}
\newcommand{\ovy}{\overline{y}}
\newcommand{\ovz}{\overline{z}}
\newcommand{\wtq}{\overline{Q}}
\newcommand{\wta}{\widetilde{a}}

\newcommand{\wtp}{\widetilde{P}_{-1}}
\newcommand{\bmn}{B(m,n)}
\newcommand{\mb}{\mathbf{1}}
\newcommand{\ch}{\mathcal{DH}}
\newcommand{\cl}{\mathcal{CL}}
\newcommand{\cf}{\mathcal{F}}
\newcommand{\cg}{\mathcal{G}}
\newcommand{\cb}{\mathcal{B}}
\newcommand{\cim}{\mathcal{IM}}
\newcommand{\cm}{\mathcal{M}}
\newcommand{\ii}{\rho_i}

\newcommand{\lm}{\lambda}
\newcommand{\lmn}{\lambda\cup1^n}

\newcommand{\vf}{V_{F}}
\newcommand{\br}{\overline{F}}

\newcommand{\sn}{\bigoplus\limits_{n=0}^{\infty}\mathbf{k}[S(n)]}
\newcommand{\cs}{\mathcal{S}}

\oddsidemargin0.1in
\evensidemargin0.1in
\textwidth6.0in
\topmargin0.5in
\textheight7.5in

\begin{document}
\title{Towards a categorical boson-fermion correspondence}

\author{Yin Tian}
\address{Yau Mathematical Sciences Center, Tsinghua University, Beijing 100084, China}
\email{ytian@math.tsinghua.edu.cn}

\date{October 31, 2017.}

\keywords{}

\subjclass[2010]{18D10.}

\begin{abstract}
We construct the Heisenberg counterpart of a Clifford categorification in \cite{Tian}.
It is a modification of Khovanov's Heisenberg categorification.
We express generators of the Heisenberg category as a complex of generators of the Clifford category.
Certain vertex operators associated to the Clifford algebra are lifted to endofunctors of the Fock space categorification.
\end{abstract}

\maketitle

\tableofcontents

\section{Introduction}
The boson-fermion correspondence establishes an isomorphism between a bosonic Fock space and a fermionic Fock space.
A Heisenberg algebra acts on the bosonic Fock space, and a Clifford algebra acts on the fermionic Fock space.
The correspondence also provides maps between the Heisenberg and Clifford algebras via vertex operators  \cite{Fr} \cite[Section 14.9]{Kac}.


On the categorical level, the author constructed a DG categorification of a Clifford algebra \cite{Tian}.
The motivation is from studying three dimensional contact topology on $(\mathbb{R} \times [0,1]) \times [0,1]$.
The infinite strip $\mathbb{R} \times [0,1]$ is the universal cover of a punctured disk.
The boson-fermion correspondence is an example of a conformal field theory for a punctured disk.
For any oriented surface $\Sigma$, Honda defines a {\em contact category} $\C(\Sigma)$ constructed from contact structures on $\Sigma \times [0,1]$ \cite{HT}.
Contact categories and its close relative {\em bordered Heegaard Floer homology} \cite{LOT} can be viewed as a part of (2+1+1) dimensional topological field theory.
We hope this paper provides a small step towards categorifying conformal field theory via three dimensional geometric structures.

The goal of this paper is to give a representation theoretic interpretation of the geometric structure underlying the Clifford categorification.
Khovanov in \cite{Kh1} constructed a $\K$-linear additive categorification of the Heisenberg algebra, where $\K$ is a field of characteristic zero.
The Heisenberg category acts on the category of $\sn$-modules, where $S(n)$ is the $n$-th symmetric group.
We provide a modification of Khovanov's Heisenberg category, and show that it is the Heisenberg counterpart of the Clifford category under a categorical boson-fermion correspondence.

On the Heisenberg side, we construct a $\K$-algebra $B$ which contains $\sn$ as a subalgebra.
The homotopy category $\cb=\Kom(B)$ of finite dimensional projective $B$-modules categorifies the bosonic Fock space.
The derived category $D(B^e)$ of $B$-bimodules admits a monoidal structure given by the derived tensor product over $B$.
There are two distinguished bimodules $P$ and $Q$ which correspond to the induction and restriction functors of $\cb$.
Our Heisenberg category $\ch$ is defined as a full triangulated monoidal subcategory of $D(B^e)$ which is generated by $B, P$ and $Q$.

On the Clifford side, we generalize the construction in \cite{Tian} from $\mathbb{F}_2$ to $\K$ of characteristic zero.
We define a DG $\K$-algebra $R=\bigoplus\limits_{k\in\Z}R_{k}$, where all $R_{k}$'s are isomorphic to each other.
A homotopy category of certain DG $R$-modules categorifies the fermionic Fock space.
There are a family of distinguished DG $R$-bimodules $T(i)$ for $i \in \Z$ which correspond to certain contact geometric objects.
Our Clifford category $\cl$ is defined as a full triangulated monoidal subcategory of the derived category $D(R^e)$ which is generated by $R$ and $T(i)$'s.

The DG algebra $R_0$ is formal, i.e. quasi-isomorphic to its cohomology algebra $H(R_0)$ with the trivial differential.
It is derived Morita equivalent to a DG algebra $\wt{H}(R_0)$ with the trivial differential and concentrated at degree zero.
Then $\wt{H}(R_0)$ can be viewed as an ordinary algebra.
We show that it is isomorphic to a quiver algebra $F$, and the algebras $B$ and $F$ are Morita equivalent.
To sum up, we have a chain of the algebras:
\begin{gather} \label{eq chain rel1}
R_0 \leftrightarrow H(R_0) \leftrightarrow \wt{H}(R_0) \cong F \leftrightarrow B.
\end{gather}
Certain categories of $B$-modules and $R_0$-modules are equivalent.
This equivalence categorifies the isomorphism of the Fock spaces, see Theorem \ref{thm bf fock}.

The key feature of the category $\cb$ is that the algebra $B$ is not semisimple.
On the negative side, the Heisenberg category $\ch$ does not admit bi-adjunction.
As a result, it does not have an easy diagrammatic presentation.
It is expected that there are some difficulties in describing DG or triangulated monoidal categories using diagrams.
On the positive side, there are some $B$-bimodule homomorphisms and extensions between $B, P$ and $Q$ which do not exist in Khovanov's Heisenberg category.
These extra morphisms enable us to construct an infinite chain of adjoint pairs in $\ch$ which contains the bimodules $P$ and $Q$, see Theorem \ref{thm adj}.

Correspondingly, the bimodules $T(i)$ for $i \in \Z$ form a chain of adjoint pairs in the Clifford category.
Their classes $t_i=[T(i)]$ in the Grothendieck group generate a Clifford algebra $Cl$ with the relation:
$$t_i t_j + t_j t_i= \delta_{|i-j|,1}1.$$
Note that the basis $\{t_i\}$ is slightly different from the basis $\{\psi_i, \psi_i^*\}$ of creating and annihilating operators.

Using a variation of vertex operator construction, we can express the Heisenberg generators $p, q$ satisfying $qp-pq=1$ in terms of the Clifford generators as
\begin{gather*}
g(q)= \sum \limits_{i\le0}t_{2i}t_{2i-1} - \sum \limits_{i>0}t_{2i-1}t_{2i},\\
g(p)= \sum \limits_{i\le0}t_{2i+1}t_{2i} - \sum \limits_{i>0}t_{2i+1}t_{2i}.
\end{gather*}
We construct two objects $\wtq, \ov{P}$ in $D(R_0^e)$ which lift the expressions $g(q), g(p)$.
The chain (\ref{eq chain rel1}) induces an equivalence $\cg: D(B^e) \ra D(R_0^e)$ of categories.
Our first main result is to show that $\cg(Q)$ and $\cg(P)$ are isomorphic to $\wtq$ and $\ov{P}$, respectively, see Theorem \ref{thm bf HCl}.

Consider two generating series
$$\overline{t}(z)=\sum\limits_{i\in \Z} t_{2i+1} z^{i}, \qquad t(z)=\sum\limits_{i\in \Z} t_{2i} z^{-i},$$
associated to $Cl$.
The expressions $\overline{t}(z)|_{z=-1}$ and $t(z)|_{z=-1}$ define two linear operators of the Fock space.
Our second main result is to categorify these operators to certain endofunctors of $\cb$, see Theorems \ref{thm bar sn} and \ref{thm sn}.
The endofunctors are related to {\em Serre functors}.
We hope that this gives an idea towards categorifying vertex operators in general.
At this moment we do not know what the categorical meaning of $z$ is when treating $z$ as a complex variable.

\vspace{.2cm}
\n{\bf Comparison with other works.}
A similar non-semisimplicity appears in the work of Frenkel, Penkov and Serganova \cite{FPS}.
They gave a categorification of the boson-fermion correspondence via the representation theory of $\mathfrak{sl}(\infty)$.
Khovanov \cite{Kh2} pointed out that $\cb$ is also related to a category of certain $\op{Sym}^*(V) \rtimes \mathfrak{gl}(\infty)$-modules, where $V$ is the infinite dimensional vector representation of $\mathfrak{gl}(\infty)$.

Cautis and Sussan \cite{CS} constructed another categorical version of the correspondence whose Heisenberg side is Khovanov's categorification.
They define some infinite complexes of the Heisenberg generators which satisfy a categorical Clifford relation.
The construction is based on the previous work of Cautis and Licata \cite{CL1, CL2} on categorification of quantum affine algebras.
Although the bi-adjunction is missing in our case, we do have a chain of adjoint pairs in both the Heisenberg and Clifford categories.
We can first use the complexes in \cite{CS} in one direction, and then apply the adjunction map to get the complexes in the other direction.
We conjecture that the resulting complexes of the Heisenberg generators are isomorphic to certain Clifford generators. 
In this paper we express the Heisenberg generators as some complexes of the Clifford generators.

The algebra $B$ also appears in studying stability of representation of symmetric groups \cite{CEF}.
They are interested in some right $B$-modules which are infinite dimensional and satisfy certain stability conditions.
We focus on finite dimensional left $B$-modules and their homological properties.

\vspace{.2cm}
\n{\em Convention:} All modules are left modules unless specified otherwise.

\vspace{.2cm}
\n{\bf Acknowledgements.}
The author is grateful to Ko Honda for introducing me to the contact category.
Thank Mikhail Khovanov for valuable suggestions and encouragement.
Also thank Bangming Deng and Aaron Lauda for useful discussions.
The author is partially supported by NSFC grant 11601256.

\section{The bosonic Fock space}

We provide a modification of the symmetric group algebras as follows.

\begin{defn} \label{def B}
Define a $\K$-algebra $B$ by generators $1(n)$ for $n \ge 0$, $s_i(n)$ for $1 \le i \le n-1$, and $v_i(n)$ for $1 \le i \le n$, subject to the relations consisting of three groups:

\n(1)  Relation of idempotents:
\begin{align*}
1(n) 1(m)=\delta_{n,m}1(n), \quad 1(n) s_i(n) = s_i(n) 1(n) = s_i(n), \quad 1(n-1) v_i(n) = v_i(n) 1(n) = v_i(n).
\end{align*}

\n(2) Relation of symmetric groups:
\begin{align*}
s_i(n) s_i(n) =1(n),& \\ s_i(n)s_{i+1}(n)s_i(n)=s_{i+1}(n)s_i(n)s_{i+1}(n),& \\
 s_i(n) s_{j}(n) = s_{j}(n) s_i(n),& \qquad \mbox{if}~~~|i-j|>1.
\end{align*}

\n(3) Relation of short strands:
\begin{align*}
v_i(n) s_j(n) = s_{j}(n-1) v_i(n), & \qquad \mbox{if}~i > j+1, \\
v_i(n) s_j(n) = s_{j-1}(n-1) v_i(n), & \qquad \mbox{if}~i <j,  \\
v_i(n) v_{j}(n+1)=v_{j}(n) v_{i+1}(n+1), & \qquad \mbox{if}~i \ge j, \\
v_i(n) s_i(n) = v_{i+1}(n) . & \qquad \mbox{(Slide relation)}
\end{align*}
\end{defn}

\begin{figure}[h]
\begin{overpic}
[scale=0.4]{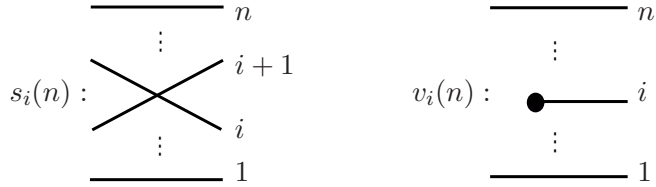}
\put(27,0){$1$}
\put(27,8){$i$}
\put(27,20){$i+1$}
\put(27,30){$n$}
\put(102,30){$n$}
\put(102,15){$i$}
\put(102,0){$1$}
\put(60,15){$v_i(n):$}
\put(-15,15){$s_i(n):$}
\end{overpic}
\caption{Generators of $B$.}
\label{s1}
\end{figure}

The algebra $B$ can be described diagrammatically, see figure \ref{s1}.
The idempotent $1(n)$ is denoted by $n$ horizontal strands.
In particular, $1(0)$ is denoted by the empty diagram.
The generator $s_i(n)$ is denoted by $n$ strands with a $(i,i+1)$ crossing.
The generator $v_i(n)$ is denoted by a diagram with $n-1$ horizontal strands and one short strand in the $i$-th position.
Here, the short strand has no endpoint on the left and one endpoint on the right.
The product $ab$ of two diagrams $a$ and $b$ is a horizontal concatenation of $a$ and $b$, where $a$ is on the left, $b$ is on the right.
The product is zero unless the number of their endpoints match.

\begin{rmk}
The short strand $v_1(1)$ corresponds to a contact structure on $(\mathbb{R} \times [0,1]) \times [0,1]$.
\end{rmk}

The relations of the second group are the defining relation of the symmetric groups.
The relations of the third group are about short strands.
The first three lines are isotopy relations of disjoint diagrams.
The last line says that the short strand can slide over the crossing.
We call it the {\em slide relation}.
It induces a relation
\begin{gather} \label{eq sym rel}
v_i(n)v_{i+1}(n+1)s_{i}(n+1)=v_i(n)v_{i+1}(n+1),
\end{gather}
which is called the {\em symmetric relation}.
In addition to the isotopy relations of disjoint diagrams, other local relations are drawn in figure \ref{s2}.

Let $a \bt b \in B$ denote a vertical concatenation of $a$ and $b$, where $a$ is at the bottom, $b$ is at the top.
The element $a \bt b$ does not depend on the relative positions of $a$ and $b$ by the isotopy relation of disjoint diagrams.
\begin{figure}[h]
\begin{overpic}
[scale=0.25]{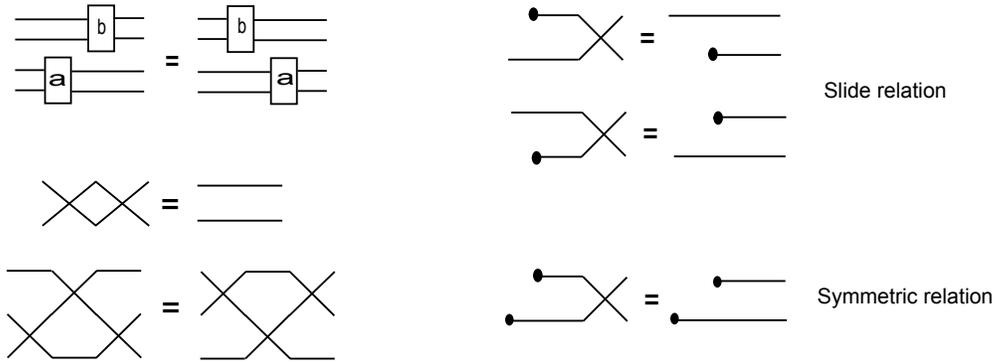}
\end{overpic}
\caption{Local relations of $B$.}
\label{s2}
\end{figure}

For $i<n$, generators $v_i(n)=v_n(n) g_i(n)$ for some $g_i(n) \in S(n)$ by the slide relation.
So the algebra $B$ is generated by $\K[S(n-1)]$ and $v_n(n)$ for $n \ge 1$.
Relations involving $v_n(n)$ are the isotopy relation with elements of symmetric groups, and the symmetric relation in (\ref{eq sym rel}) for $i=n$.
It is straightforward to verify the following equivalent description of $B$.
\begin{lemma} \label{lem def B}
The algebra $B$ is generated by $\K[S(n-1)]$ and $v_n(n)$ for $n \ge 1$, subject to the relations:
\begin{align*}
1(n-1) v_n(n) = v_n(n) 1(n) = v_n(n),\\
v_n(n) s_i(n) = s_{i}(n-1) v_n(n) ~~\mbox{for}~ i<n-1,\\
v_n(n)v_{n+1}(n+1)s_{n}(n+1)=v_n(n)v_{n+1}(n+1).
\end{align*}
\end{lemma}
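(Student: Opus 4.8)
The plan is to compare the algebra $B$ of Definition \ref{def B} with the algebra $\wt{B}$ defined by the presentation in the statement, by exhibiting mutually inverse $\K$-algebra homomorphisms. Every relation of $\wt{B}$ is visibly a consequence of the relations of $B$: the first relation is part of group (1), the second is the first line of group (3) with $i=n$, and the third is the symmetric relation (\ref{eq sym rel}) with $i=n$. Since $1(n)$, $s_i(n)$ and $v_n(n)$ generate $B$ (by the slide relation one has $v_i(n)=v_n(n)g_i(n)$ as noted before the lemma), this gives a surjective map $\pi\colon \wt{B} \tra B$ fixing these generators. The content of the lemma is that $\pi$ is injective, and for this I would construct a one-sided inverse $\phi\colon B \ra \wt{B}$.

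On generators, set $\phi(1(n))=1(n)$, $\phi(s_i(n))=s_i(n)$, and, motivated by iterating the slide relation, $\phi(v_i(n)) = v_n(n)\, s_{n-1}(n)s_{n-2}(n)\cdots s_i(n)$ for $1\le i\le n$, with the convention that the product of crossings is $1(n)$ when $i=n$. First I would check $\phi$ respects each defining relation of $B$. Groups (1) and (2) and the isotopy relations of disjoint diagrams involve only idempotents and symmetric group generators, so they are immediate. The slide relation $v_i(n)s_i(n)=v_{i+1}(n)$ holds by construction, since $s_i(n)s_i(n)=1(n)$ cancels the rightmost crossing. For the commutation relation $v_i(n)s_j(n)=s_j(n-1)v_i(n)$ with $i>j+1$, after applying $\phi$ the crossing $s_j(n)$ commutes past all of $s_{n-1}(n),\dots,s_i(n)$ inside $\K[S(n)]$, and then the listed relation $v_n(n)s_j(n)=s_j(n-1)v_n(n)$ (valid since $j<n-1$) finishes it; for $v_i(n)s_j(n)=s_{j-1}(n-1)v_i(n)$ with $i<j$, a short braid manipulation in $\K[S(n)]$ rewrites $\bigl(s_{n-1}(n)\cdots s_i(n)\bigr)s_j(n)$ as $s_{j-1}(n)\bigl(s_{n-1}(n)\cdots s_i(n)\bigr)$, after which the same listed relation applies.

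The main point, and where essentially all of the work lies, is the family $v_i(n)v_j(n+1)=v_j(n)v_{i+1}(n+1)$ for $i\ge j$. Applying $\phi$ turns both sides into words of the shape $v_n(n)\,u\,v_{n+1}(n+1)\,u'$ with $u\in\K[S(n)]$ involving only $s_1,\dots,s_{n-1}$ and $u'$ a word in the crossings $s_\bullet(n+1)$; moving $v_{n+1}(n+1)$ rightward across $u$ by the commutation relations just verified, and then matching the two resulting words by braid relations in $\K[S(n+1)]$ together with one or more uses of the symmetric relation $v_n(n)v_{n+1}(n+1)s_n(n+1)=v_n(n)v_{n+1}(n+1)$, produces the required identity. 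This braid bookkeeping is the only nonformal step. Once $\phi$ is well defined, $\pi\circ\phi=\op{id}_B$ because $\pi(\phi(v_i(n)))=v_n(n)s_{n-1}(n)\cdots s_i(n)=v_i(n)$ in $B$ by the iterated slide relation, and $\phi\circ\pi=\op{id}_{\wt{B}}$ since $\phi$ fixes the generators $1(n),s_i(n),v_n(n)$ of $\wt{B}$. Hence $\pi$ is an isomorphism, proving the lemma.
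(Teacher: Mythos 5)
Your proof is correct and carries out in detail the plan the paper sketches just before the lemma and leaves to the reader as ``straightforward to verify.'' The one nonformal step you flag does go through: commuting $v_{n+1}(n+1)$ leftward (not rightward, a minor slip) past all $s_k(n)$ with $k<n$ reduces both sides of $v_i(n)v_j(n+1)=v_j(n)v_{i+1}(n+1)$ to $v_n(n)v_{n+1}(n+1)$ times words in $\K[S(n+1)]$ differing by a single inserted $s_i$, and since $w\,s_i\,w^{-1}=s_n$ for the prefix $w=(s_{n-1}\cdots s_i)(s_n\cdots s_{i+1})$, one application of the symmetric relation absorbs it.
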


The algebra $B$ is idempotented, i.e. has a complete system of mutually orthogonal idempotents $\{1(n)\}_{n\ge 0},$ so that
$$ B =  \bigoplus\limits_{m,n\ge 0} 1(m) B 1(n).$$
Let $\bmn$ denote its component $1(m) B 1(n)$.
It is spanned by diagrams with $m$ endpoints on the left and $n$ endpoints on the right.
The subalgebra $B(n,n)$ is naturally isomorphic to the symmetric group algebra $\K[S(n)]$.
The generator $v_i(n) \in B(n-1,n)$.
By definition, $\bmn=0$ if $n<m$.

There is a family of subalgebras
$$B(k)= \bigoplus\limits_{m,n \leq k} \bmn$$
of $B$ which is spanned by diagrams with at most $k$ endpoints on the right.
The algebra $B(k)$ is finite dimensional with the unit $\mb(k)'=\sum\limits_{0\leq n \leq k} 1(n)$.
There are the canonical nonunital inclusions $B(k) \subset B(k+1)$ which are compatible with the canonical nonunital inclusions $B(k) \subset B$.

\begin{lemma} \label{lem bmn}
For $0\le m \le n$, the vector space $\bmn$ is of dimension $\binom{n}{m}\cdot m!$.
As a left $\K[S(m)]$-module, it is free of rank $\binom{n}{m}$; as a right $\K[S(n)]$-module, it is isomorphic to the induction module $\op{Ind}_{n-m}^{n} \mb_{n-m}$, where $\mb_{n-m}$ is the one dimensional trivial representation of $\K[S(n-m)]$.
\end{lemma}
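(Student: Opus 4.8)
The plan is to analyze $\bmn$ through the diagrammatic basis. A spanning set is obtained as follows: by the slide relation every generator $v_i(n)$ equals $v_n(n)g_i(n)$ for a suitable permutation, so any diagram in $\bmn$ can be written as a product of $n-m$ short strands followed by an element of $\K[S(m)]$ acting on the left. Concretely, first I would show that every diagram in $\bmn$ is equivalent, using only the relations of Definition~\ref{def B}, to one of the form $\sigma \cdot v_{m+1}(m+1)v_{m+2}(m+2)\cdots v_n(n) \cdot \tau$, and then, absorbing $\tau$ into the short strands via the slide and symmetric relations, to one of the form $\sigma\,w$ where $\sigma\in\K[S(m)]$ and $w$ is the ``straight'' composite of $n-m$ short strands precomposed by a minimal-length coset representative for $S(n)/S(n-m)$ (i.e.\ a choice of which $n-m$ of the $n$ right endpoints are terminated by short strands). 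There are $\binom{n}{m}$ such representatives and $m!$ choices of $\sigma$, giving the claimed upper bound $\dim \bmn \le \binom{n}{m}\cdot m!$.

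For the lower bound I would exhibit a concrete faithful action. The natural candidate is the action of $B$ on $\bigoplus_n \K[S(n)]$ by the ``partial matching'' operators: $1(n)$ projects onto $\K[S(n)]$, $s_i(n)$ acts by right(or left) multiplication, and $v_i(n)$ acts by the linear map $\K[S(n)]\to\K[S(n-1)]$ that merges/deletes the $i$-th strand. One checks this respects all relations (this is exactly the content encoded in Figures~\ref{s1}--\ref{s2}); then the images of the $\binom{n}{m}\cdot m!$ spanning diagrams above are linearly independent in $\Hom_{\K}(\K[S(n)],\K[S(m)])$ because they send the identity permutation to distinct basis elements (a permutation of $S(m)$ times a fixed injection $[m]\hookrightarrow[n]$), and distinct injections give linearly independent outputs. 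This forces the spanning set to be a basis and pins down the dimension.

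It then remains to identify the module structures. As a \emph{left} $\K[S(m)]$-module, the basis above is visibly a free basis: the $\binom{n}{m}$ coset representatives $w$ are permuted among cosets only trivially, and left multiplication by $S(m)$ acts simply transitively on each block $\{\sigma w : \sigma \in S(m)\}$, so $\bmn \cong \K[S(m)]^{\oplus\binom{n}{m}}$. As a \emph{right} $\K[S(n)]$-module, I would argue that $\bmn$ is cyclic, generated by the single element $u:=v_{m+1}(m+1)\cdots v_n(n)$ obtained by terminating the last $n-m$ strands; the symmetric relation~(\ref{eq sym rel}) (applied repeatedly) shows that $u\cdot s_i(n)=u$ for all $m < i \le n-1$, i.e.\ $u$ is fixed by the parabolic subgroup $S(n-m)$ acting on the last $n-m$ letters. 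Hence there is a surjection $\op{Ind}_{n-m}^n \mb_{n-m} \to \bmn$ of right $\K[S(n)]$-modules, and comparing dimensions ($\binom{n}{m}\cdot m! = [S(n):S(n-m)]\cdot 1 \cdot$ well, $|S(n)|/|S(n-m)| = \binom{n}{m}m!$) shows it is an isomorphism.

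The main obstacle I anticipate is the first step: verifying carefully that the relations of Definition~\ref{def B} suffice to rewrite an \emph{arbitrary} word in the generators into the claimed normal form, without hidden collisions — in particular handling the interaction of the slide relation with braid relations when short strands are dragged past crossings. This is a diagrammatic confluence/rewriting argument; once the normal form is established the dimension count and the two module identifications are routine, and the faithful action on $\bigoplus_n\K[S(n)]$ provides the independent confirmation that no relations have been missed.
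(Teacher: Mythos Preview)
Your proposal is correct and follows essentially the same approach as the paper: the paper also takes as basis the $\binom{n}{m}$ crossing-free diagrams (choice of which $n-m$ right endpoints receive short strands), observes that left $S(m)$-multiplication makes $\bmn$ free on this set, and for the right module structure notes that any such diagram is a cyclic generator whose stabilizer is $S(n-m)$ by the symmetric relation. The only difference is that the paper simply asserts these diagrams form a basis without the separate faithful-action argument for linear independence that you sketch; your extra step is a reasonable way to make that assertion rigorous, but the overall strategy is the same.
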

\begin{proof}
As a left $\K[S(m)]$-module, $\bmn$ is free with a basis of diagrams, each of which consists of $m$ horizontal strands, $n-m$ short strands, and no crossings.
There are $\binom{n}{m}$ such diagrams.
As a right $\K[S(n)]$-module, $\bmn$ is generated by any single element of the basis above.
The stabilizer of such an element is isomorphic to $\K[S(n-m)]$ by the symmetric relation, see figure \ref{s2}.
\end{proof}

For each partition $\mu \vdash n$, we fix a Young symmetrizer $e_{\mu} \in \K[S(n)] \cong B(n,n)$ throughout the paper.
We view $e_{\mu}$ as idempotents of $B(k)$ and $B$ under the inclusions $B(n,n) \subset B(k) \subset B$ for any $k \geq n$.
In particular, $e_{(0)} \in B$ is represented by the empty diagram for the partition $(0) \vdash 0$.
Let $\Par$ denote the set of all partitions.
Define $$V_{\mu}=B \cdot e_{\mu}$$ as the left projective $B$-module.
Since each $1(n) \in \K[S(n)]$ which is a direct sum of matrix algebras determined by partitions, there is an isomorphism
$$B \cong \bigoplus\limits_{\mu \in \Par} V_{\mu}^{\oplus m(\mu)}$$
of left projective $B$-modules, where $m(\mu)$ is the multiplicity.
It follows that
$$V_P:=\bigoplus\limits_{\mu \in \Par} V_{\mu}$$
is a {\em projective generator} of the category $\{B-\op{mod}\}$ of left $B$-modules.
In particular, $B$ is Morita equivalent to $\End_{B}(V_P)^{op}$.

Let $L_{\mu}$ denote the simple $\K[S(n)]$-module for $\mu \vdash n$.
Let $M \bt N \in \{\K[S(m+n)]-\op{mod}\}$ be the external product of $M \in \{\K[S(m)]-\op{mod}\}$ and $N \in \{\K[S(n)]-\op{mod}\}$.

\begin{lemma} \label{lem hom B}
For $\lambda \vdash m, \mu \vdash n$, there is a natural isomorphism
$$\Hom_{B}(V_{\lambda}, V_{\mu}) \cong \Hom_{\K[S(n)]}(L_{\lambda}\bt \mb_{n-m}, L_{\mu}),$$
if $m \le n$.
If $m>n$, the space $\Hom_{B}(V_{\lambda}, V_{\mu})$ is zero.
Moreover, the morphism space is one dimensional if $\mu$ is obtained from $\lambda$ by adding at most one box on each column; it is zero otherwise.
\end{lemma}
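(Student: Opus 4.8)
The plan is to reduce $\Hom_B(V_\lambda,V_\mu)$ to a Hom-space of symmetric group representations, using that $V_\lambda,V_\mu$ are projective, and then to read off the numerical statement from the Pieri rule. First, since $e_\mu$ is an idempotent of $B$, the module $V_\mu=Be_\mu$ is a direct summand of $B$, hence projective, and the evaluation map gives the natural isomorphism $\Hom_B(Be_\lambda,Be_\mu)\cong e_\lambda Be_\mu$, $f\mapsto f(e_\lambda)$. As $e_\lambda\in B(m,m)$ and $e_\mu\in B(n,n)$, this equals $e_\lambda B(m,n)e_\mu$, which is $0$ when $m>n$ because $\bmn=0$ by definition; this handles the vanishing case, so assume $m\le n$ from now on.

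Next I would upgrade Lemma \ref{lem bmn} to a statement about the $(\K[S(m)],\K[S(n)])$-bimodule structure of $\bmn$. Reading that proof while also tracking the left action should give an isomorphism of bimodules $\bmn\cong\K[S(n-m)\backslash S(n)]$, in which $S(n)$ acts by right translation and $S(m)$ acts by left translation through the embedding $S(m)\hookrightarrow S(n)$ on the first $m$ letters, which commutes with the copy of $S(n-m)$ on the last $n-m$ letters. Concretely, writing $w_{m,n}\in\bmn$ for the crossingless diagram with horizontal strands in positions $1,\dots,m$ and short strands in positions $m+1,\dots,n$: the symmetric relation gives $w_{m,n}\rho=w_{m,n}$ for $\rho\in S(n-m)$, the relations moving short strands past crossings give $s_j(m)w_{m,n}=w_{m,n}s_j(n)$ for $j<m$, and $\{w_{m,n}\tau\}$ for $\tau$ ranging over coset representatives is a basis by the dimension count of Lemma \ref{lem bmn}. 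Granting this, and using $\K[S(n)]e_\mu\cong L_\mu$ (valid since $\operatorname{char}\K=0$), one has $\bmn e_\mu\cong\mb_{n-m}\otimes_{\K[S(n-m)]}L_\mu$ as a left $\K[S(m)]$-module.

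Then I would compute $e_\lambda\bigl(\mb_{n-m}\otimes_{\K[S(n-m)]}L_\mu\bigr)$ step by step. The space $\mb_{n-m}\otimes_{\K[S(n-m)]}L_\mu$ is the space of $S(n-m)$-coinvariants of $L_\mu$, which carries a residual left $\K[S(m)]$-action since $S(m)$ commutes with $S(n-m)$ in $S(n)$; over a field of characteristic zero it is $S(m)$-equivariantly identified with the invariants $\Hom_{\K[S(n-m)]}(\mb_{n-m},L_\mu)$. Applying $e_\lambda$, i.e. the functor $\Hom_{\K[S(m)]}(L_\lambda,-)$ via $\K[S(m)]e_\lambda\cong L_\lambda$, then the tensor--hom adjunction $\Hom_A(X,\Hom_{A'}(Y,Z))\cong\Hom_{A\otimes A'}(X\otimes Y,Z)$, then Frobenius reciprocity, should give
\begin{gather*}
\Hom_B(V_\lambda,V_\mu)\cong\Hom_{\K[S(m)\times S(n-m)]}\bigl(L_\lambda\otimes\mb_{n-m},\ \Res^{S(n)}_{S(m)\times S(n-m)}L_\mu\bigr)\\
\cong\Hom_{\K[S(n)]}\bigl(L_\lambda\bt\mb_{n-m},\ L_\mu\bigr),
\end{gather*}
the asserted natural isomorphism. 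Finally, the Pieri rule gives $L_\lambda\bt\mb_{n-m}\cong\bigoplus_\mu L_\mu$ over the partitions $\mu\vdash n$ obtained from $\lambda$ by adding a horizontal strip of size $n-m$ --- equivalently, at most one box in each column --- each with multiplicity one, so the last Hom space is one-dimensional exactly in that case and zero otherwise.

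The step I expect to be the main obstacle is the bimodule refinement of Lemma \ref{lem bmn}: that lemma is phrased one side at a time, and one must verify that the left $S(m)$-action on the diagram basis really matches left translation on $S(n-m)\backslash S(n)$ with no internal twist, i.e. that $s_j(m)w_{m,n}=w_{m,n}s_j(n)$ for $j<m$. This should follow by a short induction on $n$ from the relation $v_i(n)s_j(n)=s_j(n-1)v_i(n)$ for $i>j+1$. Everything after that is routine representation theory of symmetric groups over a field of characteristic zero.
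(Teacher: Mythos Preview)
Your proposal is correct and follows essentially the same route as the paper, only more explicitly. The paper's proof is the single chain
\[
\Hom_B(V_\lambda,V_\mu)\cong e_\lambda B e_\mu \cong (e_\lambda \bt e_{(n-m)}) B e_\mu \cong \Hom_{\K[S(n)]}(L_\lambda\bt\mb_{n-m},L_\mu),
\]
where the middle isomorphism is precisely your bimodule identification $B(m,n)\cong (1(m)\bt e_{(n-m)})\K[S(n)]$ phrased via idempotents rather than via the coset space $S(n-m)\backslash S(n)$; inserting the symmetrizer $e_{(n-m)}$ on the short strands is the paper's shortcut for your coinvariants--invariants--Frobenius chain. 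The step you flagged as the main obstacle (that the left $S(m)$-action commutes through $w_{m,n}$) is exactly what the paper leaves implicit, and your verification via the relation $v_i(n)s_j(n)=s_j(n-1)v_i(n)$ for $i>j+1$ is the right one.
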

\begin{proof}
It follows from the natural isomorphisms:
$$\Hom_{B}(V_{\lambda}, V_{\mu}) \cong e_{\lambda}Be_{\mu} \cong (e_{\lambda} \bt e_{(n-m)}) Be_{\mu} \cong \Hom_{\K[S(n)]}(L_{\lambda}\bt \mb_{n-m}, L_{\mu}),$$
where $e_{(n-m)}$ is the idempotent for the partition $(n-m)$ of $n-m$ corresponding to the trivial representation $\mb_{n-m}$ of $\K[S(n-m)]$.
\end{proof}

Define a quiver $\Gamma$ as follows.
The set of vertices of $\Gamma$ is $\Par$, the set of all partitions.
There is an arrow $\lm \ra \mu$ if $\lambda \subset \mu$ and $|\mu|=|\lm|+1$.
We write $\lm \ra \mu \oplus \nu \ra \eta$ if there is a square
\begin{gather} \label{eq def lmmunueta}
\xymatrix{
\lm \ar[r]\ar[d]   &  \mu \ar[d]     \\
  \nu \ar[r]       & \eta
}
\end{gather}
Let $\K \Gamma$ denote the path algebra of $\Gamma$, $(\lambda) \in \K \Gamma$ denote the idempotent, and $(\lambda|\mu) \in \K\Gamma$ denote the generator for each arrow $\lambda \ra \mu$.

\begin{defn} \label{def F}
Define a $\K$-algebra $F$ as a quotient of $\K \Gamma$, modulo the relations
\begin{gather*}
(\lambda | \mu) (\mu | \eta)=(\lambda | \nu) (\nu | \eta) ~~\mbox{if}~ \lm \ra \mu\oplus\nu \ra \eta;\\
(\lambda | \mu) (\mu | \eta)=0 ~~\mbox{if}~ \eta \backslash \lm = (1^2).
\end{gather*}
\end{defn}


The $\K$-vector space $(\lm)F(\mu)$ is at most one dimensional by the first relation in Definition \ref{def F}.
The second relation implies that $(\lm)F(\mu)$ is one dimensional if and only if $\lm \subset \mu$ and $\mu$ is obtained from $\lm$ by adding at most one box on each column.
We summarize this in the following lemma.
\begin{lemma} \label{lem lmRmu}
For $\lm=(\lm_i), \mu=(\mu_i)$, the space $(\lm)F(\mu)$ is one dimensional if $\mu_i \geq \lm_i \geq \mu_{i+1}$ for all $i \geq 1$; otherwise, the space is zero.
\end{lemma}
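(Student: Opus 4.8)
The plan is to reduce Lemma \ref{lem lmRmu} to a direct analysis of the two defining relations of $F$ in Definition \ref{def F}, combined with a bookkeeping argument about paths in the quiver $\Gamma$. First I would observe that any path from $\lm$ to $\mu$ in $\Gamma$ has length exactly $|\mu|-|\lm|$, since each arrow increases the size of the partition by one; in particular $(\lm)F(\mu)=0$ unless $\lm\subset\mu$. So fix $\lm\subset\mu$ and set $d=|\mu|-|\lm|$; every path corresponds to a saturated chain $\lm=\nu^{(0)}\subset\nu^{(1)}\subset\cdots\subset\nu^{(d)}=\mu$, i.e.\ a standard-Young-tableau-type filling of the skew shape $\mu\backslash\lm$ recording the order in which boxes are added.

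Next I would use the first relation, $(\lambda|\mu)(\mu|\eta)=(\lambda|\nu)(\nu|\eta)$ whenever $\lm\ra\mu\oplus\nu\ra\eta$, to show that in $F$ the image of a path depends only on the \emph{set} of boxes added, not on the order: any two saturated chains with the same skew shape differ by a sequence of transpositions of commuting box-additions (adding box $a$ then box $b$ versus $b$ then $a$, when $a,b$ are not in the same row or column and neither is forced to wait for the other), and each such transposition is exactly an instance of a square \eqref{eq def lmmunueta}. A short induction on $d$, using that any two linear extensions of the poset of boxes of $\mu\backslash\lm$ are connected by adjacent transpositions of incomparable elements, then shows $(\lm)F(\mu)$ is spanned by a single element, the common image of all these paths; hence $\dim(\lm)F(\mu)\le 1$, recovering the statement already noted after Definition \ref{def F}.

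Then I would determine exactly when this single spanning element is nonzero by invoking the second relation, $(\lambda|\mu)(\mu|\eta)=0$ if $\eta\backslash\lm=(1^2)$, i.e.\ when two added boxes lie in the same column (one directly below the other). If $\mu\backslash\lm$ contains two boxes in the same column, then in \emph{some} saturated chain these two boxes are added consecutively, producing a factor that is killed by the second relation; since all chains give the same element of $F$, that element is $0$. Conversely, if $\mu\backslash\lm$ has at most one box in each column — which is precisely the condition $\mu_i\ge\lm_i\ge\mu_{i+1}$ for all $i$, as one checks by comparing column lengths — then the skew shape is a horizontal-strip-like configuration with no two boxes stacked, no instance of the second relation ever arises along a suitably chosen chain, and one must argue the spanning element is genuinely nonzero. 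For this last point I would exhibit $F$ as a quotient of $\K\Gamma$ and produce an explicit linear functional, or better, appeal to Lemma \ref{lem hom B}: the algebra $F$ is designed to match $\End_B(V_P)^{op}$ (via the chain \eqref{eq chain rel1}), and Lemma \ref{lem hom B} already tells us $\dim\Hom_B(V_\lm,V_\mu)=1$ exactly under the stated box-adding condition, so the nonvanishing is inherited. (If one wants a self-contained argument, one instead checks directly that the two relations, viewed as a confluent rewriting system on paths, do not collapse a path whose skew shape is a vertical-strip-complement; this is the routine part.)

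The main obstacle is the nonvanishing direction: showing that the spanning element of $(\lm)F(\mu)$ does \emph{not} become zero in the quotient when the column condition holds. Bounding the dimension above by $1$ is formal, but proving a quotient is nonzero requires either constructing a representation of $F$ in which the relevant element acts nontrivially, or a careful diamond-lemma/confluence argument showing the relations in Definition \ref{def F} have no "hidden" consequences forcing extra vanishing. I expect the cleanest route is the representation-theoretic one, leveraging the Morita equivalence $B\leftrightarrow F$ and Lemma \ref{lem hom B}, so that Lemma \ref{lem lmRmu} becomes essentially a restatement of the already-established $\Hom$-space computation; the combinatorial translation "at most one box per column $\iff$ $\mu_i\ge\lm_i\ge\mu_{i+1}$" is then the only genuinely new, if elementary, ingredient.
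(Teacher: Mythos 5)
Your upper-bound argument (any two saturated chains connected by the commuting-square relation, so $\dim(\lm)F(\mu)\le 1$) and your vanishing argument (a vertical domino somewhere in $\mu\backslash\lm$ forces the common spanning element to zero) match what the paper intends; the paper itself offers no more than the two sentences preceding the lemma, so at that level of detail you are in agreement. You are also right that the genuine content is the nonvanishing direction, which the paper silently elides. However, the route you declare "cleanest" is circular in the paper's logical order: you want to inherit nonvanishing from $\dim\Hom_B(V_\lm,V_\mu)=1$ via the identification $F\cong\End_B(V_P)^{op}$, but that identification is Proposition \ref{prop Morita BF}, whose proof explicitly cites Lemma \ref{lem lmRmu} to match dimensions. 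So as written, Route A proves the lemma using a proposition that assumes the lemma.

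There are two honest fixes, both of which you gesture at but do not carry out. Fix one: reorganize the dependency. The construction in the proof of Proposition \ref{prop Morita BF} produces a well-defined algebra homomorphism $F \to \End_B(V_P)^{op}$ sending $(\lm|\mu)\mapsto b_{\lm\mu}$; checking that this map respects Definition \ref{def F} uses only equation (\ref{eq iso B F}) and the vanishing of $\Hom_B(V_\lm,V_\eta)$ when $\eta\backslash\lm=(1^2)$, neither of which needs Lemma \ref{lem lmRmu}. Since $b_{\lm\mu}\ne 0$ by Lemma \ref{lem hom B}, the image of $(\lm||\mu)$ is nonzero whenever $\mu\backslash\lm$ is a horizontal strip, giving the lower bound; combined with your upper bound this yields the lemma without circularity, at the cost of threading the argument through Section 2 more carefully than the paper does. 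Fix two, fully self-contained: define $\widetilde F$ to be the $\K$-span of symbols $(\lm\|\mu)$ for $\lm\subset\mu$ with $\mu\backslash\lm$ a horizontal strip, with product $(\lm\|\mu)(\mu\|\eta)=(\lm\|\eta)$ if $\eta\backslash\lm$ is a horizontal strip and $0$ otherwise. Associativity is immediate because any sub-skew-shape of a horizontal strip is again a horizontal strip, and both relations of Definition \ref{def F} evidently hold, so there is a surjection $F\twoheadrightarrow\widetilde F$ exhibiting $(\lm)F(\mu)\ne 0$ exactly on horizontal strips. Either fix closes your gap; calling the confluence route "routine" and stopping there does not. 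Finally, your translation between "at most one box per column" and $\mu_i\ge\lm_i\ge\mu_{i+1}$ is asserted but not checked; it is correct, but it is the one combinatorial verification the lemma actually asks for and should be spelled out.
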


Let $\lm \ra \lm^1 \ra \dots \ra \lm^k \ra \mu$ denote any path from $\lm$ to $\mu$ in $\Gamma$.
When $(\lm)F(\mu)$ is nonzero, define
\begin{gather} \label{eq gen R}
(\lm || \mu)=(\lm|\lm^1)(\lm^1|\lm^2)\cdots(\lm^k|\mu) \in (\lm)F(\mu),
\end{gather}
which is independent of the choices of pathes.
It is a generator of $(\lm)F(\mu)$.
Note that $(\lm || \mu)=(\lm)$ if $\lm=\mu$, and $(\lm || \mu)=(\lm|\mu)$ if $\lm \ra \mu$.

\begin{lemma} \label{lem R}
The algebra $F$ has a $\K$-basis $\{(\lm || \mu)\}$, and the multiplication is
$(\lambda || \mu) (\mu || \eta)=(\lm|| \eta)$ when they all exist.
\end{lemma}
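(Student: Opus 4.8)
\emph{Proof proposal.}
The two relations of Definition \ref{def F} are each homogeneous in the source vertex and in the target vertex, so $F=\bigoplus_{\lambda,\mu\in\Par}(\lambda)F(\mu)$, with the $(\lambda)$ remaining mutually orthogonal idempotents in the quotient. Since $\dim_{\K}(\lambda)F(\mu)\le 1$ (the first relation), the basis claim reduces to two points: that the $(\lambda || \mu)$ span $F$, and that $(\lambda || \mu)\ne 0$ whenever $\mu/\lambda$ is a horizontal strip (i.e. $\mu_i\ge\lambda_i\ge\mu_{i+1}$ for all $i$, by Lemma \ref{lem lmRmu}). The multiplication formula will drop out of the spanning argument.

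\emph{Spanning and multiplication.} Every element of $F$ is a $\K$-combination of images of paths in $\Gamma$, and a nonconstant path from $\lambda$ to $\mu$ is a saturated chain $\lambda\lessdot\dots\lessdot\mu$ in Young's lattice, hence a linear extension of the poset of boxes of the skew shape $\mu/\lambda$. I would use the classical fact that any two linear extensions of a finite poset are joined by a sequence of transpositions of consecutive incomparable elements; for our paths, each such transposition replaces a sub-path $\rho\to\rho'\to\rho''$ by $\rho\to\rho'''\to\rho''$ where $\rho\to\rho'\oplus\rho'''\to\rho''$ is a square of $\Gamma$, i.e. it is exactly an instance of the first relation. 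Hence all paths from $\lambda$ to $\mu$ have the same image in $F$. If $\mu/\lambda$ is a horizontal strip this common image is $(\lambda || \mu)$ by definition. If $\mu/\lambda$ has two boxes in some column, then (the boxes of $\mu/\lambda$ in a fixed column occupy a block of consecutive rows) some linear extension places two of them, lying in consecutive rows of a column, consecutively, producing a sub-path $\rho\to\rho'\to\rho''$ with $\rho''\setminus\rho=(1^2)$; this is $0$ by the second relation, so every path from $\lambda$ to $\mu$ maps to $0$. This proves spanning. For the product, concatenating a path realizing $(\lambda || \mu)$ with one realizing $(\mu || \eta)$ yields a path from $\lambda$ to $\eta$ whose image is $(\lambda || \eta)$ if $\eta/\lambda$ is a horizontal strip and $0$ otherwise, which is the asserted rule.

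\emph{Nonvanishing, the crux.} It remains to show $(\lambda || \mu)\ne 0$ in $F$ when $\mu/\lambda$ is a horizontal strip, and this is where the real work lies. I would construct a $\K$-algebra homomorphism $\Phi\colon F\to\mathcal{E}$, where $\mathcal{E}=\bigoplus_{\lambda,\mu\in\Par}e_{\lambda}Be_{\mu}$ is the endomorphism algebra of the projective generator $V_P$ (Morita equivalent to $B$), sending $(\lambda)\mapsto e_{\lambda}$ and, for each arrow $\lambda\to\mu$, $(\lambda|\mu)$ to a nonzero element of the one-dimensional space $e_{\lambda}Be_{\mu}\cong\Hom_{B}(V_{\lambda},V_{\mu})$ (one-dimensional by Lemma \ref{lem hom B}). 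The second relation is then automatic, since $e_{\lambda}Be_{\eta}=0$ when $\eta\setminus\lambda=(1^2)$. The delicate step is to choose the generators coherently so the first relation holds: for a square $\lambda\to\mu\oplus\nu\to\eta$ the composites $\Phi(\lambda|\mu)\Phi(\mu|\eta)$ and $\Phi(\lambda|\nu)\Phi(\nu|\eta)$ both lie in the one-dimensional space $e_{\lambda}Be_{\eta}$ and are both nonzero (multiplicity-one branching for symmetric groups, via Lemma \ref{lem hom B}), so they agree up to a scalar in $\K^{\times}$; one must rescale the chosen generators to make all these scalars equal to $1$, which amounts to trivializing a $2$-cocycle on the square complex of Young's lattice and can be arranged by fixing the rescalings inductively along the filtration by $|\mu|$. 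Granting this, $\Phi$ is well defined and carries each $(\lambda || \mu)$ to a nonzero element, so $(\lambda || \mu)\ne 0$; together with spanning this proves the basis statement, and it incidentally yields the equivalence $F\leftrightarrow B$. I expect the coherent normalization just described to be the only genuine obstacle. A self-contained alternative avoiding $B$ is a Bergman diamond-lemma argument: order paths by length and then lexicographically, and rewrite by the square relations (a sorting rule) and by the $(1^2)$-relation (sending a path to $0$); the nontrivial confluence checks are the overlaps between a sorting step and a zeroing step.
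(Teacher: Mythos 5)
Your proof is essentially correct, and you have put your finger on exactly the right point. The paper states Lemma \ref{lem R} without a proof: the preceding paragraph dispatches the spanning and the vanishing (these follow from relations (1) and (2) exactly as you argue, via the fact that any two saturated chains in the interval $[\lambda,\mu]$ of Young's lattice are connected by square moves), but the nonvanishing of $(\lambda||\mu)$ for a horizontal strip is simply asserted. As you say, that is the crux, and the paper's real justification of it is the subsequent Proposition \ref{prop Morita BF}, which is precisely your map $\Phi\colon F\to\End_B(V_P)^{\mathrm{op}}$ together with a coherent choice of generators $b_{\lambda\mu}$ of the one-dimensional spaces $\Hom_B(V_\lambda,V_\mu)$. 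So your route is the same as the paper's; you have merely moved the nonvanishing argument to where it logically belongs.

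The one place where you wave your hands and the paper does not is the coherent normalization itself. You phrase it as trivializing a $2$-cocycle on the square complex of Young's lattice and assert that this can be done "inductively along the filtration by $|\mu|$." That is morally correct --- the relevant cube complex is contractible, so the cocycle is a coboundary --- but as stated the induction is not obviously well posed, since a single new arrow $\lambda\to\mu$ may sit in several squares whose other three edges are already fixed, and one must check that the resulting constraints on the rescaling of $(\lambda|\mu)$ are mutually consistent; this is exactly a confluence check, and you have not carried it out. The paper avoids the issue by producing the coherent generators explicitly via the Vershik--Okounkov rescaled basis $\wt{v}_T=c_T v_T$ of $L_\mu$, and the consistency of the factors $c_T$ across the two kinds of commuting squares (the braid-type square $T'=s_is_{i+1}s_iT$ and the far-commuting square $T'=s_is_jT$) is verified by hand; equations (\ref{eq sn f})--(\ref{eq iso B F''}) then give relation (\ref{eq iso B F}) on the nose. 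Your diamond-lemma alternative would also work and is close in spirit to what the paper actually checks. So: correct structure, correct identification of the difficulty, but the cocycle-trivialization step should either be verified (the cube-complex contractibility plus a spanning-tree argument) or replaced by the explicit rescaling the paper uses.
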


Lemmas \ref{lem hom B} and \ref{lem lmRmu} implies that $\End_B(V_P)^{op}$ is naturally isomorphic to $F$ as vector spaces.
\begin{prop} \label{prop Morita BF}
The algebras $\End_B(V_P)^{op}$ and $F$ are naturally isomorphic. In particular, $B$ is Morita equivalent to $F$.
\end{prop}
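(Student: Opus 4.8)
The plan is to exhibit an explicit $\K$-linear isomorphism $\Phi\colon F \xrightarrow{\sim} \End_B(V_P)^{op}$ and to check it is multiplicative. First I would recall from Lemmas \ref{lem hom B} and \ref{lem lmRmu} that both algebras have distinguished $\K$-bases indexed by the same set of pairs $(\lm,\mu)$: on the $F$ side the basis $\{(\lm||\mu)\}$ of Lemma \ref{lem R}, and on the endomorphism side a basis $\{\varphi_{\lm\mu}\}$ where $\varphi_{\lm\mu}$ spans the (at most one-dimensional) space $\Hom_B(V_\lm,V_\mu)$, nonzero exactly when $\lm\subset\mu$ and $\mu$ is obtained from $\lm$ by adding at most one box in each column — the very same condition, by Lemma \ref{lem lmRmu}, that makes $(\lm)F(\mu)$ nonzero. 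This gives the vector space isomorphism already observed in the paragraph preceding the proposition; so I would declare $\Phi((\lm||\mu))=\varphi_{\lm\mu}$, after first pinning down the normalization of the $\varphi_{\lm\mu}$ so that they behave well under composition.

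The key step is the normalization. Using the identifications $\Hom_B(V_\lm,V_\mu)\cong e_\lm B e_\mu$ from the proof of Lemma \ref{lem hom B}, I would choose, for each arrow $\lm\ra\mu$ in $\Gamma$, a generator $f_{\lm\mu}\in e_\lm B e_\mu$, and then for a general pair set $f_{\lm\mu}=f_{\lm\lm^1}f_{\lm^1\lm^2}\cdots f_{\lm^k\mu}$ along a path. For this to be well defined I must check it is path-independent, which is exactly the content of the square relation: given $\lm\ra\mu\oplus\nu\ra\eta$, the two composites $f_{\lm\mu}f_{\mu\eta}$ and $f_{\lm\nu}f_{\nu\eta}$ land in the one-dimensional space $e_\lm B e_\eta$, hence are proportional; I would need to verify the proportionality constant is $1$ for a suitable choice of the arrow generators (equivalently, rescale the $f_{\lm\mu}$ over arrows so that all commuting squares actually commute on the nose). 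Once the $f_{\lm\mu}$ are normalized this way, the map $(\lm||\mu)\mapsto f_{\lm\mu}$ matches basis to basis, and Lemma \ref{lem R}'s multiplication rule $(\lm||\mu)(\mu||\eta)=(\lm||\eta)$ corresponds termwise to $f_{\lm\mu}f_{\mu\eta}=f_{\lm\eta}$ by construction; products that don't exist in $F$ map to zero because the corresponding $\Hom$ spaces vanish. The second relation of Definition \ref{def F}, $(\lm|\mu)(\mu|\eta)=0$ when $\eta\setminus\lm=(1^2)$, is respected because then $\mu$ is \emph{not} obtainable from $\lm$ by adding at most one box per column — wait, rather: $\eta$ is not so obtainable from $\lm$ — so $\Hom_B(V_\lm,V_\eta)=0$ by Lemma \ref{lem hom B}, forcing the composite to vanish on the endomorphism side as well. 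Multiplicativity then follows by checking it on the basis $\{(\lm||\mu)\}$.

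The naturality claim I would address by noting that the isomorphism $B\cong\bigoplus_\mu V_\mu^{\oplus m(\mu)}$ and the resulting projective generator $V_P$ are canonical once the $V_\mu=B e_\mu$ are fixed, and that $\Phi$ was built from canonical identifications ($\Hom_B(V_\lm,V_\mu)\cong e_\lm B e_\mu$ and the trivial-representation idempotent factorization) together with a combinatorial path normalization that is itself forced up to scalars; I would phrase "natural" as "compatible with these fixed choices." Finally, Morita equivalence of $B$ and $F$ is immediate: $V_P$ is a projective generator of $\{B\text{-mod}\}$, so $B$ is Morita equivalent to $\End_B(V_P)^{op}$, which we have just identified with $F$.

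The main obstacle is the path-independence/normalization step: I expect that the commuting squares in $B$ commute only up to nonzero scalars a priori, and I will need either a clean diagrammatic computation in $B$ (using the slide and symmetric relations of Definition \ref{def B}, cf. figures \ref{s1}, \ref{s2}) showing those scalars can be taken to be $1$, or an inductive argument on $|\mu|-|\lm|$ rescaling the arrow generators $f_{\lm\mu}$ consistently — the potential worry being a cocycle obstruction, which cannot occur here because $\Gamma$'s squares have contractible nerve (any two paths between two fixed vertices are connected by square moves). Everything else is bookkeeping on the two matching bases.
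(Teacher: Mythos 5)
You correctly reduce the problem to the same normalization question the paper addresses: choose, for each arrow $\lm\ra\mu$ in $\Gamma$, a generator $b_{\lm\mu}$ of the one-dimensional space $\Hom_B(V_\lm,V_\mu)$ so that every square $\lm\ra\mu\oplus\nu\ra\eta$ commutes on the nose, and observe that the second relation in Definition~\ref{def F} is free because $\Hom_B(V_\lm,V_\eta)=0$ when $\eta\setminus\lm=(1^2)$. The reduction, and the Morita consequence at the end, match the paper exactly.

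The gap is in how you propose to dispose of the normalization. Your stated reason that the cocycle obstruction vanishes---``$\Gamma$'s squares have contractible nerve (any two paths between two fixed vertices are connected by square moves)''---only establishes simple connectivity of the square $2$-complex, not contractibility; the obstruction to rescaling the arrow generators lives in $H^2$ of that complex with coefficients in $\K^\times$, which by universal coefficients requires $H_2=0$, a strictly stronger statement you neither prove nor cite. Moreover, even granting a rescaling that trivializes all square scalars, you have implicitly assumed that the composite of arrow generators along any path is \emph{nonzero} whenever the target $\Hom$ is nonzero (otherwise your $f_{\lm\mu}$ for a general pair would be $0$ and $\Phi$ would fail to be injective); this is a separate fact about $B$ that does not follow from the normalization. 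The paper avoids both issues by making everything explicit: it rescales the Vershik--Okounkov basis of the irreducible $\K[S(n)]$-modules by the constants $c_T$, defines $f_{\lm\mu}(\wt v_T)=\wt v_{T\cup\{n\}}$ (manifestly nonzero, with manifestly nonzero composites), and then directly computes via \eqref{eq VO rescale}--\eqref{eq VO rescale'} that $(1+s_n)\cdot(f_{\mu\eta}\circ f_{\lm\mu})=(1+s_n)\cdot(f_{\nu\eta}\circ f_{\lm\nu})$, yielding the square relation after projecting onto the $\mb_2$-summand. So the structure of your argument is right, but the step you identify as ``the main obstacle'' is precisely where the paper's actual work lies, and the topological shortcut you offer does not close it.
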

\begin{proof}
It is enough to show that we can choose a generator $b_{\lambda \mu}$ of $\Hom_{B}(V_{\lambda}, V_{\mu})$ for each $\lambda \ra \mu$ such that
\begin{align} \label{eq iso B F}
b_{\mu\eta} \circ b_{\lambda \mu}=b_{\nu\eta} \circ b_{\lambda \nu} \in \Hom_{B}(V_{\lambda}, V_{\eta}), \qquad \mbox{if}~~\lm \ra \mu \oplus \nu \ra \eta.
\end{align}
The second relation in Definition \ref{def F} is always preserved since $\Hom_{B}(V_{\lambda}, V_{\eta})=0$ if $\eta \backslash \lm = (1^2)$.
We prove (\ref{eq iso B F}) in the rest of this section.
\end{proof}

For $\lambda \ra \mu$, Lemma \ref{lem hom B} implies that
\begin{gather} \label{eq iso B F'}
\Hom_{B}(V_{\lambda}, V_{\mu}) \cong \Hom_{\K[S(n)]}(L_{\lambda}\bt \mb_{1}, L_{\mu}) \cong \Hom_{\K[S(n-1)]}(L_{\lambda}, \op{Res}_{n-1}^{n}L_{\mu}).
\end{gather}
Recall the following description of representations of symmetric groups from Vershik and Okounkov \cite{VO}.
For $\mu \vdash n$, $L_{\mu}$ has a $\K$-basis $\{v_{T};~T ~\mbox{Young tableaux of}~ \mu\}$.
They are common eigenvectors of the {\em Jucys-Murphy elements}.
The collection $\alpha(T)=(a_1, \dots, a_n) \in \Z^n$ of eigenvalues $a_i$ which are called {\em contents} of $T$.
If $\alpha(T')=(a_1, \dots, a_{i+1}, a_{i}, \dots, a_n)$, then we write $T'=s_i T$, where $s_i \in S(n)$ is the transposition of $i$ and $i+1$.
For each $\mu$, there is a special Young tableaux $T^{\mu}$ where $1$ through $\mu_1$ are in the first row, $\mu_1+1$ to $\mu_1+\mu_2$ are in the second row, and so on.
Each $T$ can be expressed as $s\cdot T^{\mu}$ for some $s \in S_n$.
Let $l(T)=l(s)$ denote the minimal number of transpositions needed to obtain $T$ from $T^{\mu}$.

The main result we need is \cite[Proposition 7.1]{VO} of Vershik and Okounkov.
If $T'=s_i T$ and $l(T')>l(T)$, then there exists a basis $\{v_{T}\}$ of $L_{\mu}$ such that
\begin{align*}
s_i \cdot v_{T} &= v_{T'} + \frac{1}{a_{i+1}-a_{i}} v_{T},\\
s_i \cdot v_{T'}&= (1-\frac{1}{(a_{i+1}-a_{i})^2})v_{T} - \frac{1}{a_{i+1}-a_{i}} v_{T'}.
\end{align*}

We rescale the basis $\{v_{T}\}$ by setting $\wt{v}_T=c_T v_T$, where the numbers $c_T \in \mathbb{Q} \subset \K$ are defined by induction on $l(T)$ as
$$c_{T'}=\frac{a_{i+1}-a_{i}}{a_{i+1}-a_{i}-1} c_T, \quad \mbox{if}~ T'=s_iT, ~l(T')>l(T); \qquad c_{T^{\mu}}=1.$$
When $T'=s_is_{i+1}s_iT=s_{i+1}s_is_{i+1}T$, i.e. there are two pathes from $T$ to $T'$, and $\alpha(T)=(\dots, a_i, a_{i+1}, a_{i+2}, \dots)$, we have
$$c_{T'}=\frac{a_{i+1}-a_{i}}{a_{i+1}-a_{i}-1}\cdot \frac{a_{i+2}-a_{i+1}}{a_{i+2}-a_{i+1}-1}\cdot \frac{a_{i+2}-a_{i}}{a_{i+2}-a_{i}-1} c_T$$
which is independent of choices of pathes.
Similarly, when $T'=s_is_jT=s_js_iT$ for $|i-j|>1$, $c_{T'}$ does not depend on choices of pathes from $T$ to $T'$.
Therefore, the collection $\{c_T\}$ is well-defined.

In terms of the rescaled basis $\{\wt{v}_{T}\}$, the action of $s_i$ is given by
\begin{align} \label{eq VO rescale}
s_i \cdot \wt{v}_{T} &= \frac{1}{a_{i+1}-a_{i}} \wt{v}_{T}+\frac{a_{i+1}-a_{i}-1}{a_{i+1}-a_{i}}v_{T'},
\end{align}
\begin{align} \label{eq VO rescale'}
s_i \cdot \wt{v}_{T'}&= \frac{a_{i+1}-a_{i}+1}{a_{i+1}-a_{i}}\wt{v}_{T} - \frac{1}{a_{i+1}-a_{i}} \wt{v}_{T'}.
\end{align}
The expression in terms of the rescaled basis still hold if $l(T')<l(T)$.

\begin{defn} \label{def choice f}
For $\lambda \ra \mu$, define $f_{\lambda\mu} \in \Hom_{\K[S_{n-1}]}(L_{\lambda}, \op{Res}_{n-1}^{n}L_{\mu})$ on the rescaled basis by
\begin{gather}
f_{\lambda\mu}(\wt{v}_{T})=\wt{v}_{T\cup\{n\}},
\end{gather}
where $T\cup\{n\}$ is the tableaux of $\mu$ obtained from $T$ of $\lambda$ by filling $n$ to the extra box.
\end{defn}

For $\lm \ra \mu \oplus \nu \ra \eta$, $\lm \vdash n-1, \mu,\nu \vdash n$ and $\eta \vdash n+1$,
$$f_{\mu\eta} \circ f_{\lambda \mu}(\wt{v}_{T})=\wt{v}_{T_1}, \qquad
f_{\nu\eta} \circ f_{\lambda \nu}(\wt{v}_{T})=\wt{v}_{T_2},$$
such that $T_2=s_nT_1$.
It follows from (\ref{eq VO rescale}, \ref{eq VO rescale'}) that
\begin{gather} \label{eq sn f}
s_n \cdot (f_{\mu\eta} \circ f_{\lm\mu})=\frac{1}{d}~(f_{\mu\eta} \circ f_{\lm\mu})+\frac{d-1}{d}~(f_{\nu\eta} \circ f_{\lm\nu}),
\end{gather}
\begin{gather} \label{eq sn f'}
s_n \cdot (f_{\nu\eta} \circ f_{\lm\nu})=\frac{d+1}{d}~(f_{\mu\eta} \circ f_{\lm\mu})+\frac{-1}{d}~(f_{\nu\eta} \circ f_{\lm\nu}),
\end{gather}
where $d=a_{n+1}-a_{n}$, and $\alpha(T_1)=(\dots, a_n , a_{n+1})$.
In particular,
$$(1+s_n)\cdot (f_{\mu\eta} \circ f_{\lambda \mu})=(1+s_n)\cdot (f_{\nu\eta} \circ f_{\lambda \nu})$$
in $\Hom_{\K[S(n-1)]}(L_{\lambda}, \op{Res}_{n-1}^{n+1}L_{\eta}) \cong \Hom_{\K[S(n+1)]}(L_{\lambda} \bt \K[S_2], L_{\eta})$.
Since $(1+s_n)$ is a projector onto the summand $\mb_2 \subset \K[S(2)]$, we have
\begin{gather} \label{eq iso B F''}
\op{pr}(f_{\mu\eta} \circ f_{\lambda \mu})=\op{pr}(f_{\nu\eta} \circ f_{\lambda \nu}) \in \Hom_{\K[S(n+1)]}(L_{\lambda} \bt \mb_2, L_{\eta}).
\end{gather}

The map $f_{\lm\mu}$ can be identified with an element of $(e_{\lm}\bt 1(1)) \K[S(n)] e_{\mu} \subset \K[S(n)]$.
Define
\begin{gather} \label{eq def b}
b_{\lm\mu}=v_n(n)\cdot f_{\lm\mu} \in B.
\end{gather}
It is a generator of the one dimensional space $e_{\lm}B e_{\mu}$.
Figure \ref{se1} describes the generators $f_{\lm\mu} \in \K[S(n)]$ and $b_{\lm\mu} \in B$, where a box with $\lm$ indside denotes the idempotent $e_{\lm}$, and a horizontal strand labeled $n$ denote $n$ horizontal strands $1(n)$.

The element $b_{\lm\mu}$ induces a generator of $\Hom_{B}(V_{\lambda}, V_{\mu})$, which is still denoted by $b_{\lm\mu}$.
The map $b_{\lm\mu}$ is the image of the map $f_{\lambda \mu}$ under the isomorphism (\ref{eq iso B F'}).
The equation (\ref{eq iso B F''}) implies (\ref{eq iso B F}) under the isomorphism $\Hom_{B}(V_{\lambda}, V_{\eta}) \cong \Hom_{\K[S(n+1)]}(L_{\lambda} \bt \mb_2, L_{\eta}).$
We complete the proof of Proposition \ref{prop Morita BF}.

\begin{figure}[h]
\begin{overpic}
[scale=0.3]{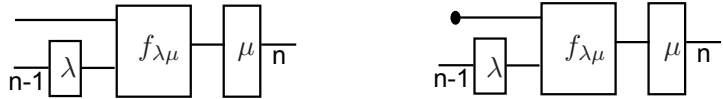}
\put(7,3){$\lm$}
\put(18,6){$f_{\lm\mu}$}
\put(32,6){$\mu$}
\put(67,3){$\lm$}
\put(78,6){$f_{\lm\mu}$}
\put(92,6){$\mu$}
\end{overpic}
\caption{The generators $f_{\lm\mu}$ and $b_{\lm\mu}$ on the left and right, respectively.}
\label{se1}
\end{figure}

\vspace{.2cm}
According to \cite[Lemma 2.4]{ASS}, $\{1_{\mu}~;~ \mu \in \Par\}$ is a complete set of nonisomorphic primitive orthogonal idempotents of the quiver algebra $F$.
Therefore, $\{F\cdot 1_{\mu}\}$ is a complete set of nonisomorphic finite dimensional projective $F$-module.
By the Morita equivalence of $B$ and $F$, $\{V_{\mu}=B\cdot 1_{\mu}\}$ is a complete set of nonisomorphic finite dimensional projective $B$-module.
Let $\Kom(B)$ denote the homotopy category of finite dimensional projective $B$-modules.
Let $K_0(B)$ denote its Grothendieck group.


\begin{prop} \label{prop K0 vb}
There is an isomorphism of abelian groups $K_0(B) \cong V_B$ which maps $[V_{\mu}]$ to the Schur polynomial associated to $\mu$.
\end{prop}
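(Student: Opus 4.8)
The plan is to identify $K_0(B)$ with the free abelian group on the classes $[V_\mu]$, $\mu \in \Par$, and then transport the known structure of $V_B$ through the combinatorics of partitions. First I would observe that, by Proposition \ref{prop Morita BF} and the paragraph following it, $\{V_\mu\}_{\mu \in \Par}$ is a complete set of non-isomorphic indecomposable finite dimensional projective $B$-modules; hence the homotopy category $\Kom(B)$ has Grothendieck group $K_0(B) = \bigoplus_{\mu \in \Par} \Z [V_\mu]$, a free abelian group with basis indexed by all partitions. On the other side, the bosonic Fock space $V_B$ is (by definition, as the space the Heisenberg algebra acts on) the space of symmetric functions, which has the Schur functions $\{s_\mu\}_{\mu \in \Par}$ as a $\Z$-basis. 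So the assignment $[V_\mu] \mapsto s_\mu$ is manifestly an isomorphism of abelian groups; the content of the proposition is really that this is the ``correct'' identification, i.e. the one compatible with the induction/restriction structure that will be categorified by $P$ and $Q$ in later sections.

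The substantive point I would therefore establish is the compatibility with restriction along the tower $\K[S(n)] \subset \K[S(n+1)]$, equivalently with the Pieri rule. Concretely, for each $n$ the projective module $1(n)B$, viewed via the isomorphism $B(n,n) \cong \K[S(n)]$, decomposes as $\bigoplus_{\mu \vdash n} V_\mu^{\oplus \dim L_\mu}$, so the ``degree $n$ part'' of $K_0(B)$ matches the degree-$n$ symmetric functions. For the module structure: Lemma \ref{lem hom B} shows $\Hom_B(V_\lambda, V_\mu)$ is one-dimensional exactly when $\mu$ is obtained from $\lambda$ by adding at most one box in each column, i.e. when $\mu/\lambda$ is a horizontal strip — which is precisely the combinatorics appearing in the Pieri rule $s_\lambda \cdot h_{n-m} = \sum_\mu s_\mu$, the sum over $\mu \supset \lambda$ with $\mu/\lambda$ a horizontal strip of size $n-m$. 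More precisely, Lemma \ref{lem bmn} identifies $B(m,n) = 1(m)B1(n)$ as a right $\K[S(n)]$-module with $\op{Ind}_{n-m}^n \mb_{n-m}$, and the induction product $L_\lambda \ast \mb_{n-m} := \op{Ind}_{S(m)\times S(n-m)}^{S(n)} (L_\lambda \boxtimes \mb_{n-m})$ decomposes with multiplicities given by the Littlewood–Richardson/Pieri coefficients. Passing to classes, $[B(m,-) \otimes_{\K[S(m)]} V_\lambda]$ expands in the $[V_\mu]$ with exactly the Pieri coefficients, which forces $[V_\mu] \mapsto s_\mu$ once we know $[V_{(0)}] \mapsto 1$ and that multiplication by the class of the ``short strand bimodule'' corresponds to multiplication by $h_1$ (hence by all $h_k$ by iterating, using Lemma \ref{lem bmn}).

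I would then assemble these pieces: define the map $K_0(B) \to V_B$ on the basis by $[V_\mu] \mapsto s_\mu$; it is an isomorphism of abelian groups because both sides are free with bases indexed by $\Par$; and it intertwines the relevant operators by the Pieri computation above, so it is the canonical identification promised by the theory. The main obstacle I anticipate is purely bookkeeping rather than conceptual: one must pin down exactly which generating structure on $V_B$ is being used (the bare abelian group, as the statement literally says, versus the module-over-symmetric-functions structure that makes the Schur choice canonical), and then verify that the decomposition of $1(n)B$ into the $V_\mu$ and the right-module identification of Lemma \ref{lem bmn} really do reproduce the Pieri rule with the right normalization — in particular checking that no nontrivial scalars or sign twists intervene, which is where the explicit Vershik–Okounkov rescaling from the proof of Proposition \ref{prop Morita BF} is reassuring. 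None of this is hard, but it is the only place where an error could hide.
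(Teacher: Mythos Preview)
Your proposal is correct, and in fact gives more than the paper does: the paper states this proposition without proof, relying entirely on the preceding paragraph (which establishes that $\{V_\mu\}_{\mu\in\Par}$ is a complete set of nonisomorphic indecomposable finite-dimensional projectives, so $K_0(B)$ is free abelian on the $[V_\mu]$) together with the standard fact that Schur functions form a $\Z$-basis of $V_B=\Z[x_1,x_2,\dots]$. Your first paragraph is exactly that argument; the subsequent discussion of Pieri compatibility is correct and well-motivated but goes beyond what the bare statement (an isomorphism of abelian groups) requires, and the paper does not include it here.
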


\begin{rmk}
There exists a filtration $\{F_n\}$ of $F$ such that the left global dimension of $F_n$ is $n$, see (\ref{eq Fn filtration}) and Lemma \ref{lem sn finite}.
Each finite dimensional $F$-module is an $F_n$-module for sufficiently large $n$ so that it has a finite projective resolution.
As a result, $\Kom(F)$ is equivalent to the derived category of finite dimensional $F$-modules.
The analogue for $\Kom(B)$ holds as well.
\end{rmk}

\section{The Heisenberg algebra} \label{sec H}
We consider endofunctors of $\Kom(B)$ which are given by tensoring with $B$-bimodules.
In this section, we write $\ot$ for $\ot_{B}$.

\subsection{The Heisenberg category $\ch$}
Define a linear map
$$\begin{array}{cccc}
\rho: & B & \ra & B \\
 & a & \mapsto & a \bt 1(1),
\end{array}$$
which is given by adding a horizontal strand on the top of any diagram $a \in B$.
The map $\rho: B \ra B$ is an inclusion of nonunital algebras.
In particular, $\rho: B(m,n) \ra B(m+1,n+1)$.
Restricted to the unital subalgebras $B(k)$, the induced inclusion $\rho: B(k) \ra B(k+1)$ is not unital.

Consider two $B$-bimodules corresponding to the induction and restriction functors of $\Kom(B)$ with respect to the inclusion $\rho$.

\begin{defn}
Define a $\K$-vector space $P= \bigoplus\limits_{m \geq 0, n \geq 1} B(m,n)$ with the $B$-bimodule structure given by
$a\cdot v \cdot b=a~v~\rho(b)$, where $a~v~\rho(b)$ is the multiplication in $B$, for $a,b \in B$ and $v \in P$.

Dually, define a $\K$-vector space $Q= \bigoplus\limits_{m,n \geq 1} B(m,n)$ with the $B$-bimodule structure given by
$a\cdot v \cdot b=\rho(a)~v~b$, where $\rho(a)~v~b$ is the multiplication in $B$, for $a,b \in B$ and $v \in Q$.
\end{defn}

See figure \ref{s3} for a diagrammatic description of $P$ and $Q$.
Since the top strand of $P$ is unchanged under the right multiplication, we call it the frozen strand of $P$ and add a little bar at its right end.
Similarly, we call the top strand of $Q$ the frozen strand of $Q$ and add a little bar at its left end.

\begin{figure}[h]
\begin{overpic}
[scale=0.25]{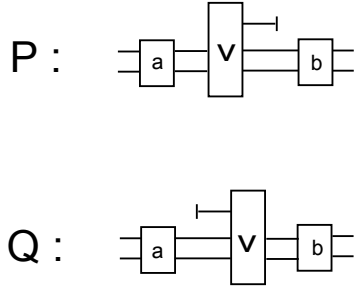}
\end{overpic}
\caption{The induction and restriction bimodules $P$ and $Q$.}
\label{s3}
\end{figure}

For a partition $\mu$, let
$$\Ind(\mu)=\{\eta ~|~ \eta \supset \mu, |\eta|=|\mu|+1\}, \quad \Res(\mu)=\{\lambda ~|~ \lambda \subset \mu, |\lambda|=|\mu|-1\}.$$
The endofunctors of tensoring with $P$ and $Q$ are the analogue of the induction and restriction functors on the representations of the symmetric groups.

\begin{lemma} \label{lem tensor PQ}
(1) $P \ot V_{\mu} \cong \bigoplus\limits_{\eta \in \Ind(\mu)}V_{\eta}$.

\n (2) $Q \ot V_{\mu} \cong \bigoplus\limits_{\lambda \in \Res(\mu)}V_{\lambda}$; in particular $Q \ot V_{(0)} = 0$.
\end{lemma}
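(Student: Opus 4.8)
The plan is to compute $P \ot V_\mu$ and $Q \ot V_\mu$ directly as left $B$-modules by understanding the bimodules $P$ and $Q$ as vector spaces graded by partitions, then matching the result with the description of $V_\eta$ coming from the primitive idempotents $1_\mu = e_\mu$. Recall that $V_\mu = B e_\mu$ and that, by Lemma~\ref{lem hom B} and Proposition~\ref{prop Morita BF}, the indecomposable projectives $V_\mu$, $\mu \in \Par$, are pairwise nonisomorphic, so it suffices to identify the multiplicities. The key observation is that $P \ot V_\mu = P \ot_B B e_\mu \cong P e_\mu$ and likewise $Q \ot V_\mu \cong Q e_\mu$, so I only need to decompose the left $B$-modules $P e_\mu$ and $Q e_\mu$ into indecomposable projectives.

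For part (1): say $\mu \vdash n$, so $e_\mu \in B(n,n)$. By definition $P = \bigoplus_{m \ge 0,\, n' \ge 1} B(m,n')$ with right action twisted by $\rho$, which adds a frozen strand on top; hence $P e_\mu$ consists of the elements $v \cdot e_\mu = v\,\rho(e_\mu)$ with $v \in B(m, n+1)$, i.e.\ $P e_\mu \cong B(\,\cdot\,, n{+}1)\cdot \rho(e_\mu) = B\,\rho(e_\mu)$ as a left $B$-module. Now $\rho(e_\mu) = e_\mu \bt 1(1)$ is an idempotent in $B(n{+}1, n{+}1) \cong \K[S(n{+}1)]$, and it equals the idempotent $e_\mu \bt \mb_1$ projecting onto $L_\mu \bt \mb_1$ inside the regular representation. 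By the branching rule, $\op{Ind}_n^{n+1}(L_\mu) \cong \bigoplus_{\eta \in \Ind(\mu)} L_\eta$, each with multiplicity one; equivalently $\K[S(n{+}1)]\,(e_\mu \bt \mb_1)$ decomposes with each $e_\eta$, $\eta \in \Ind(\mu)$, appearing once. Lifting this idempotent decomposition from $\K[S(n{+}1)] = B(n{+}1,n{+}1)$ to $B$ (using that $B e_\eta = V_\eta$ and that the idempotents $e_\eta$ remain primitive in $B$ by Proposition~\ref{prop Morita BF}), I get $P e_\mu \cong \bigoplus_{\eta \in \Ind(\mu)} V_\eta$, as claimed.

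For part (2): again with $\mu \vdash n$, we have $Q = \bigoplus_{m,n' \ge 1} B(m,n')$ with left action twisted by $\rho$, so $Q e_\mu$ collects the elements $v \cdot e_\mu$ with $v \in B(m, n)$ and left action $a \cdot v = \rho(a) v$. As a left $B$-module this is $B(m,n) e_\mu$ with $B$ acting through $\rho: B \ra B$; concretely $Q e_\mu \cong \bigoplus_{m \ge 1} \rho(B(m-1, ?))\dots$, and the cleanest route is to use Lemma~\ref{lem bmn}: $B(m,n)$ as a right $\K[S(n)]$-module is $\op{Ind}_{n-m}^n \mb_{n-m}$, so $B(m,n) e_\mu = \Hom_{\K[S(n)]}(\text{stuff})$, which by Frobenius reciprocity is $\op{Res}$ of $L_\mu$; more directly, $Q \ot V_\mu$ is the restriction functor, adjoint to $P\ot -$, so its effect on the basis $\{[V_\mu]\}$ is dual to induction, giving $Q e_\mu \cong \bigoplus_{\lambda \in \Res(\mu)} V_\lambda$. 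When $\mu = (0)$, $\Res((0)) = \es$ and $B(0,0) = \K$ with the frozen strand forcing $Q e_{(0)} = 0$ since $Q$ has no $B(m,0)$ component (the index runs over $n' \ge 1$, but $e_{(0)} \in B(0,0)$), hence $Q \ot V_{(0)} = 0$.

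The main obstacle I anticipate is the bookkeeping in part (2): unlike $P$, where $Pe_\mu$ is visibly a left ideal $B\rho(e_\mu)$ generated by a single idempotent, for $Q$ the left action is twisted by $\rho$ and $Qe_\mu$ is not obviously projective on the nose. The cleanest fix is to establish the $(P, Q)$ adjunction first (or cite it from the surrounding development) and deduce the decomposition of $Q \ot V_\mu$ from that of $P$ via the pairing on $K_0$, but if one wants a self-contained argument one must carefully track how $\rho$ interacts with the idempotent decomposition $1(n) = \sum_\mu (\text{matrix units})$ and invoke the branching rule in the restriction direction, checking that no projective summand is lost or gains multiplicity; the vanishing $Q \ot V_{(0)} = 0$ is then the degenerate base case of this count.
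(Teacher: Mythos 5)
Your argument for part (1) is exactly the paper's: identify $P\ot V_\mu$ with $P e_\mu = B\rho(e_\mu) = B(e_\mu \bt \mb_1)$, decompose the idempotent $e_\mu \bt \mb_1$ in $\K[S(n{+}1)]$ by the branching rule, and conclude. That part is fine.

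Part (2) is where you have a genuine gap, and to your credit you identify it yourself. The asymmetry is real: $Q\ot V_\mu \cong Qe_\mu$ is the left $B$-module $\bigoplus_{m\ge 1}B(m,n)e_\mu$ with $B$ acting through $\rho$, i.e.\ $\Res_\rho V_\mu$, and unlike the $P$-case it is not visibly a left ideal of $B$ generated by a single idempotent. Your two proposed fixes do not close the gap. (i) The ``cleanest fix'' via the $(P,Q)$ adjunction and $K_0$ is circular in the paper's development: the adjunction is established only in Section 3.2 and relies on Lemma \ref{lem proj}, which itself is proved from Lemma \ref{lem tensor PQ}; and even granting the adjunction, an identity in $K_0$ (or an isomorphism of all $\Hom(V_\lambda,-)$ spaces) does not by itself yield a module isomorphism unless you already know $Q\ot V_\mu$ is projective, which is precisely what you would need to prove. (ii) The ``Frobenius reciprocity'' fragment via Lemma \ref{lem bmn} computes dimensions of the graded pieces $B(m,n)e_\mu$, not the left $B$-module structure, and the sentence ``its effect on the basis $\{[V_\mu]\}$ is dual to induction'' is a restatement of the claim, not an argument. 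A correct direct proof should mirror (1) more carefully: observe that the top graded piece of $Qe_\mu$ is $B(n,n)e_\mu\cong L_\mu$, on which $\K[S(n{-}1)]$ acts through $\rho$ as $\Res^{n}_{n-1}L_\mu=\bigoplus_{\lambda\in\Res(\mu)}L_\lambda$; for each $\lambda$ a generator of the $L_\lambda$-isotypic piece gives a $B$-module map $V_\lambda\to Qe_\mu$; one then checks these maps are injective and that their images span $Qe_\mu$ (using that any diagram in $B(m,n)$ can be written, after right-multiplication by an element of $S(n)$, with the top strand horizontal, so $Qe_\mu$ is generated over $\rho(B)$ by $B(n,n)e_\mu$). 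The vanishing $Q\ot V_{(0)}=0$ you argue correctly.
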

\begin{proof}
The projective $V_{\mu}$ is determined by the idempotent $e_{\mu} \in \K[S(n)]$.
The representation theory of symmetric groups tells that $e_{\mu} \bt \mb_1=\sum\limits_{\eta \in \Ind(\mu)}e'_{\eta}$, where $e'_{\eta}$ is isomorphic to $e_{\eta}$.
This implies (1). The proof for (2) is similar.
\end{proof}

For the later use, we also consider tensoring $P$ and $Q$ with right $B$-modules.
Let $\wt{V}_{\mu}=e_{\mu}\cdot B$ denote the right projective $B$-module.
Note that any $\wt{V}_{\mu}$ is infinite dimensional.
\begin{lemma} \label{lem tensor PQ rt}
(1) $ \wt{V}_{\mu} \ot P \cong \wt{V}_{\mu} \oplus \bigoplus\limits_{\lambda \in \Res(\mu)}\wt{V}_{\lambda}$; in particular $\wt{V}_{(0)} \ot P \cong \wt{V}_{(0)}$.

\n (2) $\wt{V}_{\mu} \ot Q \cong \bigoplus\limits_{\eta \in \Ind(\mu)}\wt{V}_{\eta}$.
\end{lemma}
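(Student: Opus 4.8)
## Proof proposal for Lemma \ref{lem tensor PQ rt}

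**The plan** is to reduce everything to the representation theory of symmetric groups, exactly as in the proof of Lemma \ref{lem tensor PQ}, but now keeping track of the frozen strands carefully since we are tensoring on the right. The key point is that the right module $\wt{V}_{\mu}=e_{\mu}\cdot B$ is built from $e_{\mu} \in \K[S(n)] \cong B(n,n)$, and tensoring with $P$ (resp. $Q$) over $B$ amounts to computing $e_{\mu} B \ot_B P = e_{\mu} P$ (resp. $e_{\mu} Q$) as a right $B$-module. So I would first identify $e_\mu P$ and $e_\mu Q$ as subspaces of $B$ and then decompose them using idempotents.

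**For part (2),** $e_\mu Q$: recall $Q = \bigoplus_{m,n\ge 1} B(m,n)$ with left action twisted by $\rho$, so $e_\mu \ot_B Q$ picks out $\rho(e_\mu) \cdot Q$, i.e. diagrams whose bottom (left) boundary is controlled by $\rho(e_\mu) = e_\mu \bt 1(1) \in B(n+1,n+1)$, with a frozen strand. The right $B$-module structure is the ordinary one. Now $e_\mu \bt 1(1)$ is, up to the frozen-strand bookkeeping, exactly the idempotent whose associated right projective decomposes as $\bigoplus_{\eta \in \Ind(\mu)}\wt{V}_\eta$ by the branching rule $e_\mu \bt \mb_1 = \sum_{\eta \in \Ind(\mu)} e'_\eta$ used already in Lemma \ref{lem tensor PQ}. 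I would check that the frozen strand does not obstruct this: since the frozen strand of $Q$ sits at the top and is untouched by the right action, $\rho(e_\mu)\cdot Q$ as a right module is isomorphic to $(e_\mu \bt 1(1)) \cdot B$, which is $\wt{V}$ of the idempotent $e_\mu \bt 1(1)$, and the decomposition follows.

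**For part (1),** $e_\mu P$: here $P = \bigoplus_{m\ge 0, n\ge 1} B(m,n)$ with the \emph{right} action twisted by $\rho$, while the left action is ordinary. So $e_\mu \ot_B P = e_\mu \cdot P$ with its right action $v\cdot b = v\,\rho(b)$. As a right $B$-module this is $e_\mu\cdot P$ where multiplication goes through $\rho$; the frozen strand is now on the right end and is the one created by $\rho$. The relevant decomposition is governed by restriction composed with induction: as a right $\K[S(n)]$-module (through $\rho:\K[S(n)]\hookrightarrow B(n+1,n+1)$... more precisely using Lemma \ref{lem bmn} which identifies $B(n,n+1)$ as an induction module $\op{Ind}_1^{n+1}\mb_1$), one gets $e_\mu B(n,\bullet)$ decomposing via the "add a box then the frozen strand already there" count, which by Pieri / branching gives the multiplicity-one pieces $\wt V_\mu \oplus \bigoplus_{\lambda\in\Res(\mu)}\wt V_\lambda$: the summand $\wt V_\mu$ comes from the trivial (frozen) strand staying separate, and each $\wt V_\lambda$ from the frozen strand merging after a box is removed. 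The base case $\wt V_{(0)}\ot P \cong \wt V_{(0)}$ is then immediate since $\Res((0))=\es$.

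**The main obstacle** I anticipate is the frozen-strand bookkeeping in part (1): unlike Lemma \ref{lem tensor PQ}, the twist by $\rho$ is on the side over which we are tensoring, so one must be careful that $e_\mu \cdot P$ really is a projective right $B$-module and identify precisely which projectives appear — in particular the "extra" summand $\wt V_\mu$ (with no box removed) that has no analogue in Lemma \ref{lem tensor PQ}(2). I would handle this by writing $P$ explicitly via Lemma \ref{lem bmn} (which gives $B(m,n)$ as an induction module on the right) and reducing the decomposition to the double-coset / branching computation $\op{Res}\circ\op{Ind}$ for symmetric groups, where the Mackey formula produces exactly the identity term plus the sum over removable boxes. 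The remaining steps — naturality, the $S(n)$ branching rule, and the base case — are routine.
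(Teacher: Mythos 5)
Your proposal is correct and follows essentially the same approach as the paper: for (2), $Q$ is precisely the induction bimodule for right modules, so the branching rule gives the decomposition; for (1), one splits $e_\mu P$ as a direct sum of two right $B$-submodules according to whether the frozen strand is a short strand (yielding $\wt{V}_\mu$) or is connected to the left boundary (yielding $\bigoplus_{\lambda\in\Res(\mu)}\wt{V}_\lambda$ by restricting $e_\mu$). The only thing I'd flag is that the Mackey / $\op{Res}\circ\op{Ind}$ framing at the end is an overcomplication relative to what the paper actually does — both summands arise from this single diagrammatic splitting of $e_\mu P$ (closely analogous to how $QP\cong B\oplus PQ$ is proved), not from a double-coset formula — but that is a matter of packaging rather than a gap.
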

\begin{proof}
When tensoring with the right $B$-module, $Q$ is the induction bimodule which implies (2).
But $P$ is not exactly the restriction bimodule since the short strand is asymmetric with respect to left and right.
The tensor product $ \wt{V}_{\mu} \ot P=W \oplus U$, where $W$ is the right submodule spanned by diagrams with the frozen strand being the short strand, and $U$ is the complement.
Then $W \cong \wt{V}_{\mu}$, and $U \cong \bigoplus\limits_{\lambda \in \Res(\mu)}\wt{V}_{\lambda}$.
\end{proof}

\begin{lemma} \label{lem proj}
The $B$-bimodules $P$ and $Q$ are projective as both left and right $B$-modules.
\end{lemma}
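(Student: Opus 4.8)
The plan is to exhibit explicit decompositions of $P$ and $Q$ as direct sums of projective left (resp. right) $B$-modules, using the diagrammatic bases and the structure lemmas already established. Recall $B = \bigoplus_{m,n} B(m,n)$, and note that $P = \bigoplus_{m\ge 0,\, n\ge 1} B(m,n)$ with the left $B$-action being ordinary left multiplication in $B$; the right action is twisted by $\rho$, but this does not affect the left-module structure. So as a \emph{left} $B$-module, $P$ is simply $\bigoplus_{n\ge 1} B\cdot 1(n)$, a direct sum of $B$-submodules of $B$, each of which is projective. Dually, as a \emph{right} $B$-module, $Q = \bigoplus_{n\ge 1} 1(n)\cdot B$ with ordinary right multiplication, hence a direct sum of projective right $B$-modules. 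This handles ``left-projectivity of $P$'' and ``right-projectivity of $Q$'' immediately.

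The remaining two cases — $P$ as a right module, $Q$ as a left module — are where the $\rho$-twist actually enters, and these are the main obstacle. For $Q$ as a left $B$-module: $a\cdot v = \rho(a)\,v$, so $Q\cdot 1(n)$ (the diagrams in $Q$ with $n$ right endpoints) is the $B$-module obtained by restricting the free left $\K[S(n)]$-module $1(n)B1(n) \cdot(\text{stuff})$ along $\rho:B(k)\to B(k+1)$. More concretely, I would argue as in Lemma~\ref{lem tensor PQ}: the component $1(m)Q1(n) = B(m+1,n+1)$ viewed with the left action through $\rho$, and Lemma~\ref{lem bmn} describes $B(m+1,n+1)$ as a left $\K[S(m+1)]$-module (free of rank $\binom{n+1}{m+1}$) — but we need it over the \emph{image} $\rho(\K[S(m)]) = \K[S(m)]\bt 1(1) \subset \K[S(m+1)]$. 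Since $\K[S(m+1)]$ is free as a left module over $\K[S(m)]\bt1(1) \cong \K[S(m)]$ (this is just the restriction $\op{Res}^{m+1}_m$ applied to the regular representation, which is free), and a free module over $B(k)$ pulls back under the nonunital inclusion to a projective $B$-module, we conclude $Q$ is left-projective. Symmetrically, for $P$ as a right $B$-module, the twist is $v\cdot b = v\,\rho(b)$; here I would invoke exactly Lemma~\ref{lem tensor PQ rt}(1), whose proof already shows $\wt V_\mu\ot P$ decomposes into projective right modules — applied with $\mu$ ranging over a projective generator of right $B$-modules, this gives that $P$ itself is right-projective. (Alternatively, decompose $P$ directly: the right action of $\rho(\K[S(n)])\cong\K[S(n)]$ on the frozen-top-strand diagrams is free by the same restriction-is-free argument, plus the ``short frozen strand'' summand $W\cong$ a sum of $\wt V_\lambda$'s as in Lemma~\ref{lem tensor PQ rt}.)

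The one technical point to be careful about is that the inclusions $\rho:B(k)\to B(k+1)$ and $\rho:B(k)\to B$ are \emph{nonunital}, so ``free module over $B(k+1)$ pulled back to $B(k)$'' must be interpreted correctly: one should work with the idempotent-truncated pieces $1(n)B1(n)$, check freeness over the relevant finite symmetric-group algebra, and then reassemble — the direct sum $\bigoplus_n$ over all components is harmless since projectivity is closed under arbitrary direct sums. I expect the bulk of the write-up to consist of: (i) the trivial left-$P$ / right-$Q$ observation, and (ii) reducing the twisted cases to the freeness of $\K[S(n+1)]$ over $\K[S(n)]$ together with the already-proved Lemmas~\ref{lem bmn}, \ref{lem tensor PQ}, and \ref{lem tensor PQ rt}. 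No new ideas beyond bookkeeping with idempotents should be needed.
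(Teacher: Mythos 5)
Your proposal gets the result and leans on the right lemmas, but it mixes a genuinely different (and nicer) argument for the two \emph{untwisted} cases with a somewhat hand-wavy argument for the two \emph{twisted} cases, so let me separate the two.

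The untwisted cases are done differently from the paper, and more economically. You observe that the left action on $P$ is ordinary multiplication, so $P = \bigoplus_{n\ge 1} B\cdot 1(n)$ is literally a direct summand of $B$ as a left $B$-module (the missing piece is $B\cdot 1(0)\cong V_{(0)}$), hence projective; dually for $Q$ as a right module. The paper instead treats all four cases uniformly via the trick $P \cong P\ot_B B \cong \bigoplus_\mu (P\ot_B V_\mu)^{\oplus m(\mu)}$ together with Lemma~\ref{lem tensor PQ}, which is slightly more machinery for the two cases where the action is ordinary but has the virtue of uniformity. Your shortcut is correct and worth noting.

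For the twisted cases ($Q$ left, $P$ right) your ``freeness'' argument does not quite close on its own. First, a small indexing slip: $1(m)\cdot Q\cdot 1(n) = B(m+1,n)$, not $B(m+1,n+1)$. More substantively, showing that each graded piece $1(m)\cdot Q\cdot 1(n)$ is free over $\K[S(m)]$ (via $\rho$) does not by itself imply that $Q\cdot 1(n)$ is projective as a left $B$-module: the pieces for different $m$ are glued together by the left action of the short-strand generators $v_i$, and projectivity over $B$ (equivalently, being a direct sum of $V_\lambda$'s) is a statement about that global structure, not about each $\K[S(m)]$-layer separately. The clean way to assemble the local freeness into the global claim is exactly the paper's move: $Q \cong Q\ot_B B \cong \bigoplus_\mu (Q\ot_B V_\mu)^{\oplus m(\mu)}$, and then Lemma~\ref{lem tensor PQ}(2) says each $Q\ot_B V_\mu$ is a direct sum of $V_\lambda$'s. (The branching rule for $\Res^{S(n)}_{S(n-1)}$, i.e.\ the freeness you cite, is precisely what is buried in the proof of that lemma.) You do end up invoking Lemma~\ref{lem tensor PQ} and Lemma~\ref{lem tensor PQ rt}, so the gap is one of presentation rather than of mathematical content, but as written the sentence ``a free module over $B(k)$ pulls back under the nonunital inclusion to a projective $B$-module, we conclude $Q$ is left-projective'' skips the step where the graded pieces are reassembled into a $B$-module decomposition.
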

\begin{proof}
The free left $B$-module $B$ is isomorphic to $\bigoplus\limits_{\mu}V_{\mu}^{\oplus m(\mu)}$, where $m(\mu)$ is the multiplicity.
So $$P \cong P \ot B \cong P \ot (\bigoplus\limits_{\mu}V_{\mu}^{\oplus m(\mu)}) \cong \bigoplus\limits_{\mu}(P \ot V_{\mu})^{\oplus m(\mu)}$$
which is left projective by Lemma \ref{lem tensor PQ}.
The same argument shows that $Q$ is left projective.

Similarly, it follows from Lemma \ref{lem tensor PQ rt} that $P$ and $Q$ are right projective.
\end{proof}

Let $D(B^e)$ denote the derived category of $B$-bimodules.
The derived tensor product gives a monoidal structure on it.
Any $B$-bimodule is viewed as an object of $D(B^e)$ by placing it in the cohomological degree zero.
The unit object is isomorphic to the $B$-bimodule $B$.

\begin{defn} \label{def H}
Define $\ch$ as the Karoubi envelope of the smallest full monoidal triangulated subcategory of $D(B^e)$ which contains $B,P,Q$.
\end{defn}

Since $B, P, Q$ are all left projective and right projective, the derived tensor product between them is equivalent to the ordinary tensor product.
Therefore, the monoidal structure on $\ch$ can be calculated using the ordinary tensor product.
We will use $MN$ to denote $M \ot N$ for objects $M, N \in \ch$, and simply write $\Hom$ for $\Hom_{\ch}$.

Since $\Kom(B)$ is generated by $V_{\mu}$'s, Lemma \ref{lem tensor PQ} implies that tensoring with $P$ and $Q$ induces endofunctors of $\Kom(B)$.
Thus, the category $\ch$ acts on $\Kom(B)$ by tensoring with bounded complexes of $B$-bimodules in $\ch$.

\vspace{.2cm}
The Heisenberg relation holds in $\ch$.
The proof is due to Khovanov \cite{Kh2}.
We rewrite it in terms of diagrams.
A diagram in $QP$ is called type (1) if the frozen strand of $Q$ connects to the frozen strand of $P$; otherwise, it is called type (2).
Any diagram is either of type (1) or type (2) since the short strand can slide over the crossing.
Let $X$ and $Y$ be the subspaces of $QP$ spanned by diagrams of type (1) and (2), respectively.
Since the left and right multiplication do not change the frozen strand, both $X$ and $Y$ are sub-bimodules of $QP$.
There is a decomposition $QP \cong X \oplus Y$ of $B$-bimodules.
There are natural isomorphisms of $B$-bimodules: $X \cong B$ and $Y \cong PQ$, see figure \ref{s4}.

\begin{figure}[h]
\begin{overpic}
[scale=0.25]{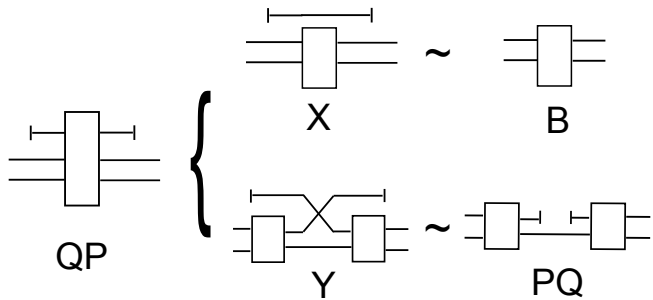}
\end{overpic}
\caption{The isomorphism $QP \cong B ~\oplus ~ PQ$.}
\label{s4}
\end{figure}

\begin{prop} \label{prop H relation}
There is a canonical isomorphism
\begin{gather} \label{eq H relation}
QP \cong B ~\oplus ~ PQ \in \ch.
\end{gather}
\end{prop}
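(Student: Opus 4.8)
The plan is to establish the bimodule decomposition $QP \cong X \oplus Y$ explicitly and then identify each summand. First I would recall the diagrammatic description of $QP$: an element is a horizontal concatenation of a $Q$-diagram and a $P$-diagram, and the frozen strand of $Q$ (marked at its left end) together with the frozen strand of $P$ (marked at its right end) are carried along without being touched by the bimodule action. Using the slide relation from Definition \ref{def B}, any short strand can be slid over crossings, so every diagram can be normalized to one of exactly two forms: either the frozen strand of $Q$ is joined to the frozen strand of $P$ (type (1)), or it is not (type (2)). I would define $X$ and $Y$ as the $\K$-spans of type (1) and type (2) diagrams respectively. Since left multiplication acts on the far left (through $\rho$ on the $Q$ side) and right multiplication acts on the far right (through $\rho$ on the $P$ side), neither operation can change whether the two frozen strands are connected; hence $X$ and $Y$ are sub-bimodules and $QP = X \oplus Y$ as $B$-bimodules.

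Next I would construct the two claimed isomorphisms. For $X \cong B$: a type (1) diagram has its two frozen strands fused into a single through-strand, and deleting that strand (which is acted on trivially on both sides after the fusion) gives a bijection with diagrams of $B$; I would check that this bijection is compatible with both the left and right $B$-actions, using that the fused frozen strand sits "above" everything and the $\rho$-shifts on the two sides cancel. For $Y \cong PQ$: a type (2) diagram has the frozen strand of $Q$ running to some endpoint on the right of the $Q$-part and the frozen strand of $P$ being a short strand coming from the left of the $P$-part; reading the diagram as (frozen strand of $P$)(rest)(frozen strand of $Q$) instead exhibits it as a concatenation of a $P$-diagram and a $Q$-diagram, giving a bijection with a spanning set of $PQ$. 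Again one verifies equivariance: the left action on $Y$ factors through $\rho$ exactly as the left action on $PQ = P \ot Q$ does (via the left action on $P$), and symmetrically on the right. This is essentially Khovanov's argument rewritten diagrammatically, as the text indicates, and Figure \ref{s4} encodes precisely these bijections.

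Finally, since $B$, $P$, $Q$ are all projective as left and right $B$-modules by Lemma \ref{lem proj}, the derived tensor products computing $QP$ and $PQ$ agree with the ordinary tensor products, so the $B$-bimodule isomorphism $QP \cong B \oplus PQ$ is at once an isomorphism in $D(B^e)$ and hence in $\ch$. Assembling the two identifications gives the canonical isomorphism \eqref{eq H relation}.

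I expect the main obstacle to be the careful bookkeeping in the $Y \cong PQ$ identification: one must be sure that the normalization via the slide relation is well-defined (independent of the order in which short strands are slid past crossings), that type (2) is genuinely the complement of type (1), and that the re-bracketing of a type (2) diagram as a $P$-diagram followed by a $Q$-diagram respects the bimodule structure on the nose rather than only up to a twist. The relations in Lemma \ref{lem def B} — in particular the symmetric relation $v_n(n)v_{n+1}(n+1)s_n(n+1) = v_n(n)v_{n+1}(n+1)$ — are what guarantee the normalization is consistent, and I would invoke them at exactly the point where two short strands (the two former frozen strands) would otherwise create an ambiguity. Everything else is a routine check of $B$-equivariance on spanning sets.
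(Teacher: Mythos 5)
Your proposal follows exactly the paper's argument: decompose $QP$ into sub-bimodules $X$ and $Y$ according to whether the two frozen strands are connected, identify $X \cong B$ and $Y \cong PQ$ via the diagrammatic bijections of Figure~\ref{s4}, and note that projectivity (Lemma~\ref{lem proj}) lets the ordinary tensor product stand in for the derived one so the isomorphism holds in $\ch$. This is precisely the proof the paper gives in the paragraph preceding the proposition (attributed to Khovanov \cite{Kh2} and rewritten diagrammatically), so no further comment is needed.
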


We can partially describe $\ch$ using certain diagrams as in the literature.
Following \cite{Kh1}, we denote $B$ by the empty diagram, $P$ and $Q$ by an upward and downward vertical arrows, respectively.
Certain morphisms in $\ch$ can be described via diagrams with prescribed boundary conditions on the top and bottom.
The composition of these morphism is given by the vertical concatenation.

The elementary morphisms are three U-turn maps and four crossing maps:
\be
\item a U-turn map $\op{cup}_{QP} \in \Hom(B, QP)$ given by the inclusion;

\item a U-turn map $\op{cap}_{QP} \in \Hom(QP, B)$ given by the projection;

\item a crossing map $\op{cr}_{QP} \in \Hom(PQ, QP)$ given by the inclusion;

\item a crossing map $\op{cr}_{PQ} \in \Hom(QP, PQ)$ given by the projection.

\item a U-turn map $\op{cap}_{PQ} \in \Hom(PQ, B)$ given by connecting the frozen strands in $P$ and $Q$;

\item a crossing map $\op{cr}_{PP} \in \Hom(PP, PP)$ given by adding a crossing between the two frozen strands in $PP$;

\item a crossing map $\op{cr}_{QQ} \in \Hom(QQ, QQ)$ given by adding a crossing between the two frozen strands in $QQ$.
\ee
The first four maps are induced by two inclusions and two projections with respect to the canonical Heisenberg isomorphism (\ref{eq H relation}).
The remaining three maps are described in figure \ref{s5}.

\begin{figure}[h]
\begin{overpic}
[scale=0.25]{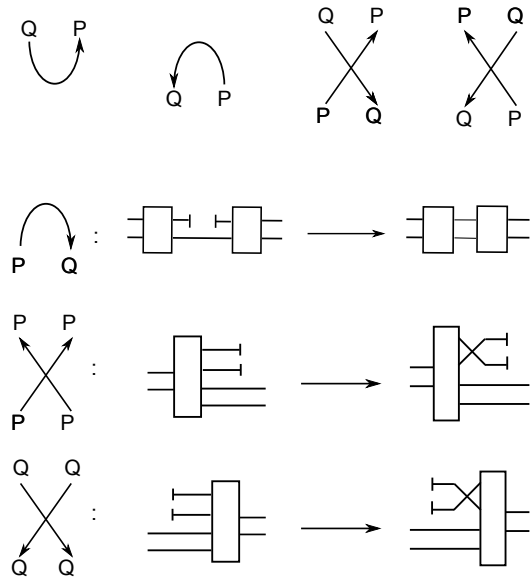}
\end{overpic}
\caption{Some elementary morphisms in $\ch$.}
\label{s5}
\end{figure}

\vspace{.2cm}
One of the main differences between $\ch$ and Khovanov's Heisenberg category is that there is no U-turn map in $\Hom(B, PQ)$.
\begin{lemma}
The morphism space $\Hom(B, PQ)=0$.
\end{lemma}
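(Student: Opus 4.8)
The plan is to compute the bimodule $PQ$ explicitly enough to see that it has no sub-bimodule isomorphic to $B$, equivalently that $\Hom_{B^e}(B, PQ) = 0$. Since $B = \bigoplus_{m,n} B(m,n)$ with the obvious bimodule structure, a bimodule map $B \to PQ$ is determined by the images of the idempotents $1(n)$, and such a map exists iff there is a family of elements $x_n \in 1(n)\cdot PQ \cdot 1(n)$ that are central in the appropriate sense: $a\cdot x_n = x_m \cdot a$ for every $a \in B(m,n)$. In particular $x_n$ must be fixed by the left and right actions of $\K[S(n)] = B(n,n)$, and must be compatible with the short-strand generators $v_i$.

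First I would pin down $PQ$ as a vector space: by definition $P = \bigoplus_{m\ge 0, n\ge 1} B(m,n)$ and $Q = \bigoplus_{m,n\ge 1} B(m,n)$, and $PQ = P \ot_B Q$. Diagrammatically an element of $PQ$ is a diagram with a frozen strand coming from $P$ (barred on the right) stacked with a frozen strand coming from $Q$ (barred on the left), glued along a middle layer of $B$; the key point (as in the discussion preceding Proposition \ref{prop H relation} and figure \ref{s4}) is that in $PQ$ the two frozen strands are \emph{separate} horizontal strands that never interact, and there is no cap connecting them — that cap $\op{cap}_{PQ}$ is a map \emph{out} of $PQ$, not a relation inside it. Concretely, tensoring with $V_\mu$ and using Lemma \ref{lem tensor PQ} gives $PQ \ot V_\mu \cong P\ot(\bigoplus_{\lambda \in \Res(\mu)} V_\lambda) \cong \bigoplus_{\lambda \in \Res(\mu)}\bigoplus_{\eta \in \Ind(\lambda)} V_\eta$, so $PQ$ as a left module is a direct sum of $V_\eta$'s, and I can read off $\Hom_B(B, PQ)$ — wait, that is not yet a bimodule statement. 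The cleaner route: a bimodule map $B \to PQ$ restricts, under $- \ot_B V_{(0)}$, to a map $V_{(0)} \to PQ \ot_B V_{(0)}$, and $PQ \ot V_{(0)} \cong P \ot (Q \ot V_{(0)}) = P \ot 0 = 0$ by Lemma \ref{lem tensor PQ}(2). Hence any bimodule map $B \to PQ$ kills $V_{(0)} = B e_{(0)}$, i.e. sends the idempotent $1(0)$ (the empty diagram) to $0$.

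Now I would propagate this: since $1(0)$ maps to $0$, and $1(1) = v_1(1)\cdot(\text{something})$ — more precisely $v_1(1) \in B(0,1)$ satisfies $v_1(1)\cdot 1(1) = v_1(1)$ and $1(0)\cdot v_1(1) = v_1(1)$, so $\phi(v_1(1)) = \phi(1(0)\cdot v_1(1)) = \phi(1(0))\cdot v_1(1) = 0$; but also $\phi(v_1(1)) = v_1(1)\cdot \phi(1(1))$, and one checks that left multiplication by $v_1(1)$ is injective on $1(1)\cdot PQ\cdot 1(1)$ (because $v_1(1)$ acts injectively on the relevant module components — this uses the freeness statements in Lemma \ref{lem bmn} and the explicit form of $PQ$). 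Therefore $\phi(1(1)) = 0$, and then inductively, using that each $1(n)$ is reached from $1(n-1)$ by the injective operator "multiply by $v_n(n)$" combined with $\K[S(n)]$-symmetrization, one gets $\phi(1(n)) = 0$ for all $n$, hence $\phi = 0$. The main obstacle is the injectivity claim — that left multiplication by $v_n(n)$ is injective on the space of $(\K[S(n)],\K[S(n)])$-invariant elements of $1(n)\,PQ\,1(n)$; this is where I would need to be careful, either by a direct diagrammatic argument (a barred frozen strand in $PQ$ can never be "absorbed", so adding a short strand on top loses no information) or by using the module decompositions above together with Lemma \ref{lem hom B} to show the relevant $\Hom$ spaces inject. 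An alternative, possibly cleaner, finish: compute $\Hom_{B^e}(B, PQ) \cong \Hom_{B^e}(B, PQ)$ via $\Hom_{B^e}(B, M) = \{m \in M : bm = mb \ \forall b\}$ (the "center-like" space $Z(M)$), observe $Z(PQ) \subset \prod_n Z_n$ with $Z_n \subseteq 1(n)\,PQ\,1(n)$, and show $Z_0 = 1(0)\,PQ\,1(0) = 0$ directly since $Q\cdot 1(0) = 0$ forces $PQ\cdot 1(0) = 0$ — then the compatibility with the $v_i$'s forces everything else to vanish as above.
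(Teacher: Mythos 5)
Your proof is correct and follows essentially the same strategy as the paper: show $\phi(1(0))=0$ because $Q\cdot 1(0)=0$, then propagate by induction using $\phi(v_n(n))=\phi(1(n-1))v_n(n)=0=v_n(n)\phi(1(n))$ together with injectivity of left multiplication by $v_n(n)$ on $1(n)\cdot PQ\cdot 1(n)$. The paper asserts the same injectivity without further elaboration, so your level of detail matches; the detour through $-\ot_B V_{(0)}$ and the mention of $\K[S(n)]$-symmetrization are harmless extras.
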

\begin{proof}
Suppose $f \in \Hom(B, PQ)$. Since the $B$-bimodule $B$ is generated by $1(n)$'s for $n \ge 0$, $f$ is determined by images $f(1(n)) \in 1(n)\cdot PQ \cdot 1(n)$.
We prove that $f(1(n))=0$ by induction on $n$ as follows.
For $n=0$, it is true since $Q\cdot 1(0)=0$.
Suppose $f(1(n-1))=0$.
For the short strand, we have $f(v_n(n))=f(1(n-1)v_n(n))=0$.
On the other hand, $f(v_n(n))=f(v_n(n)1(n))=v_n(n)f(1(n))$.
Left multiplication by $v_n(n)$ induces a map $1(n)\cdot PQ \cdot 1(n) \ra 1(n-1) \cdot PQ \cdot 1(n)$.
This linear map is injective which implies that $f(1(n))=0$.
\end{proof}

\begin{figure}[h]
\begin{overpic}
[scale=0.25]{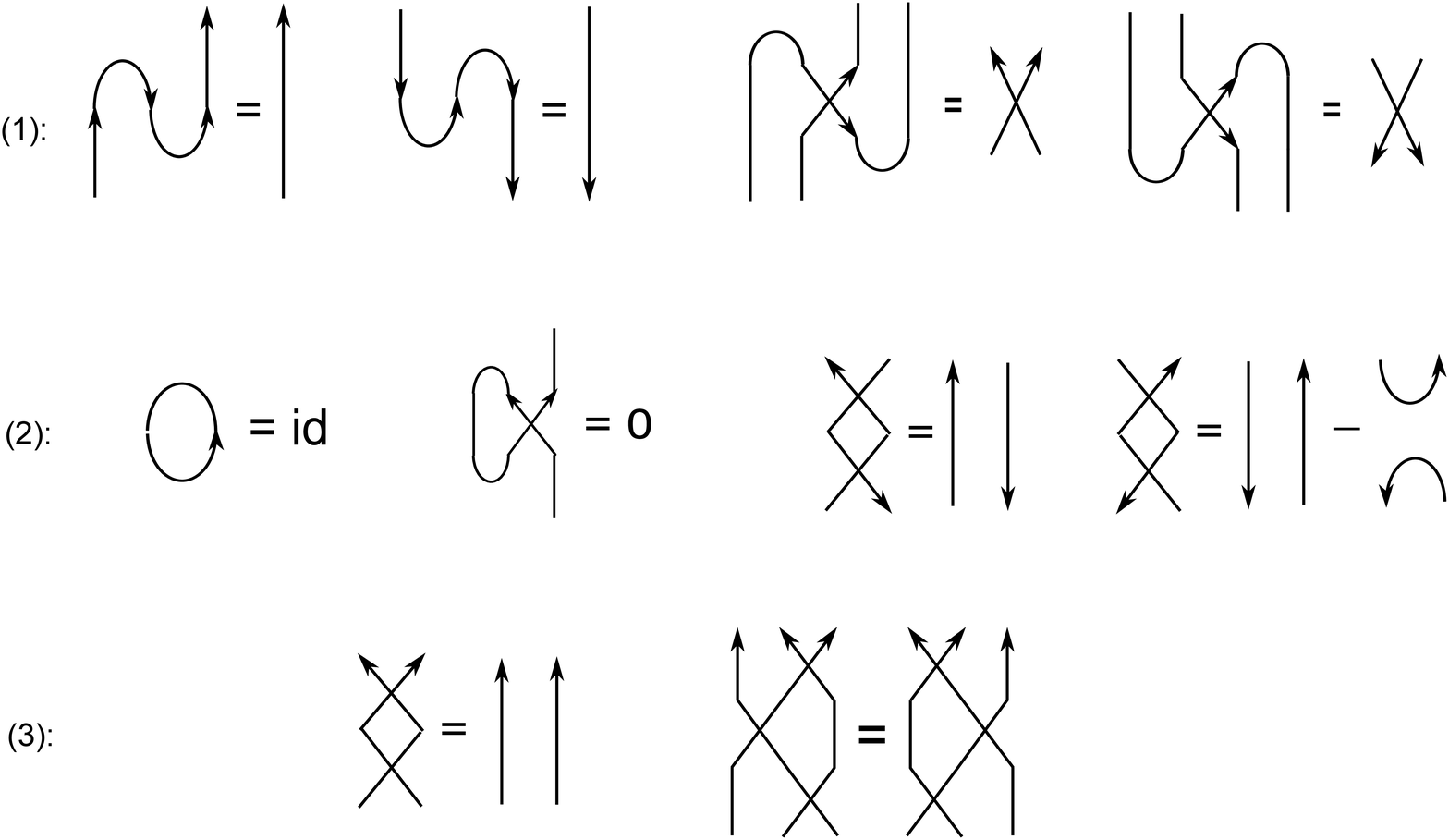}
\end{overpic}
\caption{Some local relations in $\ch$.}
\label{s6}
\end{figure}

Some local relations in $\ch$ can be described diagrammatically, see figure \ref{s6}.
The proofs of the local relations are the same as in \cite{Kh1}, and we leave them to the reader.
Relations (1) are the isotopy relations.
Note that the zigzag relation only holds for one direction since there is no U-turn map in $\Hom(B,PQ)$.
Relations (2) are essentially from the Heisenberg relation (\ref{eq H relation}).
Relations (3) imply that $\K[S(n)] \subset \End(P^n)$.
Since $\ch$ is Karoubi closed, we obtain an object $P^{(\mu)} \in \ch$ as a direct summand of $P^n$ for each partition $\mu \vdash n$. Similarly, we have $Q^{(\mu)} \in \ch$.

\begin{prop}[Khovanov \cite{Kh2}] \label{prop Kh}
The following relations hold in $\ch$:

\n(1) $P^{(\mu)}P^{(\lm)} \cong P^{(\lm)}P^{(\mu)}, Q^{(\mu)}Q^{(\lm)} \cong Q^{(\lm)}Q^{(\mu)}$ for any partitions $\lm, \mu$.

\n(2) $Q^{(n)}P^{(m)} \cong \bigoplus\limits_{k\ge0}P^{(m-k)}Q^{(n-k)}$, and $Q^{(1^n)}P^{(1^m)} \cong \bigoplus\limits_{k\ge0}P^{(1^{m-k})}Q^{(1^{n-k})}$.

\n(3) $Q^{(n)}P^{(1^m)} \cong P^{(1^m)}Q^{(n)} \oplus P^{(1^{m-1})}Q^{(n-1)}$, and $Q^{(1^n)}P^{(m)} \cong P^{(m)}Q^{(1^n)} \oplus P^{(m-1)}Q^{(1^{n-1})}$.
\end{prop}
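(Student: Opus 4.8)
The plan is to deduce all three relations from the basic Heisenberg isomorphism $QP \cong B \oplus PQ$ of Proposition \ref{prop H relation}, exactly as in Khovanov's argument \cite{Kh2}, by inducting on the sizes of the partitions and extracting idempotent summands. Since $\ch$ is Karoubi closed and the relevant symmetric-group algebras are semisimple in characteristic zero, the objects $P^{(\mu)}$ and $Q^{(\mu)}$ are genuine direct summands cut out by Young symmetrizers $e_\mu \in \K[S(n)] \subset \End(P^n)$ (resp. $\End(Q^n)$), and any isomorphism between direct sums of tensor powers of $P$ and $Q$ will restrict to the isotypic components.

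First I would establish part (1). Iterating $QP \cong B \oplus PQ$ gives $Q^n P^m \cong \bigoplus P^a Q^b$ with explicit multiplicities, but more relevantly one checks that $PP$ carries a commuting action of $S(2)$ via $\op{cr}_{PP}$, and by Relations (3) of figure \ref{s6} this extends to $\K[S(n)] \subset \End(P^n)$; likewise for $Q$. The isomorphism $P^{(\mu)}P^{(\lambda)} \cong P^{(\lambda)}P^{(\mu)}$ then follows because $P^{|\mu|+|\lambda|}$ has an $\End$-action of $S(|\mu|+|\lambda|)$ under which the idempotent $e_\mu \otimes e_\lambda$ and its ``swap'' $e_\lambda \otimes e_\mu$ are conjugate; the conjugating element of $S(|\mu|+|\lambda|)$ supplies the isomorphism. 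The same argument applies to $Q$.

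Next, parts (2) and (3) are the ``mixed'' relations. The strategy is: start from $Q^n P^m \cong \bigoplus_{k\ge 0} \binom{n}{k}\binom{m}{k} k!\, P^{m-k}Q^{n-k}$, obtained by iterating Proposition \ref{prop H relation} and bookkeeping the diagrams of type (1) vs type (2) as in the proof of that proposition. Both sides carry commuting actions of $\K[S(n)]$ (acting on the $Q$'s) and $\K[S(m)]$ (acting on the $P$'s); the isomorphism can be chosen $S(n)\times S(m)$-equivariant because it is built from the elementary cup/cap/crossing maps, all of which are equivariant for the strands not involved. Applying the idempotent $e_{(n)} \otimes e_{(m)}$ (trivial representations) to both sides and using the branching/Pieri-type decomposition of how $e_{(n)}\otimes e_{(m)}$ interacts with the $S(n-k)\times S(m-k)\times S(k)$-structure on the right-hand summands yields $Q^{(n)}P^{(m)} \cong \bigoplus_{k\ge 0} P^{(m-k)}Q^{(n-k)}$; here one uses that the trivial of $S(n)$ restricted to $S(n-k)\times S(k)$ contains the trivial exactly once. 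The sign/column versions ($Q^{(1^n)}P^{(1^m)}$, etc.) follow identically with $e_{(n)}$ replaced by the sign idempotent $e_{(1^n)}$, and the two assertions in part (3) come from taking $e_{(n)}\otimes e_{(1^m)}$ and $e_{(1^n)}\otimes e_{(m)}$ respectively, where now the relevant representation-theoretic fact is that $\mathbf{1}_n \boxtimes \op{sgn}_m$ restricted along the branching has multiplicity one in exactly the two summands $k=0,1$.

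The main obstacle is bookkeeping the $S(n)\times S(m)$-equivariance carefully enough that extracting isotypic components is legitimate — i.e. checking that the iterated Heisenberg isomorphism, assembled from the elementary morphisms of figure \ref{s5} and the local relations of figure \ref{s6}, really is equivariant, rather than merely an abstract isomorphism of bimodules. This is where the absence of bi-adjunction in $\ch$ (only one zigzag relation holds) could in principle interfere; however, since every map used points in a ``legal'' direction and the extra strands are genuinely spectators, the equivariance goes through as in \cite{Kh2}. Once that is in hand, the decompositions (1)–(3) are a formal consequence of classical $S(n)$-branching rules, so I would not grind through those character computations in detail.
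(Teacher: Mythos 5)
Your proposal is correct and reconstructs exactly the argument the paper is invoking: the paper's entire proof is a citation to Khovanov's unpublished \cite{Kh2}, with the single remark that no U-turn map in $\Hom(B,PQ)$ is needed, and your reconstruction (iterated Heisenberg isomorphism with multiplicities $\binom{n}{k}\binom{m}{k}k!$, the $S(n)\times S(m)$-equivariance of that isomorphism, and then extraction of isotypic components via Young symmetrizers, using that the diagonal $S(k)$ acts trivially on each summand) is the standard form of that argument. You also identify precisely the same potential obstruction the paper flags (the missing zigzag/bi-adjunction) and dispose of it for the same reason, so your proof matches the paper's intended route.
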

\begin{proof}
Notice that no U-turn map in $\Hom(B,PQ)$ is needed in the proof of \cite[Proposition 1]{Kh2}.
The same proof applies here.
\end{proof}

Define the Heisenberg algebra $H$ with generators $p^{(m)}, q^{(m)}$ for $m \ge 0$, and relations
\begin{gather} \label{eq rel H}
p^{(n)}p^{(m)}=p^{(m)}p^{(n)}, \qquad q^{(n)}q^{(m)}=q^{(m)}q^{(n)}, \qquad q^{(n)}p^{(m)}=\sum\limits_{k\ge0}p^{(m-k)}q^{(m-k)}.
\end{gather}
The proposition implies that there is a ring homomorphism $\gamma: H \ra K_0(\ch)$.
This map is injective by considering the categorical action of $\ch$ on $\Kom(B)$.
We conjecture that this map is an isomorphism.

\subsection{New morphisms in $\ch$}
In the rest of Section \ref{sec H}, we discuss some new ingredients in $\ch$ which do not appear in Khovanov's Heisenberg category.

The isotopy relations (1) in figure \ref{s6} implies that $P$ is left adjoint to $Q$ in $\ch$.
Since there is no U-turn map in $\Hom(B,PQ)$, $P$ is not right adjoint to $Q$.
To find an object which is right adjoint to $Q$, we need more morphisms in $\ch$.
There is a $B$-bimodule homomorphism $f'_0 \in \Hom(B, P)$ defined by $f'_0(a)=a \bt v_1(1)$.
Diagrammatically, $f'_0$ adds a short strand as the frozen strand in $P$ on the top of any diagram $a \in B$.

Recall $e_{(1^n)}=\frac{1}{n!}\sum\limits_{g \in S(n)}~(-1)^{\op{sign}(g)}g$
is the idempotent associated to the partition $(1^n)$.

\begin{defn} \label{def fn}
For $n \geq 0$, define a family of $B$-bimodule homomorphisms $f_n \in \Hom(B, P)$ by setting $f_n(1(m))$ as
\begin{align*}
 \left\{
\begin{array}{rl}
0, & ~\mbox{if}~ m<n, \\
\frac{n+1}{k!}\sum\limits_{\scriptscriptstyle g \in S(m)}v_{m+1}(m+1) (g \bt 1(1)) (1(k) \bt e_{(1^{n+1})}) (g^{-1} \bt 1(1)), & ~\mbox{if}~ m=n+k, k\ge 0,
\end{array}\right.
\end{align*}
and extending $f_n(a\cdot 1(m) \cdot b)=a\cdot f_n(1(m)) \cdot b$, for $a,b \in B$.
\end{defn}

For $n=0$, $f_0(1(m))=v_{m+1}(m+1)=1(m) \bt v_1(1)$ which agrees with $f'_0(1(m))$.
For $m=n$, $$f_n(1(n))=(n+1)!~v_{n+1}(n+1) ~e_{(1^{n+1})}.$$
We express $f_n$ in figure \ref{s7}, where a horizontal strand with a label $n$ denotes $n$ horizontal strands $1(n)$, a horizontal strand without a label denotes one horizontal strand $1(1)$, and a box with $1^{n+1}$ inside to denote $e_{(1^{n+1})}$.

\begin{figure}[h]
\begin{overpic}
[scale=0.25]{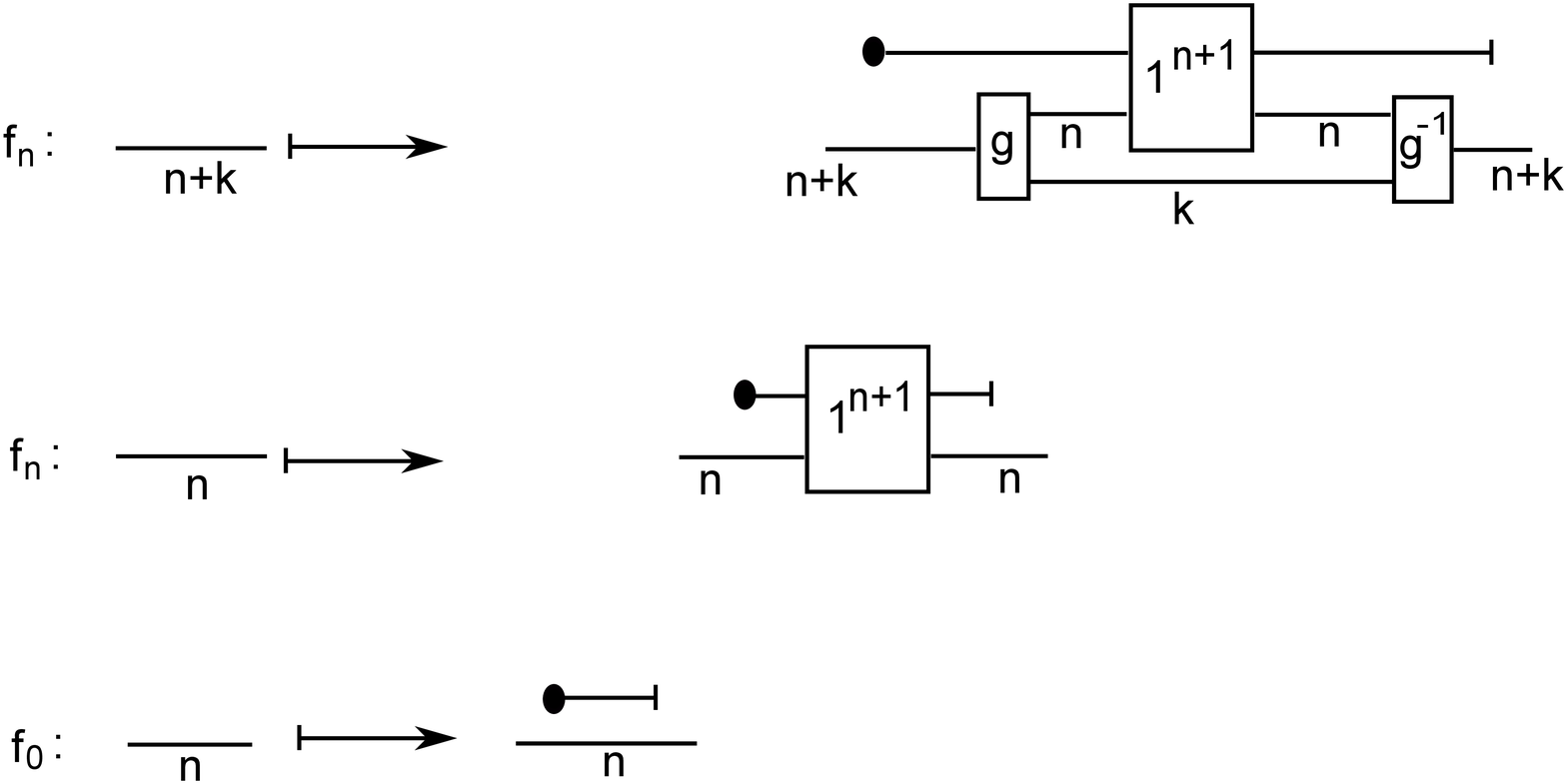}
\put(30,40){$\frac{n+1}{k!}\sum\limits_{g\in S(m)}$}
\put(30,20){${\scriptstyle (n+1)!}$}
\end{overpic}
\caption{The definition of $f_n \in \Hom(B,P)$.}
\label{s7}
\end{figure}

\begin{lemma} \label{lem fn welldefined}
The map $f_n$ is a $B$-bimodule homomorphism.
\end{lemma}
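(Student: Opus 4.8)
The plan is to reduce the whole statement to a single commutation identity and then check that identity on algebra generators. Since $B=\bigoplus_{m\ge 0}B\cdot 1(m)$ as a left $B$-module, and one verifies directly that $f_n(1(m))\in 1(m)\,P\,1(m)$, the rule $x\mapsto x\cdot f_n(1(m))$ on the summand $B\cdot 1(m)$ defines, with no choices, a left $B$-module map $B\to P$; the bilinear extension of Definition~\ref{def fn} is well defined and equals this map precisely when the map is also right $B$-linear. As the right action on $P$ is $v\cdot b=v\,\rho(b)$, the lemma therefore reduces to proving
$$b\cdot f_n(1(q))\;=\;f_n(1(p))\,\rho(b)\quad\text{in }B,\qquad\text{for all }b\in B(p,q).$$
Both sides are $\K$-linear in $b$; the identity for $b_1\in B(p,r)$ and $b_2\in B(r,q)$ yields it for $b_1b_2$ (because $\rho$ is an algebra map); and it is trivial for $b=1(p)$. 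So by Lemma~\ref{lem def B} it is enough to treat $b=s_i(p)$, where it reads $s_i(p)\,f_n(1(p))=f_n(1(p))\,s_i(p+1)$ (using $\rho(s_i(p))=s_i(p+1)$), and $b=v_p(p)\in B(p-1,p)$, where it reads $v_p(p)\,f_n(1(p))=f_n(1(p-1))\,v_p(p+1)$ (using $\rho(v_p(p))=v_p(p)\bt 1(1)=v_p(p+1)$).

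Next I would rewrite $f_n(1(m))$ to make the calculation transparent. Grouping the sum over $g\in S(m)$ in Definition~\ref{def fn} by the $n$-element set $S=g(\{k+1,\dots,m\})\subseteq\{1,\dots,m\}$ — each such $S$ occurring for exactly $n!\,k!$ of the $g$, and $(g\bt 1(1))(1(k)\bt e_{(1^{n+1})})(g^{-1}\bt 1(1))$ being the antisymmetrizer $e^{-}_{S\cup\{m+1\}}$ of the strands in $S\cup\{m+1\}$ (so $e^{-}_{\{1,\dots,n+1\}}=e_{(1^{n+1})}$) — one obtains
$$f_n(1(m))\;=\;v_{m+1}(m+1)\,c_m,\qquad c_m:=(n+1)!\!\!\sum_{\substack{S\subseteq\{1,\dots,m\}\\|S|=n}}\!\! e^{-}_{S\cup\{m+1\}}\ \in\ \K[S(m+1)],$$
with $c_m=0$ when $m<n$. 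Two properties of $c_m$ are used below: it is symmetric in the strands $1,\dots,m$, hence commutes with $\K[S(m)]\subseteq\K[S(m+1)]$; and $e^{-}_{T}=\tfrac12(1-\tau)\,e^{-}_{T}$ for any transposition $\tau$ of two strands of $T$.

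For $b=s_i(p)$ the required identity becomes $s_i(p)\,v_{p+1}(p+1)\,c_p=v_{p+1}(p+1)\,c_p\,s_i(p+1)$: by the relation $s_i(p)\,v_{p+1}(p+1)=v_{p+1}(p+1)\,s_i(p+1)$ of Lemma~\ref{lem def B} the left side equals $v_{p+1}(p+1)\,s_i(p+1)\,c_p$, and $c_p$ commutes with $s_i(p+1)$, so both sides agree. For $b=v_p(p)$ — the main step — the identity becomes $v_p(p)\,v_{p+1}(p+1)\,c_p=v_p(p)\,c_{p-1}\,v_p(p+1)$. Moving $c_{p-1}\in\K[S(p)]$ through the short strand with the relations of Definition~\ref{def B} replaces $c_{p-1}\,v_p(p+1)$ by $v_p(p+1)\,\tilde c_{p-1}$, where $\tilde c_{p-1}$ is $c_{p-1}$ transported along the relabeling isomorphism $S(\{1,\dots,p\})\cong S(\{1,\dots,p-1,p+1\})$, $p\mapsto p+1$ (the one encoded by the diagram of $v_p(p+1)$); explicitly $\tilde c_{p-1}=(n+1)!\sum_{S'\subseteq\{1,\dots,p-1\},\,|S'|=n}e^{-}_{S'\cup\{p+1\}}$. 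Combining this with $v_p(p)\,v_p(p+1)=v_p(p)\,v_{p+1}(p+1)$ (third relation of group (3) of Definition~\ref{def B}, with $i=j=p$), the right side becomes $v_p(p)\,v_{p+1}(p+1)\,\tilde c_{p-1}$, so it remains to prove $v_p(p)\,v_{p+1}(p+1)\,(c_p-\tilde c_{p-1})=0$. Splitting off in $c_p$ the subsets $S$ that contain $p$ gives $c_p-\tilde c_{p-1}=(n+1)!\sum_{R\subseteq\{1,\dots,p-1\},\,|R|=n-1}e^{-}_{R\cup\{p,p+1\}}$ (an empty sum, hence $0$, unless $p\ge n$); and each term is annihilated since $e^{-}_{R\cup\{p,p+1\}}=\tfrac12(1-s_p(p+1))\,e^{-}_{R\cup\{p,p+1\}}$ while the symmetric relation (\ref{eq sym rel}) with $i=p$ gives $v_p(p)\,v_{p+1}(p+1)\,(1-s_p(p+1))=0$. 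Hence the commutation identity holds for all generators, and therefore for all of $B$, so $f_n$ is a well-defined $B$-bimodule homomorphism. The only real work is this last case; the one relation driving it is the symmetric relation (\ref{eq sym rel}), which forces $v_p(p)\,v_{p+1}(p+1)$ to kill every antisymmetrization involving the two strands it merges — exactly the terms that survive after the reduction.
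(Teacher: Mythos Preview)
Your proof is correct and follows essentially the same strategy as the paper's: reduce to a commutation identity on algebra generators, dispose of the symmetric-group generators by the built-in $S(m)$-symmetry of the defining sum, and handle the short-strand generator using the symmetric relation~(\ref{eq sym rel}). The paper checks the identity for $v_1(m+1)$ rather than $v_p(p)$ and carries out the computation diagrammatically via a coset decomposition $S(m+1)=D(k+1)\cdot S(m)$ (figure~\ref{s8}); your rewriting $f_n(1(m))=v_{m+1}(m+1)\,c_m$ with $c_m=(n+1)!\sum_{|S|=n}e^{-}_{S\cup\{m+1\}}$ is a clean algebraic repackaging of the same content that makes both the $S(m)$-invariance and the subset-splitting $c_p-\tilde c_{p-1}=(n+1)!\sum_R e^{-}_{R\cup\{p,p+1\}}$ completely transparent, so the final appeal to $v_p(p)v_{p+1}(p+1)(1-s_p)=0$ becomes a one-line conclusion rather than a diagram chase.
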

\begin{proof}
The bimodule $B$ is generated by $1(m)$, subject to the relations
$$a1(m)=1(m)a, ~\mbox{for}~~a \in \K[S(m)], \qquad 1(m)v_1(m+1)=v_1(m+1)1(m+1).$$
It suffices to show that: $a f_n(1(m))=f_n(1(m))  a$, $f_n(1(m))v_1(m+1)=v_1(m+1)f_n(1(m+1))$.
The first equality is true since the definition of $f_n$ is a sum of conjugation over $S(m)$.
The second equality is proved in figure \ref{s8}, where a relation induced from the symmetric relation is used.
\end{proof}

\begin{figure}[h]
\begin{overpic}
[scale=0.25]{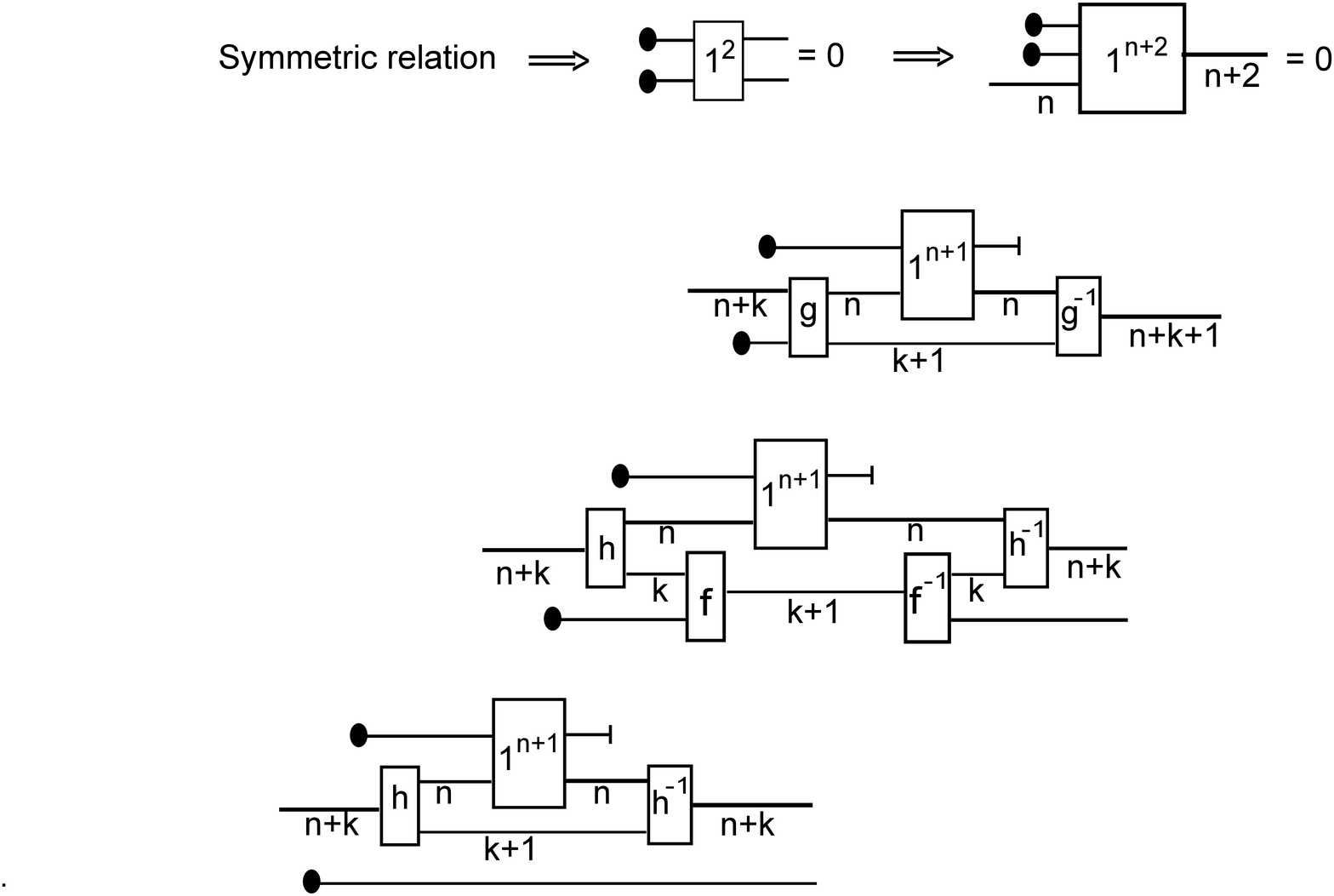}
\put(0,45){${\scriptstyle v_1(m+1)f_n(1(m+1))}=\frac{n+1}{(k+1)!}\sum\limits_{\scriptscriptstyle g\in S(m+1)}$}
\put(1,25){$=\frac{n+1}{(k+1)!}\sum\limits_{\scriptscriptstyle f\in D(k+1)}\sum\limits_{\scriptscriptstyle h\in S(m)}$}
\put(0,8){$=\frac{n+1}{k!}\sum\limits_{\scriptscriptstyle h\in S(m)}$}
\put(65,8){$={\scriptstyle f_n(1(m))v_1(m+1)}$}
\end{overpic}
\caption{The second equality in the third line uses the relation in the top row, and $D(k+1)=\{\mbox{transposition}~ (1,i) \in S(k+1), 1\le i \le k+1\}$.}
\label{s8}
\end{figure}

\begin{prop} \label{prop basis 1,P}
The morphism space $\Hom(B,P)=\prod\limits_{n \ge 0}\K\lan f_n \ran$, where $\K\lan f_n \ran$ is the one dimensional vector space spanned by $f_n$.
\end{prop}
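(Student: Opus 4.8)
The plan is to compute $\Hom_{B\text{-}\mathrm{bimod}}(B,P)$ directly, using that $B$ is generated as a bimodule by the idempotents $1(m)$. A bimodule map $f\colon B\to P$ is determined by the elements $f(1(m))\in 1(m)\,P\,1(m)$, subject to the compatibility constraints coming from the bimodule presentation of $B$ recorded in the proof of Lemma \ref{lem fn welldefined}: namely $a\cdot f(1(m)) = f(1(m))\cdot a$ for $a\in\K[S(m)]$, and $f(1(m))\,v_1(m+1) = v_1(m+1)\,f(1(m+1))$. So first I would identify the space $1(m)\,P\,1(m)$ explicitly. By definition $P=\bigoplus_{m\ge0,\,n\ge1}B(m,n)$ with the right action twisted by $\rho$, so $1(m)\,P\,1(m)$ is spanned by diagrams in $B(m,m+1)$ in which the top (frozen) strand is distinguished; equivalently, via Lemma \ref{lem bmn}, $1(m)\,P\,1(m)\cong B(m,m+1)$ as a $(\K[S(m)],\K[S(m)])$-bimodule, where the right action factors through $\rho\colon\K[S(m)]\hookrightarrow\K[S(m+1)]\cong B(m+1,m+1)$.

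Next I would determine the "stable" part of the constraint. The $S(m)$-bi-invariance condition says $f(1(m))$ lies in the space of elements of $B(m,m+1)$ that are invariant under simultaneous left and right conjugation by $S(m)$, where on the right $S(m)$ acts through its embedding as $S(m)\subset S(m+1)$ fixing the $(m+1)$-st strand. Decomposing $B(m,m+1)$ as a $\K[S(m)]$-$\K[S(m+1)]$-bimodule via Lemma \ref{lem bmn} (it is $\mathrm{Ind}_1^{m+1}\mb_1$ on the right, i.e. the permutation module $\K[S(m+1)/S(1)]$, and free of rank $m+1$ on the left), one finds that the diagonal $S(m)$-invariants are indexed by the $S(m)$-orbits on the $m+1$ "short strand positions" $1,2,\dots,m+1$ — and by the slide relation every such diagram is conjugate to one with the short strand in position $m+1$ followed by an antisymmetrizing insertion, so the invariants of $1(m)\,P\,1(m)$ under diagonal $S(m)$ form a space of dimension $m+1$, with a basis given (after rescaling) by the elements $f_0(1(m)),\dots,f_m(1(m))$ appearing in Definition \ref{def fn}; note $f_n(1(m))=0$ for $n>m$, matching the count. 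The key linear-algebra point here is that the $e_{(1^{n+1})}$-antisymmetrizations for $0\le n\le m$, conjugated around, span exactly this invariant space — this is where I expect the bulk of the work to sit.

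Finally I would impose the second constraint, $f(1(m))\,v_1(m+1)=v_1(m+1)\,f(1(m+1))$, which is precisely the relation verified in the proof of Lemma \ref{lem fn welldefined} (figure \ref{s8}) and which says that $f(1(m+1))$, restricted via left multiplication by $v_1(m+1)$ (an injective map $1(m+1)P1(m+1)\to 1(m)P1(m+1)$, as used in the proof that $\Hom(B,PQ)=0$), is forced to agree with the image of $f(1(m))$. Tracking the normalization constants $\tfrac{n+1}{k!}$ in Definition \ref{def fn}, this relation pins down $f$ on $1(m+1)$ from its value on $1(m)$ up to exactly the one new scalar (the coefficient of $f_{m+1}(1(m+1))$), and conversely any choice of scalars $(c_n)_{n\ge0}$ yields a well-defined bimodule map $\sum_n c_n f_n$ by Lemma \ref{lem fn welldefined}. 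Since the $c_n$ for $n>m$ are invisible on $1(m)$, no finiteness is forced, and we obtain $\Hom(B,P)=\prod_{n\ge0}\K\langle f_n\rangle$ rather than a direct sum. The main obstacle is the orbit/invariant computation in the middle step — showing that the diagonal $S(m)$-invariants of $1(m)P1(m)$ are spanned by the specific antisymmetrized elements $f_n(1(m))$ and have dimension exactly $m+1$ — together with bookkeeping the rescaling constants so that the compatibility with $v_1$ comes out on the nose.
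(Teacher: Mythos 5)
Your overall plan — determine $f$ by its values $f(1(m))\in 1(m)P1(m)\cong B(m,m+1)$ subject to the two relation families from the proof of Lemma \ref{lem fn welldefined}, and observe at the end that arbitrary scalars $(c_n)$ give a well-defined map, so the answer is a product rather than a sum — is sensible, and that last observation is correct. But the middle step, that the diagonal $S(m)$-invariants of $1(m)P1(m)$ form a space of dimension exactly $m+1$ spanned by $f_0(1(m)),\dots,f_m(1(m))$, is false. Writing $1(m)P1(m)\cong\K[S(m+1)]$ via $x\mapsto v_{m+1}(m+1)x$, the bi-equivariance $a\cdot f(1(m))=f(1(m))\cdot a$ for $a\in\K[S(m)]$ is equivalent to $g$ lying in the centralizer of $S(m)\subset S(m+1)$ in $\K[S(m+1)]$, whose dimension is the number of $S(m)$-conjugacy classes in $S(m+1)$, i.e.\ $\sum_{\mu\vdash m+1}|\Res(\mu)|$. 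Already for $m=2$ this is $4$ (classes $\{e\}$, $\{(12)\}$, $\{(13),(23)\}$, $\{(123),(132)\}$), not $3$; concretely $v_3(3)e_{(3)}$ is diagonally $S(2)$-invariant but not in the span of $f_0(1(2)),f_1(1(2)),f_2(1(2))$. The informal "orbits on short-strand positions" heuristic gives only two $S(m)$-orbits on $\{1,\dots,m+1\}$, not $m+1$, and in any case does not compute this centralizer. So the dimension count on which your second and third steps rest does not hold.

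The paper's proof avoids the centralizer computation entirely by running an induction in which the $v$-constraint is used first, not last: assuming $f(1(n-1))=0$, it deduces $v_i(n)\cdot f(1(n))=f(1(n-1)v_i(n))=0$ for all $1\le i\le n$; writing $f(1(n))=v_1(n+1)g$ and using that $B(n-1,n+1)\cong\op{Ind}_{2}^{n+1}\mb_2$ as a right $\K[S(n+1)]$-module (Lemma \ref{lem bmn} together with the symmetric relation), each such vanishing forces $(1,i+1)\cdot g=-g$; since these transpositions generate $S(n+1)$, $g$ is a scalar multiple of $e_{(1^{n+1})}$, i.e.\ $f(1(n))=cf_n(1(n))$. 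Subtracting $\sum c_nf_n$ then finishes the argument. To salvage your version you would have to impose the $v$-constraints simultaneously with the $S(m)$-invariance at each level $m$, at which point the argument collapses to the paper's; imposing $S(m)$-invariance alone leaves a solution space strictly larger than the one you describe.
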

\begin{proof}
For $f\in \Hom(B,P)$, we claim that if $f(1(n-1))=0$ then $f(1(n))=cf_n(1(n))$ for some constant $c$.
The image $f(1(n)) \in 1(n)\cdot P\cdot 1(n+1) \cong \K[S(n+1)]$ as vector spaces.
So $f(1(n))=v_{1}(n+1)g$ for a unique $g \in \K[S(n+1)]$.
The condition $f(1(n-1))=0$ implies that $f(v_i(n))=f(1(n-1)v_i(n))=0$ for $1 \le i \le n$.
Thus, $$v_i(n)v_{1}(n+1)g=v_i(n)f(1(n))=f(v_i(n)1(n))=0 \in B(n-1,n+1).$$
Then $(1,i+1)\cdot g=-g$ by the symmetric relation and Lemma \ref{lem bmn}, where $(1,i+1)$ is the transposition of $1$ and $i+1$.
Since $S(n+1)$ is generated by $(1,i+1)$'s, we have $a\cdot g=(-1)^{\op{sign}(a)}g$ for all $a \in S(n+1)$.
It implies that $g=c~e_{(1^{n+1})}$.
The claim follows from the definition of $f_n$.

The claim implies that for any $f\in \Hom(B,P)$ there exists $c_n \in \K$ such that $f-\sum\limits_{n\ge0}c_nf_n$ maps $1(m)$ to $0$ for all $m\ge0$. So $f=\sum\limits_{n\ge0}c_nf_n$.
It is obvious that $f_n$'s are linearly independent.
\end{proof}

We draw a vertical upwards short strand with a label $n$ to denote $f_n \in \Hom(B,P)$.
The map $f_n$ is understood as zero if $n<0$.
Define its right adjoint $f_n^*=\op{cap}_{PQ} \circ (f_n \ot id_Q) \in \Hom(Q, B)$, see figure \ref{s9}.
The map $f_n^*$ is obtained from $f_n$ by adding the U-turn map $\op{cap}_{PQ}$ on the top.
We can also add another U-turn map $\op{cap}_{QP}$ on the top.

\begin{figure}[h]
\begin{overpic}
[scale=0.25]{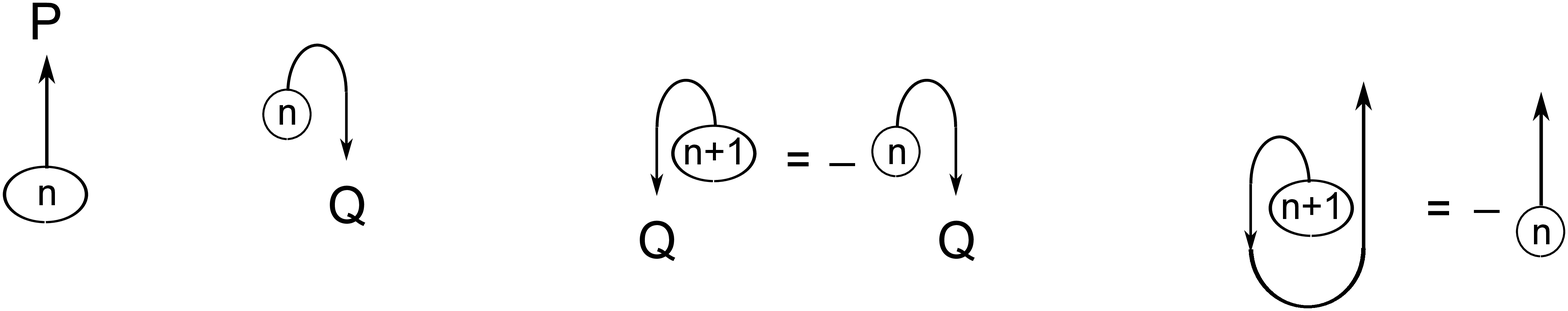}
\put(2,0){$f_n$}
\put(18,0){$f_n^*$}
\end{overpic}
\caption{}
\label{s9}
\end{figure}

\begin{lemma} \label{lem fn n-1}
There is a relation $\op{cap}_{QP} \circ (id_Q \ot f_{n+1})=-\op{cap}_{PQ} \circ (f_{n} \ot id_Q) \in \Hom(Q, B)$.
In particular, $\op{cap}_{QP} \circ (id_Q \ot f_0)=0$.
\end{lemma}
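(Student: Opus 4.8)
The plan is to compute both sides as $B$-bimodule homomorphisms $Q \to B$, i.e., as elements of $\Hom_B(Q,B)$, by evaluating them on generators of $Q$ as a bimodule and comparing. Recall $Q = \bigoplus_{m,n\ge1}B(m,n)$, and the key diagrammatic fact is that a diagram in $QP$ is either of type (1) or type (2), giving $QP \cong B \oplus PQ$, with $\op{cap}_{QP}$ the projection onto $B$ and $\op{cap}_{PQ}$ the map connecting the frozen strands of $P$ and $Q$. Both sides of the claimed identity are obtained from $f_{n}$ resp. $f_{n+1} \in \Hom(B,P)$ by attaching an identity $Q$-strand and then capping off. So I would unwind everything in terms of the explicit formula for $f_n(1(m))$ in Definition \ref{def fn}.

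First I would reduce to checking the identity on elements of the form $q = 1(1)\cdot q' \in 1(m+1)\,Q\,1(n)$ (or more efficiently, on a bimodule generating set of $Q$, using that $Q$ is generated over $B\otimes B^{op}$ by the frozen-strand-creating pieces together with $\K[S(n)]$). Evaluating $\op{cap}_{QP}\circ(id_Q \otimes f_{n+1})$: here $f_{n+1}$ creates a new frozen $P$-strand together with the antisymmetrizer $e_{(1^{n+2})}$ on the top $n+2$ strands (times the combinatorial factor $\tfrac{n+2}{k!}\sum_{g}$-conjugation), and then $\op{cap}_{QP}$ connects the two new frozen strands, which diagrammatically just removes them and leaves a short-strand contribution in $B$. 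Evaluating $-\op{cap}_{PQ}\circ(f_{n}\otimes id_Q)$: here $f_n$ creates a frozen $P$-strand with $e_{(1^{n+1})}$ on the top $n+1$ strands, and $\op{cap}_{PQ}$ connects this to the frozen $Q$-strand, turning that $Q$-strand into an ordinary (non-frozen) strand merged with the $P$-strand. The point is that capping off one strand of the antisymmetrizer $e_{(1^{n+2})}$ produces, up to the sign $(-1)$ and the factor accounting for $\tfrac{n+2}{n+1}$ versus the normalization, exactly $e_{(1^{n+1})}$ on the remaining strands: this is the standard branching relation $\op{cap}\circ e_{(1^{n+2})} = -\tfrac{1}{n+1}\,e_{(1^{n+1})}\circ(\text{something})$ type identity for the sign idempotent, which is precisely the combinatorial identity already exploited in the proof of Proposition \ref{prop basis 1,P} (the computation showing $(1,i+1)\cdot g = -g$ forces $g = c\,e_{(1^{n+1})}$). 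So the two sides agree once the scalars are matched; the scalar bookkeeping is where the precise coefficients $\tfrac{n+1}{k!}$ and $(n+1)!$ in Definition \ref{def fn} were chosen to make things work.

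Alternatively — and this is probably the cleaner route — I would invoke Proposition \ref{prop basis 1,P} directly: both $\op{cap}_{QP}\circ(id_Q\otimes f_{n+1})$ and $-\op{cap}_{PQ}\circ(f_n\otimes id_Q)$ lie in $\Hom(Q,B)$, and by adjunction (the isotopy relations (1) of figure \ref{s6}, giving $P$ left adjoint to $Q$) this space is identified with $\Hom(B,PQ)\cong\Hom(B,P)$ via the cup $\op{cup}_{QP}$... but that particular adjunction is the missing one, so instead I would use that $\Hom(Q,B)$ can be analyzed by the same inductive argument as in Proposition \ref{prop basis 1,P}: evaluate on $1(n)\cdot Q \cdot$ pieces, use $Q\cdot 1(0) = 0$ to start the induction, and use left multiplication by $v_n(n)$ being injective on the relevant components to propagate. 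Both sides vanish on $1(0)$-components and satisfy the same recursion, hence are equal; the $n=0$ special case $\op{cap}_{QP}\circ(id_Q\otimes f_0)=0$ then falls out because $f_{-1}=0$ by convention.

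The main obstacle I anticipate is the scalar/sign bookkeeping when capping off one strand of a sign-idempotent: one must carefully track how $e_{(1^{n+2})}$ restricts under removing the strand that gets capped, including the interaction with the conjugation-sum $\tfrac{1}{k!}\sum_{g\in S(m)}$ and the slide/symmetric relations (which is exactly the content of figure \ref{s8}). Getting the overall sign to come out as $-1$ rather than $+1$ — which is forced by the alternating nature of $e_{(1^{n+1})}$ and is the whole content of the lemma — requires doing this honestly rather than up to sign. Everything else is a routine diagram chase using the relations of figures \ref{s6} and \ref{s8}.
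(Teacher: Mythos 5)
Your primary approach---evaluating both sides on the bimodule generators $1(m)$ of $Q$ and tracking the branching behavior of the sign idempotent $e_{(1^{n+2})}$ under capping---is exactly the paper's proof, which carries out precisely this diagrammatic computation in figure \ref{s10}. The alternative route you sketch at the end (classifying $\Hom(Q,B)$ by the recursion in the style of Proposition \ref{prop basis 1,P}) would require an additional lemma not in the paper, and as you note yourself it is less complete; the direct computation is the one to run with.
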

\begin{proof}
We compute values of both maps on generators $1(m)$ of $Q$ for $m=n+k+1$, see figure \ref{s10}.
By definition, both sides maps $1(m)$ to zero for $m \le n$.
\end{proof}

\begin{figure}[h]
\begin{overpic}
[scale=0.2]{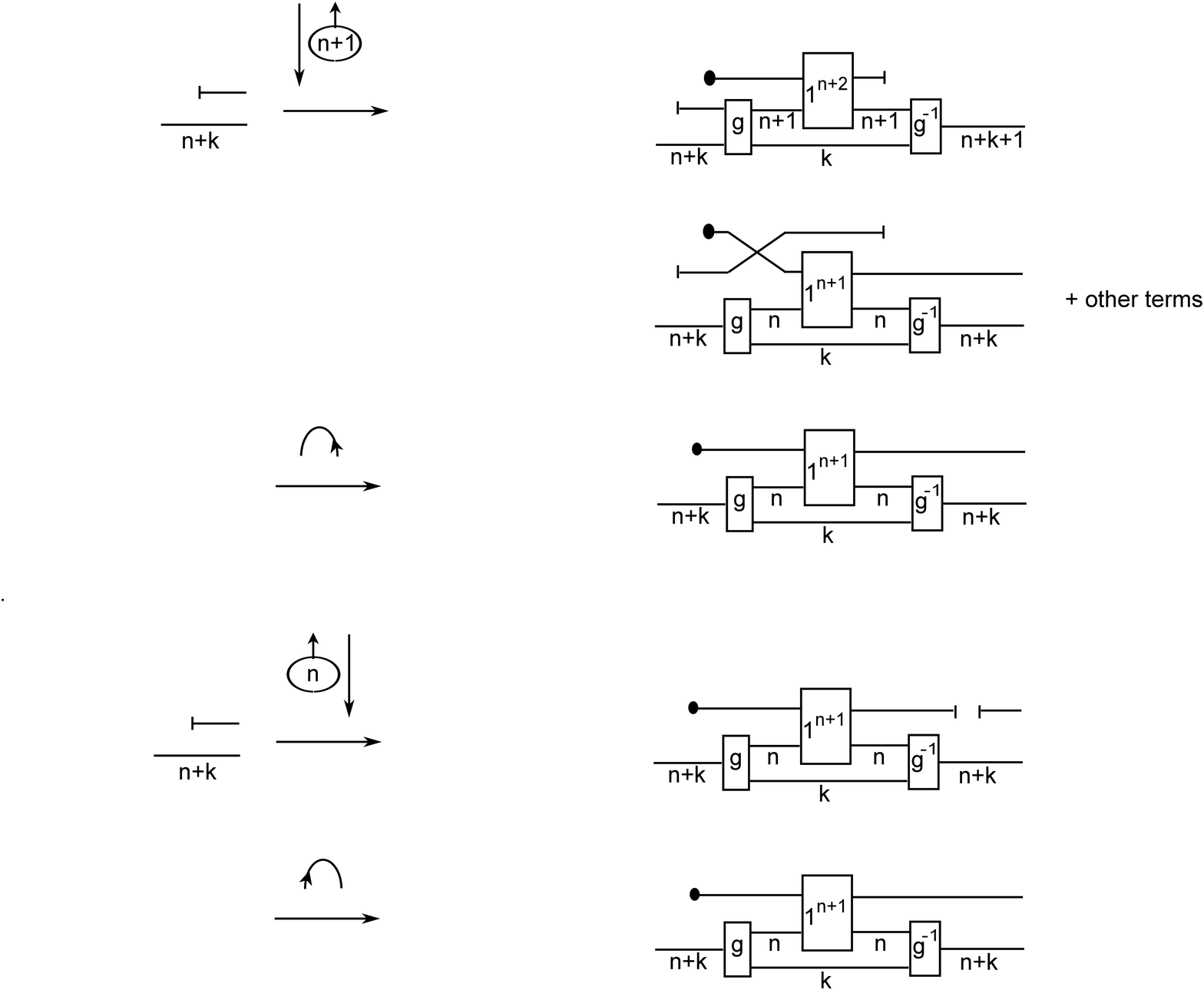}
\put(33,72){$\frac{n+2}{k!}\sum\limits_{g\in S(m)}$}
\put(30,58){$=-\frac{n+1}{k!}\sum\limits_{g\in S(m-1)}$}
\put(33,42){$-\frac{n+1}{k!}\sum\limits_{g\in S(m-1)}$}
\put(35,20){$\frac{n+1}{k!}\sum\limits_{g\in S(m-1)}$}
\put(35,6){$\frac{n+1}{k!}\sum\limits_{g\in S(m-1)}$}
\end{overpic}
\caption{The map $\op{cap}(QP)$ in the third line is the projection onto the summand $B \subset QP$.}
\label{s10}
\end{figure}
The relation in the lemma and another equivalent relation are drawn in figure \ref{s9}.

\subsection{The object right adjoint to $Q$}
We will use the morphisms in the last subsection to construct an object which is a right adjoint to $Q$.

Define a bimodule $\wt{P}_{-1}$ as the cokernel of $f_0: B \ra P$.
It is a quotient of $P$ by the subspace spanned by diagrams whose frozen strand is a short strand.
Although $\wt{P}_{-1}$ is neither left nor right projective, the derived tensor product of $\wt{P}_{-1}$ and $Q$ can still be computed via the ordinary tensor product since $Q$ is both left and right projective.

An analogue of the Heisenberg isomorphism in figure \ref{s4} still holds by replacing $P$ by $\wtp$, since the subspaces under quotient are isomorphic on both sides.
In other words, there is a canonical isomorphism of $B$-bimodules:
$$Q\wtp \cong \wtp Q \oplus B.$$
Let $d_{-1} \in \Hom(Q\wt{P}_{-1}, B)$ denote the projection.
Define a map $c_{-1} \in \Hom(B,  \wt{P}_{-1}Q)$ on the generators $1(m)$ by:
\begin{align*}
c_{-1}(1(m))=& \left\{
\begin{array}{rl}
0, & \quad \mbox{if} \hspace{0.3cm} m=0, \\
\frac{1}{(m-1)!}\sum\limits_{g \in S(m)}\ov{g} \ot_{1(m-1)} g^{-1}, & \quad \mbox{otherwise,}
\end{array}\right.
\end{align*}
where $\ov{g} \in \wtp$ denote the class of $g \in P$ in the quotient $\wtp$.
As in Lemma \ref{lem fn welldefined}, we check that $c_{-1}$ is a $B$-bimodule homomorphism:
$$v_1(m+1)c_{-1}(1(m+1))=c_{-1}(1(m))v_1(m+1),$$
see figure \ref{s11}.
Note that we still use diagrams as before to represent an element of the quotient $\wtp$.
\begin{figure}[h]
\begin{overpic}
[scale=0.25]{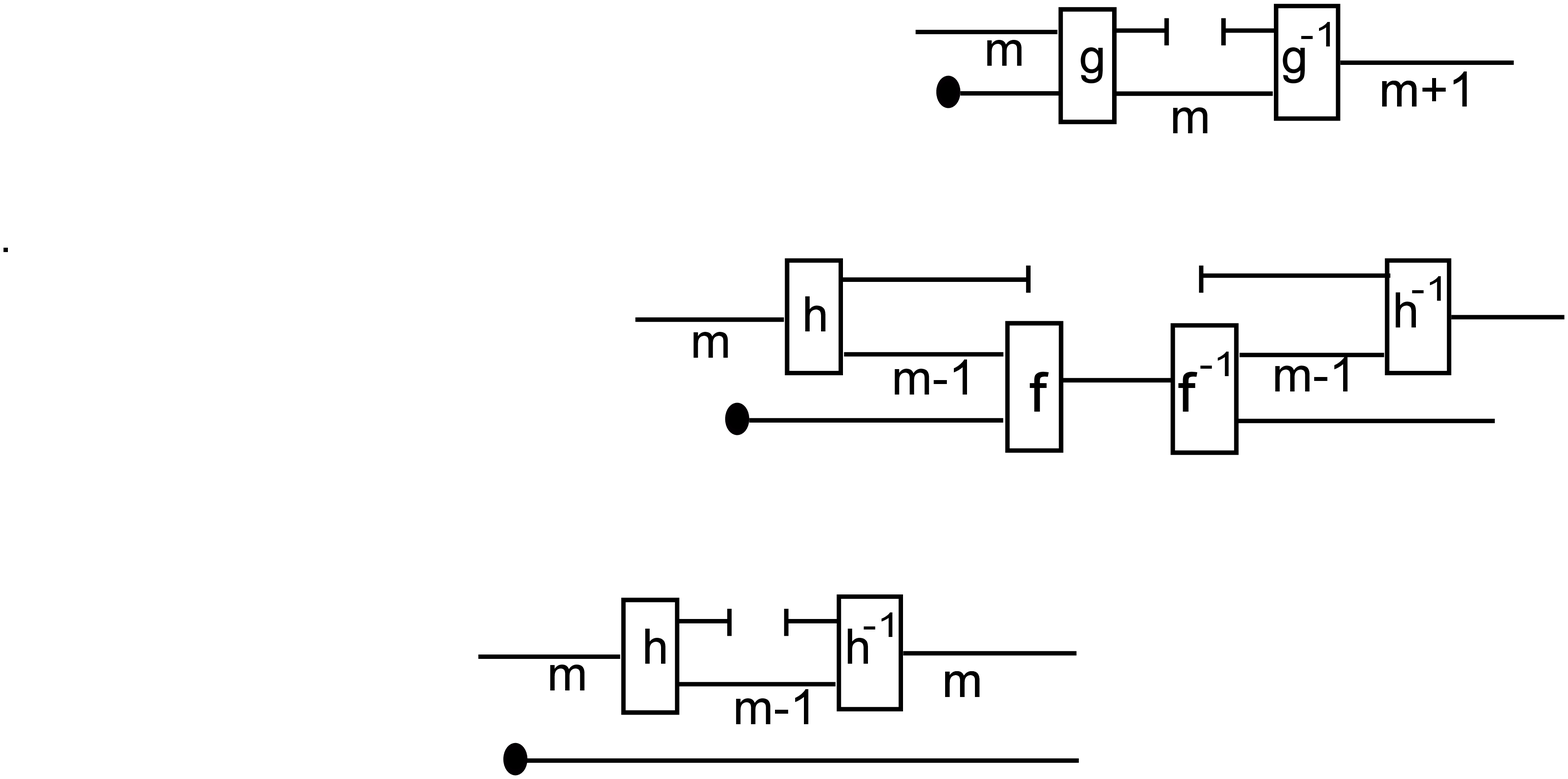}
\put(0,45){${\scriptstyle v_1(m+1)c_{-1}(1(m+1))}=\frac{1}{m!}\sum\limits_{g\in S(m+1)}$}
\put(1,25){$=\frac{1}{m!}\sum\limits_{f\in D(m)}\sum\limits_{h\in S(m)}$}
\put(0,5){$=\frac{1}{(m-1)!}\sum\limits_{h\in S(m)}$}
\put(72,5){$={\scriptstyle c_{-1}(1(m))v_1(m+1)}$}
\end{overpic}
\caption{The second equality holds in the quotient $\wtp Q$, and $D(m)=\{\mbox{transposition}~ (1,i) \in S(m), 1\le i \le m\}$.}
\label{s11}
\end{figure}

We draw a cup and a cap for $c_{-1}$ and $d_{-1}$, respectively.
\begin{figure}[h]
\begin{overpic}
[scale=0.2]{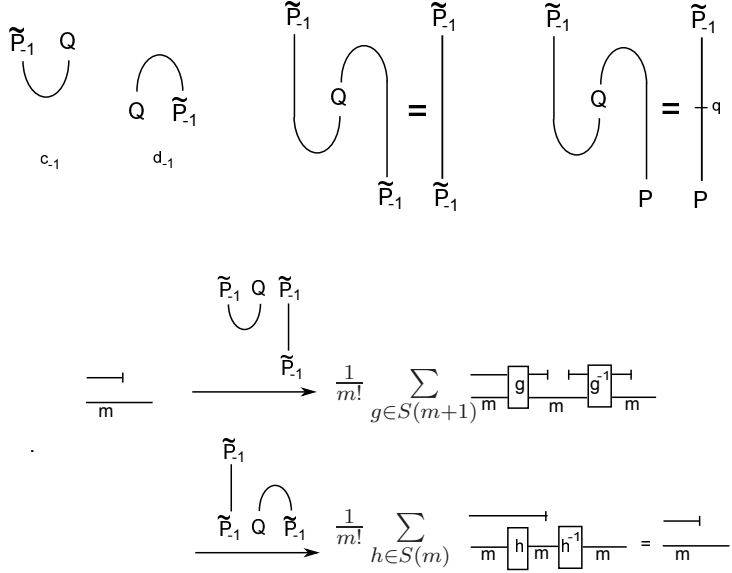}
\put(45,25){$\frac{1}{m!}\sum\limits_{g\in S(m+1)}$}
\put(45,5){$\frac{1}{m!}\sum\limits_{h\in S(m)}$}
\end{overpic}
\caption{Relations involving $c_{-1}, d_{-1}, \op{cap}_{QP}$ and $q$.}
\label{s12}
\end{figure}

\begin{prop} \label{prop adj -1}
The object $\wtp$ is right adjoint to $Q$ in $\ch$.
\end{prop}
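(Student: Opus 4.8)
The plan is to exhibit the unit and counit of the adjunction and check the two triangle identities; in a monoidal category this dual-pair data is exactly what makes the functor $Q\ot-$ left adjoint to $\wtp\ot-$, i.e. what makes $\wtp$ right adjoint to $Q$ in $\ch$. The counit is the projection $d_{-1}\in\Hom(Q\wtp,B)$ coming from the Heisenberg-type splitting $Q\wtp\cong\wtp Q\oplus B$, and the unit is the map $c_{-1}\in\Hom(B,\wtp Q)$, already shown to be a $B$-bimodule homomorphism (figure \ref{s11}). Because $B,Q,\wtp$ are complexes concentrated in degree zero and $Q$ is projective as a left and as a right $B$-module, all the tensor products involved are the ordinary ones, so it remains to verify
\begin{gather*}
(d_{-1}\ot id_Q)\circ(id_Q\ot c_{-1})=id_Q\in\Hom(Q,Q),\\
(id_{\wtp}\ot d_{-1})\circ(c_{-1}\ot id_{\wtp})=id_{\wtp}\in\Hom(\wtp,\wtp).
\end{gather*}

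Both composites are $B$-bimodule homomorphisms, and $Q$ and $\wtp$ are generated as bimodules by the idempotents $1(m)$, so it is enough to evaluate them on each $1(m)$. For the first identity, applying $id_Q\ot c_{-1}$ produces $\frac{1}{(m-1)!}\sum_{g\in S(m)}1(m)\ot\ov g\ot_{1(m-1)}g^{-1}$ in $Q\wtp Q$, and then $d_{-1}\ot id_Q$ projects the first two tensor factors onto the summand $B\subset Q\wtp$. Using the slide relation, the frozen strand of $Q$ meets the frozen strand of $\wtp$ precisely for those $g$ fixing the top position, a set of size $(m-1)!$; for such $g$ the projection yields $g\in\K[S(m-1)]\subset B$, and tensoring this back over $B$ with the remaining factor gives $\rho(g)g^{-1}=1(m)$. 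Summing the $(m-1)!$ equal contributions and dividing by $(m-1)!$ returns $1(m)$, which is exactly why the normalization in $c_{-1}$ was chosen. The second identity is the mirror computation: one pushes $\ov{1(m)}\in\wtp$ through $c_{-1}\ot id_{\wtp}$ into $\wtp Q\wtp$, applies $id_{\wtp}\ot d_{-1}$, and uses the same count together with the symmetric relation to collapse the sum to $\ov{1(m)}$. These are computations of the kind illustrated in figure \ref{s12}.

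The adjunction is not a formal consequence of the splitting $Q\wtp\cong\wtp Q\oplus B$ alone: the analogous statement with $P$ in place of $\wtp$ fails, there being no morphism $B\to PQ$, so the content lies in the maps $c_{-1},d_{-1}$ and their compatibility. Accordingly, the step I expect to be the main obstacle is the combinatorial bookkeeping of the symmetric-group sums — identifying which terms of $\sum_{g\in S(m)}$ survive the projection $d_{-1}$, counting the surviving set correctly via the slide and symmetric relations, and verifying that after recombination they give $id$ with coefficient exactly $1$. Once that normalization is pinned down the triangle identities, and hence the proposition, follow.
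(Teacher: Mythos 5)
Your proposal is correct and follows essentially the same route as the paper: exhibit the unit $c_{-1}\in\Hom(B,\wtp Q)$ and counit $d_{-1}\in\Hom(Q\wtp,B)$, then verify the two triangle identities on the bimodule generators $1(m)$ (resp.\ $\ov{1(m)}$) by tracking which terms of the symmetrized sum survive the projection onto the summand $B\subset Q\wtp$ and checking that they recombine, with the normalization $\frac{1}{(m-1)!}$, to the identity. The paper writes the first identity out diagrammatically (figure~\ref{s12}) and leaves the second to the reader, which is the same computation you describe in words.
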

\begin{proof}
It amounts to showing that
$$(id_{\wtp} \ot d_{-1}) \circ (c_{-1} \ot id_{\wtp})=id_{\wtp}, \qquad  (d_{-1} \ot id_{Q}) \circ (id_{Q} \ot c_{-1})=id_{Q}.$$
We prove the first equality on generators $\ov{1(m+1)}$ of $\wtp$ in figure \ref{s12}.
The proof for the other one is similar, and we leave it to the reader.
\end{proof}

The adjointness is equivalent to the isotopy relation involving $c_{-1}$ and $d_{-1}$, see figure \ref{s12}.
Another interesting relation is that
\begin{gather*} \label{eq PP-1}
(id_{\wtp} \ot \op{cap}_{QP}) \circ (c_{-1} \ot id_{P})=q \in \Hom(P, \wtp),
\end{gather*}
where $q: P \ra \wtp$ is the quotient map.
This relation will be used in Lemma \ref{lem f0**}.

Since $f_0$ is injective, $\wtp \cong \op{Cone} (B \xra{f_0} P) \in \ch$.
Let $P_{-1}$ denote the cone.
An object right adjoint to $P_{-1}$ can be computed via the following general method.

Let $M,N$ be two objects in a monoidal triangulated category $\cal{C}$ whose unit object is denoted by $\mb$.
Suppose $M$ and $N$ have right adjoint objects $M^*$ and $N^*$, respectively.
Let $c_M: \mb \ra M^*M$ and $d_M: MM^* \ra \mb$ denote the adjunction maps.
For $f \in \Hom_{\C}(M,N)$, let $f^* \in \Hom_{\C}(N^*,M^*)$ denote its right adjoint map.

\begin{lemma} \label{lem adj cone}
The object $L=(M \xra{f} N)$ has a right adjoint object $L^*=(N^* \xra{f^*} M^*)$, where $N$ and $N^*$ are in cohomological degree zero.
\end{lemma}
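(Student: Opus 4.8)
The plan is to build the adjunction maps for $L$ out of those for $M$ and $N$ together with the morphism $f$, and to verify the two zigzag identities. First I would set up the candidate maps. Since $L = \cone(M \xra{f} N)$ sits in a distinguished triangle $M \xra{f} N \ra L \ra M[1]$, and likewise $L^* = \cone(N^* \xra{f^*} M^*)$ sits in $N^* \xra{f^*} M^* \ra L^* \ra N^*[1]$, I want to produce a coevaluation $c_L \colon \mb \ra L^* L$ and an evaluation $d_L \colon L L^* \ra \mb$. The natural guess is to take $c_L$ to be the composite of $c_N \colon \mb \ra N^* N$ with the map $N^* N \ra L^* L$ coming from the two structure maps $N \ra L$ and $N^* \ra L^*$, and dually to take $d_L$ to be the composite $L L^* \ra M M^* \xra{d_M} \mb$ using $L \ra M[1]$ and $L^* \ra N^*[1]$ (with a degree shift bookkeeping so that $L L^*$ maps to $M M^*$ in degree zero). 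Concretely it is cleanest to present $L$ and $L^*$ as two-term complexes and write $c_L, d_L$ as honest chain maps, then check the identities at the level of complexes.

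Next I would verify the zigzag relations $(id_L \ot d_L)\circ(c_L \ot id_L) = id_L$ and $(d_L \ot id_{L^*})\circ(id_{L^*} \ot c_L) = id_{L^*}$. Writing $L$ as the complex with $M$ in degree $-1$ and $N$ in degree $0$, the tensor products $L^* L$, $L L^*$, etc., are explicit two- or three-term complexes, and the composites above become $2\times 2$ (or larger) triangular matrices of morphisms whose entries involve $c_M, d_M, c_N, d_N$, the identity maps, and $f$, $f^*$. The diagonal entries reduce to the zigzag identities already known for $M \dashv M^*$ and $N \dashv N^*$, which give $id_M$ and $id_N$ on the two graded pieces; the off-diagonal entry is a map $N \ra M[1]$ (or a homotopy) that must vanish or be cancelled, and this is where the compatibility $f^* = (\text{right adjoint of } f)$ enters — it forces the two ways of going around to agree, because $f^*$ is by definition characterized by $d_N \circ (f \ot id) = d_M \circ (id \ot f^*)$ type naturality squares. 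So the check is: plug in, use the naturality of adjunction with respect to $f$, and use the two known zigzags.

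I expect the main obstacle to be purely bookkeeping: getting the signs and the cohomological shifts right when forming the cone and its dual, and making sure that "the object right adjoint to $\cone(f)$" is $\cone(f^*)$ rather than $\cone(f^*)[\pm 1]$ or its shift. Since cones are only well-defined up to non-canonical isomorphism in a triangulated category, strictly speaking one should work in a model (complexes of bimodules, as in the paper's setting $D(B^e)$) where the cone is a genuine complex, define all maps as chain maps, and check the identities on the nose up to chain homotopy; the homotopies that appear should be built from $c_M, c_N$ or from $f$ in the obvious way. Alternatively, one can phrase the whole argument via the long exact sequences $\Hom(L, -) $ fitting into the triangle and the corresponding one for $L^*$, and use the five lemma after establishing the adjunction isomorphism on $M$ and $N$ separately and checking it commutes with the connecting maps; but the explicit chain-level computation is more transparent and is almost certainly what the paper does. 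In either approach, no deep input beyond the existence of $M^*$, $N^*$, $f^*$ and the axioms of a triangulated monoidal category is needed.
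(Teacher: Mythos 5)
Your overall strategy is right (construct $c_L, d_L$ explicitly as chain maps and verify the zigzag identities, which is exactly what the paper does), but your candidate adjunction maps are not chain maps, and this is a real gap rather than bookkeeping.

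Take $L$ with $M$ in degree $-1$, $N$ in degree $0$, and $L^*$ with $N^*$ in degree $0$, $M^*$ in degree $1$, so that the differential of $L^*$ is $f^*\colon N^*\to M^*$. You propose to define $c_L$ by composing $c_N$ with the inclusion $N^*N \to L^*L$ coming from $N\to L$ and $N^*\to L^*$. But the inclusion $N^*\to L^*$ onto the degree-$0$ piece is \emph{not} a chain map: $d_{L^*}\circ\iota = f^*\neq 0$ while $\iota\circ d_{N^*}=0$. (The inclusion $N\to L$ is fine, because the differential of $L$ points \emph{into} $N$, not out of it; but the two situations are not symmetric.) Symmetrically, your $d_L$ built from the projection of $LL^*$ onto $MM^*$ composed with $d_M$ fails the chain-map condition at degree $0$: the incoming differential $(f\otimes 1, -1\otimes f^*)\colon MN^*\to NN^*\oplus MM^*$ composed with $(0,d_M)$ gives $-d_M(1\otimes f^*)\neq 0$. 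So with your candidate maps, there is nothing to verify — they are not morphisms of $\ch$.

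The fix, which is what the paper does, is to use \emph{both} adjunctions at once: define $d_L$ on the degree-$0$ component $NN^*\oplus MM^*$ by $(d_N,d_M)$, and $c_L$ into $N^*N\oplus M^*M$ by $c_M\oplus c_N$. The point is that the compatibility $d_N\circ(f\otimes 1)=d_M\circ(1\otimes f^*)$, which you correctly recall near the end, is \emph{exactly} what is needed to make $(d_N,d_M)$ (and dually $c_M\oplus c_N$) into a chain map — it kills the component landing in/fed from $MN^*$ (resp.\ $M^*N$). You place that identity in the verification of the zigzag equations, but it is needed one step earlier, in defining the data at all. Once the maps are defined this way, the zigzag check is a straightforward component computation (the paper checks the $N^*$-component explicitly), essentially as you describe.
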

\begin{proof}
The product $LL^*=(MN^* \xra{(f \ot 1, -1\ot f^*)} (NN^* \oplus MM^*) \xra{(1 \ot f^*, f \ot 1)} NM^*)$.
The sum $(d_N, d_M): NN^* \oplus MM^* \ra \mb$ induces a map $d_L: LL^* \ra \mb$.
Similarly, define $c_L: \mb \ra L^*L$ induced by $c_M \oplus c_N: \mb \ra M^*M \oplus N^*N$.
It is easy to check that $d_L, c_L$ are chain maps.
It remains to show that $(id_{L^*} \ot d_{L}) \circ (c_{L} \ot id_{L^*})=id_{L^*}$, and $(d_{L} \ot id_{L}) \circ (id_{L} \ot c_{L})=id_{L}.$
We check the first equality on the component $N^*$.
The left hand side is
$$N^* \xra{(c_N \oplus c_M) \ot 1_{N^*}} (N^*N\oplus M^*M) \ot N^* \cong (N^*NN^* \oplus M^*MN^*) \xra{(1_{N^*} \ot d_{N}, ~~0)} N^*,$$
which is equal to $1_{N^*}$.
The rest of the proof is similar.
\end{proof}

It follows from the lemma above that $Q_{-1}=(Q \xra{f_0^*} B)$ is right adjoint to $P_{-1}$, where $Q$ is in cohomological degree zero.
Using the lemma again, we obtain $\wt{P}_{-2}=(B \xra{(f_0^*)^*} \wtp)$ is right adjoint to $Q_{-1}$.
Here, $(f_0^*)^*=(id_{\wtp} \ot f_0^*) \circ c_{-1}$.

\begin{lemma} \label{lem f0**}
There is a relation $(f_0^*)^*=q \circ (-f_1) \in \Hom(B ,\wtp)$, where $q: P \ra \wtp$ is the quotient map.
\end{lemma}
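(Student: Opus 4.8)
The plan is to deduce the identity formally, without any diagram computation, by chaining together Lemma~\ref{lem fn n-1}, the relation $(id_{\wtp}\ot\op{cap}_{QP})\circ(c_{-1}\ot id_P)=q$ recorded just above (which the text explicitly flags as being needed here), and bifunctoriality of $\ot$. Recall that by construction $(f_0^*)^*=(id_{\wtp}\ot f_0^*)\circ c_{-1}\in\Hom(B,\wtp)$ and $f_0^*=\op{cap}_{PQ}\circ(f_0\ot id_Q)$.

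First I would rewrite $f_0^*$: Lemma~\ref{lem fn n-1} with $n=0$ reads $\op{cap}_{QP}\circ(id_Q\ot f_1)=-\op{cap}_{PQ}\circ(f_0\ot id_Q)=-f_0^*$, so $f_0^*=-\op{cap}_{QP}\circ(id_Q\ot f_1)$. Tensoring on the left with $id_{\wtp}$ and substituting into the formula for $(f_0^*)^*$ gives
$$(f_0^*)^*=-(id_{\wtp}\ot\op{cap}_{QP})\circ(id_{\wtp}\ot id_Q\ot f_1)\circ c_{-1}.$$
Next I would apply the interchange law: since $c_{-1}\in\Hom(B,\wtp Q)$ and $f_1\in\Hom(B,P)$, the morphism $c_{-1}\ot f_1$, viewed as a map $B\ra\wtp QP$, factors in two ways, giving
$$(id_{\wtp}\ot id_Q\ot f_1)\circ c_{-1}\;=\;c_{-1}\ot f_1\;=\;(c_{-1}\ot id_P)\circ f_1,$$
where I used the unit isomorphisms $c_{-1}\ot id_B=c_{-1}$ and $id_B\ot f_1=f_1$. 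Substituting and then invoking $(id_{\wtp}\ot\op{cap}_{QP})\circ(c_{-1}\ot id_P)=q$ yields
$$(f_0^*)^*=-(id_{\wtp}\ot\op{cap}_{QP})\circ(c_{-1}\ot id_P)\circ f_1=-q\circ f_1=q\circ(-f_1),$$
which is the assertion.

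I do not anticipate a genuine obstacle; the only thing to verify carefully is that these manipulations are legitimate in $D(B^e)$, and this holds because $Q$, $P$ and $QP\cong B\oplus PQ$ are projective as one-sided $B$-modules, so all tensor products occurring here coincide with the ordinary ones (as already observed for $\wtp\ot Q$) and bifunctoriality applies. If one preferred a hands-on argument, one could instead evaluate $(id_{\wtp}\ot f_0^*)\circ c_{-1}$ on the generators $1(m)$ of $B$, using that $f_0^*$ caps the frozen strand of $Q$ into a short strand, and then reorganize $\frac{1}{(m-1)!}\sum_{g\in S(m)}\ov g\ot_{1(m-1)}g^{-1}$ into $-\frac{2}{(m-1)!}\sum_{g\in S(m)} v_{m+1}(m+1)(g\bt 1(1))(1(m-1)\bt e_{(1^2)})(g^{-1}\bt 1(1))$ modulo the image of $f_0$; but the formal route is shorter and is evidently what the placement of the auxiliary relation is pointing to.
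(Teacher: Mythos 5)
Your proof is correct and is essentially the paper's argument: figure~\ref{s13} performs exactly this three-step chain diagrammatically, with the first equality from Lemma~\ref{lem fn n-1}, the middle equalities being the isotopy/interchange you phrase as bifunctoriality, and the last equality being the relation $(id_{\wtp}\ot\op{cap}_{QP})\circ(c_{-1}\ot id_P)=q$. You have simply translated the graphical calculus into symbolic form, and your remark about projectivity justifying the tensor-product manipulations is a reasonable safety check, though the interchange law itself holds for bimodule morphisms without it.
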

\begin{proof}
We give a graphical calculus in figure \ref{s13}, where the first equality is from Lemma \ref{lem fn n-1}, and the last equality is from (\ref{eq PP-1}) as in the top right picture in figure \ref{s12}.
\end{proof}
\begin{figure}[h]
\begin{overpic}
[scale=0.25]{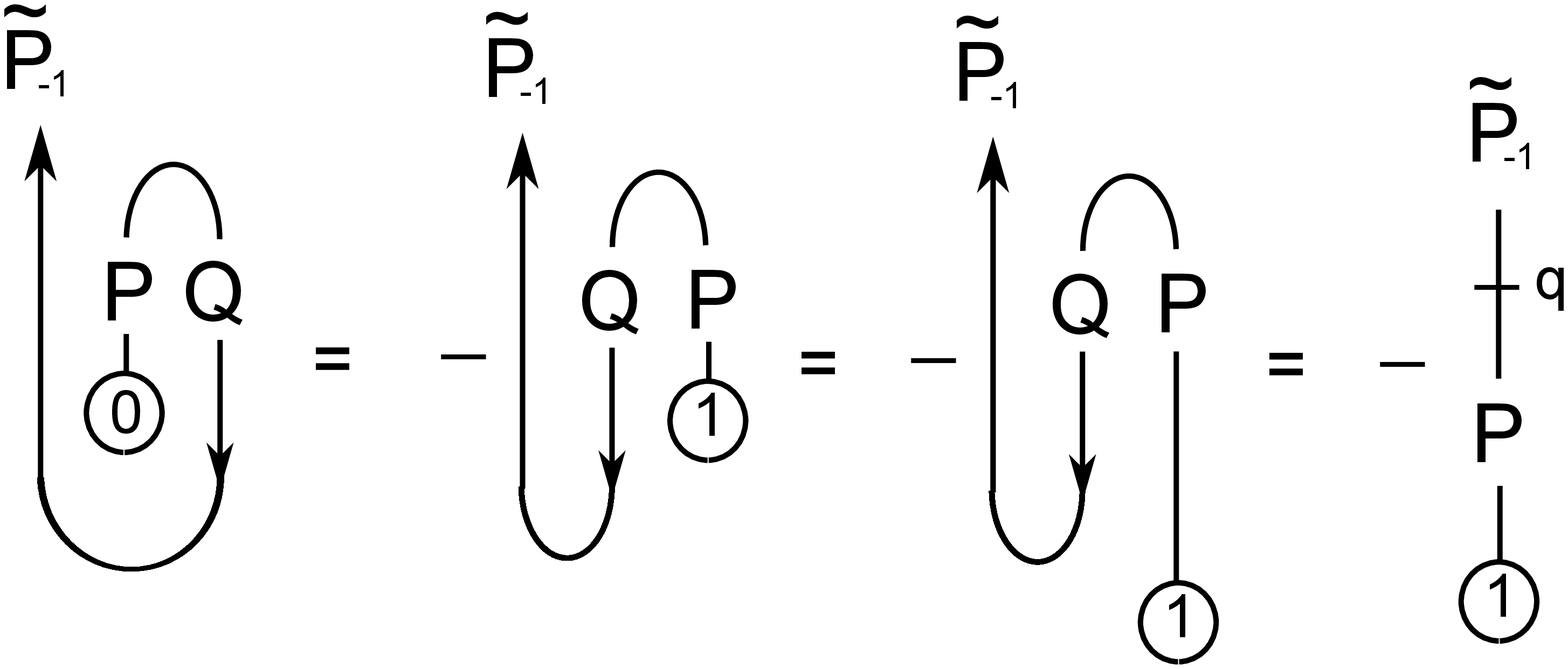}
\end{overpic}
\caption{In the rightmost picture, the little bar labeled by $q$ denotes the quotient map $q: P \ra \wtp$.}
\label{s13}
\end{figure}

Define $P_{-2}=(B\oplus B \xra{(f_0, f_1)} P)$ where $P$ is in cohomological degree zero.
There is a morphism $t: P_{-2} \ra \wt{P}_{-2}$ in $\ch$:
$$\xymatrix{
P_{-2}: \ar[d]^{t} & B \oplus B \ar[d]_{(0, id_B)} \ar[r]^{(f_0, f_1)} & P \ar[d]^{-q} \\
\wt{P}_{-2}:  & B  \ar[r]^{(f_0^*)^*} & \wtp \\
}$$
The map $t$ is a quasi-isomorphism since $B \xra{f_0} P \xra{-q} \wtp$ is a short exact sequence.
Therefore, $P_{-2} \cong \wt{P}_{-2} \in \ch$.

\begin{defn}
For $n \ge 0$, define $P_{-n}$ as a complex $(B^{\oplus n} \xra{(f_0, \dots, f_{n-1})} P)$,
and dually define $Q_{-n}$ as $(Q \xra{f_0^*, \dots, f_{n-1}^*} B^{\oplus n})$, where $P$ and $Q$ are in cohomological degree zero.
In particular, $P_0=P$ and $Q_0=Q$.
\end{defn}

Using a similar argument of proving $P_{-2} \cong \wt{P}_{-2}$, one can prove the following results.
\begin{prop} \label{prop adj negative}
There is a half infinite chain of objects
$$P_0 \leftrightarrow Q_0 \leftrightarrow P_{-1}\leftrightarrow \cdots \leftrightarrow Q_{-n+1} \leftrightarrow P_{-n} \leftrightarrow Q_{-n} \leftrightarrow \cdots,$$
such that each pair of objects form an adjoint pair in $\ch$.
\end{prop}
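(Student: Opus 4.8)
The plan is to bootstrap from the results already established for the first few objects in the chain, namely $P_0 \leftrightarrow Q_0$ (the isotopy relations (1) in figure \ref{s6}), $Q_0 \leftrightarrow P_{-1}$ (Proposition \ref{prop adj -1}), $P_{-1} \leftrightarrow Q_{-1}$ and $Q_{-1} \leftrightarrow P_{-2}$ (from Lemma \ref{lem adj cone} together with the identification $P_{-2} \cong \wt{P}_{-2}$), and then run an induction on $n$. The inductive engine is Lemma \ref{lem adj cone}: given an adjoint pair $(L, L^*)$ with $L = (M \xra{f} N)$, the lemma produces $L^* = (N^* \xra{f^*} M^*)$ as its right adjoint. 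So the strategy is to show, inductively, that the next object in the chain, when written as an explicit cone, is exactly the $L^*$ produced by applying Lemma \ref{lem adj cone} to the data of the previous pair.

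Concretely, I would set up the induction with the statement: for each $n \ge 1$, the pair $(Q_{-n+1}, P_{-n})$ is an adjoint pair and the pair $(P_{-n}, Q_{-n})$ is an adjoint pair, where $P_{-n} = (B^{\oplus n} \xra{(f_0,\dots,f_{n-1})} P)$ and $Q_{-n} = (Q \xra{(f_0^*,\dots,f_{n-1}^*)} B^{\oplus n})$, with $P$, $Q$ in cohomological degree zero. For the step $P_{-n} \leftrightarrow Q_{-n}$: writing $P_{-n}$ as the cone of $B^{\oplus n} \xra{(f_0,\dots,f_{n-1})} P$, Lemma \ref{lem adj cone} says its right adjoint is the cone of $P^* \xra{(f_0,\dots,f_{n-1})^*} (B^{\oplus n})^*$; since $P^* = Q$ (the isotopy adjunction) and $(B^{\oplus n})^* = B^{\oplus n}$, and the adjoint of $(f_0,\dots,f_{n-1})$ is $(f_0^*,\dots,f_{n-1}^*)$ componentwise by definition of $f_i^*$, this cone is precisely $Q_{-n}$. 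For the step $Q_{-n} \leftrightarrow P_{-n-1}$: applying Lemma \ref{lem adj cone} to $Q_{-n} = (Q \xra{(f_0^*,\dots,f_{n-1}^*)} B^{\oplus n})$ gives as right adjoint the cone $(B^{\oplus n}{}^* \xra{((f_0^*,\dots,f_{n-1}^*))^*} Q^*)$, i.e. a complex of the form $(B^{\oplus n} \xra{((f_0^*)^*,\dots,(f_{n-1}^*)^*)} \wt{P}_{-1})$ — here one uses that $Q^* = \wt{P}_{-1}$ by Proposition \ref{prop adj -1}. The remaining task is to identify this with $P_{-n-1}$ up to quasi-isomorphism, which generalizes the explicit quasi-isomorphism $t\colon P_{-2} \xra{\sim} \wt{P}_{-2}$ drawn in the square before Proposition \ref{prop adj negative}.

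The key computational input, and the step I expect to be the main obstacle, is the identification $(f_j^*)^* = q \circ (-f_{j+1})$ for all $j \ge 0$ — the case $j=0$ being exactly Lemma \ref{lem f0**}. Granting this, the complex $(B^{\oplus n} \xra{((f_0^*)^*,\dots,(f_{n-1}^*)^*)} \wt{P}_{-1})$ becomes $(B^{\oplus n} \xra{q\circ(-f_1,\dots,-f_n)} \wt{P}_{-1})$, and there is a map from $P_{-n-1} = (B^{\oplus(n+1)} \xra{(f_0,\dots,f_n)} P)$ to it, built exactly as in the $P_{-2}$ case: the identity on the last $n$ copies of $B$ (sending the first copy to zero), and $-q\colon P \to \wt{P}_{-1}$; this is a quasi-isomorphism because $B \xra{f_0} P \xra{-q} \wt{P}_{-1}$ is a short exact sequence and the remaining summands match. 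So the proof of the general statement $(f_j^*)^* = -q\circ f_{j+1}$ is the crux: it should follow from a graphical calculation extending figure \ref{s13}, using Lemma \ref{lem fn n-1} (the relation $\op{cap}_{QP}\circ(\mathrm{id}_Q \ot f_{n+1}) = -\op{cap}_{PQ}\circ(f_n \ot \mathrm{id}_Q)$) to slide the cap past $f_{j+1}$, and the relation $(id_{\wtp} \ot \op{cap}_{QP}) \circ (c_{-1} \ot id_{P}) = q$ from the displayed equation before Lemma \ref{lem f0**}.

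Finally, I would assemble these steps: the base cases handle the initial segment $P_0 \leftrightarrow Q_0 \leftrightarrow P_{-1} \leftrightarrow Q_{-1} \leftrightarrow P_{-2}$, the two inductive steps propagate adjointness one link at a time in both parities, and the resulting infinite chain is exactly the one in the statement. One should also double-check that each $P_{-n}$ and $Q_{-n}$ actually lies in $\ch$: this is clear since $B$, $P$, $Q$ are objects of $\ch$, the maps $f_i, f_i^*$ are morphisms in $\ch$, and $\ch$ is closed under cones (being a triangulated subcategory) and under the Karoubi envelope. No semisimplicity or finiteness beyond what is already in the excerpt is needed.
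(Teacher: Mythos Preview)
Your proposal is correct and follows essentially the same approach as the paper, which merely says ``Using a similar argument of proving $P_{-2} \cong \wt{P}_{-2}$, one can prove the following results'' without spelling out the induction. You have correctly identified the crux: the identity $(f_j^*)^* = q \circ (-f_{j+1})$ for all $j \ge 0$, whose proof is literally the graphical calculation of Lemma~\ref{lem f0**} with the index shifted (Lemma~\ref{lem fn n-1} is already stated for general $n$), together with the explicit quasi-isomorphism $P_{-n-1} \to (B^{\oplus n} \xra{((f_0^*)^*,\dots,(f_{n-1}^*)^*)} \wtp)$ generalizing the map $t$ for $P_{-2}$.
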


We will extend the chain to the left in the next two subsections.

\subsection{Extensions of $B$-bimodules}
There is an exact triangle $Q_{-1} \ra Q_0 \xra{f_0^*} B \ra Q_{-1}[1]$ in $\ch$.
Here, $Q_0$ is left adjoint to $P_{-1}$ which is left adjoint to $Q_{-1}$.
Similar exact triangles exist in $\ch$ when we decrease indices of $Q$'s.
Suppose we increase indices by one to obtain a sequence
$$Q_0 \ra Q_1 \ra B \xra{h} Q_0[1]$$
for some $h \in \Hom(B, Q_{0}[1])$, where $Q_1$ is some object to be constructed.
Then $Q_1$ might be a candidate of an object left adjoint to $P_0$ which is left adjoint to $Q_0$.
If the sequence can be proved to be an exact triangle, then $Q_1$ is isomorphic to the cone of $h$, i.e. an extension of $B$ by $Q_0=Q$.
This observation motivates us to study the extension group $\Hom(B, Q[1])$.

We construct a $B$-bimodule $Q_1$ in two steps as follows.
We will show that there is a short exact sequence of $B$-bimodules:
\begin{gather} \label{eq Q1}
0 \ra Q \ra Q_1 \ra B \ra 0.
\end{gather}

\n{\bf Step 1: The left module.}
We first define
$$Q_1=Q \oplus B,$$
as a left $B$-module.
Let $[a]_1 \in B$, and $[b]_Q \in Q$ for $a,b \in B(m,n)$.
Then
$$a \cdot [1(n)]_1=[a]_1; \qquad 1(n) \cdot [1(n+1)]_Q=[1_{n+1}]_Q, \quad b \cdot [1(n+1)]_Q=[b \bt 1_1]_Q.$$
The left $B$-module $Q_1$ is projective.

\vspace{.1cm}
\n{\bf Step 2: The right module.}
The subspace $Q$ of $Q_1$ is a right $B$-submodule:
$$[b]_Q \cdot b'=[bb']_Q, \quad \mbox{for}~~ b \in B(m,n), b' \in B(n,n').$$
The algebra $B$ contains $\bigoplus\limits_{n \ge 0}\K[S(n)]$ as a subalgebra.
The subspace $B$ is a right $\bigoplus\limits_{n \ge 0}\K[S(n)]$-submodule of $Q_1$:
$$[1(n)]_1 \cdot a=[a]_1, \quad \mbox{for}~~ a \in \K[S(n)].$$
The only nontrivial part of the definition of $Q_1$ is right multiplication on $[1(n-1)]_1$ with $v_n(n) \in B(n-1,n)$, for $n \ge 1$:
\begin{gather} \label{eq rt g}
[1(n-1)]_1 \cdot v_n(n)=[v_n(n)]_1 + [1(n)]_Q,
\end{gather}
see figure \ref{s14}.
Here we use the presentation of $B$ in Lemma \ref{lem def B}.
The algebra $B$ is generated by $\K[S(n-1)]$ and $v_n(n)$ for $n \ge 1$.
In particular, $v_{i}(n)=v_n(n) \cdot g_{i}(n)$ for some element $g_{i}(n) \in \K[S(n)]$.
Define $[1(n-1)]_1 \cdot v_i(n)=([1(n-1)]_1 \cdot v_n(n)) \cdot g_{i}(n)$.

Define right multiplication on $[a]_1$ with $b \in B$ as
\begin{gather} \label{eq lf rt comm}[a]_1 \cdot b=a \cdot ([1(n)]_1 \cdot b),\end{gather}
for $a \in B(m,n)$.
Our construction of $Q_1$ is complete.

\begin{figure}[h]
\begin{overpic}
[scale=0.25]{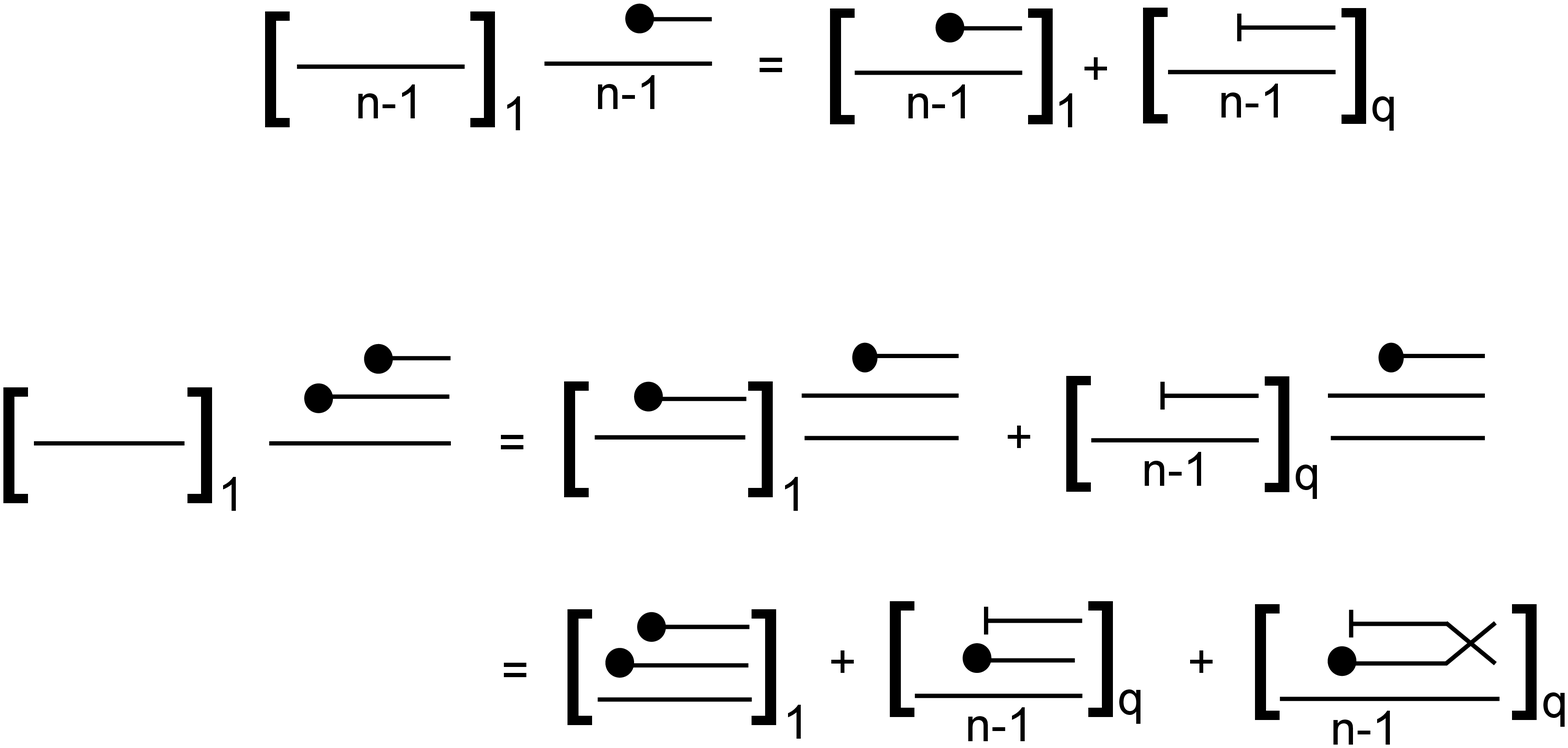}
\end{overpic}
\caption{The first row is the definition of $[1(n-1)]_1 \cdot v_n(n)$; the rest computes $([1(n-1)]_1 \cdot v_n(n)) \cdot v_{n+1}(n+1)$.}
\label{s14}
\end{figure}

\vspace{.1cm}
By definition, $Q$ is a subbimodule of $Q_1$.
As a left $B$-module, $B$ is projective, and generated by $[1(n)]_1$ for $n \ge 0$.
Thus, the right multiplication on the left submodule $B$ of $Q_1$ is determined by the right multiplication on these generators.
It commutes with the left multiplication by (\ref{eq lf rt comm}).

\begin{lemma} \label{lem Q1 well}
The right multiplication on $Q_1$ is well-defined.
\end{lemma}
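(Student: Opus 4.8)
The plan is to verify that the right $B$-module structure defined on $Q_1$ in Step 2 respects all the defining relations of $B$ as given in Lemma \ref{lem def B}, namely that $B$ is generated by $\K[S(n-1)]$ and $v_n(n)$ for $n\ge 1$ subject to (i) the idempotent compatibility $1(n-1)v_n(n)=v_n(n)1(n)=v_n(n)$, (ii) the commutation $v_n(n)s_i(n)=s_i(n-1)v_n(n)$ for $i<n-1$, and (iii) the symmetric relation $v_n(n)v_{n+1}(n+1)s_n(n+1)=v_n(n)v_{n+1}(n+1)$. Since $Q$ is already a subbimodule and the formula (\ref{eq lf rt comm}) forces right multiplication on all of $Q_1$ to be determined by its values on the generators $[1(n)]_1$, and since left multiplication visibly commutes with right multiplication by construction, it suffices to check that the assignments $[1(n-1)]_1\cdot v_n(n)=[v_n(n)]_1+[1(n)]_Q$ and $[1(n)]_1\cdot a=[a]_1$ for $a\in\K[S(n)]$ are consistent with relations (i)--(iii) above, and moreover that the extension of the $v_n(n)$-action to all $v_i(n)$ via $v_i(n)=v_n(n)g_i(n)$ is well-defined (independent of the choice of $g_i(n)$).

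First I would dispense with the easy relations. Relation (i) is immediate from the definitions since $[1(n-1)]_1\cdot v_n(n)\in [B]_1\oplus[Q]_Q$ lands in the $1(n-1)$-left-component and the $1(n)$-right-component, matching the required idempotents. Relation (ii), the commutation of $v_n(n)$ with $s_i(n)$ for $i<n-1$, follows because on the $[v_n(n)]_1$-summand it is the relation already holding in $B$, and on the $[1(n)]_Q$-summand it is the relation holding in $Q$ (recalling that on $Q$ one has $1(n)\cdot$ something and $s_i$ acts through $\rho$, so $s_i(n)$ on the right of $[1(n)]_Q$ becomes $s_i$ acting on $n+1$ strands avoiding the frozen one). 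The independence of the $v_i(n)$-action on the choice of $g_i(n)$ with $v_i(n)=v_n(n)g_i(n)$ follows once relation (iii) is verified, because two such choices differ by an element of the stabilizer, which by Lemma \ref{lem bmn} is generated exactly by the relation appearing in (iii).

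The main obstacle, and the computational heart of the proof, will be relation (iii): the symmetric relation $v_n(n)v_{n+1}(n+1)s_n(n+1)=v_n(n)v_{n+1}(n+1)$ must be shown to hold after applying it to $[1(n-1)]_1$. Concretely one must compute $([1(n-1)]_1\cdot v_n(n))\cdot v_{n+1}(n+1)$, expand using (\ref{eq rt g}) and (\ref{eq lf rt comm}) to get a sum of terms in $[B]_1$ and $[Q]_Q$ — this is precisely what Figure \ref{s14} depicts — and then check that right multiplication by $s_n(n+1)$ fixes this element. On the $[\,\cdot\,]_1$-component this reduces to the symmetric relation in $B$ itself; on the $[\,\cdot\,]_Q$-component it reduces to the corresponding relation in the bimodule $Q$, using that $Q=\bigoplus_{m,n\ge1}B(m,n)$ with the frozen top strand, so that $s_n(n+1)$ acting on the right slides past the relevant short strand via the symmetric relation in $B$ applied to the non-frozen strands. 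I expect that once the two components are separated, each verification is a direct application of the already-established relations in $B$ and in $Q$, so the proof is a bookkeeping exercise organized exactly along the decomposition $Q_1=[B]_1\oplus[Q]_Q$, with Figure \ref{s14} supplying the key diagrammatic identity.
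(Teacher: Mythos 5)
Your overall strategy matches the paper's: reduce to the presentation of $B$ in Lemma~\ref{lem def B}, dispose of the easy relations, and then check the symmetric relation on $[1(n-1)]_1$ by expanding $([1(n-1)]_1\cdot v_n(n))\cdot v_{n+1}(n+1)$ via (\ref{eq rt g}) and (\ref{eq lf rt comm}), which is exactly the computation in Figure~\ref{s14}. However, your description of how the $[\,\cdot\,]_Q$-component survives right multiplication by $s_n(n+1)$ is off. The expansion yields
$[v_n(n)v_{n+1}(n+1)]_1 + [v_n(n+1)]_Q + [v_{n+1}(n+1)]_Q$,
and the $[\,\cdot\,]_1$-term is indeed fixed by the symmetric relation in $B$. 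But neither $[v_n(n+1)]_Q$ nor $[v_{n+1}(n+1)]_Q$ is individually invariant under $\cdot\, s_n(n+1)$, and there is no ``corresponding relation in the bimodule $Q$'' doing the work. What actually happens is that the slide relation $v_n(n+1)s_n(n+1)=v_{n+1}(n+1)$ (and its inverse) swaps the two $Q$-terms, so their \emph{sum} is invariant. This pairing is the crux of the argument and is precisely what the paper emphasizes with the sentence ``Recall that $s_n(n+1)\in S(n+1)$ exchanges $v_n(n+1)$ and $v_{n+1}(n+1)$.'' Your phrase ``$s_n(n+1)$ acting on the right slides past the relevant short strand via the symmetric relation'' conflates the slide relation with the symmetric relation and suggests term-by-term invariance, which would be false. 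The rest of your plan (checking (i), (ii), and the independence of $g_i(n)$) is sound, and once the pairing of the two $Q$-terms is recognized the proof goes through exactly as you envisage.
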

\begin{proof}
Since the right action of $\K[S(n)]$ on $Q_1$ is the ordinary multiplication in $B$, we only have to check the relations involving $v_n(n)$ in Lemma \ref{lem def B}.
The isotopy relation is easy to verify.
To check the symmetric relation $v_{n}(n) v_{n+1}(n+1)=v_{n}(n) v_{n+1}(n+1) s_{n}(n+1)$, we compute
\begin{align*}
([1(n-1)]_1 \cdot v_n(n)) \cdot v_{n+1}(n+1) &=([v_{n}(n)]_1+[1(n)]_Q) \cdot v_{n+1}(n+1) \\
&=v_{n}(n) \cdot ([1(n)]_1 \cdot v_{n+1}(n+1)) +[v_{n+1}(n+1)]_Q \\
&=v_{n}(n) \cdot ([v_{n+1}(n+1)]_1 + [1(n+1)]_Q) +[v_{n+1}(n+1)]_Q \\
&=[v_{n}(n) v_{n+1}(n+1)]_1 + [v_n(n+1)]_Q +[v_{n+1}(n+1)]_Q,
\end{align*}
see figure \ref{s14}.
Recall that $s_n(n+1) \in S(n+1)$ exchanges $v_n(n+1)$ and $v_{n+1}(n+1)$.
It implies that $([1(n-1)]_1 \cdot v_n(n)) \cdot v_{n+1}(n+1)$ is invariant under right multiplication with $s_n(n+1)$.
\end{proof}

The $B$-bimodule $Q_1$ fits into the short exact sequence (\ref{eq Q1}).

\begin{lemma} \label{lem Q1 nontriv}
The extension $Q_1$ of $B$ by $Q$ is not split.
\end{lemma}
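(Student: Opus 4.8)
The claim is that the short exact sequence $0 \ra Q \ra Q_1 \ra B \ra 0$ from (\ref{eq Q1}) does not split in the category of $B$-bimodules, equivalently that the class of $Q_1$ in $\op{Ext}^1_{B^e}(B, Q)$ is nonzero. The plan is to argue by contradiction: assume there is a $B$-bimodule splitting $\sigma: B \ra Q_1$ of the surjection $Q_1 \tra B$, and derive a contradiction from the defining formula (\ref{eq rt g}) for the right action. Since $Q_1 = Q \oplus B$ as a left $B$-module and $\sigma$ is a left $B$-module map splitting the projection to the summand $B$, the map $\sigma$ is determined by a left $B$-module homomorphism $\phi: B \ra Q$, namely $\sigma([a]_1) = [a]_1 + \phi(a)$ for $a \in B$, where I write elements of the left submodule $B \subset Q_1$ as $[a]_1$. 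Because $B$ is generated as a left $B$-module by the idempotents $1(n)$, the map $\phi$ is determined by the elements $\phi(1(n)) \in 1(n) \cdot Q \cdot 1(n)$.

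Next I would impose that $\sigma$ is also a \emph{right} $B$-module map. Applying $\sigma$ to the identity $[1(n-1)]_1 \cdot v_n(n) = [v_n(n)]_1 + [1(n)]_Q$ and using right $B$-linearity of $\sigma$ together with (\ref{eq rt g}) gives
\begin{align*}
\sigma([1(n-1)]_1)\cdot v_n(n) &= \sigma\big([1(n-1)]_1 \cdot v_n(n)\big) = \sigma\big([v_n(n)]_1 + [1(n)]_Q\big)\\
&= [v_n(n)]_1 + \phi(v_n(n)) + [1(n)]_Q.
\end{align*}
On the other hand $\sigma([1(n-1)]_1) \cdot v_n(n) = \big([1(n-1)]_1 + \phi(1(n-1))\big) \cdot v_n(n) = [v_n(n)]_1 + [1(n)]_Q + \phi(1(n-1))\cdot v_n(n)$, where the last term is computed inside the bimodule $Q$. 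Comparing the two expressions, the splitting condition forces $\phi(v_n(n)) = \phi(1(n-1)) \cdot v_n(n)$ in $Q$ — which is automatic since $\phi$ is a left module map and $v_n(n) = 1(n-1)\cdot v_n(n)$ — so no contradiction arises at this level, and I must push further: I compare the $[1(n)]_Q$-component. The point is that the term $[1(n)]_Q$ appears on the right-hand side coming from (\ref{eq rt g}) but there is no matching contribution on the left-hand side unless $\phi$ absorbs it, i.e. the real constraint is that $\phi(1(n))$ must satisfy a recursion of the form $\phi(1(n-1)) \cdot v_n(n) = \phi(1(n))\cdot(\text{something}) + [1(n)]_Q + \dots$ obtained by expanding $\phi(1(n-1))\cdot v_n(n)$ using right multiplication in $Q$ and Lemma \ref{lem bmn}. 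Tracking this recursion, starting from $\phi(1(0)) \in 1(0)\cdot Q \cdot 1(0) = 0$ (since $Q\cdot 1(0) = 0$), I would show inductively that the required $\phi(1(n))$ cannot exist — the obstruction being precisely the "extra" generator $[1(n)]_Q$ that is not in the image of any left-module combination, much as in the proof that $\Hom(B,PQ)=0$.

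The main obstacle is the bookkeeping in the recursion: one must carefully expand $\phi(1(n-1)) \cdot v_n(n)$ using the right $B$-module structure on $Q$ (where $Q$ is the induction bimodule) and identify the $1(n)$-diagonal component, then show the inhomogeneous term $[1(n)]_Q$ cannot be cancelled. A cleaner alternative, which I would prefer if the direct computation gets unwieldy, is to apply a suitable functor to (\ref{eq Q1}) and show the resulting sequence of, say, left $B$-modules or of vector spaces is non-split: for instance, tensor on the right with $V_{(0)}$ over $B$, or restrict the bimodule to a one-sided module over a finite piece $B(k)$, reducing to a concrete finite-dimensional non-splitting that can be checked by computing an $\op{Ext}^1$ between specific projectives $V_\mu$. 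Either way, the conceptual content is that the class of $Q_1$ is detected by the same asymmetry of the short strand $v_n(n)$ that makes $Q_1$ genuinely new compared to Khovanov's category; this is exactly what will be used in the next subsection to identify $Q_1$ as the object left adjoint to $P_0$ via the nonzero extension class in $\Hom(B, Q[1])$.
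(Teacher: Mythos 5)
Your setup is right — a bimodule splitting of $0 \to Q \to Q_1 \to B \to 0$ amounts to a left $B$-module map $\phi: B \to Q$ (determined by elements $\phi(1(n)) \in 1(n)\cdot Q\cdot 1(n)$) that also makes the section right $B$-linear, and you correctly identify (\ref{eq rt g}) as the source of the obstruction. But you leave the argument at the level of a "recursion" whose bookkeeping you acknowledge you haven't carried out, and this is where a gap sits: you observe $1(0)\cdot Q\cdot 1(0) = 0$ only for $n=0$, whereas in fact $1(n)\cdot Q\cdot 1(n) = 0$ for \emph{every} $n$. Indeed $1(n)\cdot Q\cdot 1(n) \cong 1(n+1)\,B\,1(n) = B(n+1,n)$, which vanishes because $B(m,n) = 0$ whenever $m > n$. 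Hence $\phi(1(n)) = 0$ for all $n$, so $\phi \equiv 0$ and the section is forced to be $\sigma(1(n)) = [1(n)]_1$. There is no recursion to track. The contradiction is then one line: left-linearity gives $\sigma(v_n(n)) = v_n(n)\cdot\sigma(1(n)) = [v_n(n)]_1$, while right-linearity gives $\sigma(v_n(n)) = \sigma(1(n-1))\cdot v_n(n) = [1(n-1)]_1\cdot v_n(n) = [v_n(n)]_1 + [1(n)]_Q$ by (\ref{eq rt g}).

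This is essentially what the paper does, phrased in terms of a retraction $\psi: Q_1 \to Q \oplus B$ rather than a section $B \to Q_1$; the two are equivalent. The paper also uses $1(n)\cdot Q\cdot 1(n) \cong B(n+1,n) = 0$ as the pivotal step to pin down the splitting map on the generators $1(n)$ before deriving the same contradiction from the right action of $v_n(n)$. So your proposal has the correct skeleton and correctly locates the obstruction in (\ref{eq rt g}), but without the uniform vanishing $1(n)\cdot Q\cdot 1(n) = 0$ you cannot close the argument; the "careful expansion" and "inductive recursion" you defer to are unnecessary once that vanishing is in hand. The alternative routes you sketch (tensoring with $V_{(0)}$, restricting to $B(k)$) are plausible fallbacks but are not needed.
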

\begin{proof}
Suppose $Q_1$ is split, i.e. there is a commutative diagram
$$
\xymatrix{
 0 \ar[r] & Q \ar[r] \ar[d]^{id} &  Q_1 \ar[r] \ar[d]^{\psi} & B \ar[d]^{id} \ar[r] & 0. \\
 0 \ar[r] & Q \ar[r]  & Q\oplus B  \ar[r] & B \ar[r] & 0. \\
}$$
of $B$-bimodules.
Let $[1(n-1)]'_1$ and $[1(n)]'_Q$ for $n \ge 1$ denote the generators of $B \oplus Q$.
The commutative diagram implies that $\psi([1(n)]_Q)=[1(n)]'_Q$, and $\psi([1(n)]_1)=[1(n)]'_1+[t]'_Q$ for some $t \in 1(n)\cdot Q \cdot 1(n)$.
Since $1(n)\cdot Q \cdot 1(n)\cong B(n+1,n)=0$, we have $\psi([1(n)]_1)=[1(n)]'_1$.
So $\psi([t]_1)=\psi(t \cdot [1(n)]_1)=t \cdot \psi([1(n)]_1)=t \cdot [1(n)]'_1=[t]'_1$.
Similarly, $\psi([t]_Q)=[t]'_Q$.
We have
$$\psi([1(n-1)]_1 \cdot v_n(n))=\psi([v_n(n)]_1+[1(n)]_Q)=[v_n(n)]'_1+[1(n)]'_Q,$$
$$\psi([1(n-1)]_1) \cdot v_n(n)=[1(n-1)]'_1 \cdot v_n(n)=[v_n(n)]'_1.$$
Thus $\psi$ is not a map of right $B$-modules. This gives a contradiction.
\end{proof}

We mimic the definition of $Q_1$ to construct a family of extensions of $B$ by $Q$ as follows.
For $n \geq 0$, define a $B$-bimodule $Q_{1,n}$ in two steps.

\n{\bf Step 1: The left module.}
Define $Q_{1,n}=Q \oplus B$
as a left $B$-module.

\n{\bf Step 2: The right module.}
The subspace $Q$ of $Q_{1,n}$ is a right $B$-submodule.
The subspace $B$ is a right $\bigoplus\limits_{n \ge 0}\K[S(n)]$-submodule of $Q_{1,n}$.
The nontrivial part is to define the right multiplication $[1(m)]_1 \cdot v_{m+1}(m+1)$ as:
\begin{align*}
[v_{m+1}(m+1)]_1, & \quad \mbox{if}~ 0 \le m < n, \\
[v_{m+1}(m+1)]_1+\frac{n+1}{k!}\sum\limits_{g \in S(m)}[(g \bt 1(1)) (1(k) \bt e_{(n+1)}) (g^{-1} \bt 1(1))]_Q, & \quad \mbox{if}~ m=n+k, k\ge 0,
\end{align*}
where $e_{(n+1)}=\frac{1}{(n+1)!}\sum\limits_{g \in S(n+1)}~g$.

\begin{figure}[h]
\begin{overpic}
[scale=0.25]{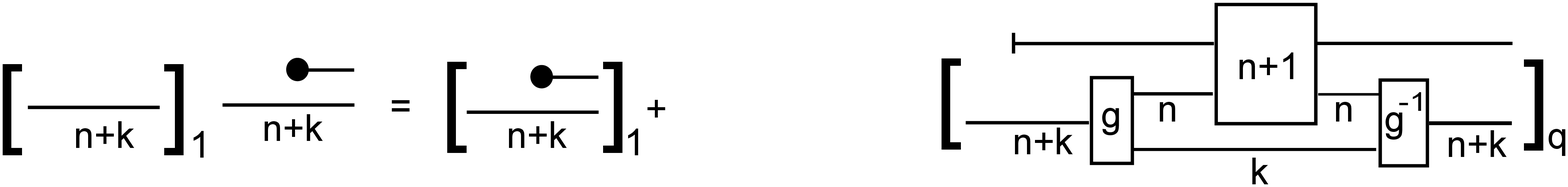}
\put(44,5){$\frac{n+1}{k!}\sum\limits_{\scriptscriptstyle g \in S(m)}$}
\end{overpic}
\caption{The right multiplication $[1(m)]_1 \cdot v_{m+1}(m+1)$ in $Q_{1,n}$ for $m=n+k$.}
\label{s15}
\end{figure}
By definition, $Q_{1,0}=Q_1$.
For $m=n$,
\begin{gather} \label{eq hn}
[1(n)]_1 \cdot v_{n+1}(n+1)=[v_{n+1}(n+1)]_1+(n+1)!~[e_{(n+1)}]_Q \in Q_{1,n}.
\end{gather}
We describe the nontrivial right multiplication in figure \ref{s15}, where a box with $n+1$ inside denotes $e_{(n+1)}$.
To compare figures \ref{s15} and \ref{s7}, the definitions of $Q_{1,n}$ and $f_n \in \Hom(B,P)$ are in some sense dual to each other.

\begin{lemma} \label{lem hn welldefined}
The right multiplication on $Q_{1,n}$ is well-defined.
\end{lemma}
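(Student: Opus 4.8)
The plan is to verify, as in Lemma~\ref{lem Q1 well}, that the prescribed right action on $Q_{1,n}$ respects the defining relations of $B$ from the presentation in Lemma~\ref{lem def B}. Since the right action of $\bigoplus_{n\ge0}\K[S(n)]$ on $Q_{1,n}$ is inherited from the ordinary multiplication of $B$ on the summands $Q$ and $B$, the isotopy relation $v_n(n)s_i(n)=s_i(n-1)v_n(n)$ for $i<n-1$ and the identity relations are immediate; the content is entirely in the symmetric relation $v_n(n)v_{n+1}(n+1)=v_n(n)v_{n+1}(n+1)s_n(n+1)$, which must be checked after applying it to the generators $[1(m)]_1$ of the left submodule $B$ (the check on $[b]_Q$ is automatic since $Q\subset Q_{1,n}$ is a subbimodule). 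Because the left $B$-module $Q_{1,n}$ is projective and generated by the $[1(m)]_1$, and right multiplication commutes with left multiplication by construction (equation~(\ref{eq lf rt comm}) copied verbatim for $Q_{1,n}$), it suffices to compute $\bigl([1(m)]_1\cdot v_{m+1}(m+1)\bigr)\cdot v_{m+2}(m+2)$ and show it is invariant under right multiplication by $s_{m+1}(m+2)$.

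First I would split into the two ranges $m<n$ and $m=n+k$, $k\ge0$. In the range $m<n-1$ the formula reduces to $[1(m)]_1\cdot v_{m+1}(m+1)=[v_{m+1}(m+1)]_1$ and the computation is exactly the one in Lemma~\ref{lem Q1 well}, with the $[\,\cdot\,]_Q$-terms appearing only once we reach index $n$; so the genuinely new case is $m=n+k$ with $k\ge0$ (and the boundary case $m=n-1$ feeding into it). Here I would expand $\bigl([1(n+k)]_1\cdot v_{n+k+1}(n+k+1)\bigr)\cdot v_{n+k+2}(n+k+2)$ using~(\ref{eq lf rt comm}) to move the leading symmetric-group factors $g\bt 1(1)$ and the idempotent $1(k)\bt e_{(n+1)}$ past into left multiplication on the generator $[1(n+k+1)]_1$, then apply the defining formula again at level $n+k+1$. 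The outcome is a sum of a $[\,\cdot\,]_1$-term $[v_{n+k+1}v_{n+k+2}]_1$ plus two families of $[\,\cdot\,]_Q$-terms, exactly parallel to the four-term expression displayed in the proof of Lemma~\ref{lem Q1 well}; cf.\ figure~\ref{s15}.

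The key point — and the main obstacle — is to see that this combination is fixed by $s_{m+1}(m+2)=s_{n+k+1}(n+k+2)$. The $[v_{n+k+1}v_{n+k+2}]_1$-part is handled by the symmetric relation in $B$ itself (it lands in the subbimodule $B$, where the action is the genuine $B$-multiplication). For the $[\,\cdot\,]_Q$-terms the crucial fact is that the idempotent $e_{(n+1)}$ — the \emph{symmetrizer}, not the antisymmetrizer used in $f_n$ — absorbs the transposition coming from $s_{n+k+1}(n+k+2)$ when it is slid into position via the symmetric relation on $v$'s; concretely $e_{(n+1)}$ is invariant under right multiplication by any $s_i$ it meets, so the two $[\,\cdot\,]_Q$-summands are exchanged-or-fixed by $s_{n+k+1}(n+k+2)$ in a way that leaves their sum unchanged. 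This is the dual of the mechanism by which $f_n$ is well-defined in Lemma~\ref{lem fn welldefined}, with $e_{(1^{n+1})}$ replaced by $e_{(n+1)}$; I would organize the bookkeeping by the set $D(k+1)$ of transpositions $(1,i)$ exactly as in figure~\ref{s8}, reindexing the sum over $S(m+1)$ as a sum over $D(k+1)$ times a sum over $S(m)$, and then checking the $s$-invariance summand by summand. Once this invariance is established the right action descends through all relations of Lemma~\ref{lem def B}, and $Q_{1,n}$ is a well-defined $B$-bimodule.
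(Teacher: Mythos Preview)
Your approach matches the paper's almost exactly: reduce to the relations in Lemma~\ref{lem def B}, dispose of the isotopy relation quickly, and verify the symmetric relation by showing that $([1(m)]_1\cdot v_{m+1}(m+1))\cdot v_{m+2}(m+2)$ is $s_{m+1}(m+2)$-invariant via a computation that parallels Lemma~\ref{lem Q1 well} and uses the analogue of the argument in Lemma~\ref{lem fn welldefined} (figure~\ref{s8}), with the symmetrizer $e_{(n+1)}$ playing the role dual to $e_{(1^{n+1})}$. The paper says precisely this and then leaves the details to the reader, so your write-up is in fact more explicit than the paper's.

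One small correction: your justification for the isotopy relation is not quite right. The fact that $\bigoplus_n\K[S(n)]$ acts by ordinary multiplication is not enough by itself, because the relation $v_{m+1}(m+1)s_i(m+1)=s_i(m)v_{m+1}(m+1)$ involves $v_{m+1}(m+1)$, and checking it on $[1(m)]_1$ requires comparing right multiplication by $s_i(m+1)$ with left multiplication by $s_i(m)$ on the $[\,\cdot\,]_Q$-term. What actually makes this work is that the $Q$-term is a sum of $S(m)$-conjugates $(g\boxtimes 1)(1(k)\boxtimes e_{(n+1)})(g^{-1}\boxtimes 1)$, so reindexing $g\mapsto s_i g$ matches the two sides. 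The paper singles this out explicitly (``The isotopy relation holds since $[1(m)]_1\cdot v_{m+1}(m+1)$ uses a sum of conjugation over $S(m)$''). This is an easy fix and does not affect the rest of your argument.
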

\begin{proof}
It suffices to check the isotopy relation and the symmetric relation involving $v_{m+1}(m+1)$ in Lemma \ref{lem def B}.
The isotopy relation holds since $[1(m)]_1 \cdot v_{m+1}(m+1)$ uses a sum of conjugation over $S(m)$.
For the symmetric relation, it is enough to check that $([1(m)]_1 \cdot v_{m+1}(m+1)) \cdot v_{m+2}(m+2)$ is invariant under right multiplication with $s_{m+1}(m+2)$.
The proof is a generalization of that of Lemma \ref{lem Q1 well} as in figure \ref{s14}, where the analogue of the proof of Lemma \ref{lem fn welldefined} in figure \ref{s8} is needed.
We leave it to the reader.
\end{proof}

The extension $Q_{1,n}$ gives rise to a family of morphisms $h_n \in \Hom(B,Q[1])$ for $n \ge 0$.
We write $\Hom^1(B,Q)$ for $\Hom(B,Q[1])$.
Lemma \ref{lem Q1 nontriv} implies that $h_0 \neq 0 \in \Hom^1(B,Q)$.
A similar argument can be used to prove that $h_n$'s are linearly independent in $\Hom^1(B,Q)$.

\begin{prop} \label{prop basis 1,Q}
The morphism space $\Hom^1(B,Q)=\prod\limits_{n \ge 0}\K\lan h_n \ran$, where $\K\lan h_n \ran$ is the one dimensional vector space spanned by $h_n$.
\end{prop}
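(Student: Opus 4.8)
The plan is to compute the extension group $\Hom^1(B,Q) = \operatorname{Ext}^1_{B^e}(B,Q)$ directly using a projective resolution of the $B$-bimodule $B$, reducing everything to generators and relations as was done for $\Hom(B,P)$ in Proposition \ref{prop basis 1,P}. Since $B$ is generated as a bimodule by the idempotents $1(n)$ subject to the relations in Lemma \ref{lem def B} (symmetric group relations, which hold automatically on idempotents, plus the relation $1(m)v_{m+1}(m+1) = v_{m+1}(m+1)1(m+1)$ and the symmetric relation $v_n(n)v_{n+1}(n+1) = v_n(n)v_{n+1}(n+1)s_n(n+1)$), a class in $\Hom^1(B,Q)$ is represented, up to coboundary, by the data of an extension $0 \to Q \to E \to B \to 0$; the construction of $Q_{1,n}$ shows each $h_n$ is realized this way. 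So I would argue in two directions: first that the $h_n$ are linearly independent (already asserted via a mild generalization of Lemma \ref{lem Q1 nontriv}), and second, the harder direction, that they span, i.e. any extension is equivalent to $\prod_n c_n Q_{1,n}$ for some scalars $c_n \in \K$.

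For the spanning part I would mimic the proof of Proposition \ref{prop basis 1,P} closely. Given an extension $E$, choose a left-module splitting $E \cong Q \oplus B$ (possible since $B$ is left projective and $Q$ is left projective, so the sequence splits on the left). The entire bimodule structure is then encoded in the right action on the left-module generators $[1(n)]_1$, and because the right action commutes with the left action this is determined by the elements $[1(n)]_1 \cdot v_{n+1}(n+1) \in E$. Writing $[1(n)]_1 \cdot v_{n+1}(n+1) = [v_{n+1}(n+1)]_1 + w_n$ with $w_n \in 1(n) \cdot Q \cdot 1(n+1)$, the content is: $w_n$ is an arbitrary element of $1(n) Q 1(n+1) \cong \K[S(n+1)]$ subject to the constraints imposed by (i) the symmetric relation on $v_n(n)v_{n+1}(n+1)$, which forces $w_n$ (together with the lower $w_i$) to be compatible with the $s_n(n+1)$-invariance, and (ii) changing the splitting, which alters $w_n$ by a coboundary. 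I would run the same inductive argument as in Proposition \ref{prop basis 1,P}: if the class restricted to degrees $< n$ is trivial, show $w_n$ is forced to be a scalar multiple of the symmetrizer $e_{(n+1)}$ (the "dual" of the antisymmetrizer $e_{(1^{n+1})}$ appearing in $f_n$), because the symmetric relation together with Lemma \ref{lem bmn} forces $(1,i+1) \cdot w_n = w_n$ for all transpositions, hence $w_n = c_n (n+1)! \, e_{(n+1)}$; this is exactly the data of $h_n$. Subtracting off $c_n h_n$ and inducting gives the claim, and the product (rather than direct sum) appears because there is one scalar for each $n$ with no finiteness constraint.

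The main obstacle I expect is bookkeeping the coboundary ambiguity correctly: unlike the $\Hom(B,P)$ computation, here I am computing $\operatorname{Ext}^1$ rather than $\operatorname{Hom}$, so I must check that after fixing a left splitting the residual freedom (changes of left splitting) does not already kill the $h_n$ and that the $w_n$ for varying $n$ interact only through the symmetric relation — i.e. that the relation $1(m) v_{m+1}(m+1) = v_{m+1}(m+1) 1(m+1)$ is automatically satisfied and imposes nothing. Concretely, a change of left splitting by $\varphi \in \Hom_{B\text{-left}}(B, Q)$ is determined by $\varphi(1(n)) \in 1(n) Q 1(n) \cong B(n+1,n) = 0$, so in fact \emph{there is no coboundary freedom at all} once we demand compatibility with the left module structure — this is the same vanishing $1(n) Q 1(n) = 0$ used in Lemma \ref{lem Q1 nontriv}, and it is what makes the $h_n$ genuinely nonzero and pairwise independent. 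With that observation in hand the argument is a direct transcription of the proof of Proposition \ref{prop basis 1,P}, and the remaining verification that the symmetric relation forces $w_n \in \K e_{(n+1)}$ is the representation-theoretic heart, handled exactly as in figures \ref{s14} and \ref{s15}. I would therefore present the proof as: (1) left-splitting plus vanishing of $1(n)Q1(n)$ to rigidify the data; (2) the symmetric relation forces the symmetrizer; (3) induction on $n$ to peel off the $h_n$; (4) linear independence from Lemma \ref{lem Q1 nontriv} generalized.
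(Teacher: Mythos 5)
Your proposal takes essentially the same route as the paper: choose a left-module splitting $E\cong Q\oplus B$ compatible with the right $\bigoplus_n\K[S(n)]$-action, reduce the extension data to the elements $[1(n)]_1\cdot v_{n+1}(n+1)$, use the symmetric relation to force the $Q$-component into $\K e_{(n+1)}$, and peel off the $h_n$ by induction, with linear independence supplied by (a generalization of) Lemma~\ref{lem Q1 nontriv}. Your explicit observation that the vanishing $1(n)Q1(n)\cong B(n+1,n)=0$ kills all coboundary freedom is a nice way to make precise what the paper leaves implicit in the phrase ``completely determined by values of right multiplication.'' One small correction: the constraint coming from the symmetric relation here is \emph{right} multiplication invariance $w_n\cdot(i,n+1)=w_n$ (yielding the symmetrizer $e_{(n+1)}$), not left multiplication by $(1,i+1)$ --- the latter is the form that appears in the $\Hom(B,P)$ computation of Proposition~\ref{prop basis 1,P}, where it forces the antisymmetrizer; the two arguments are dual but not identical.
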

\begin{proof}
For $h \in \Hom^1(B,Q)$, let $M$ denote the corresponding extension of $B$ by $Q$.
Since $B$ is projective as a left $B$-module, we can assume that $M=B\oplus Q$ as a left $B$-module.
The decomposition also holds as a right $\bigoplus\limits_{n \ge 0}\K[S(n)]$-module.
Thus the extension $M$ is completely determined by values of right multiplication $[1(m)]_1 \cdot v_{m+1}(m+1)$ for $m \ge 0$.
As in the proof of Proposition \ref{prop basis 1,P}, it suffices to show the following claim.

\n{\bf Claim:} If $[1(n-1)]_1 \cdot v_{n}(n)=[v_{n}(n)]_1$, then $[1(n)]_1 \cdot v_{n+1}(n+1)=[v_{n+1}(n+1)]_1+c[e_{(n+1)}]_Q$ for some constant $c$.
In other words, if the extension is trivial for $[1(n-1)]_1 \cdot v_{n}(n)$, then it is propositional to the extension $h_n$ for $[1(n)]_1 \cdot v_{n+1}(n+1)$, see (\ref{eq hn}).

Suppose that $[1(n-1)]_1 \cdot v_{n}(n)=[v_{n}(n)]_1$, then $[1(n-1)]_1 \cdot v_{i}(n)=[v_{i}(n)]_1$ for $1 \le i \le n$.
Let $[1(n)]_1 \cdot v_{n+1}(n+1)=[v_{n+1}(n+1)]_1+[g]_Q$ for some $g \in 1(n)\cdot Q \cdot 1(n+1) \cong \K[S(n+1)]$.
We compute
\begin{align*}
([1(n-1)]_1 \cdot v_i(n)) \cdot v_{n+1}(n+1)
=&[v_i(n)]_1 \cdot v_{n+1}(n+1) \\
=&v_i(n) \cdot ([1(n)]_1 \cdot v_{n+1}(n+1)) \\
=&v_i(n) \cdot ([v_{n+1}(n+1)]_1+[g]_Q) \\
=&[v_i(n) v_{n+1}(n+1)]_1+[v_i(n+1)g]_Q.
\end{align*}
The resulting element should be invariant under right multiplication with the transposition $g_i=(i,n+1) \in S(n+1)$ since $v_i(n)v_{n+1}(n+1)g_i=v_i(n)v_{n+1}(n+1)$.
In particular, $$v_i(n+1)gg_i=v_i(n+1)g \in B(n,n+1) \cong S(n+1).$$
It implies that $g g_i=g$ for $1 \le i \le n$.
Since $g_i$'s generate $S(n+1)$, we have $ga=g$ for all $a \in S(n+1)$.
So $g=c~e_{(n+1)}$ which proves the claim.
\end{proof}

We draw a vertical downwards short strand with a label $n$ to denote $h_n \in \Hom^1(B,Q)$.
The map $h_n$ is understood as zero if $n<0$.
Define its left adjoint $^*h_n=\op{cap}_{PQ} \circ (id_P \ot h_n) \in \Hom^1(P, B)$, see figure \ref{s16}.
The map $^*h_n$ is obtained from $h_n$ by adding the U-turn map $\op{cap}_{PQ}$ on the top.
We can also add another U-turn map $\op{cap}_{QP}$ on the top.

\begin{figure}[h]
\begin{overpic}
[scale=0.3]{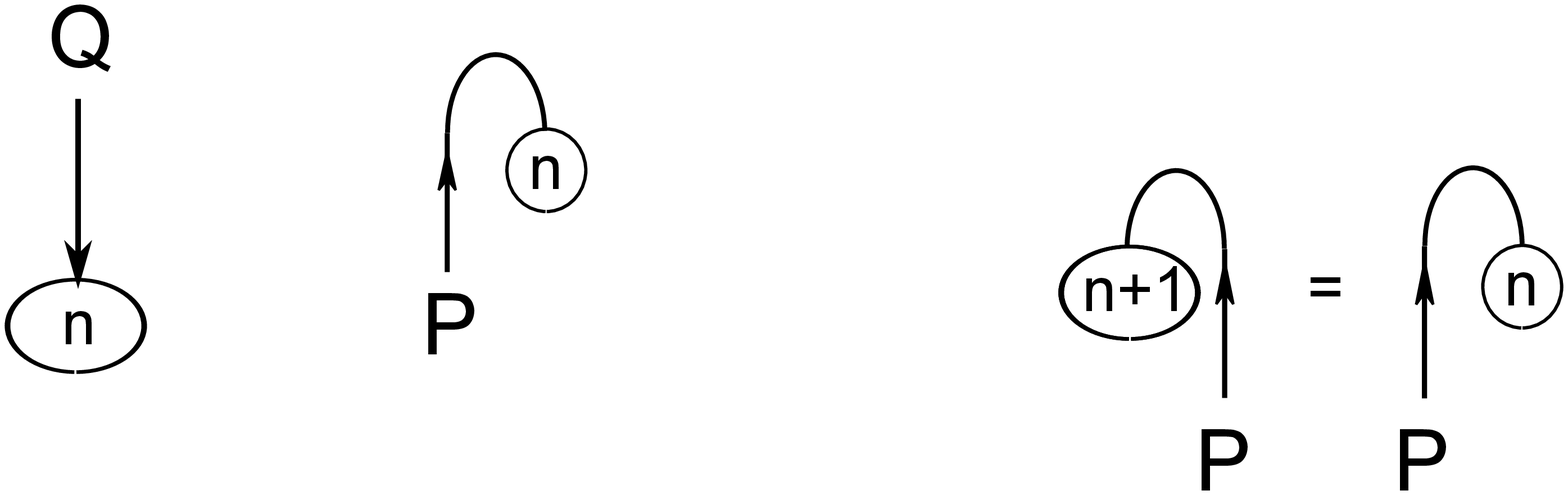}
\put(2,0){$h_n$}
\put(28,0){$^*h_n$}
\end{overpic}
\caption{Definitions of $h_n$ and $^*h_n$.}
\label{s16}
\end{figure}

\begin{lemma} \label{lem hn n-1}
There is a relation $\op{cap}_{QP} \circ (^*h_{n+1} \ot id_P)=\op{cap}_{PQ} \circ (id_P \ot h_n) \in \Hom^1(P,B)$.
In particular, $\op{cap}_{QP} \circ (^*h_0 \ot id_P)=0$.
\end{lemma}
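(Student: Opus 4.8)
The plan is to mimic the proof of Lemma~\ref{lem fn n-1} (figure~\ref{s10}), with the symmetrizers $e_{(n+1)},e_{(n+2)}$ occurring in the definition of the bimodules $Q_{1,n}$ playing the role of the antisymmetrizers $e_{(1^{n+1})},e_{(1^{n+2})}$ occurring in $f_n$. Both sides of the asserted identity are morphisms $P\to B[1]$ in $\ch$, i.e.\ elements of $\op{Ext}^1_{B^e}(P,B)$: unwinding the definitions, the right-hand side is exactly $\,{}^*h_n=\op{cap}_{PQ}\circ(id_P\ot h_n)$, while the left-hand side is the composite $P=B\ot P\xra{h_{n+1}\ot id_P}(QP)[1]\xra{\op{cap}_{QP}}B[1]$, obtained by placing the downward strand $h_{n+1}$ beside an $id_P$-strand and capping the resulting $QP$ by $\op{cap}_{QP}$. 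Thus the claim reads $\op{cap}_{QP}\circ(h_{n+1}\ot id_P)={}^*h_n$, the literal $P\leftrightarrow Q$ dual of Lemma~\ref{lem fn n-1}.

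To compare the two, recall that $P$ is projective as a left and as a right $B$-module (Lemma~\ref{lem proj}). Hence applying $-\ot_B P$ to $0\to Q\to Q_{1,n+1}\to B\to 0$ gives an exact sequence $0\to QP\to Q_{1,n+1}\ot_B P\to P\to 0$ of class $h_{n+1}\ot id_P$, and applying $P\ot_B-$ to $0\to Q\to Q_{1,n}\to B\to 0$ gives $0\to PQ\to P\ot_B Q_{1,n}\to P\to 0$ of class $id_P\ot h_n$. The two sides of the lemma are then the pushouts of these extensions of $P$ along $\op{cap}_{QP}: QP\to B$ and $\op{cap}_{PQ}: PQ\to B$, so it suffices to show that the two resulting extensions of $P$ by $B$ are equivalent. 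As in the proof of Proposition~\ref{prop basis 1,Q}, such an extension is completely determined by the values of the right multiplications by the $v_{m+1}(m+1)$ on the generators of $P$, so it is enough to compute and match these finitely-many-per-degree values.

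I would carry this out exactly as in figure~\ref{s10}. Both extensions are trivial in low degrees (the analogue of the vanishing on $1(m)$ for $m\le n$ in Lemma~\ref{lem fn n-1}), so one fixes $m=n+k+1$, expands $h_{n+1}$ using~(\ref{eq hn}) and the defining rule $[1(m)]_1\cdot v_{m+1}(m+1)=[v_{m+1}(m+1)]_1+\tfrac{n+2}{k!}\sum_{g\in S(m)}[(g\bt 1(1))(1(k)\bt e_{(n+2)})(g^{-1}\bt 1(1))]_Q$ in $Q_{1,n+1}$, splits the symmetrizer $e_{(n+2)}$ of $S(n+2)$ over the coset decomposition $S(n+2)=\bigsqcup_{f\in D(n+2)}f\,S(n+1)$ with $D(n+2)=\{(1,i):1\le i\le n+2\}$ (the same manipulation as in figures~\ref{s8} and~\ref{s11}), and finally applies $\op{cap}_{QP}$, which is the projection onto the summand $B\subset QP$. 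The surviving term is precisely the corresponding value of $\op{cap}_{PQ}\circ(id_P\ot h_n)={}^*h_n$, built from $e_{(n+1)}$; since the extra box is here pulled through a \emph{symmetrizer} rather than an antisymmetrizer, no sign is produced, which is why — in contrast with Lemma~\ref{lem fn n-1} — the present identity is sign-free. The main obstacle is purely bookkeeping: tracking the frozen strands of $P$ and $Q$ through the tensor products and the projection maps, and keeping the normalization constants ($\tfrac{n+2}{k!}$ versus $\tfrac{n+1}{k!}$, the $(n+1)!$ in~(\ref{eq hn}), etc.) consistent — the same style of calculation as in figures~\ref{s8},~\ref{s10} and~\ref{s11}. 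Finally, the ``in particular'' assertion is immediate: putting $n=-1$ in the identity just proved, the right-hand side becomes $\op{cap}_{PQ}\circ(id_P\ot h_{-1})=0$ since $h_{-1}=0$ by convention, so $\op{cap}_{QP}\circ({}^*h_0\ot id_P)=0$.
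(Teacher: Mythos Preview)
Your approach is correct and essentially identical to the paper's: both interpret the two sides as pushouts of extensions of $P$ (by $QP$ and by $PQ$, respectively) along the two cap maps, reduce to comparing the right action of $v_{m+1}(m+1)$ on the generators $[1(m)]_P$, and then carry out the diagrammatic computation for $m=n+k+1$ (the symmetrizer analogue of figure~\ref{s10}, recorded in the paper as figure~\ref{s17}). Your reading of the left-hand side as $\op{cap}_{QP}\circ(h_{n+1}\ot id_P)$ is exactly what the paper's own proof uses.
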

\begin{proof}
Let $M_{n+1}'$ denote the extension of $P$ by $QP$ corresponding to $^*h_{n+1} \ot id_P$, and
$N_{n}'$ denote the extension of $P$ by $PQ$ corresponding to $id_P \ot h_n$.
The morphisms on both sides of the relation correspond to two extensions $M_{n+1}$ and $N_n$ of $P$ by $B$.
They are related by the following commutative diagrams of bimodules.
$$
\xymatrix{
 0 \ar[r] & QP \ar[r] \ar[d]^{\op{cap}_{QP}} &  M_{n+1}' \ar[r] \ar[d] & P \ar[d]^{id} \ar[r] & 0. \\
 0 \ar[r] & B \ar[r]  &  M_{n+1}  \ar[r] & P \ar[r] & 0. \\
}$$
$$
\xymatrix{
 0 \ar[r] & PQ \ar[r] \ar[d]^{\op{cap}_{PQ}} &  N_{n}' \ar[r] \ar[d] & P \ar[d]^{id} \ar[r] & 0. \\
 0 \ar[r] & B \ar[r]  &  N_{n}  \ar[r] & P \ar[r] & 0. \\
}$$

Both extensions $M_{n+1}$ and $N_n$ are equal to $P\oplus B$ as left $B$-modules.
It suffices to show that values of right multiplication $[1(m)]_P \cdot v_{m+1}(m+1)$ for two extensions are equal.
For $m \le n$, $[1(m)]_P \cdot v_{m+1}(m+1)=[v_{m+1}(m+1)]_P$ for both extensions.
We compute the values for $m=n+k+1$ in figure \ref{s17}.
They are equal to each other which implies the lemma.
\end{proof}
\begin{figure}[h]
\begin{overpic}
[scale=0.2]{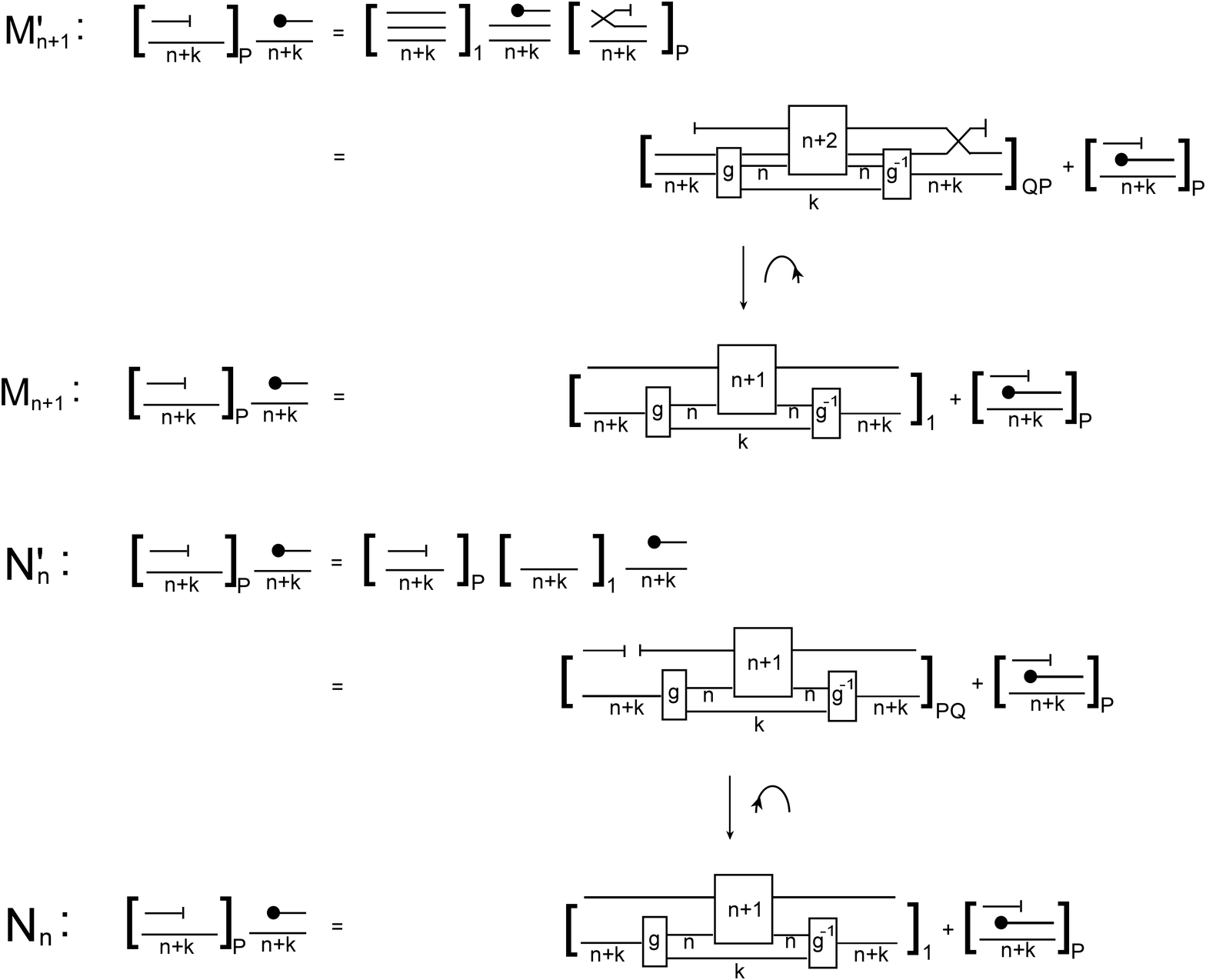}
\put(30,3){$\frac{n+1}{k!}\sum\limits_{\scriptscriptstyle g \in S(n+k)}$}
\put(30,24){$\frac{n+1}{k!}\sum\limits_{\scriptscriptstyle g \in S(n+k)}$}
\put(30,47){$\frac{n+1}{k!}\sum\limits_{\scriptscriptstyle g \in S(n+k)}$}
\put(30,68){$\frac{n+2}{k!}\sum\limits_{\scriptscriptstyle g \in S(n+k+1)}$}
\end{overpic}
\caption{}
\label{s17}
\end{figure}

\subsection{The object left adjoint to $P$}
In order to show that $Q_1$ is left adjoint to $P$, we construct two $B$-bimodule homomorphisms: $d_1: Q_1P \ra B,$ and $ c_1: B \ra PQ_1$ in the following.

Since $Q_1=Q\oplus B$ as left $B$-modules, $Q_1P=QP\oplus P \cong B\oplus PQ \oplus P =B\oplus PQ_1$ as left modules by the Heisenberg isomorphism (\ref{eq H relation}).
The isomorphism on generators of left $B$-modules is given by
$$
\begin{array}{cccc}
\phi: & B \oplus PQ_1 & \ra & Q_1P \\
& [1(m)]_B & \mapsto & [1(m+1)]_Q[1(m+1)]_P,  \\
& [1(m+1)]_P [1_{m}]_1 & \mapsto & [1(m+1)]_1[1(m+1)]_P, \\
& [1(m+1)]_P [1_{m+1}]_Q & \mapsto & [1(m+2)]_Q \cdot s_{m+1}(m+2) \cdot [1(m+2)]_P,
\end{array}
$$
for $n \ge 0$.

\begin{lemma} \label{lem Heisenberg Q1P}
The map $\phi: B \oplus PQ_1 \ra Q_1P$ is an isomorphism of $B$-bimodules.
\end{lemma}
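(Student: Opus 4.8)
The plan is to verify in turn that $\phi$ is a well-defined homomorphism of left $B$-modules, then that it is also a homomorphism of right $B$-modules, and finally that it is bijective; a bijective homomorphism of both left and right $B$-modules is an isomorphism of $B$-bimodules. Throughout one uses that all the modules in sight are left projective --- $Q_1$ by its construction and $B,P,Q$ by Lemma~\ref{lem proj} --- so that all the tensor products are ordinary tensor products and the functors $-\ot_B P$ and $P\ot_B-$ are exact.

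\emph{$\phi$ as a left $B$-module map.} The three families $[1(m)]_B$, $[1(m+1)]_P[1_m]_1$ and $[1(m+1)]_P[1_{m+1}]_Q$ on which $\phi$ is prescribed do generate $B\oplus PQ_1$ as a left $B$-module: $\{1(m)\}$ generates $B$, $\{[1(m+1)]_P\}$ generates $P$, and $Q_1=Q\oplus B$ as a left module with $Q$ generated by the $[1_{m+1}]_Q$ and the complementary summand generated by the $[1_m]_1$. Since each of these generators is cut out by an idempotent $1(k)$, the only left-$B$-linear relations among them are diagonal idempotent relations, and routine idempotent bookkeeping shows these are sent to relations in $Q_1P$; hence $\phi$ extends to a well-defined left $B$-module homomorphism. (The splitting $Q_1=Q\oplus B$ together with the Heisenberg decomposition $QP\cong B\oplus PQ$ of Proposition~\ref{prop H relation} in fact identifies both sides, as left $B$-modules, with $B\oplus PQ\oplus P$, and $\phi$ respects this.)

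\emph{$\phi$ as a right $B$-module map --- the main obstacle.} By Lemma~\ref{lem def B}, $B$ is generated as an algebra by the symmetric group algebras $\K[S(n-1)]$ and the short strands $v_n(n)$, so it suffices to check $\phi(x\cdot a)=\phi(x)\cdot a$ for $x$ in one of the three generating families and $a$ either an element of a symmetric group or $a=v_n(n)$. When $a$ lies in a symmetric group the right action simply permutes bottom endpoints, and the required identity is an isotopy of diagrams. When $a=v_n(n)$ one must use the twisted right action (\ref{eq rt g}) defining the extension $Q_1$, namely $[1(n-1)]_1\cdot v_n(n)=[v_n(n)]_1+[1(n)]_Q$, and compute both $\phi(x\cdot v_n(n))$ and $\phi(x)\cdot v_n(n)$ explicitly in $Q_1P$. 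The crux is the family $[1(m+1)]_P[1_m]_1$, on which this correction term is born: after applying $\phi$, the extra term $[1(n)]_Q$ lands precisely in the summand $PQ\subset QP$ of the Heisenberg decomposition and is absorbed by the crossing $s_{m+1}(m+2)$ occurring in the third defining line of $\phi$; in other words $\phi$ transports the extension class of $Q_1$ to that of $Q_1P$. I expect this diagrammatic calculation, which is of the same nature as the proofs of Lemmas~\ref{lem Q1 well} and~\ref{lem hn welldefined} and can reuse them, to be the bulk of the work.

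\emph{Bijectivity.} Having established that $\phi$ is a bimodule homomorphism, one obtains bijectivity from the five lemma applied to the commutative diagram of $B$-bimodules
$$\xymatrix{
0\ar[r] & B\oplus PQ \ar[r]\ar[d]_{\cong} & B\oplus PQ_1 \ar[r]\ar[d]^{\phi} & P \ar[r]\ar@{=}[d] & 0 \\
0\ar[r] & QP \ar[r] & Q_1P \ar[r] & P \ar[r] & 0
}$$
whose rows are obtained from the short exact sequence $0\to Q\to Q_1\to B\to 0$ of (\ref{eq Q1}) by applying the exact functors $P\ot_B-$ (then adding a split $B$ summand) and $-\ot_B P$, whose left vertical arrow is the identity on $B$ together with the Heisenberg isomorphism $PQ\xrightarrow{\sim}QP$ of figure~\ref{s4}, and whose right vertical arrow is the identity on $P$. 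Commutativity of the left square is precisely the assertion that $\phi$ restricts to the Heisenberg isomorphism on $B\oplus PQ$, which one reads off from the first and third defining lines of $\phi$; commutativity of the right square says $\phi$ is compatible with the projection onto $P$, which follows from the first two lines. Since the outer vertical maps are isomorphisms, so is $\phi$.
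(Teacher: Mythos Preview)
Your proposal is correct and follows essentially the same route as the paper. The paper is more economical: since the text immediately preceding the lemma already constructs $\phi$ as a left $B$-module isomorphism (via the splitting $Q_1=Q\oplus B$ and the Heisenberg isomorphism $QP\cong B\oplus PQ$), well-definedness and bijectivity are already in hand, and the paper's proof reduces entirely to the single right-module check you correctly isolate as the crux --- namely $\phi([1(m+1)]_P[1_m]_1\cdot v_{m+1}(m+1))=\phi([1(m+1)]_P[1_m]_1)\cdot v_{m+1}(m+1)$, carried out by a direct two-line computation. Your five-lemma argument is valid but superfluous.
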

\begin{proof}
Since $Q$ is a subbimodule of $Q_1$, we only have to check that the restriction of $\phi$ to $P \ot B \subset P \ot Q_1$ is a map of right $B$-modules.
We compute
\begin{align*}
&\phi([1(m+1)]_P [1_{m}]_1 \cdot v_{m+1}(m+1))\\
=&\phi([1(m+1)]_P ([v_{m+1}(m+1)]_1 + [1(m+1)]_Q)) \\
=& [v_{m+1}(m+2)]_1 [1(m+2)]_P + [1(m+2)]_Q \cdot s_{m+1}(m+2) \cdot [1(m+2)]_P,\\
&\phi([1(m+1)]_P [1_{m}]_1) \cdot v_{n+1}(n+1)\\
=&[1(m+1)]_1[1(m+1)]_P \cdot v_{m+1}(m+1)) \\
=& [1(m+1)]_1[v_{m+1}(m+2)]_P = [1(m+1)]_1 \cdot v_{m+1}(m+2) \cdot [1_{m+2}]_P \\
=& [v_{m+1}(m+2)]_1 [1(m+2)]_P + [1(m+2)]_Q \cdot s_{m+1}(m+2) \cdot [1(m+2)]_P.
\end{align*}
So $\phi([1(m+1)]_P [1_{m}]_1 \cdot v_{m+1}(m+1))=\phi([1(m+1)]_P [1_{m}]_1) \cdot v_{m+1}(m+1)$.
\end{proof}

Let $d_1: Q_1P \ra B$ denote the projection.
Define a map $c_1: B \ra PQ_1$ of $B$-bimodules on the generators $1(m)$ by:
$$
c_1(1(m))=
[v_{m+1}(m+1)]_P [1(m)]_1+(1-\delta_{m,0})\frac{1}{(m-1)!}\sum\limits_{g \in S(m)} [g]_P [g^{-1}]_Q$$
see figure \ref{s18}.
To show $c_1$ is well-defined, it is enough to check that:
$$v_{m+1}(m+1)c_{1}(1(m+1))=c_{1}(1(m))v_{m+1}(m+1).$$
The proof is similar to the proof in figure \ref{s11}.
We leave it to the reader.

\begin{figure}[h]
\begin{overpic}
[scale=0.3]{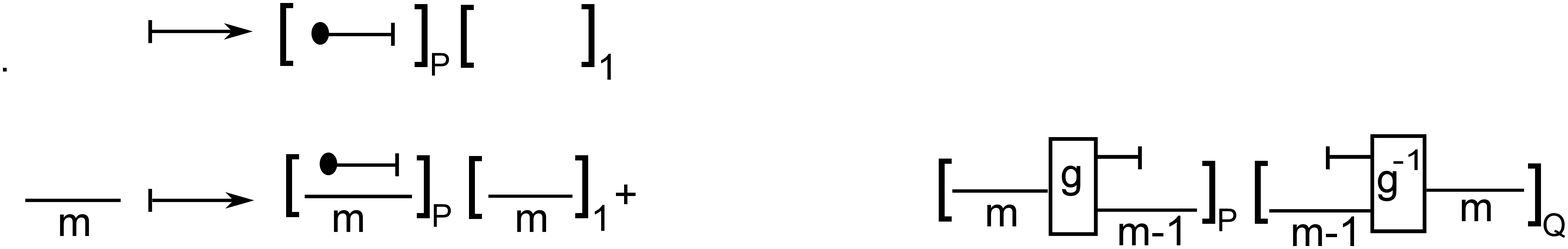}
\put(5,12){$\es$}
\put(33,12){$\es$}
\put(42,2){$\frac{1}{(m-1)!}\sum\limits_{\scriptscriptstyle g \in S(m)}$}
\end{overpic}
\caption{The map $c_1: B \ra PQ_1$ on the generators $1(m)$, where $\es$ denotes the empty diagram for $1(0) \in B$.}
\label{s18}
\end{figure}

\begin{prop} \label{prop adj +1}
The object $Q_1$ is left adjoint to $P$ in $\ch$.
\end{prop}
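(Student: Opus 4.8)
The plan is to exhibit $c_1\in\Hom(B,PQ_1)$ and $d_1\in\Hom(Q_1P,B)$ as unit and counit of the adjunction, i.e.\ to verify the two zigzag identities
$$(id_P\ot d_1)\circ(c_1\ot id_P)=id_P\in\Hom(P,P),\qquad (d_1\ot id_{Q_1})\circ(id_{Q_1}\ot c_1)=id_{Q_1}\in\Hom(Q_1,Q_1).$$
Since $P$ is both left and right projective over $B$, each of the tensor products $PQ_1$, $Q_1P$, $PQ_1P$ and $Q_1PQ_1$ occurring here coincides with the ordinary underived bimodule tensor product, just as in the discussion preceding Proposition \ref{prop adj -1}; in particular $d_1$ is the honest projection $Q_1P\cong B\oplus PQ_1\tra B$ coming from the isomorphism $\phi$ of Lemma \ref{lem Heisenberg Q1P}, and both composites above are genuine $B$-bimodule endomorphisms. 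Recall also that $Q_1\in\ch$, as it sits in the exact triangle $Q\ra Q_1\ra B\ra Q[1]$.

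First I would reduce each identity to a check on $B$-bimodule generators. For the first identity it suffices to evaluate the composite on the generators $[1(m+1)]_P$ of $P$: feeding in the formula for $c_1(1(m+1))$ from figure \ref{s18} and applying $c_1\ot id_P$ produces, inside $PQ_1P$, the sum of $[v_{m+2}(m+2)]_P\,[1(m+1)]_1\ot[1(m+1)]_P$ and $\tfrac1{m!}\sum_{g\in S(m+1)}[g]_P\,[g^{-1}]_Q\ot[1(m+1)]_P$. Under $\phi^{-1}$ the first summand lands in the $PQ_1$-part of $Q_1P$, so $id_P\ot d_1$ annihilates it; applying $id_P\ot d_1$ to the second summand passes through the Heisenberg projection $\op{cap}_{QP}$ of figure \ref{s4}, and the averaging over $S(m+1)$ with the factor $1/m!$ returns exactly $[1(m+1)]_P$. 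For the second identity I would evaluate the composite on the generators of $Q_1=Q\oplus B$, splitting $c_1$ into its $PQ$- and $PB$-components: on the subbimodule $Q$ the governing diagrammatic identity is the one behind the adjunction $P\dashv Q$ given by the isotopy relations (1) of figure \ref{s6}, while on the generators $[1(m)]_1$ of the $B$-summand one inserts $c_1$, uses the defining right action $[1(m)]_1\cdot v_{m+1}(m+1)=[v_{m+1}(m+1)]_1+[1(m+1)]_Q$ of $Q_1$ from (\ref{eq rt g}) together with $\phi$, and checks that the symmetric-group sums collapse to $[1(m)]_1$. Both computations are a graphical calculus of the same shape as the proof of Proposition \ref{prop adj -1}; compare figures \ref{s14} and \ref{s18}.

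The step I expect to be the main obstacle is the bookkeeping of these symmetric-group sums: organizing the sums over $S(m)$ and $S(m+1)$ so that the off-diagonal contributions visibly cancel once $\phi^{-1}$ and the Heisenberg projection have been applied, and matching the normalization constants $1/(m-1)!$ and $1/m!$ across the two legs of each zigzag. The combinatorics is of the same type already handled in Lemmas \ref{lem fn welldefined}, \ref{lem Q1 well} and \ref{lem hn welldefined}, so no genuinely new ingredient should be required beyond careful diagram manipulation.
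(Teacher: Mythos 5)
Your proposal is correct and takes essentially the same route as the paper: state the two triangle (zigzag) identities for the unit $c_1$ and counit $d_1$, then verify them on bimodule generators by a graphical computation of the same shape as the one for Proposition \ref{prop adj -1}. The paper's own proof is in fact even terser (it simply records the two identities and refers the diagram check to the argument of Proposition \ref{prop adj -1} and figure \ref{s12}), so the only substantive difference is that you spell out the generator computation explicitly; your characterization of the $Q$-summand step as ``the $P\dashv Q$ adjunction'' is slightly loose — the identity actually used there is the auxiliary relation $(\op{cap}_{QP}\ot id_{Q_1})\circ(id_Q\ot c_1)=r$ recorded after Proposition \ref{prop adj +1} — but this does not affect the correctness of the argument.
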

\begin{proof}
It amounts to showing that
$$(id_{P} \ot d_{1}) \circ (c_{1} \ot id_{P})=id_{P}, \qquad  (d_{1} \ot id_{Q_1}) \circ (id_{Q_1} \ot c_{1})=id_{Q_1}.$$
The proof is similar to that of Proposition \ref{prop adj -1} as in figure \ref{s12}.
\end{proof}

We draw a cup and a cap for $c_{1}$ and $d_{1}$, respectively.
The adjointness is equivalent to the isotopy relation, see figure \ref{s19}.
Another useful relation is that
\begin{gather*} \label{eq QQ1}
(\op{cap}_{QP} \ot id_{Q_1}) \circ (id_Q \ot c_{1})=r \in \Hom(Q, Q_1),
\end{gather*}
where $r: Q \ra Q_1$ is the inclusion map in the short exact sequence (\ref{eq Q1}).

\begin{figure}[h]
\begin{overpic}
[scale=0.2]{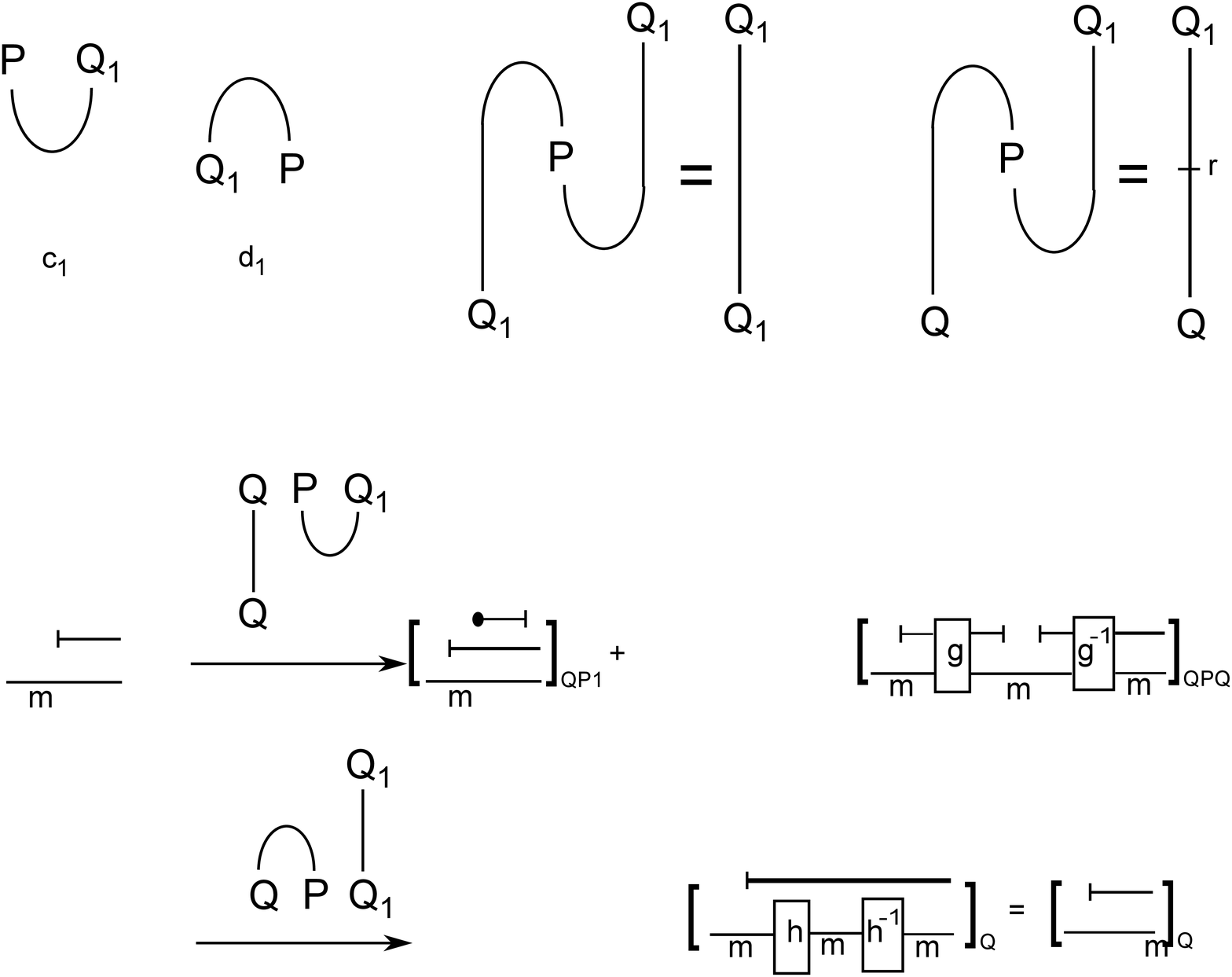}
\put(51,25){$\frac{1}{m!}\sum\limits_{\scriptscriptstyle g \in S(m+1)}$}
\put(38,4){$\frac{1}{m!}\sum\limits_{\scriptscriptstyle h \in S(m)}$}
\end{overpic}
\caption{Relations involving $c_1, d_1, \op{cap}_{QP}$, and $r$, and the proof of $(\op{cap}_{QP} \ot id_{Q_1}) \circ (id_Q \ot c_{1})=r$.}
\label{s19}
\end{figure}

The object $Q_1 \cong (B[-1] \xra{h_0} Q)$ in $\ch$, where $Q$ is in cohomological degree zero.
It follows from Lemma \ref{lem adj cone} that $P_1=(P \xra{^*h_0} B[1])$ is left adjoint to $Q_1$, where $P$ is in cohomological degree zero.
Using the lemma again, we obtain $\wt{Q}_{2}=(B[-1] \xra{^*(^*h_0)} Q_1)$ is left adjoint to $P_1$.
Here, $^*(^*h_0)=(^*h_0 \ot id_{Q_1}) \circ c_1$.

\begin{lemma} \label{lem h0**}
There is a relation $^*(^*h_n)=r \circ (h_{n+1}) \in \Hom(B, Q_1)$, where $r: Q \ra Q_1$ is the inclusion.
\end{lemma}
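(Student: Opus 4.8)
The plan is to follow the proof of Lemma~\ref{lem f0**}, replacing every ingredient by its $Q$-side counterpart, and to present the argument as a short graphical calculus in the spirit of figure~\ref{s13}. First I would unwind the definition $^*(^*h_n) = (^*h_n \ot id_{Q_1}) \circ c_1$, where $c_1: B \ra PQ_1$ is the adjunction unit constructed just before Proposition~\ref{prop adj +1} and $^*h_n = \op{cap}_{PQ}\circ(id_P\ot h_n)\in\Hom^1(P,B)$. Drawn as a diagram, $^*(^*h_n)$ is the $c_1$-cup creating a $P$-strand and a $Q_1$-strand, followed by the downward $h_n$-strand attached to the $P$-strand, and then the cap $\op{cap}_{PQ}$ closing off the $P$- and $Q$-legs.

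Next I would use Lemma~\ref{lem hn n-1}, which allows one to replace $^*h_n = \op{cap}_{PQ}\circ(id_P\ot h_n)$ by the composite $\op{cap}_{QP}\circ(h_{n+1}\ot id_P)$, that is, the move turning the label $n$ on one leg into the label $n+1$ on the other. Substituting gives $^*(^*h_n) = (\op{cap}_{QP}\ot id_{Q_1})\circ(h_{n+1}\ot id_{PQ_1})\circ c_1$. Since $h_{n+1}: B\ra Q[1]$ and $c_1$ act in disjoint tensor factors, and since $c_1$ is a degree-zero morphism so that no Koszul sign intervenes, the interchange law rewrites $(h_{n+1}\ot id_{PQ_1})\circ c_1$ as $(id_Q\ot c_1)\circ h_{n+1}$. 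Hence $^*(^*h_n) = \bigl((\op{cap}_{QP}\ot id_{Q_1})\circ(id_Q\ot c_1)\bigr)\circ h_{n+1}$.

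Finally I would invoke the relation $(\op{cap}_{QP}\ot id_{Q_1})\circ(id_Q\ot c_1) = r$ recorded just after Proposition~\ref{prop adj +1} (the $Q$-side analogue of~(\ref{eq PP-1})), which collapses the remaining cup-cap pair to the inclusion $r: Q\ra Q_1$. Composing, $^*(^*h_n) = r\circ h_{n+1}$, as asserted.

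I do not expect a genuine obstacle: once Lemma~\ref{lem hn n-1} and the relation just after Proposition~\ref{prop adj +1} are in hand, the whole content is diagram bookkeeping, best recorded as a figure parallel to figure~\ref{s13}. The one point meriting care is the sign: unlike $(f_0^*)^* = q\circ(-f_1)$ in Lemma~\ref{lem f0**}, the present identity carries no minus sign, which should emerge from the sign-free form of Lemma~\ref{lem hn n-1} together with the absence of a Koszul sign when sliding the degree-one morphism $h_{n+1}$ past $c_1$. If any doubt remains I would confirm the identity directly on the generator $1(n)$ using the explicit formula~(\ref{eq hn}) for $h_n$ and the definition of $^*h_n$ on generators.
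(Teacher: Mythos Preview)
Your proposal is correct and follows essentially the same approach as the paper's own proof: a short graphical calculus that first invokes Lemma~\ref{lem hn n-1} to trade $^*h_n$ for the expression $\op{cap}_{QP}\circ(h_{n+1}\ot id_P)$, and then applies the relation $(\op{cap}_{QP}\ot id_{Q_1})\circ(id_Q\ot c_1)=r$ recorded just after Proposition~\ref{prop adj +1} (pictured in figure~\ref{s19}) to collapse the remaining cup--cap. Your remark about the absence of a sign, in contrast with Lemma~\ref{lem f0**}, is also on point and matches the paper's stated identity.
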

\begin{proof}
We give a graphical calculus in figure \ref{s20}, where the first equality is from Lemma \ref{lem hn n-1}, and the last equality is from (\ref{eq PP-1}) as in the top right picture in figure \ref{s19}.
\end{proof}
\begin{figure}[h]
\begin{overpic}
[scale=0.25]{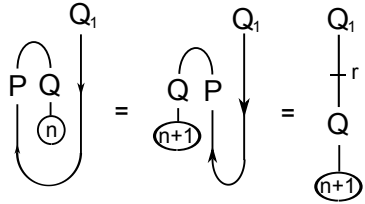}
\end{overpic}
\caption{In the rightmost picture, the little bar labeled by $r$ denotes the inclusion $r: Q \ra Q_1$.}
\label{s20}
\end{figure}

In particular, $^*(^*h_0)=r \circ (h_{1}) \in \Hom(B, Q_1)$.
We express the cone of $^*(^*h_0)$ via the cones of $r$ and $h_1$.
Let $Q_{1,1}=(B[-1] \xra{h_1} Q)$, and $r_1: Q \ra Q_{1,1}$ denotes the inclusion.
Define $$Q_2=(B[-1]\oplus B[-1] \xra{(h_0, h_1)} Q),$$ where $Q$ is in degree zero.
There exists a distinguished triangle $B[-1] \xra{r_1\circ h_0} Q_{1,1} \ra Q_2 \xra{[1]} B[-1]$ in $\ch$.
The octahedral axiom of a triangulated category implies that there exists a diagram:
$$
\xymatrix{
 B[-1] \ar[rr]^{h_1} &               & Q \ar[rr]^{r} \ar[dl]^{r_1} &               &  Q_1 \ar[dl] \\
                 & Q_{1,1} \ar[ul] \ar[dr] & & B \ar[ul]^{h_0} \ar[ll]^{r_1\circ h_0} &   \\
                 &               & Q_2 \ar[ur] &               &
}$$
such that $Q_2$ is isomorphic to $\wt{Q}_2$, the cone of $^*(^*h_0)=r \circ (h_{1}) \in  \Hom(B[-1], Q_1)$.

\begin{defn}
For $n \ge 0$, define $Q_n$ as a complex $(B[-1]^{\oplus n} \xra{(h_0, \dots, h_{n-1})} Q)$,
and dually define $P_n$ as $(P \xra{^*h_0, \dots, ^*h_{n-1}} B[1]^{\oplus n})$, where $P$ and $Q$ are in cohomological degree zero.
In particular, $P_0=P$ and $Q_0=Q$.
\end{defn}

Using a similar argument of proving $Q_{2} \cong \wt{Q}_{2}$, one can prove the following result.
\begin{prop} \label{prop adj positive}
There is a half infinite chain of objects
$$\cdots \leftrightarrow P_n \leftrightarrow Q_n \leftrightarrow P_{n-1}\leftrightarrow \cdots \leftrightarrow Q_{1} \leftrightarrow P_{0} \leftrightarrow Q_{0},$$
such that each pair of objects form an adjoint pair in $\ch$.
\end{prop}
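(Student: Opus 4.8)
The plan is to prove this by induction on $n$, repeating one step at a time the construction carried out above for $n\le 2$. At the right-hand end of the chain we already have the isotopy relation $P_0\leftrightarrow Q_0$ ($P$ left adjoint to $Q$), together with $Q_1\leftrightarrow P_0$ (Proposition \ref{prop adj +1}), $P_1\leftrightarrow Q_1$, and $Q_2\cong\wt{Q}_2\leftrightarrow P_1$ from the discussion above; here $\leftrightarrow$ records an adjoint pair, the left member being left adjoint to the right one. It therefore suffices to show that an adjoint pair $Q_n\leftrightarrow P_{n-1}$ extends one step further to the left, first to $P_n\leftrightarrow Q_n$ and then to $Q_{n+1}\leftrightarrow P_n$.

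The machine is Lemma \ref{lem adj cone}, used (as in the cases $n\le 2$) to compute left adjoint objects: writing a two-term complex as the $L^*$ of a suitable $L$, one reads off $L$ as the left adjoint of that complex. Applied to $Q_n=(B[-1]^{\oplus n}\xra{(h_0,\dots,h_{n-1})}Q)$, and using that the unit $B$ is self-dual and that ${}^*Q=P$ (the isotopy relation), it identifies the left adjoint of $Q_n$ with $(P\xra{({}^*h_0,\dots,{}^*h_{n-1})}B[1]^{\oplus n})=P_n$, hence $P_n\leftrightarrow Q_n$. Applied once more, now to $P_n=(P\xra{({}^*h_0,\dots,{}^*h_{n-1})}B[1]^{\oplus n})$ and using ${}^*P=Q_1$ (Proposition \ref{prop adj +1}), it identifies the left adjoint of $P_n$ with $\wt{Q}_{n+1}:=(B[-1]^{\oplus n}\xra{({}^*({}^*h_0),\dots,{}^*({}^*h_{n-1}))}Q_1)$. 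By Lemma \ref{lem h0**}, which already holds for every index, each connecting map ${}^*({}^*h_i)$ equals $r\circ h_{i+1}$ with $r\colon Q\hookrightarrow Q_1$ the inclusion of (\ref{eq Q1}); thus $\wt{Q}_{n+1}$ is a mapping cone of $B[-1]^{\oplus n}\to Q_1$ whose connecting map factors through $Q\subset Q_1$. Unwinding $Q_1=(B[-1]\xra{h_0}Q)$ and applying the octahedral axiom, exactly as the diagram displayed above does for $n=2$, identifies $\wt{Q}_{n+1}$ with the total complex $Q_{n+1}=(B[-1]^{\oplus(n+1)}\xra{(h_0,\dots,h_n)}Q)$. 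This gives $Q_{n+1}\leftrightarrow P_n$ and closes the induction; the adjunction morphisms are produced together with the cones, so the chain is an honest sequence of adjoint pairs.

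I expect the one step needing real care to be this last identification $\wt{Q}_{n+1}\cong Q_{n+1}$: one must run the octahedral axiom on the composition $B[-1]\hookrightarrow B[-1]^{\oplus(n+1)}\xra{(h_0,\dots,h_n)}Q$ and check that the connecting map it manufactures is exactly $(r\circ h_1,\dots,r\circ h_n)$, the inputs being Lemma \ref{lem h0**} together with the relation $(\op{cap}_{QP}\ot id_{Q_1})\circ(id_Q\ot c_1)=r$ from the $n=2$ case. A cleaner route would bypass the octahedral bookkeeping altogether and instead exhibit a direct quasi-isomorphism $Q_{n+1}\to\wt{Q}_{n+1}$ of complexes, mirroring the treatment of $P_{-2}\cong\wt{P}_{-2}$. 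Everything else in the induction is formal, given Lemma \ref{lem adj cone}.
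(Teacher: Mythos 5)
Your proof is correct and takes essentially the same approach as the paper, which proves only the $n\le 2$ cases in detail (Proposition~\ref{prop adj +1}, Lemma~\ref{lem adj cone}, Lemma~\ref{lem h0**}, and the octahedral diagram for $Q_2\cong\wt Q_2$) and then asserts that ``a similar argument'' handles the general step. You have filled in what that similar argument is: read Lemma~\ref{lem adj cone} backwards to compute left adjoints of the two-term complexes $Q_n$ and $P_n$, invoke Lemma~\ref{lem h0**} to recognise the resulting connecting maps as $r\circ h_{i+1}$, and collapse the iterated cone $(B[-1]^{\oplus n}\to Q_1)$ to $Q_{n+1}$ by the octahedral axiom, exactly as the paper does for $n=1$. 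Your suggested alternative of a direct quasi-isomorphism $Q_{n+1}\to\wt Q_{n+1}$ (mirroring the explicit map used for $P_{-2}\cong\wt P_{-2}$) is also sound and would make the induction even more uniform between the two halves of the chain.
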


Combining Propositions \ref{prop adj negative} and \ref{prop adj positive}, we finally obtain the following.
\begin{thm} \label{thm adj}
There is an infinite chain of objects
$$\cdots \leftrightarrow P_n \leftrightarrow Q_n \leftrightarrow \cdots \leftrightarrow Q_{1} \leftrightarrow P_{0} \leftrightarrow Q_{0} \leftrightarrow P_{-1} \leftrightarrow \cdots \leftrightarrow P_{-n} \leftrightarrow Q_{-n} \leftrightarrow \cdots,$$
such that

\n(1) $P_0=P, Q_0=Q$;

\n(2) each pair of objects form an adjoint pair in $\ch$;

\n(3) there are exact triangles $B \ra P_n \ra P_{n-1} \xra{[1]} B$, and $Q_n \ra Q_{n+1} \ra B \xra{[1]} Q_n$, for all $n$.
\end{thm}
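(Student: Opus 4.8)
The plan is to assemble Theorem \ref{thm adj} from the two half-infinite chains already constructed and then read off the three itemized assertions.

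\emph{Gluing the two half-chains.} Propositions \ref{prop adj positive} and \ref{prop adj negative} provide the chains $\cdots\leftrightarrow P_n\leftrightarrow Q_n\leftrightarrow\cdots\leftrightarrow Q_1\leftrightarrow P_0\leftrightarrow Q_0$ and $P_0\leftrightarrow Q_0\leftrightarrow P_{-1}\leftrightarrow\cdots$, and these overlap exactly in the link $P_0\leftrightarrow Q_0$: in both cases $P_0=P$ and $Q_0=Q$ (which is assertion (1)), and in both cases the adjunction recorded there is that $P$ is left adjoint to $Q$, namely the one-sided zigzag relation (1) of Figure \ref{s6}. Hence the two chains agree on the overlap and splice to the bi-infinite chain in the statement, in which every consecutive pair is an adjoint pair; this is (2).

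\emph{The exact triangles (3).} For each index I would exhibit the natural morphism between the two consecutive iterated cones and identify its cone with a single (possibly shifted) copy of $B$. For $n\ge 1$, $P_n=(P\to B[1]^{\oplus n})$, and the projection $B[1]^{\oplus n}\to B[1]^{\oplus(n-1)}$ that forgets the last summand is compatible with $\op{id}_P$ and so induces a morphism $P_n\to P_{n-1}$ whose cone is the discarded summand $B[1]$. For $n\le 0$, writing $n=-m+1$ with $m\ge 1$ one has $P_{-m}=(B^{\oplus m}\to P)$, and the inclusion $B^{\oplus(m-1)}\hookrightarrow B^{\oplus m}$ of the first $m-1$ summands is compatible with $\op{id}_P$, giving $P_{-m+1}\hookrightarrow P_{-m}$ with cokernel the remaining copy of $B$. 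In either case, rotating the resulting triangle yields $B\to P_n\to P_{n-1}\xra{[1]}B$. The triangles for the $Q$'s are the mirror image: for $n\ge 0$ the inclusion $B[-1]^{\oplus n}\hookrightarrow B[-1]^{\oplus(n+1)}$ induces an injection of bimodules $Q_n\hookrightarrow Q_{n+1}$ with cokernel $B$, while for $n\le -1$, $n=-m$, the projection $B^{\oplus m}\to B^{\oplus(m-1)}$ induces $Q_{-m}\to Q_{-m+1}$ with kernel a copy of $B$; rotating gives $Q_n\to Q_{n+1}\to B\xra{[1]}Q_n$. That the third vertex is genuinely a (possibly shifted) copy of $B$, and that the displayed morphism really exists in $D(B^e)$ even though the $P_n,Q_n$ are defined as derived mapping cones, is settled by the same octahedral bookkeeping already used to prove $P_{-2}\cong\wt{P}_{-2}$ and $Q_2\cong\wt{Q}_2$, so I would organize this step inductively along those lines rather than recomputing cones by hand.

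\emph{Where the real work is.} None of the substance is in the theorem itself: the adjunctions rest on the explicit bimodule homomorphisms $f_n$ and $h_n$ (Propositions \ref{prop basis 1,P} and \ref{prop basis 1,Q}), on the compatibility identities of Lemmas \ref{lem fn n-1}, \ref{lem hn n-1}, \ref{lem f0**}, \ref{lem h0**}, and on the cone--adjunction device of Lemma \ref{lem adj cone}, all of which feed Propositions \ref{prop adj positive} and \ref{prop adj negative}. Granting those, the only point in assembling Theorem \ref{thm adj} that demands genuine care is bookkeeping: checking that the two half-chains agree at $n=0$, and tracking the cohomological placement of the copies of $P$, $Q$, $B$ and the directions of the connecting morphisms so that each rotated triangle comes out in precisely the stated form $B\to P_n\to P_{n-1}\xra{[1]}B$ and $Q_n\to Q_{n+1}\to B\xra{[1]}Q_n$. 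That is where an off-by-one error or a sign could most easily slip in, so I would treat it as the main --- if modest --- obstacle.
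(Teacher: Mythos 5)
Your proposal is correct and takes essentially the same route as the paper: the paper's proof of this theorem is simply to splice Propositions \ref{prop adj negative} and \ref{prop adj positive}, and part (3) is left implicit in the definitions of $P_n,Q_n$ as two-term complexes. You spell out the termwise projections/inclusions that realize the connecting maps and identify the cones, which is the correct and intended verification; the only thing to watch (which you flag) is the cohomological placement of the stray $B$ term so that after rotation the triangle lands in exactly the stated form.
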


\section{The fermionic Fock space and the Clifford algebra}
In this section, we extend the construction of $\mathbb{F}_2$-linear DG categories in \cite{Tian} to the ground field $\K$ of characteristic zero.

Recall the integral fermionic Fock space $V_F$ from \cite[Section 3]{Tian}.
Let $V_F$ be a free abelian group with a basis of semi-infinite increasing sequences of even integers:
\begin{gather} \label{eq cim}
\cim = \{\mf{x}=(x_1, x_2, \dots) ~|~ x_1 < x_2 < \cdots \in 2\Z, x_i=x_{i-1}+2 ~\mbox{for}~ i\gg0\}.
\end{gather}
There is a decomposition $\cim = \bigsqcup\limits_{k \in \Z} \cim_{k}$, where
\begin{gather} \label{eq cim_k}
\cim_k = \{(x_1, x_2, \dots) \in \cim ~|~ x_i=2i+2k ~\mbox{for}~ i\gg0\}.
\end{gather}
Elements of $\cim_k$ are called sequences of {\em charge} $k$.

For later use, let $\cm$ denote the set of semi-infinite sequences of even integers:
\begin{gather} \label{eq cm}
\cm = \{\mf{x}=(x_1, x_2, \dots) ~|~ x_i \in 2\Z, x_i=x_{i-1}+2 ~\mbox{for}~ i\gg0\},
\end{gather}
and $\cm_k$ denote its subset of sequences of charge $k$.

For $j\in \Z$, let $\psi_j$ and $\psi_j^*$ be the {\em creating} and {\em annihilating} operators acting on the basis of $V_F$ by:
\begin{align*}
\psi_j(x_1, x_2, \dots)=& \left\{
\begin{array}{ll}
0 & \mbox{if} \hspace{0.3cm} 2j=x_n ~\mbox{for some}~ n; \\
(-1)^{n}(x_1, \dots, x_n, 2j, x_{n+1}, \dots) & \mbox{if} \hspace{0.3cm} x_n < 2j < x_{n+1}.
\end{array}\right. \\
\psi_j^*(x_1, x_2, \dots)=& \left\{
\begin{array}{ll}
0 & \quad \mbox{if} \hspace{0.3cm} 2j\neq x_n ~\mbox{for all}~ n; \\
(-1)^{n-1}(x_1, \dots, x_{n-1}, x_{n+1}, \dots) & \quad \mbox{if} \hspace{0.3cm} 2j= x_n.
\end{array}\right. \\
\end{align*}

Define $Cl$ as an algebra with generators $t_i$ for $i \in \Z$ and relations:
\begin{gather*}
t_i^2=0;\\
t_it_j=-t_j t_i \hspace{.2cm} \mbox{if} \hspace{.2cm} |i-j|>1;\\
t_i t_{i+1} + t_{i+1}t_i=1.
\end{gather*}

There is an action of $Cl$ on $V_F$ given by:
$$t_{2j}:=\psi_j, \quad\quad t_{2j-1}:=\psi_j^*+\psi_{j-1}^*.$$


\subsection{The DG algebra $R$}
In this subsection, we define the DG $\K$-algebra $R$ using diagrams.
A category of DG $R$-modules will be used to categorify the fermionic Fock space $V_F$.
The construction extends that in the case of $\mathbb{F}_2$, see \cite[Sections 4, 5]{Tian}.

There are three types of {\em elementary diagrams} with even indices:
\be
\item a vertical strand $\mb_i$ with $i$ on both ends;
\item a crossing $cr_{i,j}$ with $(j, i)$ on the bottom and $(i, j)$ on the top;
\item a dotted strand $dot_i$ with $i+2$ on the bottom and $i$ on the top.
\ee
All indices of diagrams are even integers, see figure \ref{fig v1}.
The grading of an elementary diagram is zero except that $\deg(cr_{i,j})=1$ if $i<j$, and $\deg(cr_{i,j})=-1$ if $i \geq j$.
\begin{figure}[h]
\begin{overpic}
[scale=0.2]{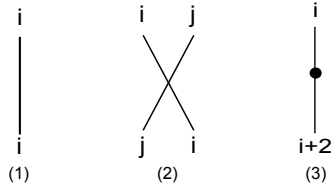}
\end{overpic}
\caption{Elementary diagrams.}
\label{fig v1}
\end{figure}

A generating diagram is obtained from an elementary diagram by horizontally stacking finitely many vertical strands $1_{\mf{a}}$ on the left and infinitely many vertical strands $1_{\mf{x}}$ on the right, where $\mf{a}$ is a finite sequence of even integers, and $\mf{x} \in \cm$ defined in (\ref{eq cm}).
A generating diagram is of type (s) if the corresponding elementary diagram is of type (s), for $s=1,2,3$.
Its grading is the same as the corresponding elementary diagram.
A generating diagram of type (1), (2), or (3) is denoted by $\mb_{\mf{x}}, cr_{i,j}(\mf{a}, \mf{x})$, or $dot_i(\mf{a}, \mf{x})$, respectively, see figure \ref{fig v2}.
\begin{figure}[h]
\begin{overpic}
[scale=0.3]{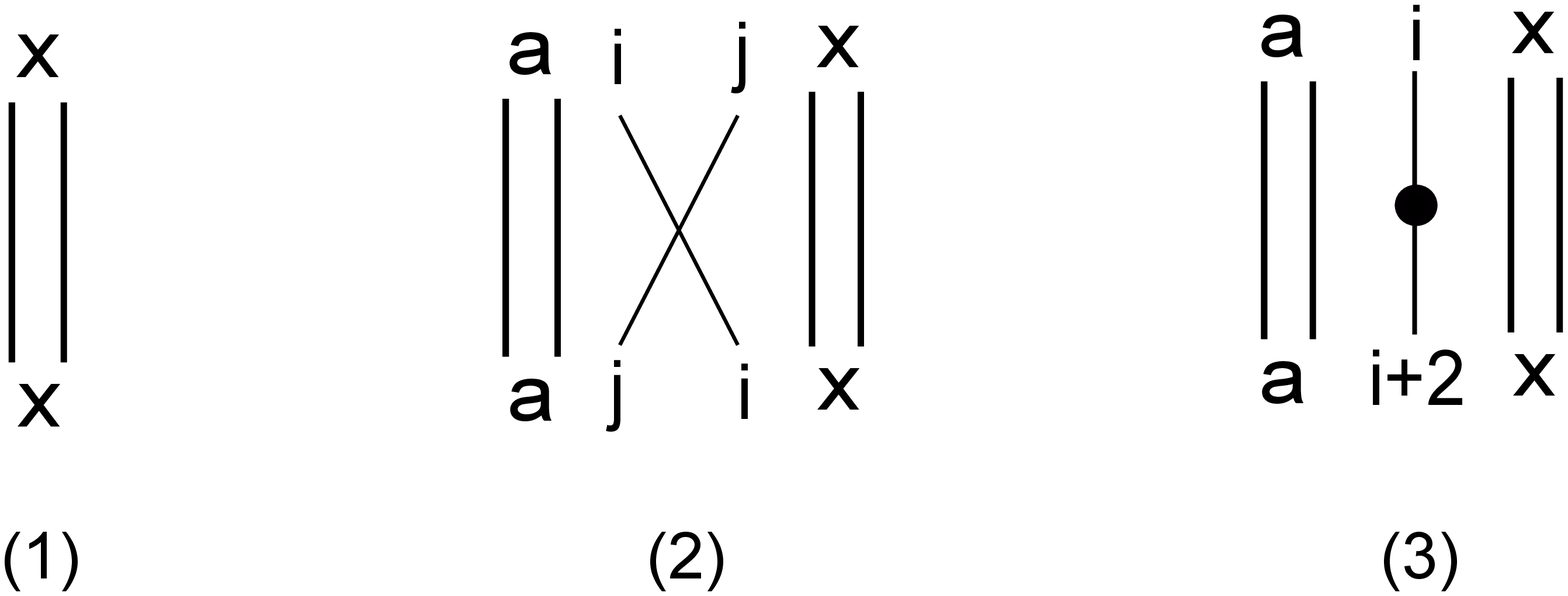}
\end{overpic}
\caption{Generating diagrams.}
\label{fig v2}
\end{figure}

The graded $\K$-algebra $R$ is generated by the generating diagrams, subject to some local relations.
The multiplication $fg$ of two diagrams $f$ and $g$ is the vertical concatenation of $f$ and $g$, where $f$ is at the bottom and $g$ is at the top.
The vertical concatenation of two diagrams is zero unless their endpoints match.

The local relations consist of five groups, see figure \ref{fig v3}:
\be
\item Isotopy relation: a vertical strand is an idempotent; disjoint diagrams super commute.
\item Double dot relation: a strand with two dots is zero.
\item Double crossing relation: $cr_{j,i} \cdot cr_{i,j}=(1-\delta_{i,j})\mb_{i,j}$.
\item Triple intersection moves for all labels.
\item Dot-slide relation: a dot can slide from right to left under a double crossing.
\ee
\begin{figure}[h]
\begin{overpic}
[scale=0.25]{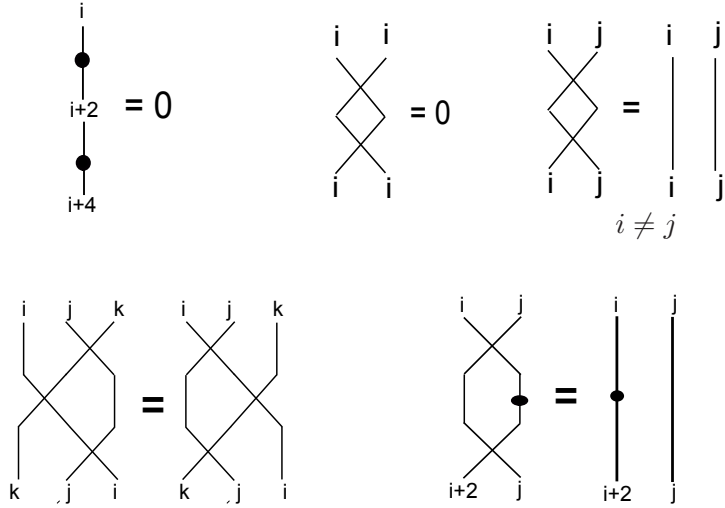}
\put(85,38){$i \neq j$}
\end{overpic}
\caption{Local relations of $R$.}
\label{fig v3}
\end{figure}

A generating diagram $dot_{i}(\mf{a}, \mf{x})$ of type (3) is called {\em restricted} if $x_n > i+2$ for all $n$, where $\mf{x}=(x_1, x_2, \dots) \in \cal{M}$.
Such a diagram is called of type (3r).
Using the dot-slide relation, any generating diagram with a dot can be written as a composition of diagrams such that the dot is moved from left to right.
Thus, the algebra $R$ is generated by generating diagrams of type (1), (2), and (3r).

Define a differential on the generators of $R$ as
\begin{align*}
d(r)=& \left\{
\begin{array}{ll}
\mb_{\mf{y}} & \quad\mbox{if} \hspace{0.3cm} r=cr_{i,i}(\mf{a}, \mf{x}); \\
0 & \quad\mbox{otherwise}.
\end{array}\right.
\end{align*}
where $r$ is a generating diagram of type (1), (2) or (3r), and $\mf{y}=(a_1, \dots, a_n, i, i, x_1, x_2, \dots) \in \cal{M}$.
The differential is extended to $R$ by Leibniz rule $d(r_1r_2)=d(r_1)r_2+(-1)^{\deg(r_1)}r_1d(r_2)$.

\begin{lemma} \label{lem welldef R}
The differential $d$ on $R$ is well-defined, i.e. it preserves the local relations, $\deg(d)=1$, and satisfies $d^2=0$.
\end{lemma}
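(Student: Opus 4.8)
The plan is to regard $d$ as the super-derivation determined by its prescribed values on the generating diagrams of type (1), (2) and (3r) via the Leibniz rule, and to verify in turn: (a) $\deg d=1$; (b) $d$ preserves the local relations, so that it descends to $R$; and (c) $d^2=0$. Part (a) is quickest: on a generator $r$ of type (1) or (3r), and on $cr_{i,j}(\mf{a},\mf{x})$ with $i\neq j$, one has $d(r)=0$; on $cr_{i,i}(\mf{a},\mf{x})$, which has degree $-1$ since $i\ge i$, the image $\mb_{\mf{y}}$ has degree $0=-1+1$. So $d$ raises degree by one on generators, hence everywhere by the Leibniz rule. I record a fact used repeatedly below: $d$ replaces a generator only by a diagram supported on the same underlying sequence, because $\mf{y}$ is both the bottom and the top sequence of $cr_{i,i}(\mf{a},\mf{x})$.

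Part (b) --- that $d$ annihilates each of the five groups of local relations of Figure \ref{fig v3} modulo the relation ideal --- is the substance of the lemma. Groups (1) and (2) are direct: $d(\mb_{\mf{x}})=0$ handles the idempotent relation; super-commutativity of disjoint diagrams is preserved because $d(f)$ lives in the same columns as $f$, so the Leibniz signs $(-1)^{\deg}$ reproduce exactly the super-commutation signs; and the double-dot relation involves only dotted strands, whose differential is controlled --- after the dot-slide normalisation of group (5) --- by $d$ on crossings, so both sides go to $0$. For the double-crossing relation (group (3)), the case $i\neq j$ is immediate since every crossing present is killed by $d$, while the case $i=j$ is the first genuine computation, where the Leibniz sign from $\deg(cr_{i,i})=-1$ produces the needed cancellation:
\[
d\bigl(cr_{i,i}\,cr_{i,i}\bigr)=d(cr_{i,i})\,cr_{i,i}+(-1)^{-1}cr_{i,i}\,d(cr_{i,i})=\mb_{\mf{y}}\,cr_{i,i}-cr_{i,i}\,\mb_{\mf{y}}=0,
\]
matching $d$ of the right-hand side $0$.

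Groups (4) and (5) are where I expect the real work, and I regard the triple-intersection case as the main obstacle. If the three strand labels in a triple intersection move are pairwise distinct, then every crossing that occurs has unequal labels, $d$ kills each of the three factors, and both sides of the relation have zero differential; the same holds whenever no two of the strands involved carry equal labels. When two, or all three, of the labels coincide, precisely the crossings $cr_{i,i}$ between equally-labelled strands contribute under the Leibniz rule, $d$ of each side becomes a signed sum of diagrams in which one such crossing is smoothed to a pair of vertical strands, and one must identify the two signed sums using the isotopy and double-crossing relations; keeping track of the $(-1)^{\deg}$ signs carried by the $\pm 1$-graded crossings is the delicate point. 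The dot-slide relation (group (5)) is intertwined with this, because $R$ is generated by type (1), (2) and (3r) diagrams only: the differential of a non-restricted dotted strand is computed by first using the dot-slide relation to move the dot into a restricted position (where $d$ vanishes) and then applying the Leibniz rule to the surrounding double crossings, so well-definedness amounts precisely to $d$ killing the dot-slide relation, which reduces to the computation above together with the triple-intersection identities.

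Part (c): since $d$ is odd, $d^2$ is an ordinary (even) derivation, so it suffices to verify $d^2=0$ on generators. On a type (1) or (3r) generator and on $cr_{i,j}(\mf{a},\mf{x})$ with $i\neq j$ this is clear because already $d=0$; on $cr_{i,i}(\mf{a},\mf{x})$ one gets $d^2(cr_{i,i})=d(\mb_{\mf{y}})=0$ because $\mb_{\mf{y}}$ is a generator of type (1). Hence $d^2$ vanishes on all generators, and therefore on $R$ by part (b).
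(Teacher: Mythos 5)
Your proposal is correct and follows essentially the same route as the paper: isolate the fact that $d$ acts nontrivially only on $cr_{i,i}$, verify the double-crossing relation by the sign cancellation $d(cr_{i,i}\,cr_{i,i})=\mb_{\mf y}\,cr_{i,i}-cr_{i,i}\,\mb_{\mf y}=0$, note the triple-crossing case is handled similarly, and conclude $\deg d=1$ and $d^2=0$ directly from the definition on generators. The paper's proof is terser (it doesn't walk through groups (1), (2), (5) explicitly, and it dispatches the triple-intersection move with "similar"), while you lay out the case analysis more carefully and correctly flag the equal-label triple-intersection computation as the only place requiring a genuine sign-tracking argument, matching the level of rigor in the source.
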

\begin{proof}
Since $d(r)$ is nontrivial only for $r=cr_{i,i}(\mf{a}, \mf{x})$, it is enough to show that $d$ preserves the double crossing and triple crossing relations involving $cr_{i,i}$.
We check the double crossing relation:
$$d(cr_{i,i} \cdot cr_{i,i})=d(cr_{i,i})\cdot cr_{i,i} - cr_{i,i} \cdot d(cr_{i,i})=\mb_{i,i}\cdot cr_{i,i}-cr_{i,i}\cdot \mb_{i,i}=0=d(\mb_{i,i}).$$
The proof for the triple crossing relation is similar.

By definition $\deg(d)=1$, and $d^2=0$ follows from that $d^2(r)=0$ for any generator $r$.
\end{proof}

\begin{rmk}
(1) The definition of the differential here is simplified comparing to that in \cite[Section 4.2]{Tian}, since we reduce the generators of $R$ from type (3) to type (3r).

\n (2) The differential does not satisfy Leibniz rule with respect to the horizontal concatenation of diagrams.
In other words, the diagrammatic DG category defined by $R$ is not monoidal.

\n (3) The generators only involving $cr_{i,i}$ generate the graded nilCoxeter algebra.
\end{rmk}

Since a double crossing with the same indices is zero, the dot-slide relation is not symmetric with respect to left and right, i.e. a dot might not be able to slide from left to right.
The obstruction is from the double crossing relation $cr_{i,i}^2=0$, see figure \ref{fig v4} for an example.
\begin{figure}[h]
\begin{overpic}
[scale=0.4]{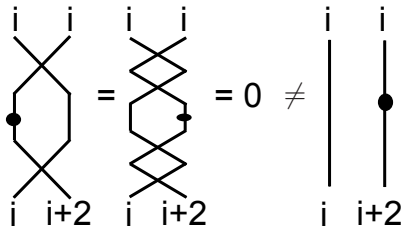}
\put(68,30){$\neq$}
\end{overpic}
\caption{A dot may not slide from left to right.}
\label{fig v4}
\end{figure}

There is a distinguished element $\es(k)=(2k+2, 2k+4, \dots) \in \cim_k$, called the {\em vacuum state} of charge $k$.
The {\em energy} $\op{E}_k$ is defined on $\cm_k$ by
$$\begin{array}{cccc}
\op{E}_k:& \cm_k & \ra & \Z \\
& (x_1, x_2, \dots)  & \mapsto & \frac{1}{2} \sum\limits_{i=1}^{\infty}(2k+2i-x_i),
\end{array}$$
where only finitely many terms are nonzero in the sum.
The energy measures the total difference between the vacuum state $\es(k)$ and $\mf{x} \in \cm_k$.
In particular, the vacuum object has energy zero, and $E_k(\mf{x}) \ge 0$ for $\mf{x} \in \cim_k$.
Note that $\op{E}_k(\mf{x})<0$ implies that $\mf{x}$ contains repetitive numbers: $x_i=x_j$ for some $i \neq j$.

\vspace{.2cm}
The following properties of the DG algebra $R$ are proved in the case of $\mathbb{F}_2$ in \cite[Sections 4, 5]{Tian}.
They also hold in characteristic zero with the same proofs.
We give a summary here, and refer the reader to \cite{Tian} for more detail.

\vspace{.2cm}
\n {\bf Properties of the graded algebra $R$:}
\be
\item There is a decomposition
$R=\bigoplus\limits_{\mf{x}, \mf{y} \in \cm}\mb_{\mf{x}}\cdot R \cdot \mb_{\mf{y}}$
into subspaces, where $\mb_{\mf{x}}\cdot R \cdot \mb_{\mf{y}}$ is spanned by diagrams with indices $\mf{x}$ at the bottom and $\mf{y}$ at the top.
\item The space $\mb_{\mf{x}}\cdot R \cdot \mb_{\mf{y}}=0$ if $\mf{x}$ and $\mf{y}$ have different charges since the generating diagrams do not change the charge.
Thus, there is a decomposition $R=\bigoplus\limits_{k \in \Z}R_k$ into subalgebras, where $R_k=\bigoplus\limits_{\mf{x}, \mf{y} \in \cm_k}\mb_{\mf{x}}\cdot R \cdot \mb_{\mf{y}}$.
Moreover, all $R_k$'s are isomorphic to each other.
\item The space $\mb_{\mf{x}}\cdot R_k \cdot \mb_{\mf{y}}=0$ if $\op{E}_k(\mf{x})>\op{E}_k(\mf{y})$ for $\mf{x}, \mf{y} \in \cm_k$, since the generating diagrams do not decrease the energy from bottom to top.
\item Each subspace $\mb_{\mf{x}}\cdot R \cdot \mb_{\mf{y}}$ is finite dimensional, see \cite[Corollary 4.19]{Tian}.
\item The algebra $\mb_{\mf{x}}\cdot R \cdot \mb_{\mf{x}}$ is one dimensional generated by $\mb_{\mf{x}}$ if $\mf{x}=(x_1, x_2, \dots)$ does not contain repetitive numbers, i.e. $x_i \neq x_j$ for $i \neq j$.
\item If $\mf{x}$ contains repetitive numbers, then the algebra $\mb_{\mf{x}}\cdot R \cdot \mb_{\mf{x}}$ is a graded tensor product of nilCoxeter algebras.
\ee

Let $\op{N}(\mf{x}) \in \cm$ denote the rearrangement of $\mf{x} \in \cm$ such that the sequence $\op{N}(\mf{x})$ is non-decreasing.
It is called the {\em normalization} of $\mf{x}$.
Since the crossings $cr_{i,j}$ and $cr_{j,i}$ are inverse to each other up to grading shifts,
the algebra $\bigoplus\limits_{\op{N}(\mf{x})=\op{N}(\mf{y})}\mb_{\mf{x}}\cdot R \cdot \mb_{\mf{y}}$ is a graded infinite matrix algebra over its subalgebra $\mb_{N(\mf{x})}\cdot R \cdot \mb_{N(\mf{x})}$.


\vspace{.2cm}
\n {\bf Properties of the cohomology algebra $H(R)$:}
\be
\item The cohomology $H(\mb_{\mf{x}}\cdot R \cdot \mb_{\mf{y}})=0$ if $\mf{x}$ or $\mf{y}$ contains repetitive numbers, essentially because the cohomology of the nilCoxeter algebra is zero, see \cite[Lemma 5.1]{Tian}.
\item Let $\cal{NRM}$ denote a subset of $\cm$ consisting of sequences containing no repetitive numbers.
The differential on $\mb_{\mf{x}}\cdot R \cdot \mb_{\mf{y}}$ is zero for $\mf{x}, \mf{y} \in \cal{NRM}$.
Thus, the cohomology $H(\mb_{\mf{x}}\cdot R \cdot \mb_{\mf{y}})$ is naturally isomorphic to $\mb_{\mf{x}}\cdot R \cdot \mb_{\mf{y}}$.
\item The cohomology algebra $H(R)$ is naturally isomorphic to $\bigoplus\limits_{\mf{x}, \mf{y} \in \cal{NRM}}\mb_{\mf{x}}\cdot R \cdot \mb_{\mf{y}}$. We will identify them from now on.
\ee
Recall that $\cim$ is the set of strictly increasing sequences, see (\ref{eq cim}).
Any sequence $\mf{x} \in \cal{NRM}$ has a unique representative $N(\mf{x}) \in \cim$.

\begin{prop} \label{prop HR}
The DG algebra $R$ is formal, i.e. it is quasi-isomorphic to its cohomology $H(R)$ with the trivial differential.
The DG algebra $H(R) = \bigoplus\limits_{\mf{x}, \mf{y} \in \cal{NRM}}\mb_{\mf{x}}\cdot R \cdot \mb_{\mf{y}}$ is a graded infinite matrix algebra over its subalgebra $\wt{H}(R)=\bigoplus\limits_{\mf{x}, \mf{y} \in \cim}\mb_{\mf{x}}\cdot R \cdot \mb_{\mf{y}}$.
In particular, $H(R)$ and $\wt{H}(R)$ are derived Morita equivalent.
\end{prop}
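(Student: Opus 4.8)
The plan is to prove the three assertions in sequence, using as black boxes the listed properties of $R$ and $H(R)$ (notably the vanishing of nilCoxeter cohomology, the finiteness of the corners $\mb_{\mf{x}}\cdot R\cdot\mb_{\mf{y}}$, and the matrix-algebra description of $\bigoplus_{\op{N}(\mf{x})=\op{N}(\mf{y})}\mb_{\mf{x}}\cdot R\cdot\mb_{\mf{y}}$, all imported from \cite{Tian}).

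\textbf{Formality.} Let $e=\sum_{\mf{x}\in\cal{NRM}}\mb_{\mf{x}}$, a locally finite idempotent, legitimate since $R$ is idempotented. The listed property that $d$ vanishes on $\mb_{\mf{x}}\cdot R\cdot\mb_{\mf{y}}$ for $\mf{x},\mf{y}\in\cal{NRM}$ says precisely $d(eRe)=0$, so $(eRe,0)$ is a DG subalgebra of $R$. I would show that the inclusion $\iota\colon(eRe,0)\hookrightarrow R$ is a quasi-isomorphism: it induces on cohomology the map $eRe=H(eRe)\to H(R)$ sending a cocycle to its class, and by the listed computation $H(R)=\bigoplus_{\mf{x},\mf{y}\in\cal{NRM}}\mb_{\mf{x}}\cdot R\cdot\mb_{\mf{y}}=eRe$ this map is the identity under the natural identification (no differential and no coboundaries occur inside $eRe$). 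Thus $R$ is quasi-isomorphic to the DG algebra $(eRe,0)$ with trivial differential, which is the asserted formality, with $H(R)$ identified with $eRe$.

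\textbf{The matrix structure.} For $\mf{x}\in\cal{NRM}$ the normalization $\op{N}(\mf{x})$ lies in $\cim$. The cited description of $\bigoplus_{\op{N}(\mf{x})=\op{N}(\mf{y})}\mb_{\mf{x}}\cdot R\cdot\mb_{\mf{y}}$ as a graded matrix algebra over the one-dimensional $\mb_{\op{N}(\mf{x})}\cdot R\cdot\mb_{\op{N}(\mf{x})}$ supplies, for each $\mf{x}$, homogeneous elements $\sigma_{\mf{x}}\in\mb_{\mf{x}}\cdot R\cdot\mb_{\op{N}(\mf{x})}$ and $\bar\sigma_{\mf{x}}\in\mb_{\op{N}(\mf{x})}\cdot R\cdot\mb_{\mf{x}}$, of opposite degree, with $\sigma_{\mf{x}}\bar\sigma_{\mf{x}}=\mb_{\mf{x}}$ and $\bar\sigma_{\mf{x}}\sigma_{\mf{x}}=\mb_{\op{N}(\mf{x})}$. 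Conjugation by these gives graded isomorphisms, up to shift, $\mb_{\mf{x}}\cdot R\cdot\mb_{\mf{y}}\cong\mb_{\op{N}(\mf{x})}\cdot R\cdot\mb_{\op{N}(\mf{y})}$ for all $\mf{x},\mf{y}\in\cal{NRM}$, and assembling them over the fibers of $\op{N}\colon\cal{NRM}\to\cim$ presents $H(R)$ as a graded infinite matrix algebra with entries in $\wt{H}(R)=\bigoplus_{\mf{z},\mf{w}\in\cim}\mb_{\mf{z}}\cdot R\cdot\mb_{\mf{w}}$; equivalently $\wt{H}(R)=f\cdot H(R)\cdot f$ for the idempotent $f=\sum_{\mf{z}\in\cim}\mb_{\mf{z}}$.

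\textbf{Derived Morita equivalence.} This I would obtain from the full-idempotent criterion applied to $f$: for every $\mf{x}\in\cal{NRM}$ the identity $\mb_{\mf{x}}=\sigma_{\mf{x}}\cdot\mb_{\op{N}(\mf{x})}\cdot\bar\sigma_{\mf{x}}$ places $\mb_{\mf{x}}$ inside the two-sided ideal $H(R)\cdot f\cdot H(R)$, so $H(R)\cdot f\cdot H(R)=H(R)$; hence $f$ is full and the bimodule $H(R)\cdot f$ induces a Morita equivalence between $H(R)$ and $\wt{H}(R)=f\cdot H(R)\cdot f$, in particular a derived equivalence. The only delicate points I anticipate are organizational: handling the infinite idempotents $e$ and $f$ and the ideal $H(R)\cdot f\cdot H(R)$ in the non-unital (idempotented) setting, and tracking the grading shifts from the $\sigma_{\mf{x}}$ so that all equivalences are of \emph{graded} categories. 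The substantive input — finiteness of the corners and vanishing of the nilCoxeter cohomology — has already been isolated in \cite{Tian}.
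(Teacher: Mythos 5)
Your proposal is correct and fleshes out exactly the argument the paper leaves implicit: the proposition is stated as a summary of the preceding bullet-point properties of $R$ and $H(R)$ (vanishing of the differential and of the nilCoxeter cohomology on the appropriate corners, and the crossing-conjugation matrix-algebra structure), and your three steps — realizing $H(R)$ as the DG subalgebra $eRe$ with zero differential via the idempotent $e=\sum_{\mf{x}\in\cal{NRM}}\mb_{\mf{x}}$, conjugating by the invertible crossings $\sigma_{\mf{x}}$ to exhibit the matrix structure over $\wt{H}(R)=fH(R)f$, and invoking fullness of $f$ for the Morita statement — are precisely that assembly. The one point worth stating explicitly, which you only note parenthetically, is that injectivity of $eRe\to H(R)$ uses that $d$ preserves the $(\mf{x},\mf{y})$-decomposition, so no coboundary lands in an $\cal{NRM}$-corner; otherwise the write-up is complete.
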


The DG algebra $\wt{H}(R)=\bigoplus\limits_{k \in \Z}\wt{H}(R_k)$, where $\wt{H}(R_k)$'s are all isomorphic to each other.
We summarize the relations as:
\begin{gather} \label{eq rel R HR}
R \xrightarrow{\op{quasi-iso}} H(R) \xrightarrow{\op{derived~~Morita}} \wt{H}(R) \cong \bigoplus\limits_{\Z} \wt{H}(R_0)
\end{gather}

\subsection{The algebra $\wt{H}(R_0)$}
The DG algebra $\wt{H}(R_0)$ has a simple presentation in terms of generators and relations.
We give a brief summary here, see \cite[Proposition 5.5]{Tian} for the proofs.

\vspace{.2cm}
\n{\bf Properties of the algebra $\wt{H}(R_0)$:}
\be
\item The space $\mb_{\mf{x}}\cdot \wt{H}(R_0) \cdot \mb_{\mf{y}}$ is at most one dimensional, for $\mf{x}, \mf{y} \in \cim_0$.
\item The space $\mb_{\mf{x}}\cdot \wt{H}(R_0) \cdot \mb_{\mf{y}}$ is one dimensional for $\mf{x}, \mf{y} \in \cim_0$ if and only if there exists a finite index set $\cal{I} \subset \Z_+$ such that $y_i=x_i-2$ for $i \in \cal{I}$ and $y_i=x_i$ for $i \notin \cal{I}$. It is generated by a diagram with $|\cal{I}|$ dots and no crossing. We write such a generator as $\mf{x} \xrightarrow{\cal{I}} \mf{y}$.
\item When $\cal{I}=\es$, the space $\mb_{\mf{x}}\cdot \wt{H}(R_0) \cdot \mb_{\mf{x}}$ is generated by $\mb_{\mf{x}}$.
\item As an algebra, $\wt{H}(R_0)$ is generated by $\mb_{\mf{x}}$, and $\mf{x} \xrightarrow{\cal{I}} \mf{y}$ for $\cal{I}=\{i\}$. The relations consist of two groups:
    \begin{gather} \label{eq rel HR0}
     (\mf{x} \xrightarrow{\{i\}} \mf{y})(\mf{y} \xrightarrow{\{j\}} \mf{z})=(\mf{x} \xrightarrow{\{j\}} \mf{y'})(\mf{y'} \xrightarrow{\{i\}} \mf{z}), ~i \neq j; \qquad
     (\mf{x} \xrightarrow{\{i\}} \mf{y})(\mf{y} \xrightarrow{\{i\}} \mf{z})=0.
     \end{gather}
    The relations are induced by the isotopy relation of dots on different strands, and the double dots relation, respectively.
\item The DG algebra $\wt{H}(R_0)$ with the trivial differential is concentrated at degree zero, since any dot has degree zero.
      So it can be viewed as an ordinary algebra.
\ee
Some generators of $\wt{H}(R_0)$ between states of low energy are described in figure \ref{fig v5}.
\begin{figure}[h]
\begin{overpic}
[scale=0.4]{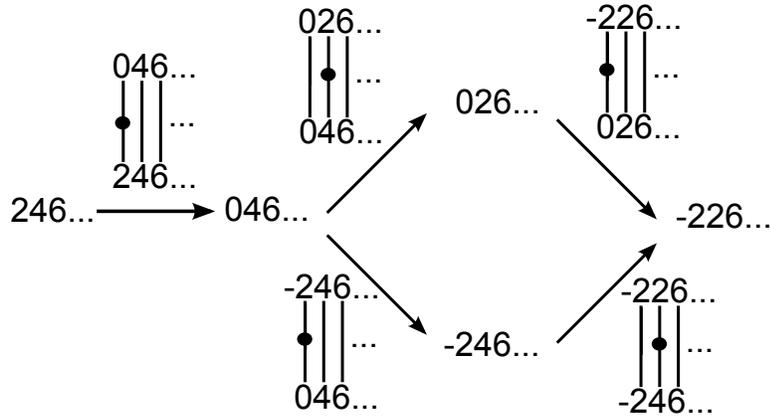}
\end{overpic}
\caption{Some generators of $\wt{H}(R_0)$.}
\label{fig v5}
\end{figure}

According to Property (4) above, $\wt{H}(R_0)$ is a quiver algebra whose set of vertices is $\cim_0$, the set of semi-infinite increasing sequences of charge $0$.
Let $\Par$ denote the set of all partitions.
There is a bijection $\mf{x}: \Par \ra \cim_{0}$:
\begin{gather} \label{eq iso vfk}
\mf{x}(\lm)=(2-2\ov{\lm}_1, 4-2\ov{\lm}_2, \dots, 2k-2\ov{\lm}_k, 2(k+1), \dots),
\end{gather}
where $\ov{\lm}=(\ov{\lm}_1, \ov{\lm}_2, \dots, \ov{\lm}_k)$ is the dual partition of $\lm$.
We fix the bijections $\mf{x}$ from now on.
Under the bijection, each generator $(\mf{x} \xrightarrow{\{i\}} \mf{y})$ of $\wt{H}(R_0)$ can be rewritten as $r(\lm|\mu)$, where $\mf{x}=\mf{x}(\lm), \mf{y}=\mf{x}(\mu)$.
The relations of $\wt{H}(R_0)$ in (\ref{eq rel HR0}) are exactly those of $F$ in Definition \ref{def F}.

\begin{prop} \label{prop HR0 F}
The algebras $F$ and $\wt{H}(R_0)$ are naturally isomorphic under the bijection $\mf{x}$.
\end{prop}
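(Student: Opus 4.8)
The plan is to exhibit an explicit isomorphism of quiver algebras and verify it respects generators and relations. The key observation is that both $F$ and $\wt{H}(R_0)$ are presented as path algebras of quivers modulo relations, and Property (4) for $\wt{H}(R_0)$ together with Definition \ref{def F} for $F$ shows the two presentations are formally identical once the vertex sets are matched. So first I would use the bijection $\mf{x}\colon \Par \ra \cim_0$ from (\ref{eq iso vfk}) to identify the vertex sets of the two quivers, checking that under this bijection the arrows correspond: an arrow $\lm \ra \mu$ of $\Gamma$ (i.e. $\lm \subset \mu$ with $|\mu|=|\lm|+1$) should match precisely a generator $\mf{x}(\lm) \xrightarrow{\{i\}} \mf{x}(\mu)$ of $\wt{H}(R_0)$ for a single-element index set $\cal{I}=\{i\}$.

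The heart of this step is the combinatorial translation between adding a box to $\lm$ and decreasing one coordinate of $\mf{x}(\lm)$ by $2$. Writing $\mf{x}(\lm)=(2-2\ov{\lm}_1, 4-2\ov{\lm}_2, \dots)$ in terms of the dual partition, adding a box to $\lm$ on some column $j$ increases $\ov{\lm}_j$ by $1$, hence decreases the $j$-th coordinate $2j-2\ov{\lm}_j$ of $\mf{x}(\lm)$ by $2$, leaving all other coordinates fixed; this is exactly the condition $y_i=x_i-2$ for $i\in\{j\}$, $y_i=x_i$ otherwise, from Property (2). One must check this operation indeed produces a valid element of $\cim_0$ (strictly increasing) precisely when adding the box yields a partition — this amounts to the standard fact that the "addable boxes" of $\lm$ correspond to the "descents" in the sequence $\mf{x}(\lm)$, i.e. positions where decreasing by $2$ keeps the sequence strictly increasing. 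So the arrow sets biject, and moreover the square (\ref{eq def lmmunueta}) in $\Gamma$ corresponds to decreasing two distinct coordinates $i\neq j$.

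Next I would match the relations. The first relation of $F$, namely $(\lm|\mu)(\mu|\eta)=(\lm|\nu)(\nu|\eta)$ when $\lm \ra \mu\oplus\nu \ra \eta$, corresponds under the bijection to decreasing coordinates $i$ then $j$ versus $j$ then $i$ (with $i\neq j$, since two distinct boxes are added), which is exactly the first relation in (\ref{eq rel HR0}), the isotopy relation for dots on different strands. The second relation of $F$, $(\lm|\mu)(\mu|\eta)=0$ when $\eta\backslash\lm=(1^2)$, corresponds to adding two boxes in the same column $j$, i.e. decreasing the $j$-th coordinate by $2$ twice — but this is the double-dot relation $(\mf{x}\xrightarrow{\{i\}}\mf{y})(\mf{y}\xrightarrow{\{i\}}\mf{z})=0$, the second relation in (\ref{eq rel HR0}). (One should also note that adding two boxes in the same row, $\eta\backslash\lm=(2)$, decreases two distinct coordinates and so is governed by the first relation, consistent with the fact that $(\lm)F(\eta)$ is then one-dimensional by Lemma \ref{lem lmRmu}.) Since both algebras are quotients of the respective path algebras by relations that match under the bijection on generators, the induced map $r(\lm|\mu)\mapsto (\mf{x}(\lm)\xrightarrow{\{i\}}\mf{x}(\mu))$ extends to a well-defined algebra isomorphism.

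The main obstacle is the bookkeeping in the box-addition versus coordinate-decrease dictionary, particularly getting the dual-partition indexing in (\ref{eq iso vfk}) to line up so that "same column" on the partition side really is "same coordinate/same strand" on the sequence side, and confirming that addable boxes correspond bijectively to admissible coordinate decreases (no spurious arrows on either side, and the quivers $\Gamma$ and the one underlying $\wt{H}(R_0)$ genuinely coincide, not merely have isomorphic path algebras after imposing relations). Once that combinatorial correspondence is nailed down, the relation-matching is immediate from comparing Definition \ref{def F} with (\ref{eq rel HR0}) term by term, and naturality of the isomorphism in the bijection $\mf{x}$ is automatic since $\mf{x}$ is the only choice made.
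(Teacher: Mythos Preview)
Your proposal is correct and follows exactly the approach the paper takes: the paper's argument is the sentence immediately preceding the proposition, which simply notes that under $\mf{x}$ the generators $(\mf{x}(\lm)\xrightarrow{\{i\}}\mf{x}(\mu))$ of $\wt{H}(R_0)$ match the arrows $(\lm|\mu)$ of $F$ and that the relations in (\ref{eq rel HR0}) coincide with those in Definition~\ref{def F}. Your write-up fleshes out the combinatorial dictionary (adding a box in column $j$ of $\lm$ decreases the $j$-th coordinate of $\mf{x}(\lm)$ by $2$, so same-column additions become double dots and distinct-column additions become commuting dots) more explicitly than the paper does, but the method is identical.
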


\subsection{The fermionic Fock space.}
The categorification of the fermionic Fock space $\vf$ is the same as in \cite[Sections 5.2]{Tian}.
We refer to \cite[Section 10]{BL} for an introduction to DG modules and {\em projective} DG modules, and to \cite{Ke} for an introduction to DG categories and their homology categories.

For the DG algebra $R$, let $DG(R)$ denote the DG category of DG $R$-modules.
Let $P(\mf{x})=R \cdot \mb_{\mf{x}}$ denote the projective DG $R$-module associated to $\mf{x} \in \cal{M}$.
Define $DGP(R)$ as the smallest full DG subcategory of $DG(R)$ which contains $\{P(\mf{x}); \mf{x} \in \cm\}$ and is closed under the cohomological grading shift functor $[1]$ and taking mapping cones.
\begin{defn} \label{def cf}
Define $\cf$ as the homotopy category of $DGP(R)$. It decomposes as a disjoint union $\bigcup\limits_{k \in \Z} \cf_k$, where $\cf_k$ is the homotopy category of $DGP(R_k)$.
\end{defn}


The relations in (\ref{eq rel R HR}) imply that $\cf_k$ is equivalent to the homotopy category of $DGP(\wt{H}(R_k))$.
The later is naturally isomorphic to $\Kom(F)$ for all $k$ by Proposition \ref{prop HR0 F}.
Here, $\Kom(F)$ is the homotopy category of finite dimensional projective $F$-modules.
There are canonical isomorphisms of their Grothendieck groups:
$$K_0(\cf) = \bigoplus\limits_{k \in \Z} K_0(\cf_k) \cong \bigoplus\limits_{k \in \Z} K_0(\Kom(F)).$$
It follows from the representation theory of quiver algebras that $K_0(\Kom(F)) \cong \Z\lan \Par \ran$, the free abelian group with a basis $\Par$.
Let $V_F=\bigoplus\limits_{k \in \Z}(V_F)_k$, where $(V_F)_k$ has an integral basis $\cim_k$, see (\ref{eq cim_k}).
To sum up, we have the following categorification of the fermionic Fock space $\vf$.

\begin{prop} \label{prop K0 vf}
There is a canonical isomorphism of abelian groups $K_0(\cf_k) \ra (\vf)_k$ which maps $[P(\mf{x})]$ to $\mf{x} \in \cim_k$.
Taking the union of $k$, we have $K_0(\cf) \cong \vf$.
\end{prop}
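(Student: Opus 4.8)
Much of the needed machinery is already assembled in the text preceding the statement, so the plan is mainly to organize it and pin down the bijection on objects. Fix a charge $k\in\Z$. By the relations (\ref{eq rel R HR}) --- formality of $R$ (Proposition \ref{prop HR}) and the derived Morita equivalence $H(R_k)\sim\wt H(R_k)$ --- together with the standard fact (see \cite{Ke}) that a quasi-isomorphism of DG algebras and a derived Morita equivalence each induce a triangle equivalence on the homotopy categories of projective DG modules, the category $\cf_k$ is triangle equivalent to the homotopy category of $DGP(\wt H(R_k))$. Since $\wt H(R_k)$ is concentrated in cohomological degree zero with trivial differential, a projective DG module over it is just a bounded complex of finitely generated projectives, so this homotopy category is $\Kom(\wt H(R_k))$; and $\wt H(R_k)\cong F$ (all $\wt H(R_k)$ being isomorphic to $\wt H(R_0)$) by Proposition \ref{prop HR0 F}, whence $\cf_k\simeq\Kom(F)$.

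Next I would track objects along this chain. For $\mf x\in\cim_k$ the projective DG module $P(\mf x)=R\cdot\mb_{\mf x}$ transports, through the quasi-isomorphism and the Morita equivalence, to the indecomposable projective $\wt H(R_k)\cdot\mb_{\mf x}$, which under the bijection $\mu\leftrightarrow\mf x(\mu)$ of (\ref{eq iso vfk}) and Proposition \ref{prop HR0 F} is the projective $F\cdot 1_\mu$. The other generators $P(\mf x)$ of $DGP(R_k)$, with $\mf x\in\cm_k$ not strictly increasing, contribute nothing new: if $\mf x$ has a repetition then $H(R)\cdot\mb_{\mf x}=0$, so $P(\mf x)$ is a contractible projective DG module and is zero in $\cf_k$; and if $\mf x$ has no repetition then $P(\mf x)$ is isomorphic in $\cf_k$ to a cohomological shift of $P(N(\mf x))$ with $N(\mf x)\in\cim_k$. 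Hence $\{[P(\mf x)]:\mf x\in\cim_k\}$ matches $\{[F\cdot 1_\mu]:\mu\in\Par\}$ under $K_0(\cf_k)\cong K_0(\Kom(F))$.

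Then I would invoke the representation theory of the quiver algebra $F$. By the Remark following Proposition \ref{prop K0 vb}, every finite dimensional $F$-module is an $F_n$-module of finite projective dimension, so $\Kom(F)$ is equivalent to the bounded derived category of finite dimensional $F$-modules; therefore $K_0(\Kom(F))$ is free abelian on the classes $\{[F\cdot 1_\mu]\}_{\mu\in\Par}$ of the indecomposable projectives (the $1_\mu$ forming a complete set of primitive orthogonal idempotents by \cite[Lemma 2.4]{ASS}). Combining with the previous paragraph, $[P(\mf x)]\mapsto\mf x$ extends to an isomorphism $K_0(\cf_k)\xra{\sim}\Z\lan\cim_k\ran=(\vf)_k$, and it is canonical since each step of the chain is. Finally, from $\cf=\bigsqcup_{k\in\Z}\cf_k$ one gets $K_0(\cf)=\bigoplus_k K_0(\cf_k)\cong\bigoplus_k(\vf)_k=\vf$.

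The only genuine subtlety I expect is the object-tracking of the second paragraph --- checking that $P(\mf x)$ really goes to the vertex-idempotent projective $F\cdot 1_\mu$ indexed by $\mu$ with $\mf x=\mf x(\mu)$, and that in passing to $\cf_k$ one may ignore the non-increasing sequences. This, however, is exactly what is established over $\mathbb F_2$ in \cite[Sections 4, 5]{Tian}, and the characteristic-zero arguments are identical, so no new input beyond the cited structural results on $R$, $H(R)$ and $\wt H(R_0)$ is needed.
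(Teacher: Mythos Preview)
Your proposal is correct and follows essentially the same approach as the paper: the paper's argument is precisely the paragraph preceding the proposition, which invokes the chain (\ref{eq rel R HR}) to get $\cf_k\simeq\Kom(F)$ and then appeals to the representation theory of quiver algebras for $K_0(\Kom(F))\cong\Z\langle\Par\rangle$. You have simply filled in the object-tracking and the disposal of non-increasing sequences that the paper leaves implicit (deferring to \cite{Tian}), so your write-up is a faithful expansion of the paper's proof rather than a different route.
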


\subsection{The Clifford category}

To categorify the Clifford algebra, we extend the construction in \cite[Section 6]{Tian} from $\mathbb{F}_2$ to characteristic zero.
We need carefully deal with the signs.
Recall the definitions of shifts of left and right DG modules from \cite[Sections 10.3, 10.6.3]{BL} as follows.
A DG left $A$ and right $B$ module $M$ is denoted as a $(A,B)$-bimodule.
Let $M[1]$ denote the shift of $M$, i.e.
$$M[1]^i=M^{i+1},  \quad d_{M[1]}=-d_M, \quad a \cdot m \cdot b=(-1)^{|a|}amb,$$
for $m \in M[1], a \in A, b \in B$, where $a \cdot m \cdot b$ is the multiplication in $M[1]$, and $amb$ is the multiplication in $M$.
In other words, the left $A$ action is twisted in $M[1]$.
We will focus on the case $A=B=R$, i.e. DG $R$-bimodules.
Given a homomorphism of $R$-bimodules $u: M \ra N$, the cone $C(u)$ of $u$ is defined as
$(M[1]\oplus N, d=(d_{M[1]}+f, d_N)$.
Consider the DG category $DG(R^e)$ of DG $R$-bimodules.
Let $M \ot_R N$ denote the DG $R$-bimodule of the tensor product over $R$, where the differential is $d(m \ot n)=d(m) \ot n+(-1)^{|m|}m \ot d(n)$.
We will simply write $M \ot N$ for $M \ot_R N$.

For each $i \in \Z$, there is an inclusion of DG algebras:
$$\begin{array} {cccc}
\ii: & R & \ra & R \\
& f & \mapsto & \mb_{2i}\odot f,
\end{array}
$$
where $\mb_{2i}\odot f \in R$ denotes the diagram obtained from a diagram $f \in R$ by adding a vertical strand $\mb_{2i}$ on the left.
More precisely, $\ii$ maps $\mb_{\mf{x}} R \mb_{\mf{y}}$ into $\mb_{(2i)\odot\mf{x}} R \mb_{(2i)\odot\mf{y}}$, where $(2i)\odot\mf{x}$ is the concatenation of sequences $(2i)$ and $\mf{x}$.
Since $(2i)\odot\mf{x} \in \cm_{k-1}$ for $\mf{x} \in \cm_k$, we have $\ii(R_k) \subset R_{k-1}$.

\begin{defn} \label{def Ti}
For any $i \in \Z$, define two DG $R$-bimodules
\begin{gather*}
T(2i)=\bigoplus_{k\in\Z}T(2i)_k=\bigoplus_{k\in\Z}\left(\bigoplus_{\mf{x}\in\cm_{k-1}, \mf{y}\in\cm_k}\mb_{\mf{x}} R \mb_{(2i)\odot\mf{y}}\right),\\
T(2i-1)=\bigoplus_{k\in\Z}T(2i-1)_k=\bigoplus_{k\in\Z}\left(\bigoplus_{\mf{y}\in\cm_k,\mf{z}\in\cm_{k+1}}
\mb_{(2i)\odot\mf{z}} R \mb_{\mf{y}}\right).
\end{gather*}
The actions of $R$ are given by $f \cdot t \cdot g=f~t~\ii(g)$, and $f \cdot t' \cdot g=\ii(f)~t'~g$, where $f~t~\ii(g)$ and $\ii(f)~t'~g$ are multiplications in $R$, for $f,g \in R$, $t \in T(2i)$, and $t' \in T(2i-1)$.
\end{defn}

The bimodules $T(2i)$ and $T(2i-1)$ correspond to the induction and restriction functors with respect to $\ii$, respectively.
The summand $T(2i)_k$ is a $(R_{k-1},R_k)$-bimodule, and $T(2i-1)_k$ is a $(R_{k+1},R_k)$-bimodule.
See figure \ref{fig v6} for a graphic description of $T(2i), T(2i+1)$.
Since the left strand of $T(2i)$ with the index $2i$ is unchanged under the right action, we call it the frozen strand of $T(2i)$, and add a little bar at its top.
Similarly, we call the left strand of $T(2i-1)$ with the index $2i$ the frozen strand of $T(2i-1)$, and add a little bar at its bottom.
\begin{figure}[h]
\begin{overpic}
[scale=0.25]{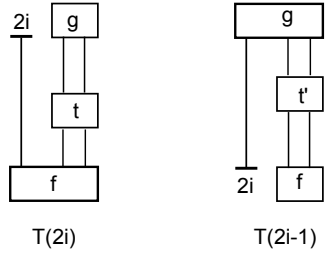}
\end{overpic}
\caption{Two families of $R$-bimodules $T(2i)$ and $T(2i-1)$.}
\label{fig v6}
\end{figure}

\begin{prop} \label{prop Cl relation}
There are canonical isomorphisms of DG $R$-bimodules:

\n(1) $T(2i-1)\ot T(2i) \cong C(u_i)$ for some $u_i: T(2i)\ot T(2i-1) \ra R$.

\n(2) $T(2i+1)\ot T(2i) \cong C(\wt{u}_i)[1]$ for some $\wt{u}_i: R \ra T(2i)\ot T(2i+1)$.
\end{prop}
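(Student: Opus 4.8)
The plan is to mimic the proof of the Heisenberg relation (Proposition~\ref{prop H relation}) and of its $\mathbb{F}_2$-analogue in \cite[Section~6]{Tian}; since the combinatorics of diagrams is identical, the genuinely new work is the bookkeeping of signs, exactly as the opening of this section warns. Both statements are the categorical shadow of the Clifford relation $t_jt_{j+1}+t_{j+1}t_j=1$: part~(1) is the case $j=2i-1$ and part~(2) the case $j=2i$. Their $K_0$-consequence $[T(2i-1)\ot T(2i)]=[R]-[T(2i)\ot T(2i-1)]$ (and the analogue for~(2)) already fixes the shifts that must appear.

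For~(1): the frozen strands of $T(2i-1)$ and of $T(2i)$ are both labelled $2i$, so a diagram representing an element of $T(2i-1)\ot T(2i)$ carries two strands labelled $2i$ issuing from these frozen ends, with everything else controlled by sequences in~$\cm$. As in the Heisenberg proof the left and right actions do not touch the frozen strands, so the type of a diagram is preserved. Using the defining relations of $R$ --- in particular the double crossing relation $cr_{j,i}\cdot cr_{i,j}=(1-\delta_{i,j})\mb_{i,j}$, the dot-slide relation, and $cr_{2i,2i}^{2}=0$ --- I would put each diagram into a normal form with respect to these two strands: either they are joined into a single strand through the tensor point (type~(1)) or they cross exactly once, necessarily through $cr_{2i,2i}$ (type~(2)). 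This gives, at the level of the underlying \emph{graded} $R$-bimodule, a decomposition $T(2i-1)\ot T(2i)\cong A\oplus B$, with $A$ the sub-bimodule spanned by type~(1) diagrams, naturally identified with $R$, and $B$ the span of type~(2) diagrams, naturally identified with $(T(2i)\ot T(2i-1))[1]$, the shift coming from $\deg(cr_{2i,2i})=-1$. Next I would bring in the differential: $d$ annihilates every generating diagram except the equal-label crossings, and $d(cr_{2i,2i})=\mb_{2i,2i}$, so by the super-Leibniz rule $d$ preserves $A$ (where it restricts to the internal differential of $R$), sends $B$ into $A\oplus B$, and has no $A\to B$ component, since applying $d$ never creates a crossing. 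The resulting $B\to A$ component, transported through the two identifications, is the desired $u_i\colon T(2i)\ot T(2i-1)\to R$, and the total differential is by construction that of the mapping cone $C(u_i)$.

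For~(2): the argument runs parallel, but now the two frozen strands are labelled $2i$ and $2i+2$; they cannot be joined directly, and the pieces relating the two summands are instead the dot $dot_{2i}$ and the mixed crossings $cr_{2i,2i+2}$ and $cr_{2i+2,2i}$, of degrees $0$, $+1$ and $-1$. The same normal-form analysis splits $T(2i+1)\ot T(2i)$, as a graded $R$-bimodule, into a copy of $T(2i)\ot T(2i+1)$ and a copy of $R$, each carrying the degree shift forced by the crossings used to produce it; because these crossings have the opposite parity of degree to $cr_{2i,2i}$, the connecting component of the differential now runs \emph{into} the $T(2i)\ot T(2i+1)$ summand, so that the whole is $C(\wt u_i)[1]$ for a map $\wt u_i\colon R\to T(2i)\ot T(2i+1)$. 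This is the same dichotomy that distinguishes the two directions of the adjoint chain in Theorem~\ref{thm adj}: part~(1) realizes $\Res\circ\Ind$ along a single inclusion as a cone \emph{onto} $R$, while part~(2) realizes $\Res\circ\Ind$ along two adjacent inclusions as an extension \emph{of} $R$.

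The main obstacle is precisely the signs. Once a diagram is normalized, every step above is a routine diagrammatic identity in $R$, but to pin down $u_i$, $\wt u_i$ and the shifts one must carry consistently: the super-signs in $d(r_1r_2)=d(r_1)r_2+(-1)^{\deg r_1}r_1\,d(r_2)$, the sign in the super-commutation of disjoint diagrams, the twisted left action $a\cdot m\cdot b=(-1)^{|a|}amb$ on the shift $M[1]$ of a bimodule (see \cite[Section~10]{BL}), the sign in the cone differential, and the opposite values $\deg(cr_{i,j})=\pm1$ according as $i<j$ or $i\ge j$. With these conventions fixed, the identification of the two graded summands and of the connecting map with $u_i$ (resp.\ $\wt u_i$) is the same computation as in \cite{Tian}.
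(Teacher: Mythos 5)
Your treatment of part (1) matches the paper's argument. The paper's proof constructs two injective degree-zero graded bimodule maps $v_i\colon T(2i)\ot T(2i-1)[1]\ra T(2i-1)\ot T(2i)$ (insert $cr_{2i,2i}$, with the sign $(-1)^{|m|}$) and $w_i\colon R\ra T(2i-1)\ot T(2i)$ (prepend the vertical frozen strand), observes that their images decompose $T(2i-1)\ot T(2i)$ as a graded bimodule according to whether the two frozen strands are joined, notes that $w_i$ is already a DG map, and then reads off $u_i$ as $\op{pr}_i\circ d\circ v_i[-1]$. This is exactly your ``type-(1)/type-(2) normal form $\Rightarrow$ graded splitting $\Rightarrow$ connecting component of $d$'' argument, including the observation that $d$ has no component from the $R$ piece into the $T(2i)\ot T(2i-1)[1]$ piece.

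For part (2), the paper itself only says the proof is similar and refers to \cite[Section~6.2.2]{Tian}, so your deferral of the sign bookkeeping is consistent with the paper's level of detail. However, one concrete claim in your sketch is wrong: you attribute the reversed direction of the cone to $cr_{2i,2i+2}$ and $cr_{2i+2,2i}$ having ``opposite parity of degree to $cr_{2i,2i}$.'' Since $\deg(cr_{2i,2i+2})=+1$, $\deg(cr_{2i+2,2i})=-1$ and $\deg(cr_{2i,2i})=-1$, these all have the same (odd) parity; only $dot_{2i}$ has even degree. So parity is not what flips the arrow. (In fact, unwinding $C(\wt{u}_i)[1]=R[2]\oplus (T(2i)\ot T(2i+1))[1]$, the $R$ summand must sit with an overall shift by $[2]$, which a single degree-zero $dot_{2i}$ by itself cannot produce; the shift and the direction of $\wt{u}_i$ both come from a more careful accounting of which crossings are forced by the mismatched labels $2i$ and $2i+2$, and of where the differential $d$ --- which acts only on equal-label crossings $cr_{j,j}$ --- can actually be nonzero.) The parity heuristic should be removed; the correct mechanism is the diagrammatic one from \cite{Tian}, which you are otherwise right to invoke.
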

\begin{proof}
The proofs are almost the same as the discussion in \cite[Section 6.2]{Tian} except that some signs are needed now.
To prove (1), we define two maps $v_i ,w_i$ as:
$$\begin{array} {cccc}
v_i: & T(2i)\ot T(2i-1)[1] & \ra & T(2i-1)\ot T(2i) \\
& m \ot n & \mapsto & (-1)^{|m|}(\mb_{2i}\odot m) \ot cr_{2i,2i}(\mf{z}) \ot (\mb_{2i} \odot n),\\
w_i: & R & \ra & T(2i-1)\ot T(2i) \\
& f & \mapsto &  (2i) \odot f,
\end{array}
$$
for $m \in T(2i), n\in T(2i-1), f \in R$.
See figure \ref{fig v7}.
\begin{figure}[h]
\begin{overpic}
[scale=0.25]{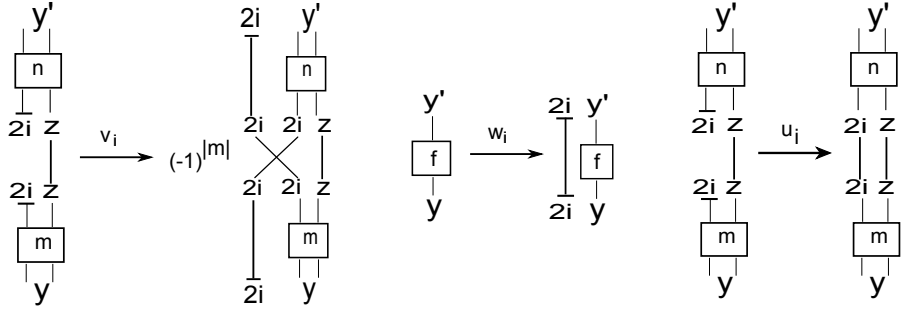}
\end{overpic}
\caption{The maps $v_i, w_i, u_i$.}
\label{fig v7}
\end{figure}

The factor $(-1)^{|m|}$ in the definition of $v_i$ is needed to prove that the map is well-defined:
\begin{align*}
v_i(ma \ot n)= &(-1)^{|ma|} (\mb_{2i}\odot ma) \ot cr_{2i,2i}(\mf{z}) \ot (\mb_{2i} \odot n) \\
= & (-1)^{|ma|} (\mb_{2i}\odot m) \ot  (\mb_{2i,2i} \odot a) cr_{2i,2i}(\mf{z})   \ot (\mb_{2i} \odot n) \\
= & (-1)^{|ma|} (-1)^{|a|} (\mb_{2i}\odot m) \ot   cr_{2i,2i}(\mf{z}) (\mb_{2i,2i} \odot a)  \ot (\mb_{2i} \odot n) \\
= & (-1)^{|ma|+|a|} (\mb_{2i}\odot m) \ot   cr_{2i,2i}(\mf{z})  \ot (\mb_{2i} \odot an) \\
= & (-1)^{|m|} (\mb_{2i}\odot m) \ot   cr_{2i,2i}(\mf{z})  \ot (\mb_{2i} \odot an) = v_i(m \ot an).
\end{align*}
By definition, $v_i ,w_i$ are homogeneous degree zero maps of graded $R$-bimodules.
The two maps are injective.
Moreover, the set of diagrams in $T(2i-1)\ot T(2i)$ is divided into two subsets depending on whether the frozen strand in $T(2i)$ connects to the frozen strand in $T(2i-1)$.
So there is an isomorphism  of graded $R$-bimodules:
\begin{gather} \label{eq iso cl}
T(2i-1)\ot T(2i) = \op{Im}(v_i) \oplus \op{Im}(w_i) \cong (T(2i)\ot T(2i-1)[1] ~\oplus~ R).
\end{gather}
The map $w_i$ is actually a map of DG $R$-bimodules.
Let $\op{pr}_i: T(2i-1)\ot T(2i) \ra  R$ denote the projection onto its summand $R$ with respect to the isomorphism above.
It is a map of graded $R$-bimodules.
Finally, we define $u_i: T(2i)\ot T(2i-1) \ra R$ as the composition
$$T(2i)\ot T(2i-1) \xrightarrow{v_i[-1]} T(2i-1)\ot T(2i)[-1] \xrightarrow{d} T(2i-1)\ot T(2i) \xrightarrow{\op{pr}_i} R.$$
It turns out that $u_i$ has a simple description as connecting the frozen strands in $T(2i-1)$ and $T(2i)$, see figure \ref{fig v7}.
In particular, $u_i$ is a map of DG $R$-bimodules.
The definition of $u_i$ implies that the isomorphism (\ref{eq iso cl}) of graded bimodules can be completed into an isomorphism of DG bimodules: $T(2i-1)\ot T(2i)  \cong (T(2i)\ot T(2i-1) \xrightarrow{u_i} R)$, where $R$ is in degree zero.

The proof for (2) is similar. We leave it for the reader, see \cite[Section 6.2.2]{Tian} for more detail.
\end{proof}

\begin{prop} \label{prop Cl relation2}
(1) There is a canonical isomorphisms of DG $R$-bimodules: $T_{j} \ot T_{i} \cong T_{i} \ot T_{j} [c_{i,j}]$ for $|i-j|>1$, where $c_{i,j}=(-1)^{i-j}$ if $i<j$, otherwise $c_{i,j}=(-1)^{i-j+1}$.

\n(2) The DG $R$-bimodule $T_{j} \ot T_{j}$ is contractible for any $j \in \Z$.
\end{prop}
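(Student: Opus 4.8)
The plan is to mimic the proof of Proposition \ref{prop Cl relation} and the $\mathbb{F}_2$ arguments of \cite[Section 6]{Tian}, being careful about the signs forced by the shift conventions recalled above. It is convenient to attach to each generator the even label on its frozen strand: the frozen strand of $T_{2i}$ and of $T_{2i-1}$ both carry the label $2i$, pointing up (bar on top) in the induction case $m=2i$ and down (bar on the bottom) in the restriction case $m=2i-1$. Thus, the parity of $m$ records the direction of the frozen strand of $T_m$.

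For (1), observe first that $|i-j|>1$ forces the two labels $\ell_i,\ell_j$ to be distinct. As graded $R$-bimodules, $T_j \ot T_i$ and $T_i \ot T_j$ are spanned by the same underlying diagrams with two extra strands labelled $\ell_i$ and $\ell_j$; they differ only in the relative position of these strands. I would define a map $\Phi\colon T_j \ot T_i \ra T_i \ot T_j$ by inserting between the two frozen strands a crossing diagram that transposes them. Since the differential of $R$ is supported on crossings with equal labels, and $\ell_i\neq\ell_j$, this crossing is a cycle, so $\Phi$ is a morphism of DG bimodules; the double crossing relation $cr_{\ell_j,\ell_i}\cdot cr_{\ell_i,\ell_j}=\mb_{\ell_i,\ell_j}$ (valid because the labels differ) shows that the opposite crossing is a two-sided inverse up to an overall grading shift. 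It remains to identify that shift: the relevant crossing has degree $\pm 1$ according to whether the two labels occur in increasing or decreasing order along the strands, and combining this degree with the left-action twist $a\cdot m=(-1)^{|a|}am$ in the shifted bimodule, a short case analysis on the parities of $i,j$ and on the sign of $i-j$ — which governs the relative heights of the two frozen strands, hence the orientation of the transposing crossing — produces exactly $c_{i,j}$.

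For (2), the point is that now both frozen strands of $T_j \ot T_j$ carry the same label $\ell$, and the equal-label crossing satisfies $d(cr_{\ell,\ell}(\mf{a},\mf{x}))=\mb_{\mf{y}}$ by the definition of $d$. I would decompose $T_j \ot T_j$, as a graded $R$-bimodule, into the subspace $X$ spanned by diagrams in which the two frozen strands are joined by a crossing $cr_{\ell,\ell}$ and its complement $Y$, so that $X\cong Y[1]$ via that crossing; the relation $d(cr_{\ell,\ell})=\mb$ then says that on the associated graded the differential identifies $Y$ isomorphically with $X[-1]$, exhibiting $T_j\ot T_j$ as the cone of an identity map, hence contractible. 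Equivalently, one builds $h\in\op{End}^{-1}(T_j\ot T_j)$ by inserting $cr_{\ell,\ell}$ between the two frozen strands and checks, using the Leibniz rule and $cr_{\ell,\ell}^2=0$, that $dh+hd=\op{id}$ up to sign, so $\op{id}_{T_j\ot T_j}$ is null-homotopic. (One can also deduce (2) directly from the cancellation argument producing the cones $C(u_i)$ in Proposition \ref{prop Cl relation}, applied with both tensor factors equal.)

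The verifications that $\Phi$, $h$, and the auxiliary projections respect all the local relations of $R$ are routine and essentially identical to \cite{Tian}. The genuinely delicate step is the sign bookkeeping: pinning down $c_{i,j}=(-1)^{i-j}$ versus $(-1)^{i-j+1}$ in (1), and the sign of $dh+hd$ in (2), both of which depend on the twist in the shift of a DG bimodule and on the Koszul signs in the differential of a tensor product — exactly the features absent from the $\mathbb{F}_2$ treatment of \cite{Tian}.
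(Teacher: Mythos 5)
Your plan matches the paper's proof closely. For (2) the paper does exactly your "equivalent" version: it sets $h_{2i}(m)=(-1)^{|m|}\,m\cdot cr_{2i,2i}(\mf{z})$, checks via Leibniz and $d(cr_{2i,2i})=\mb$ that $d\circ h_{2i}+h_{2i}\circ d=\op{id}$ (the prefactor $(-1)^{|m|}$ makes the identity exact, not merely up to sign, and also makes $h_{2i}$ a degree-zero map to the shifted bimodule), and leaves the case $j=2i-1$, which both frozen strands still carry the label $2i$, to the reader — precisely the observation you make. For (1) the paper just defers to the crossing-insertion argument of \cite[Section 6.2.4]{Tian}, which is what you describe: since $|i-j|>1$ forces the two frozen-strand labels to differ, the transposing crossing is a cycle, is invertible by the double-crossing relation, and its $\pm1$ degree (together with the parity bookkeeping from which factor is an induction and which a restriction bimodule) produces the shift $c_{i,j}$.

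Two small caveats. Your first sketch for (2) — splitting $T_j\ot T_j=X\oplus Y$ as graded bimodules and reading off a "cone of the identity" — is looser than the homotopy: $Y$ is only a subcomplex (not a direct summand of complexes), so one would need the filtration/triangle language rather than a direct-sum decomposition; the homotopy $h$ sidesteps this entirely and is what the paper uses. And your parenthetical that (2) "can also be deduced from the cancellation argument producing the cones $C(u_i)$" of Proposition \ref{prop Cl relation} applied "with both tensor factors equal" is not quite right as stated: that proposition's mechanism uses the frozen strands of an induction and a restriction bimodule joining up (the $u_i$ cap), which cannot happen when both factors are the same $T(j)$, since both frozen strands point the same way; the null-homotopy, not the cone decomposition, is the correct mechanism for $i=j$.
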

\begin{proof}
The proof of (1) is similar to that in Proposition \ref{prop Cl relation}, see \cite[Section 6.2.4]{Tian}.

To prove (2), we define a map for $j=2i$:
$$\begin{array} {cccc}
h_{2i}: & T(2i) \ot T(2i) & \ra & T(2i) \ot T(2i)[-1] \\
& m & \mapsto & (-1)^{|m|}m \cdot cr_{2i,2i}(\mf{z}),
\end{array}
$$
for $m \in \mb_{\mf{x}}\cdot T(2i) \ot T(2i) \cdot \mb_{\mf{z}}$, see figure \ref{fig v8}.
The factor $(-1)^{|m|}$ in the definition makes $h_{2i}$ a homogeneous degree zero map of graded $R$-bimodules.
\begin{figure}[h]
\begin{overpic}
[scale=0.3]{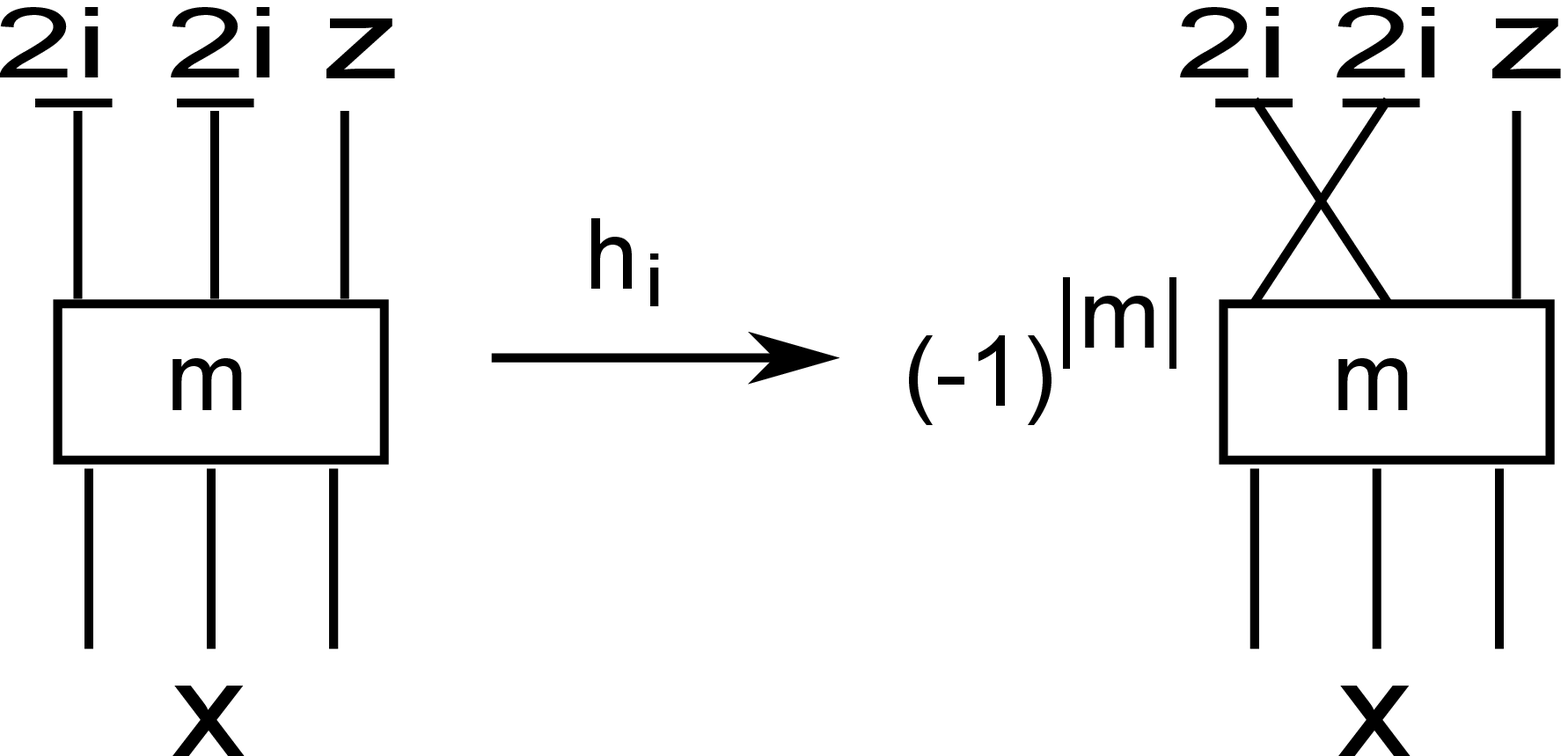}
\end{overpic}
\caption{The map $h_{2i}: T(2i) \ot T(2i) \ra T(2i) \ot T(2i)$.}
\label{fig v8}
\end{figure}

We check that
\begin{align*}
d\circ h_{2i}(m)&=(-1)^{|m|} d(m \cdot cr_{2i,2i}(\mf{z}))\\
&=(-1)^{|m|} d(m) \cdot cr_{2i,2i}(\mf{z})+(-1)^{|m|+|m|}m \cdot d(cr_{2i,2i}(\mf{z}))\\
& = (-1)(-1)^{|d(m)|} d(m) \cdot cr_{2i,2i}(\mf{z}) + m \\
&=-h_{2i} \circ d(m)+m.
\end{align*}
So $d \circ h_{2i} + h_{2i} \circ d=id$, i.e. $h_{2i}$ is the null homotopy.

The proof for the case of $j=2i-1$ is similar, and we leave it to the reader.
\end{proof}

Propositions \ref{prop Cl relation} and \ref{prop Cl relation2} categorify the relations of the Clifford algebra $Cl$.
The following lemma categorifies the Clifford action on the fermionic Fock space, see \cite[Lemma 6.11]{Tian} and the proof there.
\begin{lemma} \label{lem Cl action}
There are canonical isomorphisms of DG $R$-modules:

\n(1) $T(2i) \ot P(\mf{x}) \cong P((2i)\odot \mf{x})$, for any $\mf{x} \in \cm$.

\n(2) $T(2i-1) \ot P(\mf{x})=0$, if the sequence $\mf{x}$ does not contain $2i$ or $2i-2$.

\n(3) $T(2i-1) \ot P(\mf{x}) \in DGP(R)$ for any $\mf{x} \in \cm$.
\end{lemma}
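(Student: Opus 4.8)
The three statements build on one another, so I would prove them in the stated order. Fix $\mf{x}\in\cm_k$, so that $T(2i)\ot P(\mf{x})$ is a DG $R_{k-1}$-module and $T(2i-1)\ot P(\mf{x})$ a DG $R_{k+1}$-module. Part (1) is a standard computation with the idempotented algebra $R$: the right $R$-action on the induction bimodule $T(2i)$ factors through $\ii$, and $P(\mf{x})=R\mb_{\mf{x}}$, so $T(2i)\ot_R R\mb_{\mf{x}}\cong T(2i)\cdot\ii(\mb_{\mf{x}})=T(2i)\,\mb_{(2i)\od\mf{x}}$, using $\ii(\mb_{\mf{x}})=\mb_{(2i)\od\mf{x}}$. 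Since every diagram preserves charge and $(2i)\od\mf{x}\in\cm_{k-1}$, the subspace $\mb_{\mf{w}}\,R\,\mb_{(2i)\od\mf{x}}$ of $R$ lies in $T(2i)$ for every $\mf{w}$, so $T(2i)\,\mb_{(2i)\od\mf{x}}=R\mb_{(2i)\od\mf{x}}=P((2i)\od\mf{x})$; the left action is untwisted and $\ii$ is a chain map, so this identification is one of DG $R$-modules.

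For (2), the same manipulation gives $T(2i-1)\ot_R R\mb_{\mf{x}}\cong\bigoplus_{\mf{z}\in\cm_{k+1}}\mb_{(2i)\od\mf{z}}\,R\,\mb_{\mf{x}}$ as a left $R_{k+1}$-module, and I would show each summand vanishes. It is spanned by composites of generating diagrams; using the isotopy and dot-slide relations to bring any two dots lying on a common strand together and then the double-dot relation, it is already spanned by those composites in which every strand carries at most one dot. In such a diagram, trace the strand issuing from the bottom-left endpoint, which is labelled $2i$: each dot on it lowers its label by $2$ and it bears at most one dot, so it terminates at a top endpoint labelled $2i$ or $2i-2$. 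Hence $\mf{x}$ contains $2i$ or $2i-2$; under the hypothesis that it contains neither, no such diagram exists and the summand, hence $T(2i-1)\ot P(\mf{x})$, is zero.

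For (3), I would induct on the (finite, since the tail of $\mf{x}$ is an arithmetic progression tending to $+\infty$) number of indices $j$ with $x_j\in\{2i,\,2i-2\}$. If that number is $0$, then $T(2i-1)\ot P(\mf{x})=0$ by (2). If $2i\in\mf{x}$, moving the leftmost $2i$-strand to the far left — transposing it only past strands of a \emph{different} label, where the double-crossing relation makes each transposition invertible up to a grading shift, as in the graded-matrix-algebra description of $R$ — yields $P(\mf{x})\cong P((2i)\od\mf{x}')[d]$ with $\mf{x}'\in\cm_{k+1}$ having one fewer relevant index. Then Part (1), associativity of $\ot_R$, and Proposition \ref{prop Cl relation}(1) give
$$T(2i-1)\ot P(\mf{x})\cong\bigl(T(2i-1)\ot T(2i)\ot P(\mf{x}')\bigr)[d]\cong C\bigl(T(2i)\ot(T(2i-1)\ot P(\mf{x}'))\ra P(\mf{x}')\bigr)[d].$$
By induction $T(2i-1)\ot P(\mf{x}')$ lies in $DGP(R)$; tensoring with $T(2i)$ preserves $DGP(R)$ because by (1) it carries each $P(\mf{z})$ to $P((2i)\od\mf{z})$ and commutes with shifts and cones; hence the displayed cone, and so $T(2i-1)\ot P(\mf{x})$, lies in $DGP(R)$. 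If instead $2i\notin\mf{x}$ but $2i-2\in\mf{x}$, the analogous move gives $P(\mf{x})\cong P((2i-2)\od\mf{x}'')[d']$ with $\mf{x}''$ having one fewer relevant index, and Proposition \ref{prop Cl relation}(2) (with $i$ replaced by $i-1$) gives
$$T(2i-1)\ot P(\mf{x})\cong C\bigl(P(\mf{x}'')\ra T(2i-2)\ot(T(2i-1)\ot P(\mf{x}''))\bigr)[d'+1],$$
which lies in $DGP(R)$ by induction and because $T(2i-2)\ot(-)$ preserves $DGP(R)$.

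I expect the main obstacle to be the bookkeeping in (3): one must check that the crossing isomorphism $P(\mf{x})\cong P((2i)\od\mf{x}')[d]$ stays valid when $\mf{x}$ has repeated entries (it does, since one only ever transposes the chosen strand past strands of a \emph{different} label, where $cr_{a,b}cr_{b,a}=\mb_{b,a}$ for $a\neq b$ and, as $a\neq b$, the crossings are $d$-closed so the move is a genuine DG isomorphism), and one must track the grading shifts produced by this move together with the shifts already built into Proposition \ref{prop Cl relation}. An alternative to the inductive base case ``$2i\notin\mf{x}$, $2i-2\in\mf{x}$'' is to identify $T(2i-1)\ot P(\mf{x})$ directly with a grading shift of $P(\mf{x}\setminus\{2i-2\})$ by an explicit diagram computation, exactly as in \cite[Lemma 6.11]{Tian}; the induction above is an alternative that trades that computation for the sign bookkeeping.
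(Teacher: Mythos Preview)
Your approach is correct and matches what the paper does: the paper defers the proof to \cite[Lemma 6.11]{Tian} and then illustrates part (3) with a worked example that uses exactly your inductive reduction via Propositions \ref{prop Cl relation} and \ref{prop Cl relation2}. Parts (1) and (3) are fine as written; your organisation of (3) (pull out only the leftmost $2i$- or $(2i-2)$-strand and induct on the count of such strands) is a mild variant of the paper's example (which peels off \emph{all} leading strands one by one), but the two are equivalent.

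Your argument for (2) deserves one more sentence of justification. The step ``bring any two dots on a common strand together using isotopy and dot-slide'' is correct, but the dot-slide relation is stated for a \emph{double} crossing, so to pass a dot through a \emph{single} crossing $cr_{b,\ell}$ you must first insert $\mb=cr_{\ell,b}\,cr_{b,\ell}$, and this is only available when $b\neq\ell$. Concretely, for $dot_{2i-2}\cdot cr_{b,2i-2}\cdot dot_{2i-4}$ with $b\neq 2i,2i-2$, rewriting the first dot via dot-slide as $cr_{b,2i}\cdot dot_{2i-2}\cdot cr_{2i-2,b}$ collapses the middle $cr_{2i-2,b}\,cr_{b,2i-2}=\mb$ and leaves two adjacent dots, hence zero. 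In general one has to check that the intervening crossings never involve a strand of the \emph{same} label as the dotted strand; this follows from the type-(3r) restriction but takes a little bookkeeping. A cleaner route, which avoids this case analysis entirely, is to run your (3)-style reduction \emph{first}: since every $x_j$ satisfies $|x_j-(2i-1)|>1$, Proposition \ref{prop Cl relation2}(1) lets you commute $T(2i-1)$ past each $T(x_j)$ until only entries $>2i$ remain in the tail, at which point your ``dots only decrease labels'' observation kills the module immediately with no two-dot argument needed. This is also how the paper's worked example is structured.
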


For $\mf{x}$ containing $2i$ or $2i-2$, $T(2i-1) \ot P(\mf{x})$ can be computed using the isomorphisms in Proposition \ref{prop Cl relation} and Lemma \ref{lem Cl action}.
For instance,
\begin{align*}
&T(5) \ot P(2,4,6,8,\dots) \\
 \cong & T(5) \ot T(2) \ot T(4) \ot T(6) \ot P(8,\dots) \\
 \cong & T(2) \ot T(5) \ot T(4) \ot T(6) \ot P(8,\dots) [-1] \\
 \cong & T(2) \ot (R \ra T(4) \ot T(5)) \ot T(6) \ot P(8,\dots)
\end{align*}
\begin{align*}
& \cong  (T(2) \ot T(6) \ot P(8,\dots)) \ra (T(2) \ot T(4) \ot T(5) \ot T(6) \ot P(8,\dots))  \\
& \cong P(2,6,8,\dots) \ra (T(2) \ot T(4) \ot ((T(6) \ot T(5)) \ra R) \ot P(8,\dots)\\
& \cong P(2,6,8,\dots) \ra (T(2) \ot T(4) \ot  P(8,\dots)) \\
& \cong P(2,6,8,\dots) \ra P(2,4,8,\dots)
\end{align*}

In particular, $T(2i-1) \ot P(\mf{x})$ is always a projective DG $R$-module.
Thus, the left DG $R$-module $T(i) \cong \bigoplus\limits_{\mf{x}} T(i) \ot P(\mf{x})$ is projective.
Similarly, one can prove that $T(i)$ is projective as a right DG $R$-module.

\vspace{.2cm}
Let $D(R^e)$ denote the derived category of DG $R$-bimodules.
It is triangulated and admits a monoidal structure given by the derived tensor product over $R$.

\begin{defn} \label{def cl}
Define $\wt{\cl}$ as the smallest monoidal triangulated subcategory of $D(R^e)$ which contains $R$ and $T(i)$'s for all $i \in \Z$.
Let $\cl$ be the Karoubi envelop of $\wt{\cl}$.
\end{defn}

The derived tensor product of $T(i)$ and $T(j)$ is isomorphic to their ordinary tensor product in $\mf{D}(R^e)$.
So the monoidal structure on $\cl$ can be computed by the ordinary tensor product.
The morphisms in Proposition \ref{prop Cl relation} give rise to a chain of adjoint pairs in $\cl$, see \cite[Section 6.3]{Tian}.
\begin{prop} \label{prop Cl adj}
There is a chain of objects:
$$\cdots \leftrightarrow T(i+1) \leftrightarrow T(i) \leftrightarrow T(i-1) \leftrightarrow  \cdots,$$
such that each pair of objects form an adjoint pair in $\cl$.
\end{prop}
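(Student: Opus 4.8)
The plan is to reduce the doubly infinite chain to two base cases and then read off the unit and counit of each adjunction from the Clifford isomorphisms of Proposition \ref{prop Cl relation}. Since all the subalgebras $R_k$ are isomorphic and the bimodules $T(i)$ are built by the same recipe for every $i\in\Z$, and since the only genuine dichotomy among the $T(i)$ is between even and odd indices, it suffices to establish, for one fixed $i$, that $T(2i)$ is left adjoint to $T(2i-1)$ and that $T(2i+1)$ is left adjoint to $T(2i)$. Moreover each $T(j)$ is projective both as a left and as a right DG $R$-module (as observed just before Definition \ref{def cl}), so the derived tensor product coincides with the ordinary one throughout and all the structure maps below may be treated as honest homomorphisms of DG $R$-bimodules and manipulated diagrammatically.

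For the pair $T(2i)\dashv T(2i-1)$ I would take as counit the map $u_i\colon T(2i)\ot T(2i-1)\ra R$ of Proposition \ref{prop Cl relation}(1) --- connecting the two frozen strands --- and as unit the map $w_i\colon R\ra T(2i-1)\ot T(2i)$ from the same proof, namely the inclusion of the base $R$ of the cone $C(u_i)\cong T(2i-1)\ot T(2i)$, which was already checked there to be a map of DG bimodules. For the pair $T(2i+1)\dashv T(2i)$ I would take $\wt{u}_i\colon R\ra T(2i)\ot T(2i+1)$ of Proposition \ref{prop Cl relation}(2) as unit, and as counit the projection onto $R$ extracted from the cone decomposition $T(2i+1)\ot T(2i)\cong C(\wt{u}_i)[1]$, defined dually to $u_i$ (the analogue of the map $\op{pr}_i$ precomposed with the differential that produced $u_i$). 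In each case one must then verify the two zigzag identities, for instance $(id_{T(2i)}\ot u_i)\circ(w_i\ot id_{T(2i)})=id_{T(2i)}$ and $(u_i\ot id_{T(2i-1)})\circ(id_{T(2i-1)}\ot w_i)=id_{T(2i-1)}$. Each such composite is a diagram assembled from the frozen strands together with a single crossing $cr_{2i,2i}(\mf{a},\mf{x})$, and it collapses to the identity strand after repeated use of the double-crossing relation $cr_{j,i}\cdot cr_{i,j}=(1-\delta_{i,j})\mb_{i,j}$, the dot-slide relation, and the defining identity $d(cr_{i,i}(\mf{a},\mf{x}))=\mb_{\mf{y}}$ of the differential on $R$.

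This is the characteristic-zero counterpart of the argument carried out over $\mathbb{F}_2$ in \cite[Section 6.3]{Tian}; the only genuinely new ingredient is the bookkeeping of signs and of the cohomological shift $[1]$ appearing in Proposition \ref{prop Cl relation}(2). These are forced by the sign conventions already fixed for shifts, cones, and tensor products of DG bimodules, together with the signed maps $v_i$ and $\wt{u}_i$ built in the proofs of Propositions \ref{prop Cl relation} and \ref{prop Cl relation2}, so no new choices are required; the work is to propagate them correctly through the zigzag identities. I expect this sign-and-shift accounting to be the main obstacle --- once it is settled, the identities reduce to exactly the same strand manipulations as in the $\mathbb{F}_2$ case. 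Finally, an adjunction in $\wt{\cl}$ is witnessed by unit and counit morphisms internal to the monoidal category, so it persists to the Karoubi envelope $\cl$; assembling the two base cases over all $i\in\Z$ --- the first supplying the adjoint pairs $T(2i)\leftrightarrow T(2i-1)$ and the second the pairs $T(2i+1)\leftrightarrow T(2i)$ --- gives the asserted chain.
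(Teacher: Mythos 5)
Your proposal matches the paper's proof, which simply notes that the morphisms of Proposition \ref{prop Cl relation} supply the adjunction data and cites \cite[Section 6.3]{Tian} for the verification. You correctly identify $u_i$, $w_i$ as counit and unit for $T(2i)\dashv T(2i-1)$, $\wt{u}_i$ and its dual cap for $T(2i+1)\dashv T(2i)$, and the passage to the Karoubi envelope; the sign and shift bookkeeping you flag is precisely the only new content beyond the $\mathbb{F}_2$ case.
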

A similar chain of adjoint pairs exists in the Heisenberg category $\ch$, see Theorem \ref{thm adj}.

After passing to the derived and homotopy categories, there is a categorical action of $\cl$ on $\cf$ by Lemma \ref{lem Cl action}.
It induces a linear action
\begin{gather} \label{eq cl action}
K_0(\cl) \times K_0(\cf) \ra K_0(\cf).
\end{gather}

\begin{prop} \label{prop K0 cl}
There is an injective homomorphism $\gamma: Cl \ra K_0(\cl)$ of rings given by $\gamma(t_i)=[T(i)]$.
The pull back of the action (\ref{eq cl action}) by $\gamma$ agrees with the Clifford action on the fermionic Fock space $V_F \cong K_0(\cf)$.
\end{prop}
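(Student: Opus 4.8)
The plan is to establish, in order, that $\gamma$ is a well-defined ring homomorphism, that the action it induces on $K_0(\cf)$ coincides with the Clifford action on $\vf$ under the identification of Proposition \ref{prop K0 vf}, and finally that $\gamma$ is injective. For the first point, since $Cl$ is defined by the generators $t_i$ and the Clifford relations, it is enough to check that the classes $[T(i)]\in K_0(\cl)$ satisfy those relations. Recall that in a Grothendieck group a distinguished triangle $A\ra B\ra C\xra{[1]}A$ gives $[B]=[A]+[C]$ and that $[M[1]]=-[M]$. Proposition \ref{prop Cl relation2}(2) says $T(j)\ot T(j)$ is contractible, so $[T(j)]^2=0$. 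Proposition \ref{prop Cl relation2}(1) gives $T(j)\ot T(i)\cong T(i)\ot T(j)[c_{i,j}]$ with $c_{i,j}\in\{\pm 1\}$ for $|i-j|>1$, hence $[T(j)][T(i)]=-[T(i)][T(j)]$. Finally Proposition \ref{prop Cl relation}(1), applied to $u_i\colon T(2i)\ot T(2i-1)\ra R$, gives $[T(2i-1)][T(2i)]=[R]-[T(2i)][T(2i-1)]$, and Proposition \ref{prop Cl relation}(2), applied to $\wt{u}_i\colon R\ra T(2i)\ot T(2i+1)$, gives $[T(2i+1)][T(2i)]=[R]-[T(2i)][T(2i+1)]$; since $[R]$ is the unit of the ring $K_0(\cl)$, both are instances of $t_jt_{j+1}+t_{j+1}t_j=1$. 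Thus $\gamma$ is a homomorphism of rings.

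For the second point, we use the categorical action of $\cl$ on $\cf$ together with the identification $K_0(\cf_k)\cong(\vf)_k$, $[P(\mf{x})]\mapsto\mf{x}$, from Proposition \ref{prop K0 vf}. It suffices to compute $[T(i)]\cdot[P(\mf{x})]$ and match it with $t_i\cdot\mf{x}$. By Lemma \ref{lem Cl action}(1), $T(2i)\ot P(\mf{x})\cong P((2i)\odot\mf{x})$; if $2i\notin\mf{x}$, normalizing $(2i)\odot\mf{x}$ by composing with crossings $cr_{p,q}$ (which are invertible up to a grading shift by $\pm1$, by the double crossing relation) identifies it with $P(\mf{y})$ for $\mf{y}\in\cim$, and the accumulated grading shifts contribute exactly the sign $(-1)^n$ in the definition of $\psi_i$; if $2i\in\mf{x}$, the normalized sequence has a repeated entry, so $P$ of it is acyclic (its endomorphism algebra is a tensor product of nilCoxeter algebras, with vanishing cohomology), hence zero in $\cf$, matching $\psi_i(\mf{x})=0$. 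By Lemma \ref{lem Cl action}(2), $T(2i-1)\ot P(\mf{x})=0$ when $\mf{x}$ contains neither $2i$ nor $2i-2$, matching $(\psi_i^{*}+\psi_{i-1}^{*})(\mf{x})=0$; when $\mf{x}$ does contain $2i$ or $2i-2$, one resolves $T(2i-1)$ through Proposition \ref{prop Cl relation} and Lemma \ref{lem Cl action}(1), as in the computation preceding the statement, and obtains an iterated cone whose class is a signed combination of $[P(\mf{x}\setminus\{2i\})]$ and $[P(\mf{x}\setminus\{2i-2\})]$, with signs determined by the grading shifts, equal to $(\psi_i^{*}+\psi_{i-1}^{*})(\mf{x})$. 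Hence the pullback of (\ref{eq cl action}) by $\gamma$ is the Clifford action on $\vf\cong K_0(\cf)$.

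For injectivity, observe that the composite of $\gamma$ with the action map $K_0(\cl)\ra\End(K_0(\cf))$ equals the structure map $Cl\ra\End(\vf)$ of the Fock representation, by the previous paragraph. The Fock representation of the infinite Clifford algebra $Cl$ on $\vf$ is faithful, so if $\gamma(a)=0$ then $a$ annihilates $\vf$ and hence $a=0$; therefore $\gamma$ is injective. The only delicate part of the argument is the sign bookkeeping in the second step: one must check that the grading shifts coming from the double crossing relation in $R$ reproduce precisely the signs $(-1)^n$ and $(-1)^{n-1}$ in the definitions of $\psi_j$ and $\psi_j^{*}$, and that every semi-infinite sequence with a repeated entry yields an object that vanishes in $\cf$. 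This is the characteristic-zero refinement of the computations in \cite[Section 6]{Tian}; granting it, the ring-homomorphism property and injectivity are formal.
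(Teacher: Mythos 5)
Your proposal is correct and follows essentially the same route as the paper's own proof: well-definedness of $\gamma$ from Propositions \ref{prop Cl relation} and \ref{prop Cl relation2} read in $K_0$, compatibility with the Fock action from Lemma \ref{lem Cl action} and Proposition \ref{prop K0 vf}, and injectivity by factoring through the faithful Fock representation of $Cl$ on $V_F$. The paper's version is far terser (three sentences) and leaves the same sign bookkeeping implicit, deferring it to \cite[Lemma 6.11]{Tian} just as you do; you have merely spelled out the intermediate steps.
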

\begin{proof}
The map $\gamma$ is well-defined by Propositions \ref{prop Cl relation} and \ref{prop Cl relation2}.
Under the isomorphism in Proposition \ref{prop K0 vf}, the pull back of $(\ref{eq cl action})$ by $\gamma$ agrees with the Clifford action of $Cl$ on $V_F$ by Lemma \ref{lem Cl action}.
So $\gamma$ is also injective because the Clifford action is faithful.
\end{proof}

We conjecture that the map $\gamma$ is an isomorphism.

\section{Categorical boson-fermion correspondence}
The boson-fermion correspondence provides an isomorphism of the bosonic Fock space $V_B[q,q^{-1}]$ and the fermionic Fock space $V_F$.
Here, $V_B=\Z[x_1,x_2,\dots]$, and $V_F=\bigoplus\limits_{k \in \Z}(V_F)_k$.
In particular, there is an isomorphism $V_B \cong (V_F)_0$ of Fock spaces.
There are categorifications $\cb$ of $V_B$ and $\cf_0$ of $(V_F)_0$, see Propositions \ref{prop K0 vb} and \ref{prop K0 vf}, respectively.

\begin{thm} \label{thm bf fock}
There is an equivalence of triangulated categories $\cb \cong \cf_0$.
\end{thm}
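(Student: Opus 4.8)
The plan is to prove the theorem by reading off the chain of algebras $R_0 \leftrightarrow H(R_0) \leftrightarrow \wt{H}(R_0) \cong F \leftrightarrow B$ from (\ref{eq chain rel1}) one link at a time, checking that each link induces an exact equivalence between the appropriate homotopy categories of projective (DG) modules. All the inputs are already in hand: formality of $R_0$ and the description of $H(R_0)$ as a graded infinite matrix algebra over $\wt{H}(R_0)$ from Proposition \ref{prop HR}, the algebra isomorphism $\wt{H}(R_0) \cong F$ from Proposition \ref{prop HR0 F}, and the Morita equivalence of $B$ and $F$ from Proposition \ref{prop Morita BF}.

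On the bosonic side, the Morita equivalence of Proposition \ref{prop Morita BF} restricts to an equivalence between finite dimensional projective $B$-modules and finite dimensional projective $F$-modules, hence to a triangulated equivalence $\cb = \Kom(B) \simeq \Kom(F)$; by the remark following Proposition \ref{prop K0 vb} both sides also identify with the bounded derived categories of finite dimensional modules, although this will not be needed. On the fermionic side I would descend from $R_0$. First, by Proposition \ref{prop HR} the DG algebra $R_0$ is formal, and the resulting quasi-isomorphism of DG algebras induces an exact equivalence of derived categories of DG modules that restricts to an equivalence between the homotopy categories of $DGP(R_0)$ and of $DGP(H(R_0))$, since it carries the generating projective DG modules $P(\mf{x}) = R_0 \cdot \mb_{\mf{x}}$ to the corresponding projective DG modules of $H(R_0)$ (up to acyclics) and commutes with cohomological shifts and mapping cones. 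Next, $H(R_0)$ is a graded infinite matrix algebra over $\wt{H}(R_0)$, so it is derived Morita equivalent to $\wt{H}(R_0)$, and, because the Morita bimodule is finitely generated projective on the relevant side, this equivalence again restricts to the subcategories generated by the $P(\mf{x})$. Finally, $\wt{H}(R_0) \cong F$, and since $\wt{H}(R_0)$ has trivial differential concentrated in degree zero, $DGP(\wt{H}(R_0))$ consists of bounded complexes of the finite dimensional projective modules $\wt{H}(R_0)\cdot\mb_{\mf{x}}$, so its homotopy category is $\Kom(F)$. Composing, $\cf_0$, the homotopy category of $DGP(R_0)$, is triangulated equivalent to $\Kom(F)$, and hence to $\cb$.

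The calculations along the way are routine, so the one point that really deserves attention is the verification that each of the three fermionic-side equivalences restricts to an equivalence of the small subcategories $DGP(-)$ rather than merely of the ambient derived categories of DG modules. For formality this is essentially automatic, since base change along a DG quasi-isomorphism is exact and matches up the generating representables. For the derived Morita step one must check that the bimodule realizing $H(R_0)$ as a matrix algebra over $\wt{H}(R_0)$ is finitely generated projective on one side, so that it carries the finite projective generators $P(\mf{x})$ of $DGP(H(R_0))$ into $DGP(\wt{H}(R_0))$ and conversely; this is where the explicit matrix description from Proposition \ref{prop HR} is used, and it is the main obstacle. Much of this has in fact already been recorded in the discussion preceding Proposition \ref{prop K0 vf}, where $\cf_k$ is identified with $\Kom(F)$; the genuinely new input for the theorem is just the Morita equivalence $\cb \simeq \Kom(F)$.
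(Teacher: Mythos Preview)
Your proposal is correct and follows essentially the same approach as the paper: both proofs simply read off the chain $R_0 \leftrightarrow H(R_0) \leftrightarrow \wt{H}(R_0) \cong F \leftrightarrow B$ and invoke Propositions \ref{prop HR}, \ref{prop HR0 F}, and \ref{prop Morita BF} for the individual links. If anything, you are more explicit than the paper about why each equivalence restricts to the small subcategories $DGP(-)$ and $\Kom(-)$; the paper's proof is terse and leaves this implicit, having already noted the identification $\cf_k \simeq \Kom(F)$ before Proposition \ref{prop K0 vf}.
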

\begin{proof}
Recall that $\cb=\Kom(B)$ and $\cf_0$ is the homotopy category of $DGP(R_0)$.
The equivalence follows from a chain of relations of the algebras:
\begin{gather} \label{eq chain rel}
R_0 \leftrightarrow H(R_0) \leftrightarrow \wt{H}(R_0) \cong F \leftrightarrow B
\end{gather}
where the first arrow tells that the DG algebra $R_0$ is formal, i.e. quasi-isomorphic to its cohomology $H(R_0)$;
the second arrow tells that the DG algebra $H(R_0)$ is derived Morita equivalent to its subalgebra $\wt{H}(R_0)$, see Proposition \ref{prop HR};
the third isomorphism is from Proposition \ref{prop HR0 F};
and the last arrow is a Morita equivalence of the algebras $F$ and $B$, see Proposition \ref{prop Morita BF}.
\end{proof}


In addition to the isomorphism of Fock spaces, the correspondence establishes maps between the Heisenberg and Clifford algebras.
We refer the reader to \cite[Section 14.10]{Kac} for definitions of these maps.
The bases of the Heisenberg and Clifford algebras in this paper are different from those in \cite{Kac}.
We need modify the maps correspondingly.
Recall $q, p$ are the generators of $H$ satisfying the Heisenberg relation $qp=pq+1$, see (\ref{eq H relation}).
Define
\begin{gather} \label{eq def alpha q}
g(q)= \sum \limits_{i\le0}t_{2i}t_{2i-1} - \sum \limits_{i>0}t_{2i-1}t_{2i},
\end{gather}
\begin{gather} \label{eq def alpha p}
g(p)= \sum \limits_{i\le0}t_{2i+1}t_{2i} - \sum \limits_{i>0}t_{2i+1}t_{2i}
\end{gather}
as two elements of a certain completion of $Cl$.
The goal of this section is to lift $g(q), g(p)$ to the categorical level.

The Heisenberg category $\ch$ is a subcategory of $D(B^e)$, and the Clifford category $\cl$ is a subcategory of $D(R^e)$.
The generators $p, q$ are lifted to the $B$-bimodules $P ,Q \in \ch$.
The generators $t_i$ are lifted to the $R$-bimodules $T(i) \in \cl$, for $i \in \Z$.
We fix the equivalence $D(B^e) \ra D(R_0^e)$ induced by (\ref{eq chain rel}).
The restriction onto its subcategory $\ch$ gives an embedding $$\cg: \ch \ra D(R_0^e)$$
of monoidal categories.
Categorification of $g(q), g(p)$ amounts to showing that the objects $\cg(Q), \cg(P)$ lift the expressions in (\ref{eq def alpha q}, \ref{eq def alpha p}).

Recall that $T(2i)=\bigoplus\limits_{k\in\Z}T(2i)_k, T(2i-1)=\bigoplus\limits_{k\in\Z}T(2i-1)_k$, where each summand $T(2i)_k$ is a $(R_{k-1}, R_k)$-bimodule, and $T(2i-1)_k$ is a $(R_{k+1}, R_k)$-bimodule.
Thus, each summand of $T(i)T(j)$ for $i-j$ odd is a $R_k$-bimodule.
Throughout this section we take the summand of $k=0$, and still write $T(i)T(j)$ for the summand of $R_0$-bimodule by abuse of notation.

\begin{defn} \label{def g(q)}
Define $\wtq \in D(R^e_0)$ by
$$\left(\bigoplus\limits_{i \in \Z}T(2i)T(2i-1) \oplus  \bigoplus\limits_{j>0}R_0(j)[-1], \quad d=\bigoplus\limits_{i\le j}d_{i,j} \right),$$
where $R_0(j)=R_0$ for all $j>0$, and $d_{i,j}=\frac{1}{(j-i)!}u_i: T(2i)T(2i-1) \ra R_0(j)$ for $i \le j, j>0$.
Here, $u_i$ is defined in figure \ref{fig v7}.
\end{defn}

A part of the complex $\wtq$ is given by
$$\xymatrix{
T(4)T(3) \ar[r]  & R_0(2) \\
T(2)T(1) \ar[r] \ar[ur]  & R_0(1) \\
T(0)T(-1)  \ar[ur] \ar[uur]  &
}$$

The object $\wtq$ has a finite truncation $\wtq(n)$, where the direct sum is over $0<j \le n, -n \le i \le n$.
Its class $[\wtq(n)]$ in $K_0(\cl)$ is equal to
$$\sum \limits_{-n \le i\le0}t_{2i}t_{2i-1} - \sum \limits_{0<i\le n}(1-t_{2i}t_{2i-1})=\sum \limits_{-n \le i\le0}t_{2i}t_{2i-1} - \sum \limits_{0<i\le n}t_{2i-1}t_{2i},$$
which approximates $g(q)$ as $n$ goes to infinity.
Thus, $\wtq$ can be viewed as a categorification of $g(q)$.
Our main result in this section is the following.

\begin{thm} \label{thm bf HCl}
The objects $\cg(Q)$ and $\wtq$ are isomorphic in $D(R_0^e)$.
\end{thm}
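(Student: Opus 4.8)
The plan is to establish the isomorphism inside $D(R_0^e)$, reducing it first to something detectable on the projective generators $P(\mf{x})$, $\mf{x}\in\cim_0$, of $\cf_0$, and then producing an explicit comparison morphism. Since $\cg$ is induced by the chain (\ref{eq chain rel}) of quasi-isomorphisms and (derived) Morita equivalences, it is a monoidal triangulated equivalence $D(B^e)\to D(R_0^e)$ sending the unit $B$ to the unit $R_0$; in particular $\cg(Q)$ can be represented by a complex of $R_0$-bimodules that are projective as left and as right modules, and the same holds for $\wtq$ since each $T(i)$ is left and right projective (the remark after Lemma \ref{lem Cl action}). For such complexes, a morphism in $D(R_0^e)$ is an isomorphism as soon as it becomes one after $-\ot_{R_0}P(\mf{x})$ for every $\mf{x}$, because the $P(\mf{x})$ generate $\cf_0$, the relevant tensor products are underived, and an acyclic $R_0$-bimodule vanishes in $D(R_0^e)$. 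So it suffices to (a) construct a morphism $\Phi$ relating $\wtq$ and $\cg(Q)$, and (b) verify $\Phi\ot_{R_0}\mathrm{id}_{P(\mf{x})}$ is an isomorphism in $\cf_0$ for every $\mf{x}$.

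For (b), I would evaluate $\wtq\ot_{R_0}P(\mf{x})$ directly from Definition \ref{def g(q)}, using Proposition \ref{prop Cl relation} and Lemma \ref{lem Cl action} exactly as in the worked computation preceding Definition \ref{def cl}. For a strictly increasing $\mf{x}$, the object $T(2i)\ot T(2i-1)\ot P(\mf{x})$ vanishes unless $2i-2\in\mf{x}$ or $2i\in\mf{x}$; and when both lie in $\mf{x}$ --- which happens for all sufficiently large $i$ --- the part of $T(2i-1)\ot P(\mf{x})$ deleting $2i-2$ becomes acyclic once $2i$ is prepended by $T(2i)$ (a repeated index), so $T(2i)\ot T(2i-1)\ot P(\mf{x})\simeq P(\mf{x})$ up to a shift, and the scaled counit $d_{i,j}=\tfrac1{(j-i)!}u_i$ identifies this copy isomorphically with $R_0(j)\ot P(\mf{x})=P(\mf{x})$. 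Working over the truncations $\wtq(n)$ and passing to the limit, these copies telescope away against the surplus $R_0(j)[-1]$ summands, leaving precisely $\bigoplus P(\mf{x}')$ over the finitely many $\mf{x}'$ obtained from $\mf{x}$ by lowering one removable entry by $2$. By Theorem \ref{thm bf fock}, the bijection (\ref{eq iso vfk}) and Lemma \ref{lem tensor PQ}(2), this matches $\cg(Q)\ot P(\mf{x}(\mu))\cong\bigoplus_{\lambda\in\Res(\mu)}P(\mf{x}(\lambda))$, naturally in $\mf{x}$.

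The \textbf{main obstacle} is (a): producing $\Phi$ in $D(R_0^e)$. The guiding principle is that both kinds of summands of $\wtq$ should map into $\cg(Q)$ by morphisms already visible on the Heisenberg side: the summands $R_0(j)[-1]$ via the nonzero classes in $\Hom_{D(B^e)}(B,Q[1])\cong\Hom^1(B,Q)=\prod_n\K\langle h_n\rangle$ (Proposition \ref{prop basis 1,Q}), whose combinatorics --- the $\tfrac1{k!}$ weights in the extensions $Q_{1,n}$ --- should match the factors $\tfrac1{(j-i)!}$ in Definition \ref{def g(q)}; and the summands $T(2i)\ot T(2i-1)$ via the counit maps $u_i$ of Proposition \ref{prop Cl relation}, consistently with $d_{i,j}$ being built from $u_i$. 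Concretely, I would first pin down $\cg(Q)$ as an explicit complex of projective $R_0$-bimodules --- transporting the one-dimensional-coefficient $\wt{H}(R_0)\cong F$ bimodule model of $Q$ (supported on the arrows of $\Gamma$) up along the infinite matrix algebra $H(R_0)$ and then along the formality quasi-isomorphism $R_0\xrightarrow{\sim}H(R_0)$ --- and then build $\Phi$ blockwise and check it is a chain map, which is where the scalars and the Koszul signs of Propositions \ref{prop Cl relation} and \ref{prop Cl relation2} get fixed. The two genuinely delicate points are this identification of $\cg(Q)$ and the convergence of the infinite direct sum defining $\wtq$; I would control both by reducing systematically to the truncations $\wtq(n)$ and checking compatibility with the truncation maps.
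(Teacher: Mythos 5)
Your high-level plan — represent both sides by complexes of left/right projective $R_0$-bimodules, then detect the isomorphism after tensoring with each projective generator $P(\mf{x})$, $\mf{x}\in\cim_0$ — is the same architecture the paper uses, and your computation in (b) of $\wtq\ot_{R_0}P(\mf{x})$ as a telescoping complex collapsing onto $\bigoplus_{\lm'\in\Res(\lm)}P(\mf{x}(\lm'))$ is essentially the content of the paper's Lemma~\ref{lem nonsing C} and Proposition~\ref{prop wtQ obj}. However, the paper does not try to build a comparison morphism $\Phi\in\Hom_{D(R_0^e)}(\wtq,\cg(Q))$ directly. Instead it fixes an isomorphism $\beta_\mu:\cg(Q)\ot P(\mf{x}(\mu))\to\wtq\ot P(\mf{x}(\mu))$ for each $\mu$ and proves \emph{naturality}: that the squares (\ref{eq proof of thm BF}) commute for every generating morphism $r(\lm|\mu)$. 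This sidesteps your step (a) entirely, which is worth noticing, because the transported bimodule $\cg(Q)$ is awkward to write down in closed form, whereas its values on projectives (via $Q\ot_B V_\mu\cong\bigoplus_{\mu'\in\Res(\mu)}V_{\mu'}$) are explicit.

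The substantive gap is that neither your step (a) nor the naturality of your step (b) is carried out, and this is where essentially all of the difficulty lies. Your claim that the identification of $\wtq\ot P(\mf{x})$ with $\cg(Q)\ot P(\mf{x}(\mu))$ holds ``naturally in $\mf{x}$'' is asserted but not proved, and it is not automatic: both sides are non-canonically isomorphic to a direct sum of projectives, and the isomorphisms must be chosen compatibly with the arrows $\lm\to\mu$. In the paper this requires (i) a carefully normalized choice of generator $q_{\mu'\mu}\in e_{\mu'}Qe_{\mu}$, built from the Vershik--Okounkov rescaled basis and the scalars $h_{\mu'\mu}$ of (\ref{eq def h}) defined by products of hook-like factors over boxes; (ii) an explicit formula for the coefficient $a_{\lm'\lm}^{\lm\mu}$ in (\ref{eq def a}); (iii) nonsingularity of the scalar matrix $C$ of (\ref{eq matrix C}) via the Cauchy-type determinant of Lemma~\ref{lem det}, plus an explicit computation of the inverse chain map $g$ of (\ref{eq eq g}); and (iv) the determinant manipulation of Lemma~\ref{lem wta lmmu} matching $\wt{a}_{\lm'\lm}^{\lm\mu}$ with $a_{\lm'\lm}^{\lm\mu}$. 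All of this is compressed in your proposal into ``build $\Phi$ blockwise and check it is a chain map, which is where the scalars and the Koszul signs get fixed.'' That phrase names exactly the hard part without doing it, and without the normalization $h_{\mu'\mu}$ there is no reason to expect the blockwise comparison to be a chain map at all — the scalars $\tfrac1{(j-i)!}$ in $d_{i,j}$ must match against the combinatorics of the algebra $F$ transported through the rescaled basis, which is not obvious a priori. So the proposal is a reasonable outline but leaves the central verification unexecuted.
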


The proof will be a technical computation and occupy the rest of this section.
The homotopy category $\cf_0$ of $DGP(R_0)$ is generated by the projectives $P(\mf{x})$ for $\mf{x} \in \cim_0$.
Under the bijection $\mf{x}: \Par \ra \cim_0$ in (\ref{eq iso vfk}), each projective is of the form $P(\mf{x}(\mu))$, for some partition $\mu$.
Morphisms between the projectives are compositions of generators of $\Hom_{R_0}(P(\mf{x}(\lm)),P(\mf{x}(\mu)))$, for $\lm \ra \mu$ in the quiver $\Gamma$.
Each morphism space has a distinguished generator induced by right multiplication with the element $r(\lm|\mu) \in R_0$.
It corresponds to $(\lm|\mu) \in F$ under the relations (\ref{eq chain rel}).

For $M=\cg(Q)$ and $\wtq$, we will explicitly compute $M(\mu)=M \ot_{R_0} P(\mf{x}(\mu))$ and the map
$$M(\lm|\mu): M \ot_{R_0} P(\mf{x}(\lm)) \ra M \ot_{R_0} P(\mf{x}(\mu))$$
induced by $r(\lm|\mu)$.
In order to prove Theorem \ref{thm bf HCl}, it suffices to show that there exists isomorphisms $\beta_{\mu}: \cg(Q) \ot_{R_0} P(\mf{x}(\mu)) \cong \wtq \ot_{R_0} P(\mf{x}(\mu))$ in $\cf_0$ making the following diagram commutes:
\begin{align} \label{eq proof of thm BF}
\xymatrix{
\cg(Q)(\lm) \ar[rr]^{\cg(Q)(\lm|\mu)} \ar[d]^{\beta_{\lm}}& & \cg(Q)(\mu) \ar[d]^{\beta_{\mu}}  \\
\wtq(\lm) \ar[rr]^{\wtq(\lm|\mu)}                      &  & \wtq(\mu)
}
\end{align}

\vspace{.2cm}
\n{\bf The case of $\cg(Q)$.}
Under the relations in (\ref{eq chain rel}), the object $P(\mf{x}(\mu)) \in DGP(R_0)$ corresponds to $V_{\mu} \in \Kom(B)$, and the element $r(\lm|\mu) \in R_0$ corresponds to $(\lm|\mu) \in F$ which in turn corresponds to $b_{\lm\mu} \in B$.
We have
$$\cg(Q)(\mu)=\cg(Q) \ot_{R_0} P(\mf{x}(\mu))=\cg(Q \ot_B V_{\mu}), \qquad \cg(Q)(\lm|\mu)=\cg(Q\ot_B V_{\lm} \xra{\cdot b_{\lm\mu}} Q\ot_B V_{\mu}),$$
\begin{gather} \label{eq Q V}
\mbox{where} \quad Q \ot_B V_{\mu} \cong \bigoplus\limits_{\mu' \in \Res(\mu)}V_{\mu'}.
\end{gather}
In order to compute the map $Q(\lm|\mu): Q\ot_B V_{\lm} \xra{\cdot b_{\lm\mu}} Q\ot_B V_{\mu}$, we need to choose an explicit isomorphism for (\ref{eq Q V}).
It is determined by a choice of a generator of the one dimensional vector space
$e_{\mu'}  Q  e_{\mu} \cong (e_{\mu'} \bt 1(1)) \K[S(n)] e_{\mu}$.
The latter space has a generator $f_{\mu'\mu}$, see Definition \ref{def choice f}.
In the following, we choose another generator for $e_{\mu'}  Q  e_{\mu}$.

For $\mu' \in \Res(\mu)$, the coordinate of the extra box $\mu \backslash \mu'$ is $(s+1, t+1)$, i.e. there are $s$ and $t$ boxes at the left and the top of the extra box, respectively, see figure \ref{se2}.
In particular, the content $c(\mu \backslash \mu')=s-t$.
Let $\eta_1$ denote the sub-partition of $\mu$ living at the top right of the extra box, and $\eta_2$ denote the sub-partition of $\mu$ living at the bottom left of the extra box.
Suppose $\eta_1=(x_1, \dots x_t)$, and $\ov{\eta}_2=(\ov{y}_1, \dots, \ov{y}_s)$.
\begin{figure}[h]
\begin{overpic}
[scale=0.35]{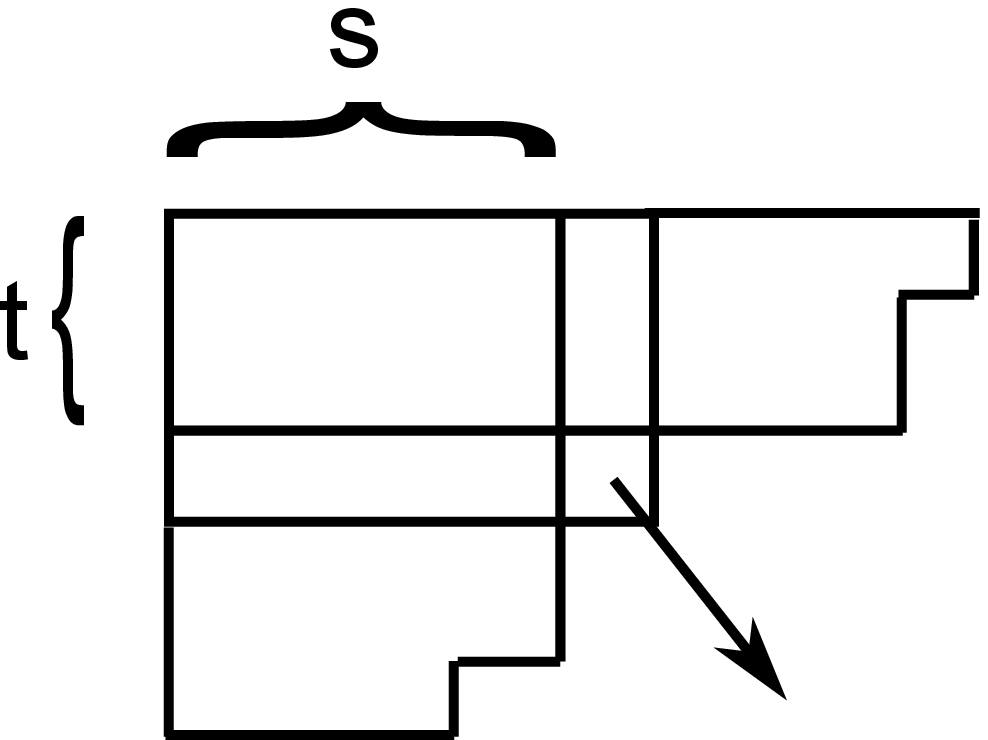}
\put(90,0){$\mu \backslash \mu'$}
\put(75,40){$\eta_1$}
\put(30,10){$\eta_2$}
\end{overpic}
\caption{}
\label{se2}
\end{figure}

Define
\begin{gather} \label{eq def h}
h_{\mu'\mu}=(-1)^s~\prod\limits_{1\le i \le t}(x_i+t+1-i)~\prod\limits_{1\le j \le s}(\ov{y}_j+s+2-j)^{-1} \in \mathbb{Q}\subset \K.
\end{gather}
Suppose $\lm' \ra \lm \oplus \mu' \ra \mu$ in the quiver $\Gamma$, see (\ref{eq def lmmunueta}).
A direct computation shows that
\begin{gather} \label{eq rel h}
h_{\mu'\mu}=\frac{d}{d-1}h_{\lm'\lm},
\end{gather}
for $d=c(\mu' \backslash \lm')-c(\lm \backslash \lm')$.

\begin{defn} \label{def qmu'mu}
For $\mu' \in \Res(\mu)$, define $q_{\mu'\mu} \in e_{\mu'}  Q  e_{\mu}$ which corresponds to $h_{\mu'\mu}^{-1}~ f_{\mu'\mu}$ under the isomorphism $e_{\mu'}  Q  e_{\mu} \cong (e_{\mu'} \bt 1(1)) \K[S(n)] e_{\mu}$.
\end{defn}

The collection of choices $\{q_{\mu'\mu}\}$ determines the isomorphisms (\ref{eq Q V}).
We fix them from now on.
The map
$$Q(\lm|\mu): Q\ot_B V_{\lm}\cong \bigoplus\limits_{\lm' \in \Res(\lm)}V_{\lm'} \ra Q\ot_B V_{\mu}\cong \bigoplus\limits_{\mu' \in \Res(\mu)}V_{\mu'},$$
is determined by its components $V_{\lm'} \ra V_{\mu'}$ for $\lm' \ra \mu'$.
Each component maps the generator $e_{\lm'} \in V_{\lm'}$ to $a_{\lm'\lm}^{\mu'\mu} ~ b_{\lm'\mu'} \in V_{\mu'}$, for some coefficients $a_{\lm'\lm}^{\mu'\mu} \in \K$.
The collection $\{a_{\lm'\lm}^{\mu'\mu}\}$ determines the map $Q(\lm|\mu)$.
The coefficients are computed via the following equation:
\begin{gather} \label{eq Q morphism}
q_{\lm'\lm} \cdot b_{\lm\mu} = \sum\limits_{\mu' \in \Res(\mu)} a_{\lm'\lm}^{\mu'\mu} ~ b_{\lm'\mu'} \cdot q_{\mu'\mu} \in e_{\lm'} Q e_{\mu}.
\end{gather}
Figure \ref{se3} describes the equation, where $f_{\lm\mu}$ corresponds to $b_{\lm\mu}$ under the canonical isomorphism
$(e_{\lm} \bt 1(1)) \K[S(n)] e_{\mu} \cong e_{\lm}Be_{\mu}$, see figure \ref{se1}.

Given $\lm, \mu$ and $\lm'$, the sum contains at most two terms.
If $e_{\lm'} Q e_{\mu}$ is one dimensional, then $\mu'=\lm$.
If $e_{\lm'} Q e_{\mu}$ is two dimensional, then $\mu'=\lm$ or $\nu$, where $\nu$ is the partition satisfying $\lm' \ra \lm \oplus \nu \ra \mu$.

\begin{figure}[h]
\begin{overpic}
[scale=0.3]{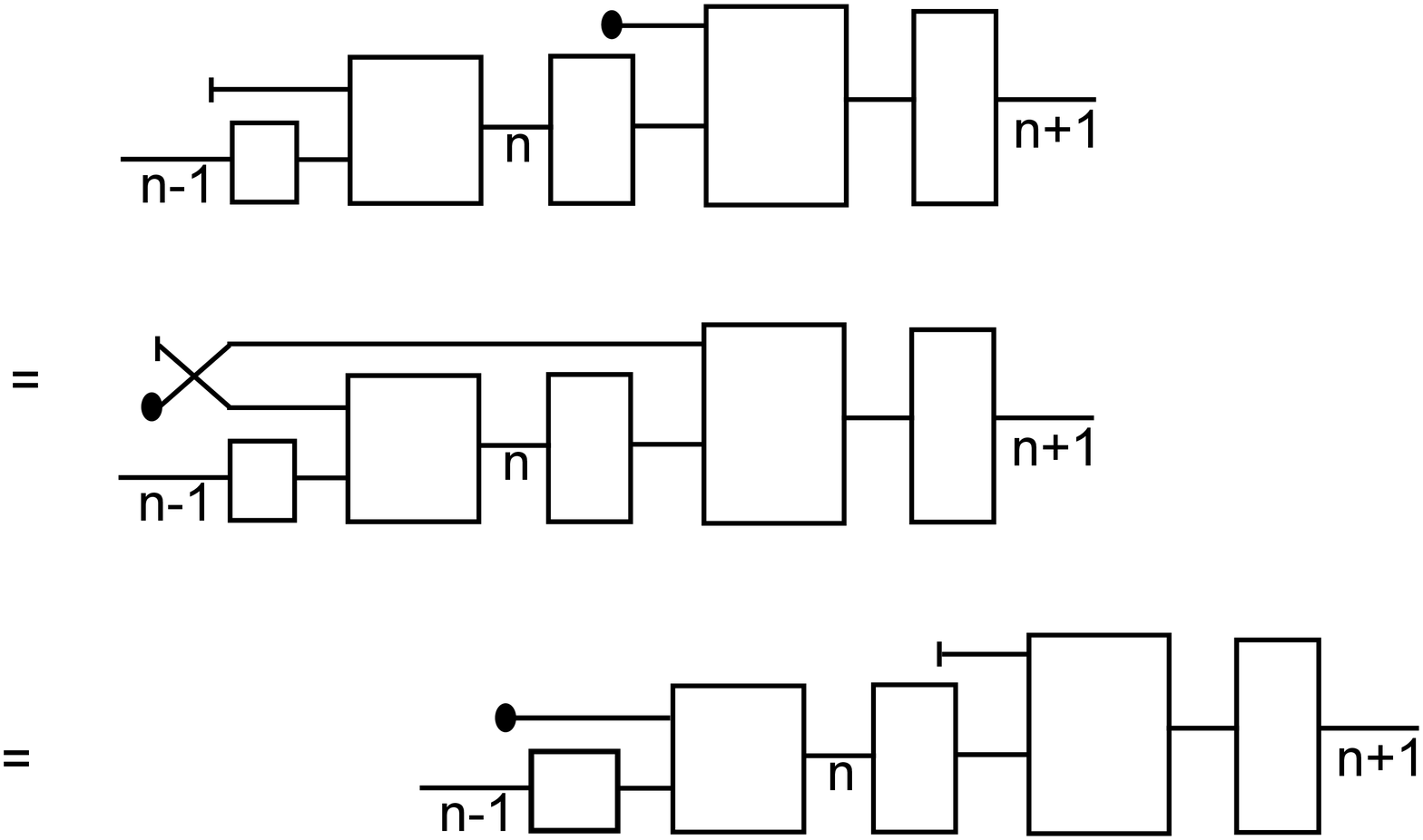}
\put(17,47){$\lm'$}
\put(26.5,50){$q_{\lm'\lm}$}
\put(41,50){$\lm$}
\put(52,52){$f_{\lm\mu}$}
\put(67,52){$\mu$}
\put(17,24){$\lm'$}
\put(26.5,27){$q_{\lm'\lm}$}
\put(41,27){$\lm$}
\put(52,29){$f_{\lm\mu}$}
\put(67,29){$\mu$}
\put(6,5){$\sum\limits_{\mu' \in \Res(\mu)} a_{\lm'\lm}^{\mu'\mu}$}
\put(38.5,1.5){$\lm'$}
\put(47.5,4){$f_{\lm'\mu'}$}
\put(62,4){$\mu'$}
\put(74,5){$q_{\mu'\mu}$}
\put(89,5){$\mu$}
\end{overpic}
\caption{}
\label{se3}
\end{figure}

\begin{lemma} \label{lem a=1}
Under the notation above, if $e_{\lm'} Q e_{\mu}$ is two dimensional, then $a_{\lm'\lm}^{\nu\mu}=1$.
\end{lemma}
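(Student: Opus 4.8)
The plan is to prove Lemma~\ref{lem a=1} by a direct computation of the defining equation~(\ref{eq Q morphism}) in the two-dimensional case, using the explicit rescaled Vershik--Okounkov basis introduced before Definition~\ref{def choice f} together with the rescaling constants $h_{\mu'\mu}$ of~(\ref{eq def h}). The key observation is that~(\ref{eq Q morphism}) lives inside the one-dimensional space $e_{\lm'}Qe_{\mu}$, whose dimension-one-ness forces a unique identity once we know all the maps on a single basis vector. So the strategy is: fix the extra box positions, pick the tableau $T$ of $\lm'$ that controls the relevant chain of inclusions, track the image of $\wt v_T$ through each of the four generators $q_{\lm'\lm}$, $b_{\lm\mu}$ (equivalently $f_{\lm\mu}$), $b_{\lm'\mu'}$, $q_{\mu'\mu}$ appearing on the two sides of~(\ref{eq Q morphism}), and read off the coefficient.

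First I would unwind the morphisms into the language of symmetric group representations via the isomorphism $e_{\lm'}Qe_{\mu}\cong (e_{\lm'}\bt 1(1))\K[S(n)]e_{\mu}$ and the further identification with $\Hom_{\K[S(n-1)]}(L_{\lm'}\bt\mb_1,\Res L_{\mu})$ from~(\ref{eq iso B F'}). Under Definition~\ref{def choice f}, $f_{\lm\mu}$ sends $\wt v_T\mapsto \wt v_{T\cup\{n\}}$, and Definition~\ref{def qmu'mu} scales $f_{\mu'\mu}$ by $h_{\mu'\mu}^{-1}$. Composing along the square $\lm'\to\lm\oplus\nu\to\mu$, the left-hand side of~(\ref{eq Q morphism}) applied to $\wt v_T$ becomes $h_{\lm'\lm}^{-1}\,\wt v_{T_1}$ where $T_1$ is the tableau of $\mu$ obtained by filling $n-1$ into the $\lm\backslash\lm'$ box and then $n$ into the $\mu\backslash\lm$ box; the $\nu$-term on the right-hand side applied to $\wt v_T$ produces $a_{\lm'\lm}^{\nu\mu}\,h_{\nu\mu}^{-1}\,\wt v_{T_2}$ with $T_2 = s_{n-1}T_1$ (the two fillings swapped). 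The $\mu'=\lm$ term on the right contributes $a_{\lm'\lm}^{\lm\mu}\,h_{\lm\mu}^{-1}\,\wt v_{T_1}$. Now I would apply the rescaled braiding formulas~(\ref{eq VO rescale}) and~(\ref{eq VO rescale'}) to express $\wt v_{T_1}$ in terms of $\wt v_{T_1}$ and $\wt v_{T_2}$ after acting by $s_{n-1}$, matching the coefficient of $\wt v_{T_2}$ on both sides. The content difference $d = c(\nu\backslash\lm')-c(\lm\backslash\lm')$ is exactly the quantity appearing in those formulas and in the relation~(\ref{eq rel h}) $h_{\nu\mu} = \frac{d}{d-1}h_{\lm'\lm}$, so the $h$-factors should cancel precisely against the $\frac{d-1}{d}$-type coefficient in~(\ref{eq VO rescale}), leaving $a_{\lm'\lm}^{\nu\mu}=1$.

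The main obstacle I anticipate is bookkeeping the signs and the ordering of the transposition $s_{n-1}$ versus the "extra box" conventions: the generators $q_{\mu'\mu}$ and $q_{\lm'\lm}$ involve an auxiliary strand in position $1$ that must be commuted past the symmetric group elements (cf.\ the slide and symmetric relations in Definition~\ref{def B}), and one must be careful that "$l(T_2)>l(T_1)$ or not" so that the correct one of~(\ref{eq VO rescale})/(\ref{eq VO rescale'}) is used. A clean way around this is to verify the identity first in a minimal example — $\lm'$ a single box, $\mu$ a $2\times2$ square or a hook — where $e_{\lm'}Qe_{\mu}$ is genuinely two-dimensional, confirm $a_{\lm'\lm}^{\nu\mu}=1$ there, and then observe that the general computation is formally identical because everything is governed by the single content parameter $d$ and the two-term recursion~(\ref{eq rel h}); the companion coefficient $a_{\lm'\lm}^{\lm\mu}$ (not claimed in this lemma) will come out as $\frac{1}{d}$ or a similar rational function, providing a useful consistency check against~(\ref{eq VO rescale'}).
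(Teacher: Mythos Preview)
Your approach is essentially the paper's: it too passes to $(e_{\lm'}\bt 1(1))\K[S(n)]e_{\mu}$, applies the Vershik--Okounkov formula (\ref{eq sn f}) to obtain $a_{\lm'\lm}^{\nu\mu}=\frac{d-1}{d}\cdot\frac{h_{\nu\mu}}{h_{\lm'\lm}}$ (and $a_{\lm'\lm}^{\lm\mu}=\frac{1}{d}\cdot\frac{h_{\lm\mu}}{h_{\lm'\lm}}$), and then cancels via (\ref{eq rel h}). The one point to tighten is that the transposition enters on the \emph{left-hand} side of (\ref{eq Q morphism}) --- it arises from sliding the frozen $Q$-strand past the short strand $v_n(n)$ of $b_{\lm\mu}$, as in figure~\ref{se3} --- so the LHS is $h_{\lm'\lm}^{-1}\,s_{n-1}\cdot f_{\lm'\lm}\cdot f_{\lm\mu}$ rather than $h_{\lm'\lm}^{-1}\,\wt v_{T_1}$ followed by a separate $s_{n-1}$-action.
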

\begin{proof}
The equation (\ref{eq sn f}) implies that
$$s_n \cdot f_{\lm'\lm} \cdot f_{\lm\mu}=\frac{1}{d}~f_{\lm'\lm} \cdot f_{\lm\mu}+\frac{d-1}{d}~f_{\lm'\nu} \cdot f_{\nu\mu},$$
where $d=c(\nu \backslash \lm')-c(\lm \backslash \lm')$.
Substituting $q_{\lm\mu}=h_{\lm\mu}^{-1}~f_{\lm\mu}$, the equation (\ref{eq Q morphism}) becomes
$$h_{\lm'\lm}^{-1}~s_n \cdot f_{\lm'\lm} \cdot f_{\lm\mu}=a_{\lm'\lm}^{\lm\mu}h_{\lm\mu}^{-1}~f_{\lm'\lm} \cdot f_{\lm\mu} + a_{\lm'\lm}^{\nu\mu}h_{\nu\mu}^{-1}~f_{\lm'\nu} \cdot f_{\nu\mu},$$
see figure \ref{se3}.
Comparing the coefficients, we have
\begin{gather} \label{eq rel a=h}
a_{\lm'\lm}^{\lm\mu}=\frac{1}{d} ~ \frac{h_{\lm\mu}}{h_{\lm'\lm}}, \qquad a_{\lm'\lm}^{\nu\mu}=\frac{d-1}{d} ~ \frac{h_{\nu\mu}}{h_{\lm'\lm}}.
\end{gather}
The lemma follows from that $\frac{h_{\nu\mu}}{h_{\lm'\lm}}=\frac{d}{d-1}$ by (\ref{eq rel h}).
\end{proof}

We compute $a_{\lm'\lm}^{\lm\mu}$ in the following.
Assume that the box $\lm \backslash \lm'$ is at the top right of the box $\mu \backslash \lm$.
Let $\eta_1, \eta_2, \eta_3$ be the sub-partitions of $\mu$ which are at the top right of $\lm \backslash \lm'$, between $\lm \backslash \lm'$ and $\mu \backslash \lm$, and at the bottom left of $\mu \backslash \lm$, respectively.
The coordinates of $\lm \backslash \lm'$ and $\mu \backslash \lm$ are $(s_1+s_2+2, t_1+1)$ and $(s_1+1, t_1+t_2+2)$, respectively, see figure \ref{se4}.
Let
\begin{gather*}
\eta_1=(x_{1}, \dots, x_{t_1}), \quad \ove_1=(\ovx_{1}, \dots, \ovx_{s_3}), \\
 \eta_2=(z_{1}, \dots, z_{t_2}), \quad \ove_2=(\ovz_{1}, \dots, \ovz_{s_2}), \\
 \eta_3=(y_{1}, \dots, y_{t_3}), \quad \ove_3=(\ovy_{1}, \dots, \ovy_{s_1}).
\end{gather*}

\begin{figure}[h]
\begin{overpic}
[scale=0.35]{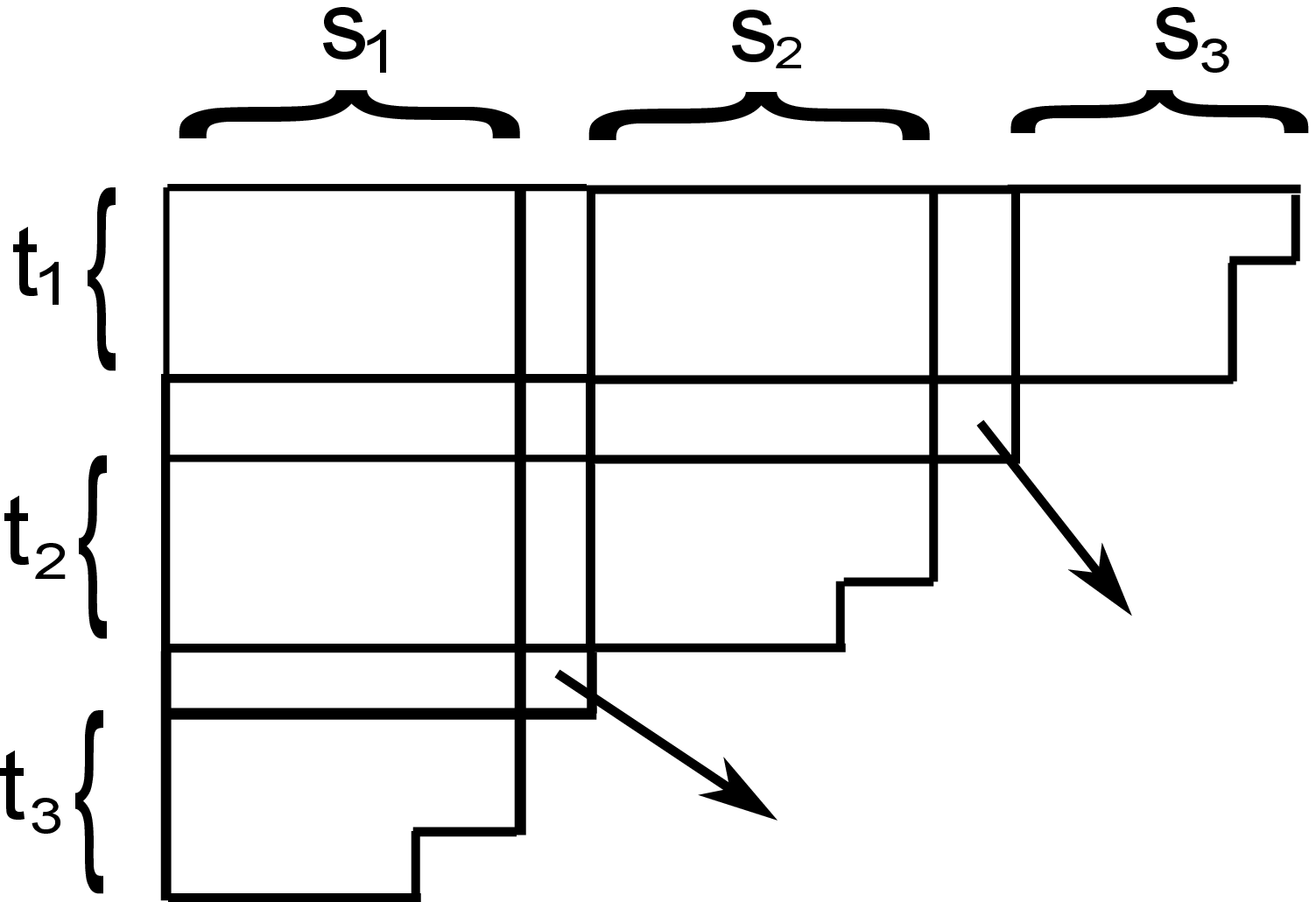}
\put(90,15){$\lm \backslash \lm'$}
\put(62,0){$\mu \backslash \lm$}
\put(85,45){$\eta_1$}
\put(54,25){$\eta_2$}
\put(20,5){$\eta_3$}
\end{overpic}
\caption{}
\label{se4}
\end{figure}

By the definition in (\ref{eq def h}), we have
$$h_{\lm'\lm}=\frac{(-1)^{s_1+s_2+1}}{(s_2+t_2+2)}\prod\limits_{1\le i \le t_1}(x_i+t_1-i+1) \prod\limits_{1\le j \le s_2}(\ovz_j+s_2-j+2)^{-1} \prod\limits_{1\le j \le s_1}(\ovy_j+s_1+s_2+t_2-j+4)^{-1}$$
$$h_{\lm\mu}=\frac{(-1)^{s_1}}{(s_2+t_2+2)^{-1}} \prod\limits_{1\le i \le t_2}(z_i+t_2-i+1) \prod\limits_{1\le i \le t_1}(x_i+s_2+t_2+t_1-i+3)\prod\limits_{1\le j \le s_1}(\ovy_j+s_1-j+2)^{-1}.$$
The constant $d=c(\nu \backslash \lm')-c(\lm \backslash \lm')=c(\mu \backslash \lm)-c(\lm \backslash \lm')=-(s_2+t_2+2)$.
It follows from (\ref{eq rel a=h}) that
\begin{align} \label{eq def a}
a_{\lm'\lm}^{\lm\mu}=(-1)^{s_2}(s_2+t_2+2)\prod\limits_{1\le j \le s_1}(\ovy_j+s_1+s_2+t_2-j+4)
\prod\limits_{1\le j \le s_1}(\ovy_j+s_1-j+2)^{-1}
\end{align}
\begin{align*}
\cdot  \prod\limits_{1\le i \le t_1}(x_i+t_1-i+1)^{-1}\prod\limits_{1\le i \le t_1}(x_i+s_2+t_2+t_1-i+3) \\
\cdot  \prod\limits_{1\le j \le s_2}(\ovz_j+s_2-j+2) \prod\limits_{1\le i \le t_2}(z_i+t_2-i+1).
\end{align*}

If $e_{\lm'} Q e_{\mu}$ is one dimensional, then $q_{\lm'\lm} \cdot b_{\lm\mu} = a_{\lm'\lm}^{\lm\mu} ~ b_{\lm'\lm} \cdot q_{\lm\mu}.$
A computation similar to that in Lemma \ref{lem a=1} shows that
$$a_{\lm'\lm}^{\lm\mu}=\pm \frac{h_{\lm\mu}}{h_{\lm'\lm}},$$
where $\pm$ depends on $\mu\backslash \lm'=(2)$ or $(1^2)$.

\vspace{.2cm}
\n{\bf The case of $\wtq$.}
Recall $\mf{x}(\lm)=(-2\ovlm_1+2, \dots, -2\ovlm_k+2k, 2(k+1), \dots)$, where $\ovlm=(\ovlm_1, \dots, \ovlm_k)$ is the dual partition of $\lm$.
Let $\cal{R}(\lm)=\{l ~|~ \ovlm_l > \ovlm_{l+1}\}$.
It is in bijection with $\Res(\lm)$, where each $l \in \cal{R}(\lm)$ corresponds to $\lm^l \in \Res(\lm)$ whose dual partition is $(\ovlm_1, \dots, \ovlm_l-1, \ovlm_{l+1}, \dots, \ovlm_k)$.

Proposition \ref{prop Cl relation2} and Lemma \ref{lem Cl action} imply that
\begin{align*}
T(2i)T(2i-1)\ot P(\mf{x}(\lm)) \cong 0&, \qquad \mbox{for}~~i<-\ovlm_1+1, \\
(T(2i)T(2i-1) \xra{u_i} R_0(i)) \ot P(\mf{x}(\lm)) \cong 0&, \qquad \mbox{for}~~i>k.
\end{align*}
Therefore, $\wtq \ot P(\mf{x}(\lm))$ is isomorphic to
\begin{align} \label{eq wtq ot P}
\left(\bigoplus\limits_{-\ovlm_1+1 \le i \le k}T(2i)T(2i-1)\ot P(\mf{x}(\lm)) \xra{f_{i,j}}  \bigoplus\limits_{1 \le j \le k}R_0(j) \ot P(\mf{x}(\lm)),\right)
\end{align}
where $f_{i,j}=\frac{1}{(j-i)!}u_i \ot id_{P(\mf{x}(\lm))}$.
The tensor product $T(2i)T(2i-1)\ot P(\mf{x}(\lm))$ for $-\ovlm_1+1 \le i \le k$ has three possibilities:
\be
\item $T(2i)T(2i-1)\ot P(\mf{x}(\lm)) \cong P(\mf{x}(\lm))$ if $i=-\ovlm_j+j$ for $1 \le j \le k$;
\item $T(2i)T(2i-1)\ot P(\mf{x}(\lm)) \cong P(\mf{x}(\lm^l))$ if $i=-\ovlm_l+l+1$ for $l \in \cal{R}(\lm)$;
\item otherwise, $T(2i)T(2i-1)\ot P(\mf{x}(\lm))=0$.
\ee
For the first two cases, there are canonical choices for the isomorphisms, see Proposition \ref{prop Cl relation} and Lemma \ref{lem Cl action}.
We fix them from now on.

In the first case, the map $f_{i,j}=\frac{1}{(j-i)!}\cdot id: P(\mf{x}(\lm)) \ra P(\mf{x}(\lm))$.
In the second case, the map $f_{i,j}=\frac{1}{(j-i)!}\cdot g^l: P(\mf{x}(\lm^l)) \ra P(\mf{x}(\lm))$, where $g^l$ is the right multiplication by the generator $r(\lm^l|\lm) \in R_0$.
Thus, $\wtq \ot P(\mf{x}(\lm)$ contains $k$ copies of $P(\mf{x}(\lm))$ at both degrees zero and one.
The differential between these $2k$ copies is the identity map multiplying with the $k \times k$ scalar matrix $C=(c_{i,j})$, where
\begin{align} \label{eq matrix C}
c_{i,j}= \left\{
\begin{array}{rl}
\frac{1}{(i-(-\ovlm_j+j))!} & \mbox{if}~i \ge -\ovlm_j+j, \\
0 & \mbox{otherwise.}
\end{array}
\right.
\end{align}

\begin{lemma} \label{lem nonsing C}
The matrix $C$ is nonsingular.
\end{lemma}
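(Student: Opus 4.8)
The plan is to recognize $C$ as the image, under the \emph{exponential specialization} of symmetric functions, of a Jacobi--Trudi matrix for the conjugate partition $\ovlm$. First I would set $m_j = -\ovlm_j + j$ and note that $j \mapsto m_j$ is strictly increasing on $\{1,\dots,k\}$, since $\ovlm_1 \ge \cdots \ge \ovlm_k \ge 1$ forces $m_j - m_{j-1} = \ovlm_{j-1}-\ovlm_j+1 \ge 1$; in particular the $k$ values $m_1 < \cdots < m_k$ are distinct. Adopting the convention $1/n! = 0$ for $n<0$, the entries in (\ref{eq matrix C}) become simply $c_{i,j} = 1/(\ovlm_j + i - j)!$, the case distinction in (\ref{eq matrix C}) being exactly the statement that this vanishes when $i < m_j$.

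Next I would invoke the classical Jacobi--Trudi identity for the partition $\ovlm=(\ovlm_1,\dots,\ovlm_k)$ (a genuine partition with $k=\lambda_1$ parts),
$$s_{\ovlm} = \det\bigl(h_{\ovlm_i - i + j}\bigr)_{1 \le i,j \le k},$$
where $h_n$ denotes the complete homogeneous symmetric function, $h_n = 0$ for $n<0$. Let $\mathrm{ex}$ be the exponential specialization, i.e.\ the ring homomorphism from the ring of symmetric functions to $\K$ with $\mathrm{ex}(h_n) = 1/n!$. Comparing entries, the matrix $\bigl(\mathrm{ex}(h_{\ovlm_i - i + j})\bigr)_{i,j}$ is precisely the transpose of $C$, so
$$\det C = \mathrm{ex}(s_{\ovlm}).$$
Finally I would use the standard evaluation of the exponential specialization of a Schur function, $\mathrm{ex}(s_\mu) = 1/\!\prod_{c\in\mu} h(c) = f^\mu/|\mu|!$, where the product runs over the cells $c$ of $\mu$ with hook lengths $h(c)$, and $f^\mu\ge 1$ is the number of standard Young tableaux of shape $\mu$. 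Applying this to $\mu=\ovlm$ gives $\det C = f^{\ovlm}/|\ovlm|! \neq 0$, so $C$ is nonsingular (and in fact the determinant is computed explicitly).

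There is essentially no hard step here; the only thing requiring care is the bookkeeping in the first paragraph --- confirming that (\ref{eq matrix C}), read with the natural conventions, is literally the specialized Jacobi--Trudi matrix of $\ovlm$ up to transposition, and that the index range is genuinely $k\times k$. If one wishes to avoid symmetric-function machinery altogether, an elementary alternative is available: $C$ is a $k\times k$ submatrix of the bi-infinite lower-triangular Toeplitz matrix $E$ with $E_{a,b}=1/(a-b)!$ (``multiplication by $e^x$'' on power series), which is unipotent, hence invertible, and totally nonnegative, so positivity of the relevant minor follows from the Lindstr\"om--Gessel--Viennot lemma applied to the obvious planar network (a non-intersecting path family exists because both the source set $\{m_1<\cdots<m_k\}$ and the sink set $\{1<\cdots<k\}$ are increasing and $m_j \le j-1$). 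Either way the conclusion is immediate, and it is what lets the $k$ copies of $P(\mf{x}(\lambda))$ in cohomological degrees $0$ and $1$ cancel inside $\wtq \ot P(\mf{x}(\lambda))$.
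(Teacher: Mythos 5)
Your proof is correct, and it takes a genuinely different and cleaner route than the paper. The paper proves nonsingularity by left-multiplying $C$ by the unipotent lower-triangular matrix $B$ with entries $(-1)^{i-j}/(i-j)!$ and recognizing $A = BC$ as a Cauchy-type matrix $(a_i + b_j)^{-1}$ on the ``positive'' block; it then computes $\det A$ explicitly via a Cauchy determinant lemma (Lemma~\ref{lem det}). You instead observe that, with the convention $1/n! = 0$ for $n<0$, the entries collapse to $c_{i,j} = 1/(\ovlm_j + i - j)!$, so that $C^T$ is literally the Jacobi--Trudi matrix $\bigl(h_{\ovlm_i - i + j}\bigr)$ under the exponential specialization $\mathrm{ex}(h_n) = 1/n!$, giving $\det C = \mathrm{ex}(s_{\ovlm}) = f^{\ovlm}/|\ovlm|! > 0$. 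Both arguments are valid. What your approach buys is brevity and conceptual clarity (and as a bonus, the Lindstr\"om--Gessel--Viennot / Edrei--Thoma total-nonnegativity alternative you sketch is also sound, since $m_j = -\ovlm_j + j$ is strictly increasing with $m_j < j$). What the paper's longer computation buys is the explicit product formula (\ref{eq det a}) for $|A| = |C|$, which is not discarded: it is reused verbatim in the proof of Lemma~\ref{lem wta lmmu} to compute the cofactor ratio $g_{j_0} = -A_{i_0 j_0}/|A|$. So if one replaced the paper's proof with yours, the hook-length formula $\det C = f^{\ovlm}/|\ovlm|!$ (together with the analogous evaluation of the cofactor as a skew-shape count) would have to be substituted into the later computation as well; for the lemma as stated, your argument is complete and shorter.
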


To prove the lemma, we need some preparations.
Define
\begin{gather} \label{eq def f(s,t)}
f(s,t)=\sum\limits_{0 \le j \le t-1} (-1)^{s+t-1-j} \frac{1}{(s+t-1-j)!~j!},
\end{gather}
for $t \ge 1$.
Here $0!=1$, and $\frac{1}{s!}$ is understood as zero if $s<0$.
By induction on $t$, one can show that
\begin{align} \label{eq f(s,t)}
f(s,t)= \left\{
\begin{array}{rl}
(-1)^{s} \frac{1}{(s-1)!(t-1)!(s+t-1)} & \mbox{if}~s>0, \\
\delta_{t+s,1} & \mbox{if}~s \le 0.
\end{array}
\right.
\end{align}

\begin{lemma} \label{lem det}
Let $A=(a_{ij})$ be a $k \times k$ matrix such that $a_{ij}=(a_i+b_j)^{-1}$.
The determinant $|A|=\prod\limits_{i,j}(a_i+b_j)^{-1} \prod\limits_{i<i'}(a_i-a_{i'}) \prod\limits_{j<j'}(b_j-b_{j'})$.
\end{lemma}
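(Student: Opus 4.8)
The statement is the classical Cauchy determinant identity, and the plan is to prove it by induction on the size $k$ of the matrix $A$, carried out by elementary row and column operations. The base case $k=1$ is immediate: $|A| = (a_1+b_1)^{-1}$, while the two products $\prod_{i<i'}(a_i-a_{i'})$ and $\prod_{j<j'}(b_j-b_{j'})$ are empty and hence equal to $1$.

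For the inductive step, first subtract the last row of $A$ from every other row; this does not change the determinant. The $(i,j)$ entry for $i<k$ becomes $(a_i+b_j)^{-1}-(a_k+b_j)^{-1} = \frac{a_k-a_i}{(a_i+b_j)(a_k+b_j)}$, so one may pull the scalar $a_k-a_i$ out of row $i$ for each $i<k$ and then the scalar $(a_k+b_j)^{-1}$ out of column $j$ for each $j$. After these operations the last row consists entirely of $1$'s while row $i$ (for $i<k$) has entries $(a_i+b_j)^{-1}$. Next subtract the last column from every other column: now the last row becomes $(0,\dots,0,1)$ and the $(i,j)$ entry for $i,j<k$ becomes $\frac{b_k-b_j}{(a_i+b_j)(a_i+b_k)}$, from which one pulls $b_k-b_j$ out of column $j$ and $(a_i+b_k)^{-1}$ out of row $i$. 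Expanding the resulting determinant along the row $(0,\dots,0,1)$ leaves precisely the $(k-1)\times(k-1)$ Cauchy matrix in the variables $a_1,\dots,a_{k-1}$ and $b_1,\dots,b_{k-1}$, to which the induction hypothesis applies.

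It then only remains to collect the scalar factors extracted along the way, namely $\prod_{i<k}(a_k-a_i)$, $\prod_{j=1}^{k}(a_k+b_j)^{-1}$, $\prod_{j<k}(b_k-b_j)$, and $\prod_{i<k}(a_i+b_k)^{-1}$, and combine them with the inductively known value of the $(k-1)\times(k-1)$ determinant. The inverse-linear factors merge cleanly: $\prod_{j=1}^{k}(a_k+b_j)^{-1}$ supplies all pairs with $i=k$, $\prod_{i<k}(a_i+b_k)^{-1}$ supplies the pairs $(i,k)$ with $i<k$, and the smaller determinant supplies the pairs $i,j\le k-1$, together giving $\prod_{i,j=1}^{k}(a_i+b_j)^{-1}$. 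The only point needing a little care is the sign bookkeeping: $\prod_{i<k}(a_k-a_i) = (-1)^{k-1}\prod_{i<k}(a_i-a_k)$ and likewise $\prod_{j<k}(b_k-b_j)=(-1)^{k-1}\prod_{j<k}(b_j-b_k)$, so the two factors of $(-1)^{k-1}$ cancel, and the expansion along $(0,\dots,0,1)$ contributes the cofactor sign $(-1)^{k+k}=+1$; hence no stray sign survives, and one recovers exactly $\prod_{i,j}(a_i+b_j)^{-1}\prod_{i<i'}(a_i-a_{i'})\prod_{j<j'}(b_j-b_{j'})$.

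An alternative would be the rational-function argument: regard $\prod_{j}(a_1+b_j)\cdot|A|$ as a polynomial of degree at most $k-1$ in $a_1$, observe that it vanishes at $a_1=a_2,\dots,a_k$ (two equal rows), and fix the remaining leading constant by induction; but the row and column reduction above is more self-contained, and the sign accounting described is the only mild obstacle.
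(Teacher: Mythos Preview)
Your inductive row--and--column reduction is correct and the sign bookkeeping is handled cleanly; this is a standard and fully rigorous proof of the Cauchy determinant.

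The paper takes a genuinely different route. Rather than inducting, it clears denominators all at once: multiplying the $i$-th row of $A$ by $\prod_{j}(a_i+b_j)$ produces a matrix $\wt{A}$ whose entries $\wt{a}_{ij}=\prod_{j'\neq j}(a_i+b_{j'})$ are polynomials of degree $k-1$ in the $a$'s and $b$'s. One then argues by divisibility: if $a_i=a_{i'}$ two rows coincide, so $(a_i-a_{i'})$ divides $|\wt{A}|$, and similarly for the $b$'s. Since $\prod_{i<i'}(a_i-a_{i'})\prod_{j<j'}(b_j-b_{j'})$ already has the same total degree as $|\wt{A}|$, the remaining factor is a constant, which is then checked to be $1$. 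Your ``alternative'' paragraph gestures at a one-variable version of this, but the paper's version is symmetric in all variables and avoids induction entirely. The trade-off: the paper's argument is shorter and conceptually cleaner once one sees the polynomial structure, but it leaves the verification of the constant $c=1$ as an exercise; your approach is longer but entirely self-contained, with no unexplained constants to chase down.
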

\begin{proof}
Multiplying the $i$-th row of $A$ by $\prod\limits_{j}(a_i+b_j)$, we obtain a matrix $\wt{A}=(\wt{a}_{ij})$, where $\wt{a}_{ij}=\prod\limits_{j'\neq j}(a_i+b_{j'})$ is a homogeneous polynomial of degree $k-1$.
If $a_i=a_{i'}$, then $|\wt{A}|=0$. So $(a_i-a_{i'})$ divides $|\wt{A}|$.
Similarly, $(b_j-b_{j'})$ divides $|\wt{A}|$.
Thus, $|\wt{A}|=c \prod\limits_{i<i'}(a_i-a_{i'}) \prod\limits_{j<j'}(b_j-b_{j'})$ for some polynomial $c$.
By comparing degrees, $c$ has degree zero, hence is a constant.
It is easy to compute that $c=1$ which implies the lemma.
\end{proof}

\n {\em Proof of Lemma \ref{lem nonsing C}:}
Consider a nonsingular $k \times k$ lower triangular matrix $B=(b_{ij})$, where
\begin{align} \label{eq def B}
b_{ij}= \left\{
\begin{array}{rl}
(-1)^{i-j}\frac{1}{(i-j)!} & \mbox{if}~i \ge j, \\
0 & \mbox{otherwise.}
\end{array}
\right.
\end{align}
The product $BC=A=(a_{ij})$ can be computed via (\ref{eq f(s,t)}): $a_{ij}=(-1)^{\ovlm_j-j+i}f(\ovlm_j-j+1,i)$ for $1 \le i,j \le k$.
The sequence $\{\ovlm_j-j\}$ is strictly decreasing with respect to $j$.
Let $p \in [1,k]$ such that $\ovlm_j-j \ge 0$ for $j \in [1,p]$, and $\ovlm_j-j < 0$ for $j \in [p+1,k]$.
We have
\begin{align} \label{eq def mat a}
a_{ij}= \left\{
\begin{array}{rl}
(-1)^{i+1} \frac{1}{(\ovlm_j-j)!(i-1)!(\ovlm_j-j+i)} & \mbox{if}~ j \in [1,p], \\
\delta_{i,j-\ovlm_j} & \mbox{if}~ j \in [p+1,k].
\end{array}
\right.
\end{align}
For any $j \in [p+1, k]$, the $j$-th column is of the form $(0, \dots, 0,1,0, \dots, 0)^T$, where $1$ is at the $(j-\ovlm_j)$-th row.
After removing these columns and rows, the resulting $p \times p$ matrix denoted by $\wt{A}$, is in the form of a matrix in Lemma \ref{lem det} up to a constant.
Let $\cal{I}=[1,k] \backslash \{j-\ovlm_j ~|~ j \in [p+1,k]\}$.
We have $|A|=(-1)^{u}|\wt{A}|$, where $u=\sum\limits_{j \in [p+1,k]}(j+j-\ovlm_j)=\sum\limits_{j \in [1,p]}\ovlm_j$.
It follows from Lemma \ref{lem det} that
\begin{gather} \label{eq det a}
|A|=(-1)^v \prod\limits_{j}\frac{1}{(\ovlm_j-j)!}\prod\limits_{i}\frac{1}{(i-1)!}\prod\limits_{i,j}\frac{1}{\ovlm_j-j+i}
\prod\limits_{j>j'}\frac{1}{\ovlm_j-j-\ovlm_{j'}+j'}\prod\limits_{i>i'}\frac{1}{i-i'},
\end{gather}
where the products are taken over $j,j' \in [1,p], i,i' \in \cal{I}$, and $v=\sum\limits_{i\in \cal{I}}(i+1)+\sum\limits_{j \in [p+1,k]}\ovlm_j$.
Then $|C|=|A|$ is nonzero so that $C$ is nonsingular.
\qed

\begin{prop} \label{prop wtQ obj}
There is an isomorphism $\wtq \ot P(\mf{x}(\lm)) \cong \bigoplus\limits_{\lm' \in \Res(\lm)}P(\mf{x}(\lm'))$. \end{prop}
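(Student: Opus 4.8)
The plan is to take the explicit two-term presentation of $\wtq \ot P(\mf{x}(\lm))$ produced just above (\ref{eq wtq ot P}) and simplify it by Gaussian elimination, the only essential input being that the matrix $C$ of (\ref{eq matrix C}) is nonsingular, i.e. Lemma \ref{lem nonsing C}.

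First I would spell the complex out concretely. By the case analysis preceding the statement, in cohomological degree zero the summands $T(2i)T(2i-1)\ot P(\mf{x}(\lm))$ with $-\ovlm_1+1\le i\le k$ contribute exactly $k$ copies of $P(\mf{x}(\lm))$ (those with $i=-\ovlm_j+j$, $1\le j\le k$), one copy of $P(\mf{x}(\lm^l))$ for each $l\in\cal{R}(\lm)$ (those with $i=-\ovlm_l+l+1$), and nothing else (the type (3) summands vanish). In cohomological degree one we have $\bigoplus_{1\le j\le k}R_0(j)\ot P(\mf{x}(\lm))\cong P(\mf{x}(\lm))^{\oplus k}$. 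Fixing the canonical isomorphisms of Proposition \ref{prop Cl relation} and Lemma \ref{lem Cl action}, the object $\wtq \ot P(\mf{x}(\lm))$ is thus identified with a complex of projective DG $R_0$-modules
$$P(\mf{x}(\lm))^{\oplus k}\ \oplus\ \bigoplus_{l\in\cal{R}(\lm)}P(\mf{x}(\lm^l))\ \xra{(C\mid D)}\ P(\mf{x}(\lm))^{\oplus k},$$
with the left-hand side in degree zero; here $C=(c_{i,j})$ is the scalar matrix (\ref{eq matrix C}), and $D$ is the block recording the maps $f_{i,j}$ out of the type-(2) summands, each of which is $\tfrac{1}{(j-i)!}$ times right multiplication by the generator $r(\lm^l|\lm)\in R_0$. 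All of these maps land in the degree-one term since $\wtq$ has no other objects in degree one, which is what justifies the block form $(C\mid D)$.

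By Lemma \ref{lem nonsing C} the $k\times k$ matrix $C$ is invertible. Changing basis of the degree-zero term — replacing it by its image under the automorphism with block matrix $\left(\begin{smallmatrix}C^{-1}&-C^{-1}D\\0&\mathrm{id}\end{smallmatrix}\right)$, which is a genuine automorphism, being block upper-triangular with invertible diagonal blocks — turns the differential into the projection $(\mathrm{id}\mid 0)$ onto the first summand. The complex then decomposes as a direct sum of the contractible complex $P(\mf{x}(\lm))^{\oplus k}\xra{\mathrm{id}}P(\mf{x}(\lm))^{\oplus k}$ and the complex $\bigoplus_{l\in\cal{R}(\lm)}P(\mf{x}(\lm^l))$ concentrated in degree zero. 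Since the contractible summand is zero in the homotopy category $\cf_0$, and since $l\mapsto\lm^l$ is the bijection from $\cal{R}(\lm)$ onto $\Res(\lm)$, we obtain $\wtq \ot P(\mf{x}(\lm))\cong\bigoplus_{\lm'\in\Res(\lm)}P(\mf{x}(\lm'))$ in $\cf_0$, as claimed.

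The heart of the argument is the nonsingularity of $C$, which is exactly Lemma \ref{lem nonsing C} (proved by the determinant computation via Lemma \ref{lem det} and the auxiliary identity (\ref{eq f(s,t)})); once that is in hand, the remainder is the routine Gaussian-elimination reduction of a two-term complex of projectives whose leading square is invertible. The only bookkeeping requiring care is matching each nonzero $T(2i)T(2i-1)\ot P(\mf{x}(\lm))$ with its isomorphism class and cohomological degree, which has already been carried out in the paragraph before the statement.
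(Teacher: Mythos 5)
Your proof is correct and takes essentially the same approach as the paper: identify the degree-zero and degree-one summands explicitly, note that the $2k$ copies of $P(\mf{x}(\lm))$ are linked by the invertible scalar matrix $C$ of Lemma \ref{lem nonsing C}, and strip them off to be left with $\bigoplus_{l\in\cal{R}(\lm)}P(\mf{x}(\lm^l))$ in degree zero. The paper phrases this as "removing the $2k$ copies" and passing to the quotient complex, while you make the underlying Gaussian elimination explicit via the block upper-triangular automorphism; the content is identical.
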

\begin{proof}
It follows from Lemma \ref{lem nonsing C} that $\wtq \ot P(\mf{x}(\lm)$ is isomorphic to its quotient complex obtained by removing the $2k$ copies of $P(\mf{x}(\lm))$. The quotient complex is concentrated at degree zero, and isomorphic to $\bigoplus\limits_{l \in \cal{R}(\lm)}P(\mf{x}(\lm^l))$.
\end{proof}

The projection
$$\op{pr}: \wtq \ot P(\mf{x}(\lm)) \ra \bigoplus\limits_{\lm' \in \Res(\lm)}P(\mf{x}(\lm'))$$ is a chain map.
In the inverse direction, there is a unique chain map
\begin{gather} \label{eq eq g}
g: \bigoplus\limits_{\lm' \in \Res(\lm)}P(\mf{x}(\lm')) \ra \wtq \ot P(\mf{x}(\lm)),
\end{gather}
such that $\op{pr} \circ g=id$, and $g \circ \op{pr}$ is homotopic to $id$.
We fix the isomorphisms from now on.
The map $$\wtq(\lm|\mu):  \wtq \ot P(\mf{x}(\lm)) \cong \bigoplus\limits_{\lm' \in \Res(\lm)}P(\mf{x}(\lm')) \ra  \wtq \ot P(\mf{x}(\mu)) \cong \bigoplus\limits_{\mu' \in \Res(\mu)}P(\mf{x}(\mu'))$$
is determined by its components $P(\mf{x}(\lm')) \ra  P(\mf{x}(\mu'))$ for $\lm' \ra \mu'$.
Each component maps the generator $\mb_{\mf{x}(\lm')}$ to $\wta_{\lm'\lm}^{\mu'\mu} \cdot \mb_{\mf{x}(\mu')}$.
The collection $\{\wta_{\lm'\lm}^{\mu'\mu}\}$ of scalars determines the map $\wtq(\lm|\mu)$.

To prove Theorem \ref{thm bf HCl}, define $\beta_{\mu}$ as a composition
$$\cg(Q) \ot P(\mf{x}(\mu))=\cg(Q \ot V_{\mu}) \cong \cg(\bigoplus\limits_{\mu' \in \Res(\mu)}V_{\mu'})=\bigoplus\limits_{\mu' \in \Res(\mu)} P(\mf{x}(\mu')) \cong \wtq \ot P(\mf{x}(\mu)),$$
of the chosen isomorphisms, where the first isomorphism is determined by $\{q_{\mu'\mu}\}$ in Definition \ref{def qmu'mu}, and the second isomorphism is $g$ in (\ref{eq eq g}).
It is enough to show that $\wta_{\lm'\lm}^{\mu'\mu}=a_{\lm'\lm}^{\mu'\mu}$, by (\ref{eq proof of thm BF}).

\begin{lemma} \label{lem wta=1}
Under the notation before Lemma \ref{lem a=1}, if $e_{\lm'} Q e_{\mu}$ is two dimensional, then $\wta_{\lm'\lm}^{\nu\mu}=1$.
\end{lemma}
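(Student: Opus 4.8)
The plan is to compute the $P(\mf{x}(\lm'))\ra P(\mf{x}(\nu))$ component of $\wtq(\lm|\mu)$ by hand, using the explicit model of $\wtq\ot P(\mf{x}(\cdot))$ set up just before Proposition \ref{prop wtQ obj}. Write $\lm'=\lm^l$ with $l\in\cal{R}(\lm)$; by case (2) there, $P(\mf{x}(\lm'))$ sits inside $\wtq\ot P(\mf{x}(\lm))$ as the summand $T(2i)T(2i-1)\ot P(\mf{x}(\lm))$ with $i=-\ovlm_l+l+1$. The first thing to settle is index bookkeeping: two-dimensionality of $e_{\lm'}Qe_{\mu}$ forces the unique coordinate $c$ in which $\mf{x}(\mu)$ differs from $\mf{x}(\lm)$ to satisfy $c\ne l$ (the boxes $\lm\backslash\lm'$ and $\mu\backslash\lm$ lie in different rows and columns), so $\ovmu_l=\ovlm_l$ and the same index $i$ produces the case-(2) summand $P(\mf{x}(\nu))$ of $\wtq\ot P(\mf{x}(\mu))$. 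Moreover, using $l\in\cal{R}(\lm)$ one checks that there is no case-(1) summand of $\wtq\ot P(\mf{x}(\lm))$ at index $i$; this will be used at the end.

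Next I would transport the map. Right multiplication by $r(\lm|\mu)$ is multiplication on the right by the dotted diagram supported on the $c$-th strand. Since $c\ne l$, and since two-dimensionality excludes the single configuration ($c=l+1$ with $\ovlm_l-\ovlm_{l+1}=1$) in which $2i$ would coincide with a label on that strand, the dot meets only crossings $cr_{a,b}$ with $a\ne b$ and therefore slides freely through the crossings defining the canonical isomorphisms of Proposition \ref{prop Cl relation} and Lemma \ref{lem Cl action}(1); the $cr_{j,j}$ obstruction of Figure \ref{fig v4} is precisely what occurs in the degenerate one-dimensional case and is absent here. Consequently, on the index-$i$ summand the map is $\op{id}_{T(2i)T(2i-1)}\ot r(\lm|\mu)$, which under the fixed canonical isomorphisms equals right multiplication by the dot $r(\lm'|\nu)$ up to a sign $\epsilon$ arising only from the shift conventions for DG $R$-bimodules built into $v_i$ and $u_i$. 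The main obstacle is to prove $\epsilon=1$: the dotted strand never meets the two frozen strands of index $2i$, so none of the factors $(-1)^{|m|}$ entering $v_i$ and $h_{2i}$ act, and the two cohomological shifts used to form $T(2i)T(2i-1)$ cancel. This sign analysis is the fermionic analogue of the Heisenberg-side computation in Lemma \ref{lem a=1}, but must be carried out independently.

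Finally I would push through the chain maps $\op{pr}$ and $g$ of (\ref{eq eq g}). Here $\op{pr}$ is the identity on each case-(2) summand, while $g(\mb_{\mf{x}(\lm')})$ equals the generator of the index-$i$ summand plus a correction supported on the $2k$ copies of $P(\mf{x}(\lm))$ controlled by the nonsingular matrix $C$. Under $\wtq(\lm|\mu)$ that correction maps into copies of $P(\mf{x}(\mu))$ sitting in the $R_0(j)$-summands or in case-(1) summands of $\wtq\ot P(\mf{x}(\mu))$; by the index remark of the first paragraph none of these is the index-$i$ slot, so in particular none is $P(\mf{x}(\nu))$, and $\op{pr}$ kills them. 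Hence $\wta_{\lm'\lm}^{\nu\mu}=\epsilon=1$. As a consistency check one may instead apply $\wtq\ot(-)$ to the identity $r(\lm'|\lm)r(\lm|\mu)=r(\lm'|\nu)r(\nu|\mu)$ in $\wt{H}(R_0)\cong F$ and, together with the structure of the index-$i$ summand, recover the same value; this is the fermionic counterpart of reading off coefficients from (\ref{eq sn f}) and (\ref{eq sn f'}).
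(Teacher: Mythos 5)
Your proof is correct and follows the same route as the paper's: identify the case-(2) summand at $i=-\ovlm_l+l+1$, check that two-dimensionality of $e_{\lm'}Qe_{\mu}$ forces $c\ne l$ (hence $\ovmu_l=\ovlm_l$) so that the same $i$ isolates $P(\mf{x}(\nu))$ inside $\wtq\ot P(\mf{x}(\mu))$, and observe that on this summand $\wtq(\lm|\mu)$ reduces to right multiplication by $r(\lm'|\nu)$; the paper states all of this in a few lines without spelling out the dot-slide, the sign bookkeeping, or the vanishing of the correction terms in $g$, so your write-up is a fuller expansion of the same argument rather than a different one. One small inaccuracy in your third paragraph: the piece of the correction supported on the case-(1) summand of $\wtq\ot P(\mf{x}(\lm))$ at $i'=-\ovlm_c+c$ maps under $\wtq(\lm|\mu)$ into $T(2i')T(2i'-1)\ot P(\mf{x}(\mu))\cong P(\mf{x}(\lm))$, which is a case-(2) summand of $\mu$ (the $\mu'=\lm$ slot), not a case-(1) summand or an $R_0(j)$-term — your conclusion is unaffected since $i'\ne i$ and the $\mu'=\lm$ slot is disjoint from $\mu'=\nu$, but the description of where the correction lands should be amended.
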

\begin{proof}
The map is the composition
$$P(\mf{x}(\lm')) \ra \wtq \ot P(\mf{x}(\lm)) \ra \wtq \ot P(\mf{x}(\mu)) \ra P(\mf{x}(\nu)).$$
There exist $i$ and $j$ such that $$T(2i)T(2i-1)\ot P(\mf{x}(\lm))\cong P(\mf{x}(\lm')), \qquad T(2j)T(2j-1)\ot P(\mf{x}(\mu))\cong P(\mf{x}(\nu)).$$
If $\nu \neq \lm$, then $i=j$.
The map $T(2i)T(2i-1)\ot P(\mf{x}(\lm)) \ra T(2i)T(2i-1)\ot P(\mf{x}(\mu))$ induced by $r(\lm|\mu)$ is equal to the map $P(\mf{x}(\lm')) \ra P(\mf{x}(\nu))$ induced by $r(\lm'|\nu)$.
Hence, $\wta_{\lm'\lm}^{\nu\mu}=1$.
\end{proof}

It remains computing $\wta_{\lm'\lm}^{\lm\mu}$.
Assume that $e_{\lm'} Q e_{\mu}$ is two dimensional, and $\lm', \lm, \mu$ are in figure \ref{se4}.
Let $j_0=s_1+1, j_1=s_1+s_2+2$.
We have $\ovmu_{j_0}=\ovlm_{j_0}+1$, and $\ov{\lm'}_{j_1}=\ovlm_{j_1}-1$.
The relevant components are
$$T(-2\ovlm_{j_0}+2j_0)T(-2\ovlm_{j_0}+2j_0-1) \ot P(\mf{x}(\mu)) \cong P(\mf{x}(\lm)),$$
$$T(-2\ovlm_{j_0}+2j_0)T(-2\ovlm_{j_0}+2j_0-1) \ot P(\mf{x}(\lm)) \cong P(\mf{x}(\lm)),$$
$$T(-2\ovlm_{j_1}+2j_1+2)T(-2\ovlm_{j_1}+2j_1+1) \ot P(\mf{x}(\lm)) \cong P(\mf{x}(\lm')).$$
The composition $P(\mf{x}(\lm')) \ra \wtq \ot P(\mf{x}(\lm)) \ra \wtq \ot P(\mf{x}(\mu)) \ra P(\mf{x}(\lm))$ is equal to
\begin{align*}
P(\mf{x}(\lm')) &\ra T(-2\ovlm_{j_0}+2j_0)T(-2\ovlm_{j_0}+2j_0-1) \ot P(\mf{x}(\lm)) \\
&\ra T(-2\ovlm_{j_0}+2j_0)T(-2\ovlm_{j_0}+2j_0-1) \ot P(\mf{x}(\mu)) \\
&\ra P(\mf{x}(\lm)),
\end{align*}
where the second and third maps are the chosen isomorphisms, and the first map is $g_{j_0} \cdot r(\lm'|\lm)$ for the distinguished generator $r(\lm'|\lm) \in \Hom(P(\mf{x}(\lm')), P(\mf{x}(\lm)))$.
The first map is a component of the unique chain map $g$ in (\ref{eq eq g}).
More precisely, $g|_{P(\mf{x}(\lm'))}=id \oplus \bigoplus\limits_{1\le j \le k}g_{j} \cdot r(\lm'|\lm)$,
where
\begin{gather*}
id: P(\mf{x}(\lm')) \ra T(-2\ovlm_{j_1}+2j_1+2)T(-2\ovlm_{j_1}+2j_1+1) \ot P(\mf{x}(\lm)) \cong P(\mf{x}(\lm')), \\
g_{j} \cdot r(\lm'|\lm): P(\mf{x}(\lm')) \ra T(-2\ovlm_{j}+2j)T(-2\ovlm_{j}+2j-1) \ot P(\mf{x}(\lm)) \cong P(\mf{x}(\lm)).
\end{gather*}
The condition $g$ being a chain map requires that
\begin{gather} \label{eq eq C}
C \cdot \mf{g}=C \cdot (g_1, \dots, g_k)^T = (f_1, \dots, f_k)^T=\mf{f},
\end{gather}
where $f_i=-((i-(-\ovlm_{j_1}+j_1+1))!)^{-1}$, and the matrix $C$ is defined in (\ref{eq matrix C}).
Note that $\frac{1}{s!}$ is set to be zero if $s<0$.
Since $C$ is nonsingular, there is a unique solution.
The coefficient $\wta_{\lm'\lm}^{\lm\mu}=g_{j_0}$.

Recall from the proof of Lemma \ref{lem nonsing C} that there exists $p \in [1,k]$ such that $\ovlm_j-j \ge 0$ for $j \in [1,p]$, and $\ovlm_j-j < 0$ for $j \in [p+1,k]$.
We further assume that $j_0 \in [1,p]$ and $j_1 \in [p+1, k]$.

\begin{lemma} \label{lem wta lmmu}
Under the assumption above, $\wta_{\lm'\lm}^{\lm\mu}=a_{\lm'\lm}^{\lm\mu}$ which is given in (\ref{eq def a}).
\end{lemma}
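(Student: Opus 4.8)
The plan is to solve the linear system $C\mf{g}=\mf{f}$ of (\ref{eq eq C}) explicitly for the single unknown $g_{j_0}=\wta_{\lm'\lm}^{\lm\mu}$, and then to check that the resulting expression coincides with the product formula (\ref{eq def a}). The first step is to reuse the factorization $BC=A$ established in the proof of Lemma \ref{lem nonsing C}, where $B$ is the nonsingular lower-triangular matrix of (\ref{eq def B}) and $A$ is the matrix of (\ref{eq def mat a}), whose determinant is recorded in (\ref{eq det a}). Since $B$ is invertible, $\mf{g}=A^{-1}(B\mf{f})$, so it suffices to compute $B\mf{f}$ and then a single entry of $A^{-1}$.

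Next I would compute $B\mf{f}$. Put $m:=-\ovlm_{j_1}+j_1+1$; using $j_1\in[p+1,k]$ and $\ov{\lm'}_{j_1}=\ovlm_{j_1}-1\ge 0$, together with the strict decrease of $\{\ovlm_j-j\}$, one checks $m\in[1,k]$ and, more importantly, $m\in\cal{I}$. With $b_{ij}=(-1)^{i-j}/(i-j)!$ and $f_j=-1/(j-m)!$ (recall $1/s!=0$ for $s<0$), the $i$-th coordinate of $B\mf{f}$ equals $-\sum_{l=0}^{\,i-m}(-1)^{i-m-l}\big((i-m-l)!\,l!\big)^{-1}$, which collapses to $-\delta_{i,m}$ by the identity $\sum_{l=0}^{N}(-1)^{N-l}\binom{N}{l}=\delta_{N,0}$ — precisely the $s\le 0$ case of (\ref{eq f(s,t)}) already used for Lemma \ref{lem nonsing C}. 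Hence $\mf{g}$ is $-1$ times the $m$-th column of $A^{-1}$, so $g_{j_0}=-(A^{-1})_{j_0,m}=(-1)^{j_0+m+1}\,M_{m,j_0}/\det A$, where $M_{m,j_0}$ is the minor of $A$ obtained by deleting row $m$ and column $j_0$.

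Then I would evaluate $\det A$ and $M_{m,j_0}$ by the same device as in the proof of Lemma \ref{lem nonsing C}: expanding along the standard-basis columns indexed by $[p+1,k]$ — none of whose pivot rows is $m$, since $m\in\cal{I}$ — reduces $\det A$ to a Cauchy determinant on rows $\cal{I}$, columns $[1,p]$, and reduces $M_{m,j_0}$ to a Cauchy determinant on rows $\cal{I}\setminus\{m\}$, columns $[1,p]\setminus\{j_0\}$. Lemma \ref{lem det} evaluates both in closed product form; their quotient is the classical formula for an entry of the inverse of a Cauchy matrix, and combining it with the signs from the two column expansions and the prefactor $(-1)^{j_0+m+1}$ yields an explicit product expression for $g_{j_0}$ indexed by $j,j'\in[1,p]$ and $i,i'\in\cal{I}$.

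The final and most delicate step is to identify this expression with (\ref{eq def a}). Under the bijection (\ref{eq iso vfk}) one has $j_0=s_1+1$, $j_1=s_1+s_2+2$ and $m-1=j_1-\ovlm_{j_1}$, and the boxes of $\mu$ lying respectively northeast of $\lm\backslash\lm'$, between the two removed boxes, and southwest of $\mu\backslash\lm$ are exactly the sub-partitions $\eta_1=(x_i)$, $\eta_2=(z_i)$, $\eta_3=(y_i)$ of figure \ref{se4}; each factor $(\ovlm_j-j+i)^{-1}$, $1/(\ovlm_j-j)!$, and so on appearing above corresponds to a content- or hook-length-type quantity attached to such a box. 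I expect this translation between the linear-algebra data ($\ovlm$, $p$, $\cal{I}$) and the geometric data ($s_1,s_2,t_1,t_2$, $\eta_1,\eta_2,\eta_3$ and conjugates), together with the sign bookkeeping, to be the main obstacle; once the dictionary is in place, checking $g_{j_0}=a_{\lm'\lm}^{\lm\mu}$ is a finite though lengthy verification.
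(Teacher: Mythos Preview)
Your proposal is correct and follows essentially the same route as the paper: reduce to $A\mf{g}=B\mf{f}$ via the factorization $BC=A$ from Lemma \ref{lem nonsing C}, observe that $B\mf{f}$ is the negative of a standard basis vector (your $m$ is the paper's $i_0=j_1-\ovlm_{j_1}+1$), extract $g_{j_0}$ as a cofactor ratio evaluated by the Cauchy-type formula of Lemma \ref{lem det}, and then translate the resulting product expression into the partition parameters $s_1,s_2,t_1,t_2,\eta_1,\eta_2,\eta_3$ of figure \ref{se4} to match (\ref{eq def a}). The paper carries out this last translation more explicitly than you do, organizing the comparison line by line and invoking a small combinatorial identity (a ``subclaim'' of the form $\prod_j(j+t-\ovmu_j)\prod_i(x_i+t+1-i)=(s+t)!$ for a partition and its dual) to match the factors coming from $\eta_1$; you should expect to need exactly such an identity when you flesh out the verification.
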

\begin{proof}
It is equivalent to solving $A \cdot \mf{g}=BC\cdot \mf{g}=B \cdot \mf{f}$, where $B$ is defined in (\ref{eq def B}), and $A$ is given in (\ref{eq def mat a}).
The assumption $j_1 \in [p+1, k]$ implies that $\ovlm_{j_1}-{j_1} < 0$.
So $B \cdot \mf{f}=(0,\dots,0,-1,0,\dots,0)^T$, where $-1$ is in the $(j_1-\ovlm_{j_1}+1)$-th row.
Let $i_0=j_1-\ovlm_{j_1}+1$.
Then $-g_{j_0}$ is equal to the $(j_0, i_0)$-th entry of the matrix $A^{-1}$, i.e.
$$g_{j_0}=-\frac{A_{i_0j_0}}{|A|},$$
where $A_{i_0j_0}$ is the algebraic cofactor.
The determinant $A_{i_0j_0}$ has a similar expression as $|A|$ in (\ref{eq det a}) but with different sets of indices.
More precisely,
\begin{gather*}
A_{i_0j_0}=(-1)^{v'} \prod\limits_{j}\frac{1}{(\ovlm_j-j)!}\prod\limits_{i}\frac{1}{(i-1)!}\prod\limits_{i,j}\frac{1}{\ovlm_j-j+i}
\prod\limits_{j>j'}\frac{1}{\ovlm_j-j-\ovlm_{j'}+j'}\prod\limits_{i>i'}\frac{1}{i-i'},
\end{gather*}
where the products are taken over $j,j' \in [1,p]\backslash \{j_0\}, i,i' \in \cal{I} \backslash \{i_0\}$, and $$v'=i_0+j_0+\sum\limits_{i \neq i_0}(i+1)+\sum\limits_{j \in [p+1,j_1]}(\ovlm_j-1)+\sum\limits_{j \in [j_1+1,k]}\ovlm_j=\sum\limits_{j \in [p+1,k]}\ovlm_j+\sum\limits_{i \in \cal{I}}(i+1)+j_0+j_1-p+1.$$
We have
\begin{gather*}
g_{j_0}=(-1)^{v-v'+1} \frac{(\ovlm_{j_0}-{j_0})!~(i_0-1)!~(\ovlm_{j_0}-j_0+i_0)\prod\limits_{j\neq j_0}(\ovlm_j-j+i_0)\prod\limits_{i\neq i_0}(\ovlm_{j_0}-j_0+i)}
{\prod\limits_{j>j_0}(\ovlm_j-j-\ovlm_{j_0}+j_0)\prod\limits_{j_0>j'}(\ovlm_{j_0}-j_0-\ovlm_{j'}+j')\prod\limits_{i>i_0}(i-i_0)
\prod\limits_{i_0>i'}(i_0-i')}
\end{gather*}
The $p-1$ factors $(\ovlm_j-j-\ovlm_{j_0}+j_0), (\ovlm_{j_0}-j_0-\ovlm_{j'}+j')$ of the denominator are negative.
So $g_{j_0}=(-1)^{v-v'+1+p-1} |g_{j_0}|$, and $v-v'+1+p-1=j_0+j_1-p+1+1+p-1=j_0+j_1+1$.
We rewrite $g_{j_0}$ as
\begin{align*}
g_{j_0}=& (-1)^{j_0+j_1+1}(\ovlm_{j_0}-j_0+i_0)\prod\limits_{j<j_0}(\ovlm_j-j+i_0)\prod\limits_{j<j_0}(\ovlm_{j}-j-\ovlm_{j_0}+j_0)^{-1}\\
&\cdot \prod\limits_{i>i_0}(i-i_0)^{-1} ~\prod\limits_{i > i_0} (\ovlm_{j_0}-j_0+i) \\
&\cdot (i_0-1)!~\prod\limits_{i<i_0}(i_0-i)^{-1} \prod\limits_{j > j_0}(\ovlm_{j}-j+i_0) \\
&\cdot (\ovlm_{j_0}-{j_0})!~ \prod\limits_{j>j_0}(\ovlm_{j_0}-j_0-\ovlm_{j}+j)^{-1} \prod\limits_{i< i_0}(\ovlm_{j_0}-j_0+i)
\end{align*}
where the products are taken over $j \in [1,p]$, and $i \in \cal{I}$.

\vspace{.2cm}
\n{\bf Claim:} The first two lines are equal to those in (\ref{eq def a}), and the last two lines are equal to the two factors of the last line in (\ref{eq def a}).

\vspace{.2cm}
We list the parameters as
\begin{align*}
j_0=s_1+1, \qquad j_1=s_1+s_2+2, \qquad k= s_1+s_2+s_3+2  \\
\ovlm_{j_0}=t_1+t_2+1, \qquad \ovlm_{j_1}=t_1+1, \\
i_0=j_1-\ovlm_{j_1}+1= s_1+s_2-t_1+2. \\
\ovlm_j=\left\{
\begin{array}{rl}
\ovlm_{j_0}+\ovy_j+1 &~\mbox{if}~1\le j<j_0, \\
\ovlm_{j_0}-(t_2-\ovz_{j-j_0}) &~\mbox{if}~j_0<j<j_1, \\
\ovlm_{j_1}-(t_1-\ovx_{j-j_1}+1) &~\mbox{if}~j_1<j \le k.
\end{array}\right.
\end{align*}
The claim about the first line follows from
\begin{gather*}
(-1)^{j_0+j_1+1}=(-1)^{s_2}, \quad \ovlm_{j_0}-j_0+i_0=s_2+t_2+2, \\
\ovlm_{j}-j-\ovlm_{j_0}+j_0=\ovy_j+1-j+j_0=\ovy_j+s_1+2-j, ~~\mbox{for}~~ j<j_0, \\
\ovlm_{j}-j+i_0=(\ovlm_{j}-j-\ovlm_{j_0}+j_0) +(\ovlm_{j_0}-j_0+i_0)=\ovy_j+s_1+s_2+t_2+4-j, ~~\mbox{for}~~ j<j_0.
\end{gather*}
For the second line, since $i \in \cal{I}=[1,k] \backslash \{j-\ovlm_j~|~j\in [p+1,k]\}$, we have
\begin{gather} \label{eq i-i0}
\prod\limits_{i>i_0}(i-i_0) \prod\limits_{j \in [j_1+1,k]}(j-\ovlm_{j}-i_0) = (k-i_0)!=(s_3+t_1)!,
\end{gather}
$$\mbox{where}~~\prod\limits_{j \in [j_1+1,k]}(\ovlm_{j}-j-i_0)=\prod\limits_{j \in [1,s_3]}(j+t_1-\ovx_j).$$

\vspace{.2cm}
\n{\bf Subclaim:} For any partition $\mu=(\mu_1, \dots, \mu_t)$, its dual partition $\ovmu=(\ovmu_1, \dots, \ovmu_s)$,
the following identity holds:
$$\prod\limits_{j\in [1,s]}(j+t-\ovmu_j) \prod\limits_{i\in [1,t]}(x_i+t+1-i) = (s+t)!.$$
This subclaim is easy to prove by induction on $s$ or $t$.

Applying the subclaim to the partition $\eta_1$ at the top right of the box $\lm \backslash \lm'$, see figure \ref{se4}, we have
$$\prod\limits_{j \in [1,s_3]}(j+t_1-\ovx_j) \prod\limits_{1\le i \le t_1}(x_i+t_1-i+1)=(s_3+t_1)!$$
Comparing with (\ref{eq i-i0}), we get
$$\prod\limits_{i>i_0}(i-i_0)= \prod\limits_{1\le i \le t_1}(x_i+t_1-i+1).$$
It implies that
$$\prod\limits_{i > i_0} (\ovlm_{j_0}-j_0+i)=\prod\limits_{i > i_0} (\ovlm_{j_0}-j_0+i_0+i-i_0)=\prod\limits_{1\le i \le t_1}(s_2+t_2+2+x_i+t_1-i+1).$$
Hence, the second line of $g_{j_0}$ agrees with that of (\ref{eq def a}).

The proof for the rest of the claim is similar, and we leave it for the reader.
It follows from the claim that $\wta_{\lm'\lm}^{\lm\mu}=g_{j_0}=a_{\lm'\lm}^{\lm\mu}$.
\end{proof}

Using a similar computation as in the proof of Lemma \ref{lem wta lmmu}, one can show that
$\wta_{\lm'\lm}^{\lm\mu}=a_{\lm'\lm}^{\lm\mu}$ holds for any case in general.
We complete the proof of Theorem \ref{thm bf HCl}.

\begin{example} \label{example}
For $\lm=(2)$ and $\mu=(2,1)$, $\mf{x}(\lm)=(0,2,6,\dots)$, and $\mf{x}(\mu)=(-2,2,6,\dots)$.
The nontrivial part of $\wtq \ot P(\mf{x}(\lm))$ is given by:
$$\xymatrix{
T(4)T(3)\ot P(0,2,6,\dots)&=P(0,4,6,\dots) \ar[r]^{1}  & P(0,2,6,\dots) \\
T(2)T(1)\ot P(0,2,6,\dots)&=P(0,2,6,\dots) \ar[r]^(.5){1} \ar[ur]^{1}  & P(0,2,6,\dots) \\
T(0)T(-1)\ot P(0,2,6,\dots)&=P(0,2,6,\dots)  \ar[ur]^{1} \ar[uur]^(.2){\frac{1}{2}}  &
}$$
where the differential is some multiples of the distinguished generator or the identity map, and the multiples are drawn above the arrows.
We have $\Res(\lm)=\{\lm'\}$, where $\lm'=(1)$ and $\mf{x}(\lm')=(0,4,6,\dots)$.
The isomorphism $P(0,4,6,\dots) \ra \wtq \ot P(0,2,6,\dots)$ is given by
$$P(0,4,6,\dots) \xra{(2,-2,1)} P(0,2,6,\dots) \oplus P(0,2,6,\dots) \oplus P(0,4,6,\dots),$$
where the codomain is the degree zero part of $\wtq \ot P(0,2,6,\dots)$.

Similarly, the nontrivial part of $\wtq \ot P(-2,2,6,\dots)$ is given by:
$$\xymatrix{
T(4)T(3)\ot P(-2,2,6,\dots)&=P(-2,4,6,\dots) \ar[r]^{1}  & P(-2,2,6,\dots) \\
T(2)T(1)\ot P(-2,2,6,\dots)&=P(-2,2,6,\dots) \ar[r]^(.5){1} \ar[ur]^{1}  & P(-2,2,6,\dots) \\
T(0)T(-1)\ot P(-2,2,6,\dots)&=P(0,2,6,\dots)  \ar[ur]^(.4){1} \ar[uur]^(.2){\frac{1}{2}}  & \\
T(-2)T(-3)\ot P(-2,2,6,\dots)&=P(-2,2,6,\dots)  \ar[uur]_{\frac{1}{2}} \ar[uuur]^(.15){\frac{1}{6}}  &
}$$
We have $\Res(\mu)=\{\nu, \lm\}$, where $\nu=(1^2)$ and $\mf{x}(\nu)=(-2,4,6,\dots)$.
The map of projection $\wtq \ot P(-2,2,6,\dots) \ra P(0,2,6,\dots) \oplus P(-2,4,6,\dots) $ gives the isomorphism.
The composition $P(0,4,6,\dots) \ra \wtq \ot P(\mf{x}(\lm)) \ra \wtq \ot P(\mf{x}(\mu)) \ra P(0,2,6,\dots) \oplus P(-2,4,6,\dots)$
is $(2\cdot r_1, 1 \cdot r_2)$, where $r_1 \in \Hom(P(0,4,6,\dots), P(0,2,6,\dots)), ~ r_2 \in \Hom(P(0,4,6,\dots), P(-2,4,6,\dots))$ are the distinguished generators.
In particular, $\wta_{\lm'\lm}^{\nu\mu}=1$, and $\wta_{\lm'\lm}^{\lm\mu}=2$.
On the other hand, $a_{\lm'\lm}^{\nu\mu}=1$ by Lemma \ref{lem a=1}, and $a_{\lm'\lm}^{\lm\mu}=\frac{1}{d}\frac{h_{\lm\mu}}{h_{\lm'\lm}}=2$, where $d=-2, h_{\lm\mu}=2, h_{\lm'\lm}=-\frac{1}{2}$. \end{example}

Since $P$ is left adjoint to $Q$ in $D(B^e)$, and $T(i)$ is left adjoint to $T(i-1)$ in $D(R_0^e)$, taking left adjoints on both sides of $\cg(Q) \cong \wtq$ gives an isomorphism $\cg(P) \cong \ov{P}$, where
$$\ov{P}=\left(\bigoplus\limits_{j>0}R_0(j)[1] \oplus \bigoplus\limits_{i \in \Z}T(2i)T(2i+1) , \quad d=\bigoplus\limits_{i \le j}d_{j,i} \right),$$
where $R_0(j)=R_0$ for all $j>0$, $d_{j,i}=\frac{1}{(j-i)!}\wt{u}_i: R_0(j) \ra T(2i)T(2i+1)$ for $i \le j, j>0$, and $\wt{u}_i$ is in Proposition \ref{prop Cl relation}.

Similarly, taking right adjoints of $\cg(Q) \cong \wtq$ gives another isomorphism $\cg(P_{-1}) \cong \ov{P_{-1}}$, where
$$\ov{P_{-1}}=\left(\bigoplus\limits_{j\ge 0}R_0(j)[1] \oplus \bigoplus\limits_{i \in \Z}T(2i)T(2i+1) , \quad d=\bigoplus\limits_{i \le j}d_{j,i} \right),$$
where $R_0(j)=R_0$ for all $j \ge 0$, $d_{j,i}=\frac{1}{(j-i)!}\wt{u}_i: R_0(j) \ra T(2i)T(2i+1)$ for $i \le j, j\ge0$.
In particular, $\ov{P}$ is a subcomplex of $\ov{P_{-1}}$, and there is an exact triangle
$$\ov{P} \ra \ov{P_{-1}} \ra R_0[1] \xra{[1]} \ov{P},$$
in $D(R_0^e)$.
This corresponds to the triangle $B \ra P \ra P_{-1} \xra{[1]} B$ on the Heisenberg side as in Theorem \ref{thm adj}.

\section{Lifting vertex operators}
Consider two generating series associated to $Cl$:
$$\overline{t}(z)=\sum\limits_{i\in \Z} t_{2i+1} z^{i}, \qquad t(z)=\sum\limits_{i\in \Z} t_{2i} z^{-i}.$$

Let $\tau: (V_F)_i \ra (V_F)_{i+1}$ denote the isomorphism of translation, i.e. $\tau(\mf{x})=(x_1+2, x_2+2, \dots)$ for $\mf{x}=(x_1, x_2, \dots) \in (V_F)_i$.
For $n \geq 0$, define a family of infinite partial sums
\begin{gather} \label{eq sn}
\overline{S}_n=\tau^{-1} \circ \sum\limits_{i=-\infty}^{n}(-1)^i t_{2i+1}, \qquad
S_n=\tau \circ \sum\limits_{i=-n}^{+\infty}(-1)^i t_{2i}.
\end{gather}
There are only finitely many nonzero terms in sums $\overline{S}_n(\mf{x}) \in (V_F)_{i+1}$ and $S_n(\mf{x}) \in (V_F)_{i-1}$, for any $\mf{x} \in (V_F)_i$.
Therefore, $\overline{S}_n$ and $S_n$ are linear transformations of $(V_F)_i$.
We focus on the case of $(V_F)_{0}$.
The bijection $\mf{x}$ in (\ref{eq iso vfk}) can be rewritten as
\begin{gather} \label{eq iso vf0}
\mf{x}(\overline{\lm})=(2-2\lm_1, 4-2\lm_2, \dots, 2k-2\lm_k, 2(k+1), \dots),
\end{gather}
where $\overline{\lm}$ is the dual partition of $\lm=(\lm_1, \lm_2, \dots, \lm_k)$.
Let $\Par_n$ denote the subset of $\Par$ consisting of partitions of at most $n$ rows, and $\ov{\Par_n}$ denote the subset of $\Par$ consisting of partitions of at most $n$ columns.

\begin{defn} \label{def lm+n}
Define $\lm \cup 1^n$ as the partition obtained from $\lm$ by adding $1$ on each row of the first $n$ rows.
Then $\lm \cup 1^n \in \Par_n$ if $\lm \in \Par_n$.
\end{defn}

Let $\ov{\Gamma}_n$ and $\Gamma_n$ denote the full subquivers of $\Gamma$ whose sets of vertices are $\ov{\Par_n}$ and $\Par_n$, respectively.
For instance, a part of $\ov{\Gamma}_2$ is:
$$
\xymatrix{
            &                    &                      &                     & (2^2)    &                 \\
            &                    &    (2) \ar[r]        & (2,1) \ar[ur]\ar[r] & (2,1^2)  & \cdots           \\
(0)\ar[r]   & (1) \ar[ur]\ar[r]  &  (1^2) \ar[ur]\ar[r] & (1^3) \ar[ur]\ar[r] & (1^4)    &
}$$
and a part of ${\Gamma}_2$ is:
$$
\xymatrix{
(0)\ar[r]   & (1) \ar[dr]\ar[r]  &  (2) \ar[dr]\ar[r] & (3) \ar[dr]\ar[r] & (4)    &                       \\
            &                    &    (1^2) \ar[r]        & (2,1) \ar[dr]\ar[r] & (3,1)  & \cdots           \\
            &                    &                      &                     & (2^2)    &
            }$$

Define two families of subalgebras of $F$:
\begin{gather} \label{eq Fn filtration}
\br_n= \bigoplus \limits_{\lm, \mu \in \Par_n} (\overline{\lm}) F (\overline{\mu}),
\qquad F_n= \bigoplus \limits_{\lm, \mu \in \Par_n} (\lm) F (\mu).
\end{gather}
In other words, $\br_n$ and $F_n$ are the subalgebras of $F$ generated by $\ov{\Gamma}_n$ and $\Gamma_n$, respectively.
Any of them is an infinite dimensional $\K$-vector space except that $F_0=\overline{F}_0$ which is one dimensional generated by $(0)$.
There are two chains of inclusions:
$$ \overline{F}_0 \subset \overline{F}_1 \subset \dots \subset \overline{F}_n \subset \cdots \subset F,
\qquad F_0 \subset F_1 \subset \dots \subset F_n \subset \cdots \subset F.$$

\subsection{The case of $\br_n$}
The algebra $\br_n$ has a $\K$-basis $\{(\overline{\lm}||\overline{\mu}); \lm, \mu \in \Par_n\}$ by Lemma \ref{lem R}.
A key feature of $\br_n$ is the following lemma.

\begin{lemma} \label{lem br_n}
For $\lm, \mu \in \Par_n$, $(\overline{\lm})\br_n(\overline{\mu})\neq0$ if and only if $(\overline{\mu})\br_n(\ov{\lmn})\neq0$.
In particular, $(\overline{\lm})\br_n(\ov{\lmn})\neq 0$. Moreover, $(\overline{\lm}||\overline{\mu})~(\overline{\mu}||\ov{\lmn})=(\overline{\lm}||\ov{\lmn})$.
\end{lemma}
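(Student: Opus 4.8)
The plan is to reduce everything to the combinatorial criterion behind Lemma~\ref{lem lmRmu}, transported to the dual partitions, and then appeal to Lemma~\ref{lem R} for the multiplicativity statement. First I would restate the criterion in the form adapted to $\br_n$: for partitions $\lm,\mu$ the space $(\overline{\lm})F(\overline{\mu})$ is nonzero exactly when $\overline{\lm}\subseteq\overline{\mu}$ and $\overline{\mu}$ is obtained from $\overline{\lm}$ by adding at most one box on each column. Since the columns of $\overline{\lm}$ are the rows of $\lm$, this says $\lm\subseteq\mu$ and $\mu_i\le\lm_i+1$ for every $i$, i.e. $\lm_i\le\mu_i\le\lm_i+1$ for all $i\ge1$. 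When $\lm,\mu\in\Par_n$ both have at most $n$ rows it suffices to check these inequalities for $1\le i\le n$, and then $(\overline{\lm})\br_n(\overline{\mu})=(\overline{\lm})F(\overline{\mu})$ because $\br_n$ contains the idempotents $(\overline{\lm})$ and $(\overline{\mu})$.

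Next I would compute $\ov{\lmn}$: by Definition~\ref{def lm+n} one has $(\lmn)_i=\lm_i+1$ for $1\le i\le n$ and $(\lmn)_i=0$ for $i>n$, so $\lmn\in\Par_n$. Feeding this into the criterion, $(\overline{\mu})\br_n(\ov{\lmn})\neq0$ iff $\mu_i\le(\lmn)_i\le\mu_i+1$ for all $i$; for $i>n$ all three quantities vanish, and for $1\le i\le n$ the inequalities become $\mu_i\le\lm_i+1\le\mu_i+1$, i.e. $\lm_i\le\mu_i\le\lm_i+1$. This is precisely the system characterizing $(\overline{\lm})\br_n(\overline{\mu})\neq0$, which proves the equivalence. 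Taking $\mu=\lm$, the space $(\overline{\lm})\br_n(\overline{\lm})$ is nonzero (it contains the idempotent $(\overline{\lm})$), so $(\overline{\lm})\br_n(\ov{\lmn})\neq0$; alternatively the inequalities $\lm_i\le(\lmn)_i\le\lm_i+1$ are immediate from the description of $\lmn$ above. For the last assertion, Lemma~\ref{lem R} gives $(\alpha||\beta)(\beta||\gamma)=(\alpha||\gamma)$ in $F$ whenever the three basis elements are defined; here $(\overline{\lm})F(\overline{\mu})$, $(\overline{\mu})F(\ov{\lmn})$ and $(\overline{\lm})F(\ov{\lmn})$ are all nonzero by what precedes, the three elements lie in $\br_n$ since $\lm,\mu,\lmn\in\Par_n$, and hence $(\overline{\lm}||\overline{\mu})(\overline{\mu}||\ov{\lmn})=(\overline{\lm}||\ov{\lmn})$ already holds inside $\br_n$.

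The main thing to be careful about is the bookkeeping in the first step: Lemma~\ref{lem lmRmu} is phrased via the rows of the \emph{direct} partitions, whereas $\br_n$ is built on the \emph{dual} side, so one must correctly transport ``adding at most one box on each column'' into a row condition on $\lm,\mu$ and check that it interacts well with the truncation to at most $n$ rows. Once the criterion is in that shape, the rest is a direct comparison of the two systems of inequalities together with the multiplication rule of Lemma~\ref{lem R}, so there is no substantial obstacle beyond this translation.
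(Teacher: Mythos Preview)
Your proof is correct and follows essentially the same route as the paper: both reduce the nonvanishing criterion to Lemma~\ref{lem lmRmu} and then observe that the resulting system of inequalities for $(\overline{\lm})\br_n(\overline{\mu})$ coincides with that for $(\overline{\mu})\br_n(\ov{\lmn})$, finishing with Lemma~\ref{lem R}. The only cosmetic difference is that the paper stays on the dual side and writes the interlacing chain $n\ge\overline{\mu}_1\ge\overline{\lm}_1\ge\overline{\mu}_2\ge\cdots$ (using $\ov{\lmn}=(n,\overline{\lm}_1,\overline{\lm}_2,\dots)$ to see the shift), whereas you transport the condition back to the row lengths of $\lm,\mu$ as $\lm_i\le\mu_i\le\lm_i+1$; the content is the same.
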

\begin{proof}
Let $\lm=(\lm_1, \dots, \lm_n) \in \Par_n$, where $\lm_i \geq 0$ for $1 \leq i \leq n$, and $\ov{\lm}=(\ov{\lm}_1, \dots, \ov{\lm}_k)$, where $\ov{\lm}_j \geq 1$ for $1 \leq j \leq k$.
Here $\lm_i=|\{j; ~1 \leq j \leq k, \ov{\lm}_j \geq i\}|$, and $\ov{\lm}_j=|\{i; ~ 1 \leq i \leq n, \lm_i \geq j\}|$.
Then $\ov{\lmn}=(n, \ov{\lm}_1, \dots, \ov{\lm}_{k-1})$.

Note that $\ov{\mu}_1 \leq n$ for any $\mu \in \Par_n$.
Lemma \ref{lem lmRmu} implies that $(\overline{\lm})\br_n(\overline{\mu})\neq0$ if and only if
$$n \geq \overline{\mu}_1 \geq \overline{\lm}_1 \geq \overline{\mu}_2 \geq \overline{\lm}_2 \cdots \geq \overline{\mu}_k \geq \overline{\lm}_k.$$
The same inequalities are equivalent to saying that $(\overline{\mu})\br_n(\overline{\lmn})\neq0$.
The rest of lemma directly follows from Lemma \ref{lem R}.
\end{proof}

Note that $\ov{\lmn}$ is obtained from $\ov{\lm}$ by adding one box on each column of the first $n$ columns.
Lemma \ref{lem br_n} says that the path from $\ov{\lm}$ to $\ov{\lmn}$ is the longest nonzero path among pathes starting from $\ov{\lm}$ in $\ov{\Gamma}_n$.
This property is the analogue of that of {\em Serre functors} of triangulated categories.
See \cite[Section 2.6]{Ke1} for the definition of Serre functors.
For our purpose, consider the special case for the bounded derived category $\mf{D}^b(A)$, where $A$ is a finite dimensional $\K$-algebra of finite global dimension.
The Serre functor is an endofunctor of $\mf{D}^b(A)$ defined as the derived functor of tensoring with the $A$-bimodule $DA=\Hom_{\K}(A,\K)$.
It is an autoequivalence of $\mf{D}^b(A)$. See \cite[Theorem 3.1]{Ke1}.

Motivated by the Serre functors above, consider $D\br_n=\Hom_{\K}(\br_n, \K)$, the linear dual of $\br_n$.
So $D\br_n$ has a dual basis $$\{[\mu | \lm]; (\lm)\br_n(\mu)\neq0, \overline{\lm}, \overline{\mu} \in \Par_n\},$$
by Lemma \ref{lem R}.
Moreover, $D\br_n$ is a $\br_n$-bimodule where the action of $\br_n$ is given by
$$(\mu'||\mu)[\mu | \lm]=[\mu'|\lm], ~~\mbox{if}~ (\lm||\mu')(\mu'||\mu)=(\lm||\mu),$$
$$[\mu | \lm](\lm||\lm')=[\mu|\lm'], ~~\mbox{if}~ (\lm||\lm')(\lm'|\mu)=(\lm||\mu).$$

Let $P(\ov{\lm})=\br_n \cdot (\ov{\lm})$ denote the projective $\br_n$-module associated to $\lm \in \Par_n$.

\begin{lemma} \label{lem bar sn}
There is an isomorphism $D\br_n \otimes_{\br_n} P(\ov{\lm}) \cong P(\ov{\lmn})$ of $\br_n$-modules.
\end{lemma}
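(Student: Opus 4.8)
The plan is to make the isomorphism completely explicit on bases. First I would use that $P(\ov{\lm})=\br_n\cdot(\ov{\lm})$ is the projective module cut out by the idempotent $(\ov{\lm})$, so that there is a canonical identification of left $\br_n$-modules
$$D\br_n\otimes_{\br_n}P(\ov{\lm})\;=\;D\br_n\otimes_{\br_n}\br_n(\ov{\lm})\;\xrightarrow{\ \sim\ }\;D\br_n\cdot(\ov{\lm}),\qquad \phi\otimes b\longmapsto \phi b .$$
From the description of the $\br_n$-bimodule $D\br_n$ given above, the right-hand side has $\K$-basis $\{\,[\ov{\mu}|\ov{\lm}]\ :\ \mu\in\Par_n,\ (\ov{\lm})\br_n(\ov{\mu})\neq 0\,\}$, namely the dual basis vectors whose second index is $\ov{\lm}$. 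On the other hand, by Lemma \ref{lem R} the target $P(\ov{\lmn})=\br_n(\ov{\lmn})$ has $\K$-basis $\{\,(\ov{\mu}\,||\,\ov{\lmn})\ :\ \mu\in\Par_n,\ (\ov{\mu})\br_n(\ov{\lmn})\neq 0\,\}$.

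The key input is Lemma \ref{lem br_n}: it says precisely that $(\ov{\lm})\br_n(\ov{\mu})\neq 0$ if and only if $(\ov{\mu})\br_n(\ov{\lmn})\neq 0$, so the two index sets above are the same set of partitions $\mu$. I would therefore define a $\K$-linear map
$$\Phi\colon\ D\br_n\cdot(\ov{\lm})\ \longrightarrow\ P(\ov{\lmn}),\qquad \Phi\big([\ov{\mu}|\ov{\lm}]\big)=(\ov{\mu}\,||\,\ov{\lmn}),$$
which is a bijection by construction; precomposing with the identification above produces a candidate isomorphism $D\br_n\otimes_{\br_n}P(\ov{\lm})\cong P(\ov{\lmn})$ of $\K$-vector spaces.

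It remains to check that $\Phi$ is a morphism of left $\br_n$-modules, and by bilinearity it suffices to evaluate both sides on a basis vector $[\ov{\mu}|\ov{\lm}]$ of the source and a basis element $(\ov{\mu}'\,||\,\ov{\mu})$ of $\br_n$ (only those basis elements whose right index is $\ov{\mu}$ can act nontrivially). On the source, the bimodule rule for $D\br_n$ gives $(\ov{\mu}'\,||\,\ov{\mu})\cdot[\ov{\mu}|\ov{\lm}]=[\ov{\mu}'|\ov{\lm}]$ when $(\ov{\lm})\br_n(\ov{\mu}')\neq 0$ and $0$ otherwise; on the target, Lemma \ref{lem R} gives $(\ov{\mu}'\,||\,\ov{\mu})\cdot(\ov{\mu}\,||\,\ov{\lmn})=(\ov{\mu}'\,||\,\ov{\lmn})$ when $(\ov{\mu}')\br_n(\ov{\lmn})\neq 0$ and $0$ otherwise. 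By Lemma \ref{lem br_n}, applied with $\mu'$ in place of $\mu$, these two vanishing conditions coincide, and in the non-vanishing case the two outputs correspond to one another under $\Phi$ by its very definition; hence $\Phi$ intertwines the actions and is the desired isomorphism. The only point requiring care — and what I expect to be the main (mild) obstacle — is the bookkeeping of the left and right idempotents attached to the basis vectors of the bimodule $D\br_n$ and the matching of this ``product-or-zero'' dichotomy on the two sides; but both are governed entirely by Lemmas \ref{lem R} and \ref{lem br_n}, so there is no genuine difficulty. One also notes that $P(\ov{\lm})$ is finite dimensional, since by Lemma \ref{lem lmRmu} only finitely many $\ov{\mu}\subseteq\ov{\lm}$ satisfy $(\ov{\mu})\br_n(\ov{\lm})\neq 0$, so the locally finite dual $D\br_n$ behaves as expected in the tensor product.
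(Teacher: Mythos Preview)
Your proposal is correct and follows essentially the same approach as the paper: both identify the $\K$-basis of $D\br_n\otimes_{\br_n}P(\ov{\lm})$ as $\{[\ov{\mu}|\ov{\lm}]:(\ov{\lm})\br_n(\ov{\mu})\neq 0\}$, use Lemma~\ref{lem br_n} to match this bijectively with the basis $\{(\ov{\mu}\,||\,\ov{\lmn})\}$ of $P(\ov{\lmn})$, and then verify the module structures agree. The paper is slightly terser, simply observing that $D\br_n\otimes_{\br_n}P(\ov{\lm})$ is cyclic on the element $[\ov{\lmn}|\ov{\lm}]$ with $(\ov{\mu}\,||\,\ov{\lmn})\cdot[\ov{\lmn}|\ov{\lm}]=[\ov{\mu}|\ov{\lm}]$, whereas you spell out the map on all basis vectors and check equivariance directly; these are the same argument.
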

\begin{proof}
The tensor product $D\br_n \otimes_{\br_n} P(\ov{\lm})$ has a $\K$-basis
$$\{[\ov{\mu}|\ov{\lm}]; (\ov{\lm})\br_n(\ov{\mu})\neq0, \mu \in \Par_n\}.$$
It follows from Lemma \ref{lem br_n} that this basis is bijection to
$$\{(\ov{\mu}||\ov{\lmn}); (\ov{\mu})\br_n(\ov{\lmn})\neq0, \mu \in \Par_n\},$$
which is a basis of $P(\ov{\lmn})$.
Moreover, the left $\br_n$-module $D\br_n \otimes_{\br_n} P(\ov{\lm})$ is generated by $[\ov{\lmn}|\ov{\lm}]$ such that $(\ov{\mu}||\ov{\lmn})[\ov{\lmn} | \ov{\lm}]=[\ov{\mu}|\ov{\lm}]$.
\end{proof}

Let $\Kom(\br_n)$ denote the homotopy category of finite dimensional projective $\br_n$-modules.
The endofunctor $D\br_n \ot^L_{\br_n} -$ preserves $\Kom(\br_n)$ by Lemma \ref{lem bar sn}.
Define $$\ov{\cs}_n=D\br_n \ot^L_{\br_n} -: \Kom(\br_n) \ra \Kom(\br_n).$$

\begin{rmk}
Since $\br_n$ is of infinite global dimension, $\ov{\cs}_n$ is not an equivalence.
\end{rmk}

Let $K_0(\br_n)$ denote the Grothendieck group of $\Kom(\br_n)$.
There is an isomorphism $K_0(\br_n) \ra \Z\lan\ov{\Par_n}\ran$ of abelian groups which maps $[P(\lm)]$ to $\lm$, where $\Z\lan\ov{\Par_n}\ran$ is the free abelian group with a basis $\Par_n$.
We fix this isomorphism.

\begin{thm} \label{thm bar sn}

There is a commutative diagram
$$
\xymatrix{
\Kom(\br_n) \ar[r]^{K_0}\ar[d]^{\ov{\cs}_n} & K_0(\br_n) \ar[r]^{\cong}\ar[d]^{K_0(\ov{\cs}_n)} &  \Z\lan\ov{\Par_n}\ran \ar[r]^{\subset} & \Z\lan\Par\ran \ar[r]^{\mf{x}} & (\vf)_0 \ar[d]^{\ov{S}_n}\\
\Kom(\br_n) \ar[r]^{K_0} & K_0(\br_n) \ar[r]^{\cong} &  \Z\lan\ov{\Par_n}\ran \ar[r]^{\subset} & \Z\lan\Par\ran \ar[r]^{\mf{x}} & (\vf)_0
}$$
where the map $\mf{x}$ is induced by the bijection in (\ref{eq iso vf0}).
\end{thm}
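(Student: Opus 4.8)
The plan is to factor the diagram into its left-hand square, relating $\Kom(\br_n) \xrightarrow{K_0} K_0(\br_n)$ to the pair $(\ov{\cs}_n, K_0(\ov{\cs}_n))$, and the outer rectangle obtained by composing the three maps along each row. The left square commutes for formal reasons: by Lemma \ref{lem bar sn} the endofunctor $\ov{\cs}_n = D\br_n \ot^L_{\br_n} -$ sends the projective $P(\ov{\lm})$ to the projective $P(\ov{\lmn})$, so it preserves $\Kom(\br_n)$ and is triangulated, and $K_0(\ov{\cs}_n)$ is by definition the map it induces on the Grothendieck group; functoriality of $K_0$ then gives commutativity. Hence everything reduces to showing that the composite $K_0(\br_n) \xrightarrow{\cong} \Z\lan\ov{\Par_n}\ran \subset \Z\lan\Par\ran \xrightarrow{\mf{x}} (\vf)_0$ intertwines $K_0(\ov{\cs}_n)$ with $\ov{S}_n$. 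As $K_0(\br_n)$ is free on the classes $[P(\ov{\lm})]$, $\lm\in\Par_n$, it suffices to verify this on each generator.

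First I would compute the image of $[P(\ov{\lm})]$ along the bottom row. Lemma \ref{lem bar sn} gives $\ov{\cs}_n[P(\ov{\lm})]=[P(\ov{\lmn})]$, so this image is $\mf{x}(\ov{\lmn})\in\cim_0$. Writing $\lm=(\lm_1,\lm_2,\dots)\in\Par_n$ with $\lm_i=0$ for $i$ large, the partition $\lm\cup1^n$ has $(\lm\cup1^n)_i=\lm_i+1$ for $1\le i\le n$ and $(\lm\cup1^n)_i=0$ for $i>n$; feeding this into the description (\ref{eq iso vf0}) of $\mf{x}$ shows that $\mf{x}(\ov{\lmn})$ is obtained from $\mf{x}(\ov{\lm})=(2-2\lm_1,4-2\lm_2,\dots)$ by subtracting $2$ from each of its first $n$ entries, its remaining entries $2(n+1),2(n+2),\dots$ being unchanged.

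Next I would compute $\ov{S}_n(\mf{x}(\ov{\lm}))$ directly from $\ov{S}_n=\tau^{-1}\circ\sum_{i=-\infty}^{n}(-1)^it_{2i+1}$ and $t_{2i+1}=\psi_{i+1}^*+\psi_i^*$. The key point is that the infinite sum telescopes: separating it as $\sum_{i\le n}(-1)^i\psi_{i+1}^*+\sum_{i\le n}(-1)^i\psi_i^*$ and reindexing the first sum, almost all terms cancel and one is left with $\sum_{i=-\infty}^{n}(-1)^it_{2i+1}=(-1)^n\psi_{n+1}^*$ on $(\vf)_0$ (this rearrangement of conditionally-indexed sums is legitimate because only finitely many $\psi_j^*$ act nontrivially on any fixed basis vector, which is also why $\ov{S}_n$ is a well-defined endomorphism of $(\vf)_0$). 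Since $\lm$ has at most $n$ rows, $2(n+1)$ occurs in $\mf{x}(\ov{\lm})$ exactly once, at position $n+1$, so $\psi_{n+1}^*$ deletes it with sign $(-1)^{(n+1)-1}=(-1)^n$, and applying $\tau^{-1}$ afterwards lowers every surviving entry by $2$. The two factors $(-1)^n$ cancel, leaving precisely $\mf{x}(\ov{\lm})$ with its first $n$ entries decreased by $2$, i.e.\ $\mf{x}(\ov{\lmn})$. Comparing with the previous paragraph yields $\ov{S}_n(\mf{x}(\ov{\lm}))=\mf{x}(\ov{\lmn})$, so the outer rectangle commutes and the theorem follows.

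I do not expect a genuine obstacle here; the difficulty is entirely in bookkeeping. The three points to get right are: the charge shift of $\psi_j^*$ (annihilation raises charge, so $\sum_i(-1)^it_{2i+1}$ maps $(\vf)_0$ into $(\vf)_1$ and $\tau^{-1}$ returns it to $(\vf)_0$), the vanishing of the telescoping boundary term as $i\to-\infty$, and the sign produced when $\psi_{n+1}^*$ removes the entry $2(n+1)$; making all of these line up is the crux of the computation.
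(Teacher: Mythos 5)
Your proof is correct and follows essentially the same route as the paper: reduce via Lemma \ref{lem bar sn} to checking $\ov{S}_n(\mf{x}(\ov{\lm}))=\mf{x}(\ov{\lmn})$ on each generator, then observe that $\sum_{i\le n}(-1)^i t_{2i+1}$ collapses to $(-1)^n\psi_{n+1}^*$ (the paper phrases this as pairwise cancellation, you as a telescope, but it is the same computation), and track the sign from deleting the entry $2(n+1)$ against $\tau^{-1}$.
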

\proof
It is enough to prove that $\mf{x}(\ov{\lmn})=\ov{S}_n(\mf{x}(\ov{\lm}))$ for any $\lm=(\lm_1, \dots, \lm_n) \in \Par_n$ by Lemma \ref{lem bar sn}.
We have
\begin{align*}
\ov{S}_n(\mf{x}(\ov{\lm})) & = \tau^{-1} \circ \sum\limits_{i=-\infty}^{n}(-1)^i t_{2i+1}(\mf{x}(\ov{\lm})) \\
& = \tau^{-1} \circ \sum\limits_{i=-\infty}^{n}(-1)^i t_{2i+1} (2-2\lm_1, 4-2\lm_2, \dots, 2n-2\lm_n, 2(n+1), \dots) \\
& = \tau^{-1} \circ \sum\limits_{i=-\lm_1}^{n}(-1)^i t_{2i+1} (2-2\lm_1, 4-2\lm_2, \dots, 2n-2\lm_n, 2(n+1), \dots)
\end{align*}
The action of $t_{2i+1}$ is equal to the sum of two contractions $(2i)\lrcorner + (2i+2)\lrcorner$.
Therefore, all terms in the last line of the alternating sum are canceled in pairs except for the contraction $(2n+2)\lrcorner$.
So
\begin{align*}
\ov{S}_n(\mf{x}(\ov{\lm})) &= \tau^{-1} \circ (-1)^n~ (2n+2)\lrcorner (2-2\lm_1, 4-2\lm_2, \dots, 2n-2\lm_n, 2(n+1), \dots) \\
& = \tau^{-1} \circ (-1)^n~ (-1)^n (2-2\lm_1, 4-2\lm_2, \dots, 2n-2\lm_n, 2(n+2), \dots) \\
& = (2-2(\lm_1+1), 4-2(\lm_2+1), \dots, 2n-2(\lm_n+1), 2(n+1), \dots) \\
& = \mf{x}(\ov{\lmn}). \qed
\end{align*}

Since $\Par=\bigcup\limits_{n}\ov{\Par_n}$ and $\displaystyle{\lim_{n\to +\infty}}\ov{S}_n=\tau^{-1}\circ \ov{t}(z)|_{z=-1}$ as in (\ref{eq sn}), we say that the family $\ov{\cs}_n$ categorify $\ov{t}(z)|_{z=-1}$ as $n\ra +\infty$.

\subsection{The case of $F_n$}
Consider the endofunctor $\RHom_{F_n}(DF_n, -)$ of the derived category $\mf{D}(F_n)$ of $R_n$-modules.
It is right adjoint to the derived functor of tensor product:
\begin{gather} \label{eq s bars}
\Hom_{F_n}(DF_n \otimes^L_{F_n} M, N) \cong \Hom_{F_n}(M, \RHom_{F_n}(DF_n, N)),
\end{gather}
for any $M, N \in \mf{D}(F_n)$.
We want to use this adjoint property to compute $\RHom_{F_n}(DF_n, -)$.

Let $\mu=(\mu_1, \dots, \mu_n) \in \Par_n$.
To compute $DF_n \otimes_{F_n} P(\mu)$, we need some infinite dimensional projective $F_n$-modules.
Lemma \ref{lem lmRmu} implies that the longest nonzero path starting from $\mu$ in $\Gamma_n$ is an infinite path to the limit of $\mu^{\infty,k}=(k, \mu_1, \dots, \mu_{n-1})$ as $k \ra +\infty$.
Let $\mu^{\infty}=(\infty, \mu_1, \dots, \mu_{n-1})$ denote the limit.
There is a chain of maps of projective $F_n$-modules: $P(\mu^{\infty,k}) \ra P(\mu^{\infty,k+1})$ for $k \geq \mu_1$.
Each of them is an inclusion.
Let $P(\mu^{\infty})$ be the direct limit of the system of projectives $P(\mu^{\infty,k})$'s.
Then $P(\mu^{\infty})$ is an infinite dimensional projective $F_n$-module.
As a set, $P(\mu^{\infty})$ is the union of $P(\mu^{\infty,k})$'s with respect to the inclusions.

The module $DF_n \otimes_{F_n} P(\mu)$ has a $\K$-basis $\{[\eta|\mu]; ~ \eta \in \Par_n, (\mu)F_n(\eta)\neq0\}$.
If $(\mu)F_n(\eta)\neq0$, then $(\eta)F_n(\mu^{\infty,k}) \neq 0$ for some $k \geq \mu_1$ by Lemma \ref{lem lmRmu}.
So there is a chain of $F_n$ homomorphisms: $P(\mu^{\infty,k}) \ra DF_n \otimes_{F_n} P(\mu)$ which maps the generator
$(\mu^{\infty,k})$ to $[\mu^{\infty,k}|\mu]$.
They induce a map
$$\op{pr}(\mu): P(\mu^{\infty}) \ra DF_n \otimes_{F_n} P(\mu)$$
of $F_n$-modules which is surjective.

Let $W(\mu)=\Ker(\op{pr}(\mu))$.
Then $W(\mu)$ has a $\K$-basis $\{(\eta||\mu^{\infty}); (\mu)F_n(\eta)=0\}$.

If $\mu_n=0$, then $(\mu)F_n(\eta)=0$ if and only if $(\eta)F_n(\mu^{\infty})=0$.
Thus $\op{pr}(\mu)$ is an isomorphism of $F_n$-modules.

If $\mu_n \neq 0$, let $\mu^{1,i}=(\mu_1, \dots, \mu_{n-1}, \mu_n-1-i)$, for $0 \leq i \leq \mu_n-1$.
Then $W(\mu)$ has a $\K$-basis $\{(\mu^{1,i}|| \mu^{1,0}); 0 \leq i \leq \mu_n-1\}$, and is a submodule of $P(\mu^{1,0})$.
Let $\op{pr^1}(\mu): P(\mu^{1,0}) \ra W(\mu)$ denote the projection.
For $1 \leq j \leq n-1$, let
$$\mu^{2,j}=(\mu_1, \dots, \mu_j-1, \dots, \mu_{n-1}, \mu_n-1) \in \Par_n$$
be the partition obtained from $\mu^{1,0}$ by removing one box from the $j$-th row if the resulting sequence is decreasing.
The $F_n$-module $\Ker(\op{pr^1}(\mu))$ is generated by $(\mu^{2,j}| \mu^{1,0}) \in P(\mu^{1,0})$.
So there is a surjective map of $F_n$-modules $\op{pr^2}(\mu): \bigoplus\limits_{j}P(\mu^{2,j}) \ra \Ker(\op{pr}^1(\mu))$.
By repeating this procedure, we eventually obtain a projective resolution of $DF_n \otimes_{F_n} P(\mu)$:
\begin{gather}  \label{eq proj res}
0 \ra P(\mu^{n,\mf{j}_n}) \ra \cdots \bigoplus\limits_{|\mf{j}_k|=k-1}P(\mu^{k,\mf{j}_k}) \cdots \ra \bigoplus_{j}P(\mu^{2,j}) \ra P(\mu^{1,0}) \ra P(\mu^{\infty}).
\end{gather}
Here, $\mf{j}_k\subset\{1,\dots,n-1\}$ is a subset of $k-1$ elements, and $\mu^{k,\mf{j}_k}$ is the partition obtained from $\mu^1$ by removing one box from the $j$-th row for $j \in \mf{j}$ if the resulting sequence is decreasing.
For the last term when $k=n$, we have $\mf{j}_k=\{1,\dots,n-1\}$, and
$$\mu^{n,\mf{j}_n}=(\mu_1-1, \dots, \mu_{n-1}-1, \mu_n-1),$$
i.e. $\mu^{n,\mf{j}_n} \cup (1^n)=\mu$.

Let $L(\lm)$ denote the simple $F_n$-module associated to the idempotent $(\lm)$, for $\lm \in \Par_n$.
\begin{lemma} \label{lem sn}
There is an isomorphism $\RHom_{F_n}(DF_n, L(\lm)) \cong L(\lmn)[-n]$ in $\mf{D}(F_n)$.
\end{lemma}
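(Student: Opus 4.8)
The plan is to identify the object $\RHom_{F_n}(DF_n, L(\lm))$ by testing it against the indecomposable projectives $P(\mu)$, $\mu \in \Par_n$, which generate $\mf{D}(F_n)$, using the adjunction $(\ref{eq s bars})$ together with the projective resolution $(\ref{eq proj res})$ of $DF_n \otimes_{F_n} P(\mu)$ constructed just above. First I would note that $P(\mu)$ is projective, so $DF_n \otimes^L_{F_n} P(\mu) = DF_n \otimes_{F_n} P(\mu)$, and that $(\ref{eq proj res})$ presents this module as a bounded complex $C_\mu$ of projective $F_n$-modules with $P(\mu^{\infty})$ in cohomological degree $0$ and $\bigoplus_{|\mf{j}_k| = k-1} P(\mu^{k,\mf{j}_k})$ in cohomological degree $-k$ for $1 \le k \le n$ (when $\mu_n = 0$ one simply has $C_\mu = P(\mu^{\infty})$). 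Since $C_\mu$ consists of projectives, applying $(\ref{eq s bars})$ with $N = L(\lm)[j]$ yields
\[ \Hom_{\mf{D}(F_n)}\!\bigl(P(\mu),\, \RHom_{F_n}(DF_n, L(\lm))[j]\bigr) \;\cong\; \Hom_{\mf{D}(F_n)}\!\bigl(C_\mu,\, L(\lm)[j]\bigr) \;=\; H^j\!\bigl(\Hom^{\bullet}_{F_n}(C_\mu, L(\lm))\bigr). \]

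Next I would compute $\Hom^{\bullet}_{F_n}(C_\mu, L(\lm))$ termwise. Since $F_n$ is a quotient of the path algebra $\K\Gamma_n$, the simple $L(\lm)$ is one-dimensional and supported at the vertex $\lm$, so $\Hom_{F_n}(P(\nu), L(\lm)) = (\nu) L(\lm)$ is $\K$ if $\nu = \lm$ and $0$ otherwise; and $\Hom_{F_n}(P(\mu^{\infty}), L(\lm)) = \varprojlim_k \Hom_{F_n}(P(\mu^{\infty,k}), L(\lm)) = 0$, because $\mu^{\infty,k} = (k, \mu_1, \dots, \mu_{n-1})$ differs from the fixed partition $\lm$ for all large $k$. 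Hence $\Hom^{\bullet}_{F_n}(C_\mu, L(\lm))$ has a copy of $\K$ in cohomological degree $-k$ for each $\mf{j}_k \subseteq \{1, \dots, n-1\}$ with $|\mf{j}_k| = k-1$ and $\mu^{k,\mf{j}_k} = \lm$, and nothing else. The combinatorial core is then to verify that such a $\mf{j}_k$ can occur only for $k = n$, $\mf{j}_n = \{1, \dots, n-1\}$ and $\mu = \lmn$: since $\mu^{k,\mf{j}_k}$ arises from $\mu^{1,0} = (\mu_1, \dots, \mu_{n-1}, \mu_n - 1)$ by deleting one box from each of the $k-1$ rows indexed by $\mf{j}_k$, obtaining $\lm$ forces the deletion of a box from the last row and from each of the first $n-1$ rows, which exhausts all $n$ deletions and gives $\mu_i = \lm_i + 1$ for $1 \le i \le n$, i.e. $\mu = \lmn$; conversely, for $\mu = \lmn$ the term $P(\mu^{n,\mf{j}_n})$ with $\mf{j}_n = \{1, \dots, n-1\}$, which equals $P(\lm)$, does appear in cohomological degree $-n$.

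Putting this together, for every $\mu \in \Par_n$ and $j$ the space $\Hom_{\mf{D}(F_n)}(P(\mu), \RHom_{F_n}(DF_n, L(\lm))[j])$, which equals $(\mu) \cdot H^j(\RHom_{F_n}(DF_n, L(\lm)))$ because $P(\mu)$ is projective, is $\K$ when $(\mu, j) = (\lmn, n)$ and vanishes otherwise. Therefore $H^j(\RHom_{F_n}(DF_n, L(\lm))) = 0$ for $j \ne n$, while $H^n$ is one-dimensional and supported at the vertex $\lmn$ and hence isomorphic to $L(\lmn)$; this gives $\RHom_{F_n}(DF_n, L(\lm)) \cong L(\lmn)[-n]$ in $\mf{D}(F_n)$. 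The step I expect to be the main obstacle is the combinatorial bookkeeping of the middle paragraph — keeping precise track of which partitions $\mu^{k,\mf{j}_k}$ occur in $(\ref{eq proj res})$ and verifying that the one-dimensional simple $L(\lm)$ is detected only by the single term $P(\lm)$ in top cohomological degree $-n$ — together with the care needed to handle the infinite-dimensional projective $P(\mu^{\infty})$ correctly in the $\Hom$ computation.
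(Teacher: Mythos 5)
Your overall strategy — testing $\RHom_{F_n}(DF_n, L(\lm))$ against the projectives $P(\mu)$ via the adjunction $(\ref{eq s bars})$ and computing $\Hom^{\bullet}(C_\mu, L(\lm))$ termwise using $(\ref{eq proj res})$ — is the same as the paper's, and you correctly handle $\Hom_{F_n}(P(\mu^{\infty}), L(\lm)) = 0$ and the reduction to counting which terms $P(\mu^{k,\mf{j}_k})$ equal $P(\lm)$. You also rightly try to pin down the vanishing of $H^j$ for $j \neq n$, which the paper's own two-line argument leaves implicit. However, the step you flag as "the combinatorial core" — that $\mu^{k,\mf{j}_k} = \lm$ forces $k = n$, $\mf{j}_n = \{1,\dots,n-1\}$, and $\mu = \lmn$ — is asserted rather than proved, and it is false as stated. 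Nothing in the definition of $(\ref{eq proj res})$ requires $\mf{j}_k$ to contain all of $\{1,\dots,n-1\}$: the sets $\mf{j}_k$ range over \emph{all} subsets of size $k-1$, so for any $\mu$ with $\mu_n \geq 1$, the partition $\lm$ obtained from $\mu$ by deleting one box from each row of $\mf{j}_k \cup \{n\}$ (for any admissible proper subset $\mf{j}_k$) produces a term $P(\lm)$ in degree $-k$ of $C_\mu$ with $k < n$.

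Concretely, take $n = 2$, $\mu = (2,1)$, $\lm = (2,0)$. Then $\mu^{1,\emptyset} = \mu^{1,0} = (2,0) = \lm$, so $P(\lm)$ sits in degree $-1$ of $C_\mu$, giving a copy of $\K$ in degree $1$ of $\Hom^{\bullet}(C_\mu, L(\lm))$. The neighbouring terms $\Hom(P(\mu^{\infty}), L(\lm))$ and $\Hom(P(\mu^{2,\{1\}}), L(\lm)) = \Hom(P((1,0)), L((2,0)))$ both vanish, so this $\K$ survives to $H^1$, yet $\lmn = (3,1) \neq \mu$, so the lemma predicts $\Hom_{\mf{D}(F_n)}(DF_n \ot^L P(\mu), L(\lm)[1]) = 0$. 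So either a cancellation is being missed somewhere, or — and I think this is the real issue — the claimed exactness of $(\ref{eq proj res})$ needs more scrutiny: the kernel $W(\mu)$ of $\op{pr}(\mu)$ appears to be infinite dimensional when $n \geq 2$ (it is supported at all $\nu$ with $\nu_1 \geq \mu_1 \geq \nu_2 \geq \cdots \geq \nu_n$ and $\nu_n < \mu_n$, with $\nu_1$ unbounded), so it cannot be surjected onto by the finite-dimensional $P(\mu^{1,0})$. In any event, the sentence "such a $\mf{j}_k$ can occur only for $k = n$, $\mf{j}_n = \{1,\dots,n-1\}$" is a genuine gap, and patching it would require a substantially different argument for why $H^j$ vanishes for $j < n$. (There is also a small sign slip: $\Hom(C_\mu^{-k}, L(\lm))$ sits in cohomological degree $+k$, not $-k$, of the Hom complex; your final conclusion has the correct degree, but the intermediate statement does not.)
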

\begin{proof}
We have $\Hom_{F_n}(DF_n \otimes P(\mu), L(\lm)[n]) \cong \Hom_{F_n}(P(\mu), \RHom_{F_n}(DF_n, L(\lm)[n])$ by the adjoint property.
The projective resolution (\ref{eq proj res}) implies that the left hand side of the isomorphism is one dimensional if $\lm \cup (1^n)=\mu$; otherwise, it is zero.
It follows from the right hand side that $\RHom_{F_n}(DF_n, L(\lm)[n]) \cong L(\lmn)$.
\end{proof}

\begin{lemma} \label{lem sn finite}
The left global dimension of $F_n$ is $n$.
\end{lemma}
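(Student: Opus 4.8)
The plan is to determine the global dimension by writing down, for every simple $F_n$-module, a minimal projective resolution and reading off its length, with Lemma \ref{lem sn} supplying the matching lower bound. For the lower bound: Lemma \ref{lem sn} gives $\RHom_{F_n}(DF_n,L((0)))\cong L((1^n))[-n]$, so $\op{Ext}^n_{F_n}(DF_n,L((0)))\cong L((1^n))\neq 0$, whence the left global dimension of $F_n$ is at least $n$. (Equivalently, the resolution (\ref{eq proj res}) with $\mu=(1^n)$ is a minimal projective resolution of $DF_n\otimes_{F_n}P((1^n))$ of length exactly $n$.) For the upper bound I will show that $\op{pd}_{F_n}L(\lambda)=\ell(\lambda)$ for every $\lambda\in\Par_n$, where $\ell(\lambda)$ is the number of nonzero rows; since $\ell(\lambda)\leq n$ this yields left global dimension $\leq n$, and combined with the lower bound it is exactly $n$.

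By Lemma \ref{lem lmRmu}, the projective cover $P(\lambda)=F_n\cdot(\lambda)$ is spanned by the paths into $\lambda$, so it is finite dimensional with composition factors $L(\eta)$ over the $\eta\subseteq\lambda$ for which $\lambda/\eta$ is a horizontal strip. I claim the minimal projective resolution of $L(\lambda)$ is the ``vertical-strip Koszul complex''
\[
0\to P_{\ell(\lambda)}\to\cdots\to P_1\to P_0\to L(\lambda)\to 0,\qquad
P_k=\bigoplus_{\substack{\eta\subseteq\lambda,\ \lambda/\eta\text{ a vertical strip},\ |\lambda/\eta|=k}}P(\eta),
\]
with differential the alternating sum (Koszul signs from a fixed ordering of the rows) of the maps $P(\eta)\to P(\eta')$ induced by the arrows $(\eta|\eta')$, summed over the boxes of $\lambda/\eta$ whose reinsertion produces a partition $\eta'$ (so that $\lambda/\eta'$ is again a vertical strip). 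Since a vertical strip inside $\lambda$ has at most one box in each row, the complex has at most $\ell(\lambda)+1$ nonzero terms, and $P_{\ell(\lambda)}=P(\lambda-e_1-\cdots-e_{\ell(\lambda)})\neq 0$ is the last one; as all differentials are built from arrows they land in the radical, so the resolution is minimal and $\op{pd}_{F_n}L(\lambda)=\ell(\lambda)$. The verification that $d^2=0$ is exactly where Definition \ref{def F} is used: given $\eta''\subsetneq\eta\subseteq\lambda$ with $|\eta/\eta''|=2$ and $\lambda/\eta''$ a vertical strip, the two boxes of $\eta/\eta''$ lie in distinct rows, so they are either ``independent'', in which case the two composite paths $\eta''\to\eta'_1\to\eta$ and $\eta''\to\eta'_2\to\eta$ agree by the commuting-square relation and cancel with the Koszul signs, or they form a $(1^2)$, in which case only one composite path exists and it vanishes by the second relation of Definition \ref{def F}.

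The main obstacle is proving exactness. I would check it vertex by vertex: for each $\mu$ the graded component $(\mu)P_\bullet$ is the cubical chain complex of the set
\[
C_\mu=\bigl\{\,\epsilon\in\{0,1\}^{\ell(\lambda)}\ :\ \eta_\epsilon:=\lambda-\textstyle\sum_i\epsilon_i e_i\text{ is a partition},\ \mu\subseteq\eta_\epsilon,\ \eta_\epsilon/\mu\text{ a horizontal strip}\,\bigr\},
\]
graded by $\sum_i\epsilon_i$, with differential lowering some $\epsilon_i$ from $1$ to $0$; one checks that $C_\mu$ is empty, or (for $\mu=\lambda$) the single point $\epsilon=0$, or else cubically collapsible, so that $(\mu)P_\bullet$ is acyclic except for $H_0=(\mu)L(\lambda)$. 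This acyclicity is equivalent to the assertion that $F_n$ is Koszul with quadratic dual $F_n^{!}$ concentrated in degrees $0,\dots,n$ (its nonzero paths being indexed by vertical strips in $\Par_n$), so an alternative is to establish Koszulity directly, for instance by exhibiting a PBW/Gröbner basis for the presentation of Definition \ref{def F} restricted to $\Gamma_n$. Granting the exactness, taking the supremum of $\op{pd}_{F_n}L(\lambda)=\ell(\lambda)$ over $\lambda\in\Par_n$ gives left global dimension $n$, completing the proof.
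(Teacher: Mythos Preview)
Your approach is essentially the same as the paper's. Both construct the same ``vertical-strip Koszul'' resolution of $L(\lambda)$: the paper writes it as
\[
0 \to P(\wt{\lambda}^{k,\mathbf{j}_k}) \to \cdots \to \bigoplus_{|\mathbf{j}_s|=s} P(\wt{\lambda}^{s,\mathbf{j}_s}) \to \cdots \to P(\lambda) \to L(\lambda) \to 0,
\]
where $\wt{\lambda}^{s,\mathbf{j}_s}$ is $\lambda$ with one box removed from each row in $\mathbf{j}_s\subset\{1,\dots,k\}$, which is exactly your $P_\bullet$ indexed by vertical strips $\lambda/\eta$. The paper then invokes the Horseshoe Lemma for the upper bound. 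You supply more detail than the paper on two points: you argue the lower bound explicitly via Lemma~\ref{lem sn} (the paper leaves it implicit, presumably relying on minimality of the resolution when $k=n$), and you sketch the vertex-by-vertex exactness check and the $d^2=0$ verification, whereas the paper simply refers to the parallel computation of the resolution~(\ref{eq proj res}). Neither proof spells out exactness fully; your combinatorial sketch via the cube $C_\mu$ is a reasonable outline of what needs checking.
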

\begin{proof}
Let $\lm=(\lm_1, \dots, \lm_k) \in \Par_n$, where $\lm_s>0$ for $1 \leq s \leq k \leq n$.
Using a similar argument as in the computation of $DF_n \otimes P(\mu)$, the simple module $L(\lm)$ has a projective resolution of length $k$:
\begin{gather}  \label{eq proj res simple}
0 \ra P(\wt{\lm}^{k, \mf{j}_k}) \ra \cdots \bigoplus\limits_{|\mf{j}_s|=s}P(\wt{\lm}^{s,\mf{j}_s}) \cdots \ra \bigoplus\limits_{j}P(\wt{\lm}^{1,j}) \ra P(\lm) \ra L(\lm) \ra 0.
\end{gather}
Here, $\mf{j}_s\subset\{1,\dots,k\}$ is a subset of $s$ elements, and $\wt{\lm}^{s,\mf{j}_s}$ is the partition obtained from $\lm$ by removing one box from the $j$-th row for $j \in \mf{j}_s$ if the resulting sequence is decreasing.
For the last term, we have $\mf{j}_k=\{1,\dots,k\}$, and
$\wt{\lm}^k=(\lm_1-1, \dots, \lm_{k}-1).$
Thus, any simple module has a projective resolution of length not greater than $n$.
The lemma follows from the Horseshoe Lemma.
\end{proof}

In order to compute $\RHom_{F_n}(DF_n, P(\lm))$, we need some finite dimensional truncations of $F_n$.
Let $\Par_n^m=\{\lm \in \Par_n, |\lm| \leq m\}$.
Define $$F_n^m=\bigoplus\limits_{\lm, \mu \in \Par_n^m} (\lm)F_n(\mu)$$ as a family of subalgebras of $F_n$.
Each $F_n^m$ is finite dimensional and has finite global dimension.
For a given $\lm \in \Par_n^m$, the simple $F_n^m$-module associated to $\lm$ is still denoted by $L(\lm)$ by abuse of notation.
For sufficiently large $m$, $\lmn \in \Par_n^m$, and the analogue of Lemma \ref{lem sn} for $F_n^m$-modules still holds:
$$\RHom_{F_n^m}(DF_n^m, L(\lm)[n]) \cong L(\lmn).$$
Since $F_n^m$ is finite dimensional and has finite global dimension, the endofuntor $DF_n^m \otimes^L -$ is an autoequivalence of $\mf{D}^b(F_n^m)$.
Its inverse is isomorphic to $\RHom_{F_n^m}(DF_n^m, -)$. See \cite[Theorem 3.1]{Ke1}.
Therefore, the functor $\RHom_{F_n^m}(DF_n^m, -)$ is an autoequivalence.

We take $m$ to be sufficiently large from now on.
The projective $F_n^m$-module $P(\lm)$ has a $\K$-basis $\{(\eta||\lm); (\eta)F_n^m(\lm)\neq 0\}$.
It is an extension of the simple modules $L(\eta)$'s.
Lemma \ref{lem sn} implies that $\RHom_{F_n^m}(DF_n^m, P(\lm)[n])$ is isomorphic to an $F_n^m$-module which is an extension of simple modules $$\RHom_{F_n^m}(DF_n^m, L(\eta)[n]) \cong L(\eta\cup(1^n)).$$
We denote this $F_n^m$-module by $Q(\lm)$.
Therefore, $Q(\lm)$ has a $\K$-basis $$\{(\eta\cup(1^n)||\lmn); (\eta)F_n^m(\lm)\neq 0\}.$$
The dimension of $\op{Ext}^i(L(\eta), L(\eta'))$ for any $\eta, \eta'$ and $i$ is at most one dimensional.
Moreover, the autoequivalence $\RHom_{F_n^m}(DF_n^m, -)$ induces isomorphisms on the Ext groups.
So the $F_n^m$-module structure on $Q(\lm)$ is given by the multiplication in $F_n^m$.
In particular, $Q(\lm)$ is generated by $(\lmn||\lmn)$ as a $F_n^m$-module.
So there is a surjective map
$$\wtpr(\lm): P(\lmn) \ra Q(\lm).$$
Let $\lm^0=\lmn=(\lm_1+1, \dots, \lm_n+1)$ for $\lm=(\lm_1,\dots, \lm_n)$.
We define
\begin{gather} \label{eq lmt}
\lm^t=(\lm_1+1, \dots, \lm_{n-t}+1,\lm_{n-t+2},\dots,\lm_n,0) \in \Par_n,
\end{gather}
for $1 \leq t \leq n$, where $\lm_{n+1}$ is understood as zero.
Then $(\lm^{t})F_n^m(\lm^{t-1}) \neq 0$.
The right multiplication with $(\lm^{t}||\lm^{t-1})$ induces a homomorphism $\wtpr (\lm)^{t}: P(\lm^{t}) \ra P(\lm^{t-1})$.

\begin{lemma} \label{lem sn proj}
The $F_n^m$-module $Q(\lm)$ has a projective resolution of length $n$:
\begin{gather} \label{eq proj q}
0 \ra P(\lm^n) \ra \cdots \ra P(\lm^{t+1}) \xrightarrow{\wtpr(\lm)^{t+1}} P(\lm^t) \xrightarrow{\wtpr(\lm)^{t}} P(\lm^{t-1}) \ra \cdots \ra P(\lm^0) \xrightarrow{\wtpr(\lm)} Q(\lm) \ra 0.
\end{gather}
\end{lemma}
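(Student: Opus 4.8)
The plan is to prove the lemma by showing directly that the augmented complex in $(\ref{eq proj q})$, namely $0 \ra P(\lm^n) \ra \cdots \ra P(\lm^1) \ra P(\lm^0) \xra{\wtpr(\lm)} Q(\lm) \ra 0$, is exact; since each $P(\lm^t)$ is projective by construction this is precisely the assertion. First I would check that $(\ref{eq proj q})$ is indeed a complex: the composite $\wtpr(\lm)^{t}\circ\wtpr(\lm)^{t+1}$ is right multiplication by the product $(\lm^{t+1}||\lm^{t})(\lm^{t}||\lm^{t-1})$, and since $\lm^{t-1}$ and $\lm^{t+1}$ differ only in rows $n-t$ and $n-t+1$ — where $\lm^{t+1}$ has entries $\lm_{n-t+1},\lm_{n-t+2}$ and $\lm^{t-1}$ has entries $\lm_{n-t}+1,\lm_{n-t+1}+1$ — the inequality $(\lm^{t+1})_{n-t}\ge(\lm^{t-1})_{n-t+1}$ fails, so $(\lm^{t+1})F_n^m(\lm^{t-1})=0$ by Lemma \ref{lem lmRmu}; hence that product vanishes in $F_n^m$ by Lemma \ref{lem R} (and the composite $\wtpr(\lm)\circ\wtpr(\lm)^1$ at the augmentation vanishes by the same kind of argument).

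The key move is to reduce exactness to a finite combinatorial check by splitting everything under the primitive idempotents $(\eta)$, $\eta\in\Par_n^m$. By Lemma \ref{lem lmRmu} the space $(\eta)P(\lm^t)$ is one-dimensional, spanned by $(\eta||\lm^t)$, exactly when $(\lm^t)_i\ge\eta_i\ge(\lm^t)_{i+1}$ for all $i$, and is $0$ otherwise; likewise $(\eta)Q(\lm)$ is one-dimensional precisely when $\eta=\xi\cup(1^n)$ for some $\xi$ with $(\xi)F_n^m(\lm)\ne0$, that is, when $\lm_i+1\ge\eta_i\ge\lm_{i+1}+1$ for $i<n$ and $\lm_n+1\ge\eta_n\ge1$. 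All differentials of $(\ref{eq proj q})$ and the augmentation $\wtpr(\lm)$ are right multiplications by path generators, so the whole augmented complex is the direct sum over $\eta\in\Par_n^m$ of complexes $C_\bullet^\eta$ whose terms have dimension $\le1$ and whose maps, by Lemma \ref{lem R}, are either $0$ or isomorphisms. It therefore suffices to show each $C_\bullet^\eta$ is exact.

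Fix $\eta$ and set $J_\eta=\{t\in\{0,\dots,n\}:(\eta)P(\lm^t)\ne0\}$. Recalling that $\lm^{t}$ is obtained from $\lm^{t-1}$ by lowering row $n-t+1$ from $\lm_{n-t+1}+1$ to $\lm_{n-t+2}$, I would read off the inequalities defining $J_\eta$ one row at a time: for each row $j<n$ the condition on $\eta_j$ holds for $t$ in an initial segment $[0,n-j]$ if $\eta_j\ge\lm_{j+1}+1$, in a final segment $[n-j,n]$ if $\eta_j\le\lm_{j+1}$, and for no $t$ at all if $\eta_j$ lies outside $[\lm_{j+2},\lm_j+1]$, while the last row contributes $[0,n]$ if $\eta_n=0$ and $\{0\}$ if $1\le\eta_n\le\lm_n+1$. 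Intersecting these, $J_\eta$ is an interval; moreover the subsets $\{\,n-j:\eta_j\ge\lm_{j+1}+1\,\}$ and $\{\,n-j:\eta_j\le\lm_{j+1}\,\}$ partition $\{1,\dots,n-1\}$, which forces the intersection to consist of at most two consecutive integers and to be a singleton only when it equals $\{0\}$. Finally, comparing the inequalities above, $(\eta)Q(\lm)\ne0$ holds if and only if $J_\eta=\{0\}$, and in that case the augmentation restricts to an isomorphism $(\eta)P(\lm^0)\xra{\ \sim\ }(\eta)Q(\lm)$ of one-dimensional spaces (it is the restriction of the surjection $\wtpr(\lm)$).

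Granting these facts, exactness of $C_\bullet^\eta$ is immediate case by case. If $J_\eta=\emptyset$ then $(\eta)Q(\lm)=0$ and $C_\bullet^\eta=0$. If $J_\eta=\{0\}$ then $C_\bullet^\eta$ is $0\ra(\eta)P(\lm^0)\xra{\ \sim\ }(\eta)Q(\lm)\ra0$. If $J_\eta=\{\alpha,\alpha+1\}$ then $(\eta)Q(\lm)=0$ and $C_\bullet^\eta$ is $0\ra(\eta)P(\lm^{\alpha+1})\xra{\ \sim\ }(\eta)P(\lm^{\alpha})\ra0$, the map being an isomorphism by Lemma \ref{lem R} since $\alpha,\alpha+1\in J_\eta$. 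In particular $\wtpr(\lm)^n$ is injective and the resolution has length exactly $n$. I expect the combinatorial statement about $J_\eta$ — that it is always an interval consisting of at most two consecutive integers, a singleton only at $\{0\}$, and that $J_\eta=\{0\}$ detects $(\eta)Q(\lm)\ne0$ — to be the one genuinely delicate point; the rest is bookkeeping with the presentation of $F_n^m$ provided by Lemmas \ref{lem lmRmu} and \ref{lem R}.
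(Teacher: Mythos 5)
Your proof is correct, and it takes a genuinely different route from the paper's. The paper computes kernels degree by degree: it exhibits an explicit basis for $\Ker(\wtpr(\lm))$ and observes that every basis element lifts to $P(\lm^1)$, then asserts the analogous statement that $\Ker(\wtpr(\lm)^t)$ is generated by $(\lm^{t+1}||\lm^t)$ without spelling out the combinatorics, and finishes with a separate injectivity claim for $\wtpr(\lm)^n$. You instead decompose the entire augmented complex under the primitive idempotents $(\eta)$, observe that each component is a complex of spaces of dimension at most one with differentials either $0$ or isomorphisms (this is where Lemma \ref{lem R} and the one-dimensionality from Lemma \ref{lem lmRmu} do the work), and reduce exactness to the combinatorial fact that $J_\eta$ is always $\emptyset$, $\{0\}$, or a pair of consecutive integers, with $J_\eta=\{0\}$ detecting $(\eta)Q(\lm)\neq0$. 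This handles exactness at every degree at once and yields injectivity of $\wtpr(\lm)^n$ for free, at the cost of making explicit the interval analysis that the paper compresses into its unproved kernel assertions. Both arguments rest on the same row-by-row inequalities from Lemma \ref{lem lmRmu}; your idempotent decomposition is a more transparent way to organize them, and the claim about the shape of $J_\eta$ — established via the partition of $\{1,\dots,n-1\}$ into L-type and R-type rows — is indeed the one substantive step, which your argument does correctly supply.
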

\begin{proof}
The $F_n^m$-module $\Ker(\wtpr (\lm))$ has a $K$-basis
$$\{(\mu||\lmn);~ \mu \neq \eta\cup(1^n)~~\mbox{for any}~~(\eta)F_n^m(\lm) \neq0 \}.$$
Then $(\mu)F_n^m(\lm^1)\neq 0$ for any $\mu$ in the basis, where $\lm^1=(\lm_1+1, \dots, \lm_{n-1}+1,0)$.
So there is a surjective map $P(\lm^1) \ra \Ker(\wtpr (\lm))$.

The partitions $\lm^t$ and $\lm^{t-1}$ differ only their $(n-t+1)$-th terms, for $1\leq t \leq n$.
More precisely,
\begin{align*}
\lm^{t+1}=(\lm_1+1, \dots,  & ~\lm_{n-t+1},  \quad \lm_{n-t+2}, \dots,\lm_n,0), \\
\lm^t=(\lm_1+1, \dots,  & ~\lm_{n-t}+1,  ~\lm_{n-t+2},\dots,\lm_n,0), \\
\lm^{t-1}=(\lm_1+1, \dots,  & ~\lm_{n-t}+1,  ~\lm_{n-t+1}+1,\dots,\lm_n,0).
\end{align*}
The multiplication $(\lm^{t+1}||\lm^{t})(\lm^{t}||\lm^{t-1})=0$ by Lemma \ref{lem lmRmu}.
So the composition $\wtpr (\lm)^{t} \circ \wtpr (\lm)^{t+1}$ is zero.
The $F_n^m$-module $\Ker(\wtpr (\lm)^{t})$ is generated by $(\lm^{t+1}||\lm^{t})$.
Thus, the sequence is exact at $P(\lm^t)$.
Similarly, one can prove that $\wtpr(\lm)^{n}$ is injective.
\end{proof}

Lemma \ref{lem sn proj} remains true as $m$ tends to infinity.
So $\RHom_{F_n}(DF_n, P(\lm)[n])$ is isomorphic to an $F_n$-module which is still denoted by $Q(\lm)$.
It has a similar projective resolution as $(\ref{eq proj q})$, where all projectives are $F_n$-modules.

Let $\Kom(F_n)$ denote the homotopy category of finite dimensional projective $F_n$-modules.
The endofunctor $\RHom_{F_n}(DF_n, -)$ preserves $\Kom(F_n)$.
Define $$\cs_n=\RHom_{F_n}(DF_n, -): \Kom(F_n) \ra \Kom(F_n).$$

Let $K_0(F_n)$ denote the Grothendieck group of $\Kom(F_n)$.
There is an isomorphism $K_0(F_n) \ra \Z\lan\Par_n\ran$ of abelian groups which maps $[P(\lm)]$ to $\lm$.
We fix this isomorphism.

\begin{thm} \label{thm sn}
There is a commutative diagram
$$
\xymatrix{
\Kom(F_n) \ar[r]^{K_0}\ar[d]^{{\cs}_n} & K_0(F_n) \ar[r]^{\cong}\ar[d]^{K_0({\cs}_n)} &  \Z\lan\Par_n\ran \ar[r]^{\subset} & \Z\lan\Par\ran \ar[r]^{\mf{x}} & (\vf)_0 \ar[d]^{{S}_n}\\
\Kom(F_n) \ar[r]^{K_0} & K_0(F_n) \ar[r]^{\cong} &  \Z\lan\Par_n\ran \ar[r]^{\subset} & \Z\lan\Par_n\ran \ar[r]^{\mf{x}} & (\vf)_0
}$$
where the map $\mf{x}$ is induced by the bijection in (\ref{eq iso vf0}).
\end{thm}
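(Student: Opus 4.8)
The plan is to follow the pattern of the proof of Theorem~\ref{thm bar sn}: first identify what $\cs_n$ does to the classes of the indecomposable projectives, and then reduce the commutativity of the diagram to a single explicit identity in $(\vf)_0$ which can be checked by unwinding the Clifford action, exactly as $\ov S_n(\mf{x}(\ov\lm)) = \mf{x}(\ov{\lmn})$ was checked in Theorem~\ref{thm bar sn}.

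First I would compute $[\cs_n(P(\lm))]$ in $K_0(F_n)$. By the discussion preceding Lemma~\ref{lem sn proj}, $\cs_n(P(\lm)) = \RHom_{F_n}(DF_n, P(\lm)) \cong Q(\lm)[-n]$, and Lemma~\ref{lem sn proj} provides the length-$n$ projective resolution
$$0 \ra P(\lm^n) \ra \cdots \ra P(\lm^1) \ra P(\lm^0) \ra Q(\lm) \ra 0$$
with $\lm^0 = \lmn$ and $\lm^t$ as in~(\ref{eq lmt}). Hence in $K_0(F_n)$
$$[\cs_n(P(\lm))] = (-1)^n \sum_{t=0}^{n}(-1)^t [P(\lm^t)] = \sum_{t=0}^{n}(-1)^{n+t}[P(\lm^t)],$$
so under the fixed isomorphism $K_0(F_n) \cong \Z\lan\Par_n\ran$ the map $K_0(\cs_n)$ sends $\lm$ to $\sum_{t=0}^{n}(-1)^{n+t}\lm^t$. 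The two left-hand squares of the diagram commute by construction, so the theorem is equivalent to the identity
$$S_n(\mf{x}(\lm)) = \sum_{t=0}^{n}(-1)^{n+t}\mf{x}(\lm^t) \in (\vf)_0 \qquad \text{for all } \lm \in \Par_n.$$

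To prove this identity I would expand $S_n = \tau \circ \sum_{i=-n}^{+\infty}(-1)^i t_{2i}$ with $t_{2i} = \psi_i$, as in the proof of Theorem~\ref{thm bar sn}. Since $\lm \in \Par_n$, the dual $\ov\lm$ has all parts at most $n$, so every particle of $\mf{x}(\lm) = (2-2\ov\lm_1, 4-2\ov\lm_2, \dots)$ lies in $[2-2n, +\infty)$; consequently among the even integers $\ge -2n$ exactly $n+1$ are holes of $\mf{x}(\lm)$, say $g_0 = -2n < g_1 < \cdots < g_n = 2\lm_1$, and $\psi_i(\mf{x}(\lm)) = 0$ for every other $i \ge -n$. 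Thus only the $n+1$ terms $i = g_t/2$ survive, and for each $t$ there are two things to check. The first is that the state $\tau\big(\psi_{g_t/2}(\mf{x}(\lm))\big)$, forgetting its sign, equals $\mf{x}(\lm^t)$: a direct computation with the bijection~(\ref{eq iso vf0}), comparing the dual partition of $\lm^t$ described in~(\ref{eq lmt}) with $\ov\lm$. The second is that the sign is correct, namely that the number of particles of $\mf{x}(\lm)$ below $g_t$ equals $g_t/2 + n - t$; this follows by counting the even integers in $[-2n, g_t)$ and observing that exactly $t$ of them are holes while the rest are particles, which makes the overall coefficient of $\mf{x}(\lm^t)$ in $\tau \circ (-1)^{g_t/2}\psi_{g_t/2}$ equal to $(-1)^{n+t}$. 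The case $t = 0$, where $g_0 = -2n$ and $\lm^0 = \lmn$, is the translate of the computation in Theorem~\ref{thm bar sn}; the terms with $t \ge 1$ are the genuinely new contributions coming from the higher syzygies of $Q(\lm)$.

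The hard part will be the first of these two checks: matching $\tau\circ\psi_{g_t/2}\circ\mf{x}(\lm)$ with $\mf{x}(\lm^t)$ amounts to translating the row-wise recipe for $\lm^t$ in~(\ref{eq lmt}) into the statement that $\mf{x}(\lm^t)$ is obtained from $\mf{x}(\lm)$ by filling the $(t+1)$-st lowest hole above level $-2n$ and then applying $\tau$. This is routine index bookkeeping on dual partitions, but it is easy to be off by one, so I would carry it out explicitly and corroborate it against the worked example $\lm = (2,1)$, $n = 2$ (using Lemma~\ref{lem sn proj} and~(\ref{eq lmt}) on one side and the Clifford computation on the other) before assembling the final argument.
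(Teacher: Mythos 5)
Your proposal is correct and follows essentially the same route as the paper's proof: both reduce the theorem, via Lemma~\ref{lem sn proj}, to the identity $S_n(\mf{x}(\lm)) = \sum_{t=0}^{n}(-1)^{n+t}\mf{x}(\lm^t)$, both observe that exactly $n+1$ of the operators $\psi_i$ with $i\ge -n$ act nonzero on $\mf{x}(\lm)$ (the paper parametrizes these holes as $s_t=\lm_{n-t+1}-(n-t)$, which is precisely your $g_t/2$), and both then match $\tau\circ(-1)^{s_t}\psi_{s_t}(\mf{x}(\lm))$ to $(-1)^{n+t}\mf{x}(\lm^t)$ by tracking the dual partition $\ov{\lm^t}$. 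The only difference is one of direction — the paper computes $\mf{x}(\lm^t)$ and expresses it in Clifford terms, whereas you start from $S_n$ and identify the surviving terms — but the underlying bookkeeping is the same.
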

\begin{proof}
Let $\lm=(\lm_1,\dots,\lm_n) \in \Par_n$ where $\lm_n \geq 0$, and $\ov{\lm}=(\ov{\lm}_1,\dots,\ov{\lm}_k)$ where $k=\lm_1$.
It is enough to prove that $\mf{x}([\cs_n(P(\lm))])={S}_n(\mf{x}(\lm))$.
The left hand side
\begin{gather}
\label{eq sn lhs}
\mf{x}([\cs_n(P(\lm))])=\mf{x}([Q(\lm)[-n]])=\sum\limits_{t=0}^{n}(-1)^{t+n} \mf{x}(\lm^t)
\end{gather}
by Lemma \ref{lem sn proj}.
Here $\lm^t$ is defined in (\ref{eq lmt}).

Note that $\mf{x}(\lm) \in \vf^{(0)}$ is expressed using the dual partition $\ov{\lm}$, see (\ref{eq iso vf0}).
Since $\lm^{t+1}$ and $\lm^t$ only differ at one place, we describe $\ov{\lm^t}$ by induction on $t$ as follows.
For $t=0$,
$$\lm^0=\lmn=(\lm_1+1,\dots,\lm_n+1), \qquad \ov{\lm^0}=(\ov{\lm^0_1}, \dots, \ov{\lm^0_{k+1}})=(n, \ov{\lm}_1,\dots,\ov{\lm}_k),$$
where $\ov{\lm^0_s}=n$ for $1 \leq s \leq \lm_{n}+1$.
Given $\ov{\lm^{t}}=(\ov{\lm^{t}_1}, \dots, \ov{\lm^{t}_{k+1}})$, we have $\ov{\lm^{t+1}}=(\ov{\lm^{t+1}_1}, \dots, \ov{\lm^{t+1}_{k+1}})$, where
\begin{align*}
\ov{\lm^{t+1}_s}=& \left\{
\begin{array}{ll}
\ov{\lm^{t}_s}-1 & \quad\mbox{if} \hspace{0.3cm} \lm_{n-t+1}+1 \leq s \leq \lm_{n-t}+1; \\
\ov{\lm^{t}_s} & \quad\mbox{otherwise}.
\end{array}\right.
\end{align*}
Here, $\lm_{n+1}$ is understood as zero.
One can prove that $\ov{\lm^t_s}=n-t$ for $s \in [\lm_{n-t+1}+1, \lm_{n-t}+1]$ by induction on $t$.
The monomials $\mf{x}(\lm^{t}), \mf{x}(\lm^{t+1})$ are
$$\begin{array}{cccccc}
\mf{x}(\lm^{t})&=( \dots,  & 2(\lm_{n-t+1}+1-(n-t)), & \dots, & 2(\lm_{n-t}+1-(n-t))&,\dots), \\
\mf{x}(\lm^{t+1})&=( \dots,  & 2(\lm_{n-t+1}+1-(n-t-1)),& \dots, & 2(\lm_{n-t}+1-(n-t-1))&,\dots).
\end{array}$$
Let $s_t=\lm_{n-t+1}-(n-t)$ for $0 \leq t \leq n$.
The subsequence of $\mf{x}(\lm^{t})$ from $2(s_t+1)$ to $2s_{t+1}$ consists of consecutive even integers.
Moreover, $\mf{x}(\lm^{t})$ and $\mf{x}(\lm^{t+1})$ only differ by two factors
$$2(s_t+1) \in \mf{x}(\lm^{t}) \backslash \mf{x}(\lm^{t+1}), \quad 2(s_{t+1}+1) \in \mf{x}(\lm^{t+1}) \backslash \mf{x}(\lm^{t}).$$
We have
$$\mf{x}(\lm^{t+1})=(-1)^{s_{t+1}-s_{t}+1} 2(s_{t+1}+1) \wedge \left(  2(s_t+1)\lrcorner  \mf{x}(\lm^{t}) \right),$$
$$\mf{x}(\lm^{t})=(-1)^{s_{t}-s_{0}+t} 2(s_{t}+1) \wedge \left(  2(s_0+1)\lrcorner  \mf{x}(\lm^{0}) \right),$$
by induction on $t$.
For $t=0$, $$\mf{x}(\lm^{0})=(2-2n,4-2\ov{\lm}_1,\dots,2(k+1)-2\ov{\lm}_k,2(k+2),\dots),$$ and $s_0=\lm_{n-0+1}-(n-0)=-n$. So
\begin{align*}
\mf{x}(\lm^{t})=&(-1)^{s_{t}-s_0+t} 2(s_{t}+1) \wedge (4-2\ov{\lm}_1,\dots,2(k+1)-2\ov{\lm}_k,2(k+2),\dots) \\
=& (-1)^{n+t} (-1)^{s_t} 2(s_{t}+1) \wedge (\tau \circ (2-2\ov{\lm}_1,\dots,2k-2\ov{\lm}_k,2(k+1),\dots) \\
=& (-1)^{n+t}~ \tau \circ ((-1)^{s_t} 2s_{t} \wedge (2-2\ov{\lm}_1,\dots,2k-2\ov{\lm}_k,2(k+1),\dots)) \\
=& (-1)^{n+t}~ \tau \circ ((-1)^{s_t} t_{2s_{t}}(\mf{x}(\lm)).
\end{align*}
Now (\ref{eq sn lhs}) becomes
$$\mf{x}([\cs_n(P(\lm))])=\tau \circ \sum\limits_{t=0}^{n}(-1)^{s_t} t_{2s_{t}}(\mf{x}(\lm)).$$
On the other hand, $${S}_n(\mf{x}(\lm))=\tau \circ \sum\limits_{s=-n}^{+\infty}(-1)^{s} t_{2s}(\mf{x}(\lm)).$$
There are exactly $n+1$ terms in the infinite sum are nonzero which are in bijection with the $n+1$ terms in $\mf{x}([\cs_n(P(\lm))])$.
\end{proof}


\begin{thebibliography}{2013/11}

\bibitem{ASS}
I.\ Assem, D.\ Simson and A.\ Skowronski, \textit{Elements of the representation theory of associative algebras. Vol. 1}, London Mathematical Society Student Texts, vol. 65, Cambridge University Press, Cambridge, 2006.



\bibitem{BL}
J.\ Bernstein and V.\ Lunts, \textit{Equivariant sheaves and functors}, Lecture Notes in Math.\ vol.\ 1578, Springer,\ 1994.

\bibitem{CEF}
T.\ Church, J.\ Ellenberg, B.\ Farb, \textit{FI-modules and stability for representations of
symmetric groups}, Duke Math.\ J.\ {\bf 164} (2015), no. 9,\ 1833--1910.

\bibitem{CL1}
S.\ Cautis and A.\ Licata, \textit{Heisenberg categorification and Hilbert schemes},\ Duke Math.\ J.\ {\bf 161} (2012), no. 13, 2469--2547.


\bibitem{CL2}
S.\ Cautis and A.\ Licata, \textit{Vertex operators and 2-representations of quantum affine algebras},\ preprint,\ 2011,\ arXiv:1112.6189.

\bibitem{CS}
S.\ Cautis and J.\ Sussan, \textit{On a categorical Boson-Fermion correspondence},\
Commun.\ Math.\ Phys.\ {\bf 336} (2014), no. 2, 649--669.

\bibitem{Fr}
I.\ Frenkel, \textit{Two constructions of affine Lie algebra representations and boson-fermion correspondence in quantum
field theory},\
Journal of Functional Analysis {\bf 44} (1981), 259--327.

\bibitem{FPS}
I.\ Frenkel, I.\ Penkov and V.\ Serganova,
\textit{A categorification of the boson-fermion correspondence via representation theory of $\mathfrak{sl}(\infty)$},\ Commun.\ Math.\ Phys.\ {\bf 341} (2014), no. 3, 1--21.

\bibitem{HT}
K.\ Honda, Y.\ Tian, \textit{Contact categories of disks},\ preprint 2016,\ arXiv:1608.08325.

\bibitem{Kac}
V.\ G.\ Kac, \textit{Infinite-dimensional Lie algebras. Third edition},\ Cambridge University Press, Cambridge, 1990.

\bibitem{Ke}
B.\ Keller, \textit{On differential graded categories},\ International Congress of Mathematicians. Vol. II, 151--190,\ Eur.\ Math.\ Soc.\, Z\"{u}rich, 2006.

\bibitem{Ke1}
B.\ Keller, \textit{Calabi-Yau triangulated categories},\  Trends in Representation Theory of Algebras (Zurich)
(A. Skowroski, ed.), European Mathematical Society, 2008, pp. 467--89.





\bibitem{Kh1}
M.\ Khovanov,
\textit{Heisenberg algebra and a graphical calculus},\ Fund.\ Math.\ {\bf 225} (2014), 169--210.

\bibitem{Kh2}
M.\ Khovanov,
\textit{Private communication}, circa 2014.

\bibitem{LOT} 
R.\ Lipshitz, P.\ Ozsv\'ath and D.\ Thurston, \textit{Bordered Heegaard Floer homology: invariance and pairing},\ preprint 2008,\ arXiv:0810.0687.


\bibitem{Tian}
Y.\ Tian,
\textit{A diagrammatic categorification of a Clifford algebra},\
Int.\ Math.\ Res.\ Notices (2015), 21, 10872--10928.

\bibitem{VO}
A.\ Vershik, A.\ Okounkov,
\textit{A new approach to the representation theory of the symmetric groups},\ Selecta Math.\ (N.S.) {\bf 2} (1996), no. 4, 581--605.

\end{thebibliography}
\end{document}